\apptocmd{\sloppy}{\hbadness 10000\relax}{}{}
\apptocmd{\sloppy}{\vbadness 10000\relax}{}{}
\numberwithin{equation}{section}
\theoremstyle{plain}
\newtheorem*{atst}{Analyst's Traveling Salesman Theorem}
\newtheorem{thm}{Theorem}[section]
\newtheorem{prop}[thm]{Proposition}
\newtheorem{cor}[thm]{Corollary}
\newtheorem{lem}[thm]{Lemma}
\newtheorem{claim}[thm]{Claim}
\theoremstyle{definition}
\newtheorem{rem}[thm]{Remark}
\newtheorem{definition}[thm]{Definition}
\newtheorem{ex}[thm]{Example}
\def\R{\mathbb{R}}
\def\Z{\mathbb{Z}}
\def\N{\mathbb{N}}
\def\W{\mathcal{W}}
\def\e{\epsilon}
\def\g{\gamma}
\def\d{\delta}
\def\D{\Delta}
\def\G{\Gamma}
\def\a{\alpha}
\newcommand{\Flat}{\mathsf{Flat}}
\newcommand{\diam}{\mathop\mathrm{diam}\nolimits}
\newcommand{\dist}{\mathop\mathrm{dist}\nolimits}
\newcommand{\side}{\mathop\mathrm{side}\nolimits}
\newcommand{\Lip}{\mathop\mathrm{Lip}\nolimits}
\newcommand{\res}{\hbox{ {\vrule height .22cm}{\leaders\hrule\hskip.2cm} }}
\newcommand{\card}{\operatorname{card}}
\newcommand{\Haus}{\mathcal{H}}
\newcommand{\Length}{\mathop\mathrm{length}\nolimits}
\newcommand{\excess}{\mathop\mathrm{excess}\nolimits}
\begin{document}

\title[H\"older curves and parameterizations]{H\"older curves and parameterizations in the Analyst's Traveling Salesman theorem}

\author{Matthew Badger \and Lisa Naples \and Vyron Vellis}
\thanks{M.~Badger was partially supported by NSF DMS grants 1500382 and 1650546. L.~Naples was partially supported by NSF DMS grant 1650546. V.~Vellis was partially supported by NSF DMS grant 1800731}
\date{April 3, 2019}
\subjclass[2010]{Primary 28A75; Secondary 26A16, 28A80, 30L05, 65D10}
\keywords{H\"older curves, parameterization, fractional rectifiability, Wa\.{z}ewski's theorem, Analyst's Traveling Salesman problem, Jones' beta numbers, tube approximations}

\address{Department of Mathematics\\ University of Connecticut\\ Storrs, CT 06269-1009}
\email{matthew.badger@uconn.edu}
\address{Department of Mathematics\\ University of Connecticut\\ Storrs, CT 06269-1009}
\email{lisa.naples@uconn.edu}
\address{Department of Mathematics\\ University of Connecticut\\ Storrs, CT 06269-1009}
\email{vyron.vellis@uconn.edu}

\begin{abstract} We investigate the geometry of sets in Euclidean and infinite-dimensional Hilbert spaces. We establish sufficient conditions that ensure a set of points is contained in the image of a $(1/s)$-H\"older continuous map $f:[0,1]\rightarrow l^2$, with $s>1$. Our results are motivated by and generalize the ``sufficient half" of the Analyst's Traveling Salesman Theorem, which characterizes subsets of rectifiable curves in $\R^N$ or $l^2$ in terms of a quadratic sum of linear approximation numbers called Jones' beta numbers. The original proof of the Analyst's Traveling Salesman Theorem depends on a well-known metric characterization of rectifiable curves from the 1920s, which is not available for higher-dimensional curves such as H\"older curves. To overcome this obstacle, we reimagine Jones' non-parametric proof and show how to construct parameterizations of the intermediate approximating curves $f_k([0,1])$. We then find conditions in terms of \emph{tube approximations} that ensure the approximating curves converge to a H\"older curve. As an application to the geometry of measures, we identify conditions that guarantee fractional rectifiability of pointwise doubling measures in $\R^N$.\end{abstract}

\maketitle

\tableofcontents

\renewcommand{\thepart}{\Roman{part}}

\section{Introduction}\label{sec:intro}

The ubiquitous Traveling Salesman problem \cite{lawler,gutin,applegate}  is to find a tour of edges on a finite graph that returns to the initial vertex and has the shortest possible length. The Analyst's Traveling Salesman problem \cite{Jones-TST,Schul-survey} is to find a rectifiable curve that contains a finite or infinite, bounded set of points in a metric space that has the shortest possible length. The former problem always has a solution yet is computationally hard, while the latter problem may or may not have any solution at all. A sophisticated example from Geometric Measure Theory of a bounded point set that is not contained in any rectifiable curve is a \emph{Besicovitch irregular} set \cite{Bes28} (see \S \ref{sec:gmt} below); a trivial example is a solid square in the plane. Tests to decide which sets are contained in a rectifiable curve have been found in $\R^2$ \cite{Jones-TST}, $\R^N$ \cite{Ok-TST}, $l^2$ \cite{Schul-Hilbert}, the first Heisenberg group \cite{Li-Schul1,Li-Schul2}, Carnot groups \cite{Carnot-TSP,Li-TSP}, Laakso-type spaces \cite{Guy-Schul}, and in general metric spaces \cite{Hah05,Hah08,DS-metric}. Applications of Jones' and Okikiolu's solution of the Analyst's TSP in $\R^N$ have been given in Complex Analysis \cite{BJ,Bishop-dyn, Bishop-QC}, Dynamics and Probability \cite{wiggly, frontier}, Geometric Measure Theory \cite{BS1,BS3}, Harmonic Analysis \cite{Tolsa-P}, and Metric Geometry \cite{Naor-Peres,shortcuts}.

Let $E\subset\R^N$ be a nonempty set and let $Q\subset \R^N$ be a bounded set of positive diameter (such as a ball or a cube). Following \cite{Jones-TST}, the \emph{Jones beta number} $\beta_{E}(Q)$ is defined by
\begin{equation*} \beta_{E}(Q) := \inf_{\ell} \sup_{x\in E\cap Q}\frac{\dist(x,\ell)}{\diam{Q}}\in[0,1],\end{equation*}
where $\ell$ ranges over all straight lines in $\R^N$, if $E\cap Q\neq\emptyset$, and by $\beta_E(Q)=0$, if $E\cap Q=\emptyset$. Let $\D(\R^N)$ denote the family of dyadic cubes in $\R^N$,
\begin{equation*} \D(\R^N) := \{ [2^{k}m_1,2^{k}(m_1+1)]\times\cdots\times [2^{k}m_N,2^{k}(m_N+1)] : m_1,\dots,m_N, k\in \mathbb{Z}\}.\end{equation*} Given a cube $Q$ and a scaling factor $\lambda>0$, we let $\lambda Q$ denote the concentric dilate of $Q$ by $\lambda$.

\begin{atst}[{\cite{Jones-TST, Ok-TST}}] A bounded set $E\subset\R^N$ is contained in a rectifiable curve $\Gamma=f([0,1])$ if and only if
\begin{equation}\label{eq:betasum}
S_E:=\sum_{Q\in\D(\R^N)}\beta_{E}(3Q)^2 \diam{Q} < \infty.
\end{equation}
More precisely, \begin{enumerate}
\item If $\Gamma$ is any curve containing $E$, then $\diam E + S_E\lesssim_N \Length(\Gamma)$.
\item If $S_E<\infty$, then there exists a curve $\Gamma\supset E$ such that $\Length(\Gamma)\lesssim_N \diam E+ S_E$.
\end{enumerate}
\end{atst}

We may refer to statements (1) and (2) as the \emph{necessary half} and \emph{sufficient half} of the Analyst's Traveling Salesman theorem, respectively. The theorem is valid if the length of a curve $\Gamma=f([0,1])$ is interpreted either as the 1-dimensional Hausdorff measure of the set $\Gamma$ or as the total variation of the parameterization $f$. A curious feature of the known proofs of the sufficient half of the Analyst's TST (see \cite{Jones-TST} or \cite{BS3}) is that a rectifiable curve $\Gamma$ containing the set $E$ is constructed as the limit of piecewise linear curves $\Gamma_k$ containing a $2^{-k}$-net for $E$ \emph{without constructing a parameterization} of $\Gamma_k$ or $\Gamma$. This aspect of the proof breaks the analogy with the classical TSP, in which one is asked to find a minimal tour of a graph.

In this paper, we provide a \emph{parametric proof} of the sufficient half of the Analyst's TST, which more closely parallels the classical TSP. Beyond its intrinsic interest, the method that we provide is important, because it allows us to establish multiscale tests to ensure that a bounded set of points in $\R^N$ is contained in a \emph{$(1/s)$-H\"older continuous curve} with $s\in(1,N)$. Rectifiable curves correspond precisely to the class of Lipschitz curves ($s=1$). Remarkably, in the H\"older Traveling Salesman theorem (see \S\S \ref{sec:statements} and \ref{sec:Holder}), we can replace approximation by lines in the definition of the Jones beta numbers with \emph{approximation by thin tubes}. For a self-contained statement of the ``parametric" Analyst's TST, see \S\ref{sec:Lip}. While our focus in this paper is primarily on sets, we are motivated by open questions about the structure of Radon measures \cite{BV,Badger-survey}. For applications of our H\"older Traveling Salesman theorems to fractional rectifiability of measures, see \S\ref{sec:gmt}.

\subsection{H\"older Traveling Salesman Theorem(s)}\label{sec:statements} A \emph{$(1/s)$-H\"older curve} $\Gamma$ in $\R^N$ is the image of a continuous map $f:[0,1]\rightarrow\R^N$ satisfying the H\"older condition, $$|f(x)-f(y)|\leq H|x-y|^{1/s}\quad\text{for all }x,y\in[0,1],$$ where $s\in[1,\infty)$ and $H$ is a finite constant independent of $x$ and $y$. A 1-H\"older curve is also called a \emph{Lipschitz curve} or a \emph{rectifiable curve}. While non-trivial rectifiable curves always have topological dimension 1 and asymptotically resemble a unique tangent line $\Haus^1$ almost everywhere, $(1/s)$-H\"older curves with $s>1$ exhibit a variety of more complicated behaviors. For example,
\begin{itemize}
\item an $m$-dimensional cube in $\R^N$ ($m\leq N$) is a $(1/m)$-H\"older curve;
\item the von Koch snowflake is a $\log_4(3)$-H\"older curve; and,
\item the standard Sierpi\'nski carpet is a $\log_8(3)$-H\"older curve.
\end{itemize}
In fact, Remes \cite{Remes} proved that any compact, connected self-similar set $K\subset\R^N$ of Hausdorff dimension $s$ that satisfies the open set condition is a $(1/s)$-H\"older curve. For related work on space-filling curves generated by graph-directed iterated function systems, see Rao and Zhang \cite{RZ16}.

Towards a H\"older version of the Analyst's Traveling Salesman theorem, the first and third authors proved in \cite{BV} as a test case that if $s>1$, $E\subset\R^N$ is bounded, and
\begin{equation*} \label{sum-no-beta} \sum_{\substack{Q\in \D(\R^N)\\ Q\cap E \neq \emptyset,\, \side Q\leq 1}}(\diam{Q})^s < \infty,\end{equation*}
then $E$ is contained in a $(1/s)$-H\"older curve. By establishing a parametric version of Jones' proof of the sufficient half of the Analyst's TST, we are able to obtain the following substantial improvement.

\begin{thm}[H\"older Traveling Salesman I]\label{thm:main2}
For all $N\geq 2$ and $s> 1$, there exists $\beta_0 \in (0,1)$ such that if $E\subset\R^N$ is bounded and
\begin{equation}\label{eq:thm2}
S^{s,+}_E:=\sum_{\substack{Q\in\D(\R^N)\\ \beta_E(3Q)\geq \beta_0}} (\diam{Q})^s < \infty,
\end{equation}
then $E$ is contained in a $(1/s)$-H\"older curve. More precisely, $E\subset \Gamma=f([0,1])$ for some $(1/s)$-H\"older map $f:[0,1]\rightarrow\R^N$ with H\"older constant $H\lesssim_{N,s} \diam E + (\diam{E})^{1-s}S_E^{s,+}.$
\end{thm}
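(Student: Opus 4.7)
The plan is to prove Theorem~\ref{thm:main2} by constructing a sequence of piecewise-linear parameterized curves $f_k:[0,1]\to\R^N$, for $k=0,1,2,\dots$, that will converge uniformly to a $(1/s)$-H\"older curve containing $E$. I will follow the parametric version of Jones's construction announced in the introduction: rather than recording only the image $\Gamma_k = f_k([0,1])$, an explicit parameterization is maintained at every scale. First I will normalize by dilation so that $\diam E=1$, and fix nested $2^{-k}$-nets $X_k\subset X_{k+1}\subset E$. The threshold $\beta_0=\beta_0(N,s)\in(0,1)$ will be chosen small at the end of the argument.

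The base curve $f_0$ is a short segment through the two points of $X_0$. To pass from $f_k$ to $f_{k+1}$, I will scan the dyadic cubes $Q$ of generation $k+1$ that meet $E$ and refine $f_k$ inside each. In a \emph{flat} cube (where $\beta_E(3Q)<\beta_0$) the set $E\cap 3Q$ lies in a $\beta_0\diam Q$-neighborhood of some line $\ell_Q$, and I will refine $f_k$ by a detour that essentially follows $\ell_Q$ through the new points $X_{k+1}\cap Q$, assigning parameter length comparable to the \emph{spatial} length of the detour --- a Lipschitz-type refinement inside the tube. In a \emph{non-flat} cube (where $\beta_E(3Q)\geq\beta_0$) I will allow a general detour through $X_{k+1}\cap Q$ but reserve a parameter interval of length $\asymp(\diam Q)^s$ for it. The total parameter cost of non-flat refinements, summed over all scales, is $\asymp S_E^{s,+}<\infty$, so after a final rescaling the whole construction fits inside $[0,1]$.

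The central invariant, maintained by induction on $k$, is a uniform $(1/s)$-H\"older estimate $|f_k(x)-f_k(y)|\leq H|x-y|^{1/s}$ with $H\lesssim_{N,s}\diam E+(\diam E)^{1-s}S_E^{s,+}$. For pairs $(x,y)$ that lie inside a parameter sub-interval allocated to a single non-flat cube $Q$, the bound will follow from $|f_k(x)-f_k(y)|\lesssim \diam Q$ together with $|x-y|\asymp(\diam Q)^s$. For pairs inside a flat-cube refinement the map is Lipschitz on that piece, which is much stronger than what is needed, provided the small constant $\beta_0$ controls the transverse deviation from $\ell_Q$ so that successive flat refinements compose harmlessly. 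The generic case, where $x,y$ straddle several cubes at various scales, will be handled by telescoping to the largest common ancestor cube and reading off the diameter at that scale against the allocated parameter length.

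The two remaining tasks are convergence and containment. By design $\|f_{k+1}-f_k\|_\infty\lesssim 2^{-k}$, so $(f_k)$ is uniformly Cauchy and converges to some continuous $f:[0,1]\to\R^N$; the uniform H\"older bound passes to the limit. Because $X_k\subset f_k([0,1])$ and $\bigcup_k X_k$ is dense in $E$, the image $f([0,1])$ contains $E$. I expect the main difficulty to be the calibration of $\beta_0$: it must be chosen small enough that the transverse wiggles of amplitude $\beta_0\diam Q$ inside flat cubes do not aggregate across scales to destroy the uniform H\"older constant, since flat cubes are \emph{not} charged to the sum $S_E^{s,+}$ and can in principle be abundant at every scale. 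Pinning down the accounting that separates Lipschitz-type flat refinements from H\"older-type non-flat refinements, and verifying that the two mechanisms interact compatibly at cube interfaces (including the book-keeping that converts the geometric estimate on $3Q$ into a parametric statement on the allocated sub-interval), will be the heart of the argument.
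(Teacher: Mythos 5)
Your proposal captures the broad architecture of the paper's argument---nested nets, piecewise-linear refinements $f_k$, flat/non-flat dichotomy, parameter budget $\asymp(\diam Q)^s$ for non-flat cubes---and you are right that the containment, the uniform convergence from $\|f_{k+1}-f_k\|_\infty\lesssim 2^{-k}$, and the calibration of $\beta_0$ are where the action is. But the sketch stops precisely at the point you identify as "the heart of the argument," and the substitute mechanism you do propose for flat cubes ("the map is Lipschitz on that piece, which is much stronger than what is needed") would not survive scrutiny.

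The issue: if you allocate parameter length inside flat cubes according to spatial length (a genuine Lipschitz refinement), the total parameter needed at stage $k$ is $\asymp\mathcal{H}^1(\Gamma_k)$, and that quantity is unbounded as $k\to\infty$ for any curve that is a snowflake at every scale---which is exactly the worst case allowed by \eqref{eq:thm2}, since flat cubes are not charged to $S_E^{s,+}$. So the refinements inside flat tubes cannot be uniformly Lipschitz on a fixed parameter interval; the Lipschitz constant necessarily grows, and the question is whether the growth is controlled. The paper's answer has two ingredients you would need to supply. First is the notion of $s$-mass $\mathcal{M}_s(k,I)$ (\S\ref{sec:mass}): a bookkeeping quantity, defined as a supremum over finite trees of $\sum(\diam f_{k'}(I'))^s$, which replaces $\mathcal{H}^1(\Gamma_k)$ in Jones's length estimate and is what the parameter intervals are made proportional to when one reparameterizes. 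Second is Lemma~\ref{lem:flat2} (tube control): when $s>1$ and the flatness error $\alpha_{k,v}$ is below an explicit threshold, the variation excess $\tau_s(k,v,v')$ is \emph{exactly zero}, because $t\mapsto t^s+(1-t)^s<1$ on $(0,1)$. This is the reason flat refinements add nothing to the $s$-mass, so the total mass is bounded by $r_0^s+S_E^{s,+}$ (Proposition~\ref{prop:mass}). The resulting Lipschitz constant of the reparameterized $F_k$ at scale $k$ is $\lesssim\mathcal{M}_s([0,1])\rho_k^{1-s}$---growing, but at a rate that combined with $\|F_{k+1}-F_k\|_\infty\lesssim\rho_k$ yields a $(1/s)$-H\"older limit by a standard summation lemma (Lemma~\ref{l:LipHold}). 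Note this is also a structural change from your plan: the paper does not propagate a uniform H\"older bound for each $f_k$ by induction (which is delicate, since a refinement can degrade a H\"older constant); it instead proves scale-dependent Lipschitz bounds and passes to the limit once. Without the mass idea and the vanishing of $\tau_s$ for $s>1$ at small flatness, the accounting you flag as the main difficulty has no mechanism to close.
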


Condition (\ref{eq:thm2}) implies that at $\Haus^s$ almost every point, the set $E$ asymptotically lies in sufficiently \emph{thin tubes}. Theorem \ref{thm:main2} provides a sufficient test that identifies all subsets of some well-known H\"older curves such as snowflakes of small dimension. However, because of the richness of H\"older geometry, a condition using Jones beta numbers alone such as \eqref{eq:thm2} cannot be expected to hold for all subsets of every H\"older curve. Indeed \eqref{eq:thm2} fails when $E$ is a carpet or a square. For expanded discussion and related examples, see \textsection\ref{sec:notnec}.

Theorem 1.1 is a simplification of our main result, which is adapted to a nested sequence of separated sets in a finite or infinite-dimensional Hilbert space. See Theorem \ref{thm:main}.

To estimate the size of the constant $\beta_0$ in Theorem \ref{thm:main2}, see Lemma \ref{lem:flat2} and Remark \ref{rem:alpha-beta}. The following variant of Theorem \ref{thm:main2} is an immediate corollary, whose hypothesis does not require knowledge of $\beta_0$.

\begin{cor}[H\"older Traveling Salesman II] \label{cor:main} Suppose that $N\geq 2$, $s>1$, and $p>0$. If $E \subset \R^N$ is bounded and \begin{equation} \label{eq:sum-p} S_E^{s,p}:=\sum_{Q\in\D(\R^N)}\beta_{E}(3Q)^{p} (\diam{Q})^s < \infty,  \end{equation}
then $E$ is contained in a $(1/s)$-H\"older curve. More precisely, $E\subset \Gamma=f([0,1])$ for some $(1/s)$-H\"older map $f:[0,1]\rightarrow\R^N$ with H\"older constant  \begin{equation*} H\lesssim_{N,s} \diam E + \beta_0^{-p}(\diam{E})^{1-s}S_E^{s,p},\end{equation*} where $\beta_0$ is the constant appearing in Theorem \ref{thm:main2}.
\end{cor}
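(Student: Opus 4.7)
The plan is to deduce Corollary 1.2 directly from Theorem \ref{thm:main2} by showing that the finiteness of $S_E^{s,p}$ controls the finiteness of $S_E^{s,+}$ up to the multiplicative constant $\beta_0^{-p}$. No new geometric construction is needed; the corollary is essentially a one-line reduction, which is why the authors present it as an \emph{immediate} corollary.

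The key observation is a cube-by-cube pointwise inequality. For any dyadic cube $Q\in\D(\R^N)$ with $\beta_E(3Q)\geq \beta_0$, we have $1\leq \beta_0^{-p}\beta_E(3Q)^p$, so that
\begin{equation*}
(\diam Q)^s \;\leq\; \beta_0^{-p}\,\beta_E(3Q)^p\,(\diam Q)^s.
\end{equation*}
Summing this inequality only over those cubes with $\beta_E(3Q)\geq \beta_0$, and then enlarging to the sum over all cubes (each term being nonnegative), yields
\begin{equation*}
S_E^{s,+} \;=\; \sum_{\substack{Q\in\D(\R^N)\\ \beta_E(3Q)\geq\beta_0}}(\diam Q)^s \;\leq\; \beta_0^{-p}\sum_{Q\in\D(\R^N)}\beta_E(3Q)^p(\diam Q)^s \;=\; \beta_0^{-p}\,S_E^{s,p}.
\end{equation*}
In particular, the hypothesis $S_E^{s,p}<\infty$ forces $S_E^{s,+}<\infty$.

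Now I would invoke Theorem \ref{thm:main2} with the same exponent $s$ and constant $\beta_0$. That theorem furnishes a $(1/s)$-H\"older map $f:[0,1]\to \R^N$ with $E\subset f([0,1])$ whose H\"older constant satisfies $H\lesssim_{N,s}\diam E + (\diam E)^{1-s}S_E^{s,+}$. Plugging in the estimate $S_E^{s,+}\leq \beta_0^{-p}S_E^{s,p}$ gives the claimed bound $H\lesssim_{N,s}\diam E + \beta_0^{-p}(\diam E)^{1-s}S_E^{s,p}$. There is no real obstacle here: the only subtlety is that $\beta_0$ depends on $N$ and $s$ (as recorded in Theorem \ref{thm:main2}), so that the factor $\beta_0^{-p}$ must appear explicitly in the bound rather than being absorbed into the implicit constant, because $p$ is an additional free parameter in Corollary \ref{cor:main}.
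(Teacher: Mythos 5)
Your proof is correct and matches the paper's intent: the paper labels Corollary \ref{cor:main} an immediate corollary of Theorem \ref{thm:main2}, and the analogous one-line bound $S^{s,+}\leq \beta_0^{-p}S^{s,p}$ is precisely the argument used in the paper's proof of the nets version (Corollary \ref{cor:nets2}). No gaps.
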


A good exercise is to prove that any bounded set $E$ in $\R^N$ satisfying condition \eqref{eq:sum-p} with $s>1$ has zero $s$-dimensional Hausdorff measure. In \textsection\ref{sec:null}, we construct a countable, compact set $E$ (hence $\Haus^s(E)=0$) such that $E$ is not contained in any $(1/s)$-H\"older curve with $1\leq s<N$. Thus, Corollary \ref{cor:main} is nonvacuous.

\subsection{Overview of the proof of Theorem \ref{thm:main2}}\label{sec:overview}

In order to properly discuss the proof of Theorem \ref{thm:main2}, we quickly sketch the proof of the sufficient half of the Analyst's TST. The proof splits into three steps. In the first step, one uses the Jones beta numbers $\beta_E(3Q)$ (in particular, whether they are large or small) to construct a sequence of finite, connected graphs $G_k$ in $\R^N$ with straight edges that converge in the Hausdorff distance to a compact, connected set $G$ containing $E$. Each graph $G_k$ is obtained by refining $G_{k-1}$ and resembles a flat arc near points of $E$ that look flat at scale $2^{-k}$. In step two, one uses the structure of the graphs $G_k$ and the Pythagorean theorem to prove the existence of a constant $C>0$ such that
\begin{equation}\label{e:Jones-estimate} \mathcal{H}^1(G_{k+1}) \leq  \mathcal{H}^1(G_{k}) + C \sum_{\substack{Q \in \D(\R^N) \\ \side{Q}\simeq 2^{-k}}}\beta_E(3Q)^2 \diam{Q}.\end{equation}
Condition (\ref{eq:betasum}) and {G}o\l \c{a}b's  semicontinuity theorem (e.g.~see \cite{AO-curves}) ensure that $\Haus^1(G)\leq \liminf_{k\rightarrow\infty} \mathcal{H}^1(G_k) < \infty$. Thus, the first two parts of the proof yield a compact, connected set $G$ containing $E$ with $\mathcal{H}^1(G)<\infty$. The final step is to invoke Wa\.zewski's theorem to conclude \emph{existence} of a Lipschitz parameterization for $G$: if $G \subset\R^N$ is connected, compact, and $\Haus^1(G)<\infty$, then there exists a Lipschitz map $f:[0,1]\rightarrow\R^N$ such that $G=f([0,1])$ (see \cite[Theorem 4.4]{AO-curves} or \cite[Lemma 3.7]{Schul-Hilbert}). Note that the condition $\mathcal{H}^1(G)<\infty$ promotes connectedness of $G$ to local connectedness (because $G$ is a curve).

In the H\"older setting, there are at least two obstacles to following the approach above. First and foremost, a naive analogue of Wa\.zewski's theorem cannot hold for H\"older maps, since the condition $\mathcal{H}^s(G)<\infty$ does not imply a continuum is locally connected when $s>1$ (e.g.~the topologist's comb). What is more, even if $G$ is assumed to be an Ahlfors $s$-regular curve with finite $\Haus^s$ measure, we cannot conclude that $G$ is a $(1/s)$-H\"older curve; we provide examples in \textsection\ref{sec:regularcurve} using a theorem of Mart\'in and Mattila \cite{MM2000}. Another obstacle is the well-known failure of {G}o\l \c{a}b's  semicontinuity theorem for Hausdorff measures $\Haus^s$ with $s>1$. Thus, in a proof of a H\"older Traveling Salesman theorem, estimating the Hausdorff measure of approximating sets has no direct use.

To overcome these obstacles, we reimagine the proof of the Analyst's TST, and in \S\ref{sec:curve}, give a procedure to construct  a sequence of partitions $\{\mathscr{I}_k\}_{k\geq 0}$ of $[0,1]$ and a sequence of piecewise linear maps $\{f_k : [0,1] \to \R^N\}_{k\geq 0}$ that \emph{parameterize} approximating graphs $G_k$. Each map $f_k$ is built by carefully refining $f_{k-1}$ to ensure that $\|f_{k}-f_{k-1}\|_\infty\lesssim 2^{-k}$. This guarantees that the maps $f_k$ have a uniform limit $f$ whose image contains the Hausdorff limit of $G_k$ (and hence $E$). To prove that $f$ is H\"older continuous, one must estimate growth of the Lipschitz constants of the maps $f_k$ (see Appendix \ref{sec:lip-hold} for the basic method). In \textsection\ref{sec:mass}, we introduce a notion of \emph{mass} of intervals $I\in\mathscr{I}_k$, defined using the $s$-power of diameters of images $f_l(J)$ of intervals $J\subset I$, $l\geq k$. This lets us record estimates in the domain of the map rather than its image, and in \S\ref{sec:mass}, we provide a mass-centric analogue of \eqref{e:Jones-estimate} that is adapted to the H\"older setting. In turn, this lets us estimate the Lipschitz constants of the maps $f_k$ and complete the proof of the H\"older Traveling Salesman theorem in \S\ref{sec:Holder}. For completeness, we use our method to reprove and strengthen the sufficient half of the Analyst's TST in \S\ref{sec:Lip}.

\subsection{Wa\.zewski type theorem for flat continua}\label{sec:wazewski} The Hahn-Mazurkiewicz Theorem (e.g.~see \cite[Theorem 3.30]{hocking}) asserts that a set $E\subset\R^N$ is a continuous image of $[0,1]$ if and only if $E$ is compact, connected, and locally connected. The Wa\.zewski Theorem (for an attribution, see \cite{AO-curves}) asserts that $E\subset\R^N$ is a Lipschitz image of $[0,1]$ if and only if $E$ is compact, connected, and $\Haus^1(E)<\infty$. It is an easy exercise to check that every $(1/s)$-H\"older continuous image of $[0,1]$ is compact, connected, locally connected, and has $\Haus^s(E)<\infty$, but the converse fails when $s>1$ (see \S\ref{sec:regularcurve} below). This motivates the following, apparently open question: Is there a metric, geometric, and/or topological characterization of H\"older curves in $\R^N$?

The method of proof of the H\"older Traveling Salesman theorems leads to the following Wa\.zewski type theorem for flat continua. For the proof of Proposition \ref{thm:flat}, see \S\ref{sec:flatcontinua}. A set $E\subset\R^n$ is called \emph{Ahlfors $s$-regular} if there exist $0<c\leq C<\infty$ such that \begin{equation}\label{eq:regularity} cr^s \leq \Haus^s(E\cap B(x,r))\leq Cr^s\quad\text{for all $x\in E$ and $0<r\leq \diam E$}.\end{equation}
We say that $E$ is \emph{lower (upper) Ahlfors $s$-regular} if the first (second) inequality in (\ref{eq:regularity}) holds for all $x\in E$ and $0<r\leq \diam E$.

\begin{prop}\label{thm:flat}
There exists a constant $\beta_1 \in (0,1)$ such that if $s>1$ and $E\subset\R^N$ is compact, connected, $\Haus^s(E)<\infty$, $E$ is lower Ahlfors $s$-regular with constant $c$, and
\begin{equation}\label{e:mv}
 \beta_{E}\left(\overline{B}(x,r)\right) \leq \beta_1\quad \text{for all }x\in E\text{ and }0<r\leq \diam E, \end{equation}
then $E=f([0,1])$ for some injective $(1/s)$-H\"older continuous map $f:[0,1]\to\R^N$ with H\"older constant $H \lesssim_{s} c^{-1}\mathcal{H}^s(E)(\diam E)^{1-s}$.
\end{prop}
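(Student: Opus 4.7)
The plan is to apply the parametric construction underlying Theorem \ref{thm:main2} (in the form of Theorem \ref{thm:main}) to nested nets of $E$, and then to exploit the global flatness hypothesis \eqref{e:mv} to upgrade the output ``$E \subset f([0,1])$'' to equality and to injectivity. After rescaling so that $\diam E = 1$, I would choose a nested sequence of maximally $2^{-k}$-separated sets $X_k \subset E$. Take $\beta_1 < \beta_0$, where $\beta_0$ is the constant from Theorem \ref{thm:main2}. Because $\beta_E(3Q) \leq \beta_1 < \beta_0$ for every dyadic cube $Q$ meeting $E$ at a scale $\leq \diam E$, the ``bad cube'' sum $S_E^{s,+}$ in \eqref{eq:thm2} is empty. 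The parametric construction from \S\ref{sec:curve} therefore produces partitions $\mathscr{I}_k$ of $[0,1]$ and piecewise linear maps $f_k:[0,1]\to\R^N$ whose images are graphs $G_k$ spanning $X_k$, and these converge uniformly to a $(1/s)$-H\"older map $f$ with $E \subset f([0,1])$.

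The H\"older constant bound comes from the lower $s$-regularity. A standard covering argument converts $cr^s \leq \Haus^s(E \cap B(x,r))$ into $\#(X_k \cap Q) \lesssim c^{-1}\Haus^s(E \cap 2Q)\,2^{ks}$ for every dyadic cube $Q$ of side $2^{-k}$, so the total mass at scale $2^{-k}$ coming from diameters of these cubes raised to the power $s$ is of order $c^{-1}\Haus^s(E)$. Feeding this into the mass-based estimate of the H\"older constant from \S\ref{sec:mass} and undoing the rescaling by $\diam E$ yields $H \lesssim_s c^{-1}\Haus^s(E)(\diam E)^{1-s}$.

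The reverse containment $f([0,1]) \subset E$ again uses flatness: each edge of $G_k$ lies in some $3Q$ with $\side Q \approx 2^{-k}$, and by \eqref{e:mv} the entire edge lies within $\beta_1 \cdot 2^{-k}$ of the best fitting line for $E \cap 3Q$, hence within $O(2^{-k})$ of $E$ itself. Since $E$ is closed, passing to the Hausdorff limit gives $f([0,1]) \subset E$.

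The delicate step, which I expect to be the main obstacle, is injectivity. The plan is to choose $\beta_1$ small enough that in each refinement step $f_k \rightsquigarrow f_{k+1}$, every edge of $G_k$ is replaced by a piecewise linear subarc whose orthogonal projection onto the best-fitting line of its parent cube is \emph{strictly monotone} in the parameter. Iterating this property inductively across scales, any alleged coincidence $f(t_1)=f(t_2)$ with $t_1<t_2$ would force, at the first scale $2^{-k_0}$ separating $t_1$ from $t_2$ in $\mathscr{I}_{k_0}$, two parameter values in the same parent interval whose images project to distinct points on the fitting line, a contradiction. Establishing this monotonicity requires tracking the geometry of the refinement carefully and is the only place where $\beta_1$ may need to be taken strictly smaller than $\beta_0$; once this is in hand, both injectivity and the identification $E = f([0,1])$ follow, completing the proof.
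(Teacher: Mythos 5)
Your overall skeleton matches the paper's proof — rescale, take nested nets, run a traveling-salesman type construction, use lower regularity for the mass bound, and use flatness plus a first-separating-scale argument for injectivity — but there is a gap at the center that a ``black box'' invocation of Theorem~\ref{thm:main} (or~\ref{thm:main2}) cannot repair. The algorithm from \S\ref{sec:curve} produces maps $f_k$ whose restriction to $\bigcup\mathscr{E}_k$ is intrinsically \emph{two-to-one} (that is property (P4)): the parameterization traverses each edge once in each direction. Your ``strict monotonicity of projections'' idea operates within a single edge and cannot eliminate the fact that each edge is covered twice. To get an injective map you must rewrite the algorithm itself, which the paper does in \S\ref{sec:flatcontinua_algo}: properties (P4), (P6), (P7) are replaced by (P4'), (P6'), (P7') asserting that $f_k|\bigcup\mathscr{E}_k$ is one-to-one, each vertex is hit by only one $\mathscr{N}_k\cup\mathscr{F}_k$ interval, and the intervals alternate so that $G_k$ is always a simple arc with exactly two terminal vertices. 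This rewrite is not automatic; it hinges on the connectedness hypothesis via Lemma~\ref{lem:flatcont} (for every chord $[x,y]$ with $x,y\in E$, the set $E$ passes within $2^{-4}|x-y|$ of each point of the chord), which guarantees (Lemma~\ref{lem:TSPflat}) that no bridge intervals ever appear and that every edge has length $<3\cdot 2^{-k}$. Without this you cannot even certify that $G_k$ is an arc.

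The same omission undermines your argument for $f([0,1])\subset E$. You argue that each edge of $G_k$ lies near the best-fitting line of a cube meeting $E$, hence near $E$; but being within $\beta_1\cdot 2^{-k}$ of a fitting line does not a priori put you within $O(2^{-k})$ of the set $E$ itself, and more importantly, bridge intervals could contribute segments joining vertices separated by distance $\gg 2^{-k}$ that are not close to $E$ at all. The paper's route is to prove there are no bridges and that $\diam f_k(I)<3\cdot 2^{-k}$ for edge intervals, so $f_k(t)$ is always within $3\cdot 2^{-k}$ of a vertex in $V_k\subset E$, whence $f(t)\in\overline{E}=E$. Once you accept the modified algorithm and Lemma~\ref{lem:flatcont}, your remaining ideas — the counting argument from lower regularity for the mass bound (the paper's Lemma~\ref{lem:flat3} does essentially this via disjoint balls centered in $E$), and the ``first separating scale'' argument for injectivity (the paper's Lemma~\ref{lem:inj}) — are in the right direction. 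The content you are missing is precisely the new flat-continuum lemma and the re-verification of the modified (P4'), (P6'), (P7'), which is where the connectedness of $E$ actually enters.
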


Inclusion of lower Ahlfors regularity in the hypothesis of Proposition \ref{thm:flat} is justifiable, because it holds automatically when $s=1$, i.e.~ every non-trivial connected set is lower Ahlfors $1$-regular. When $s>1$, a non-trivial $(1/s)$-H\"older curve is not necessarily lower Ahlfors $s$-regular, and, in fact, could have zero $\Haus^s$ measure. Nevertheless, Mart\'in and Mattila \cite{MM1993} proved that if $\Gamma$ is a $(1/s)$-H\"older curve in $\R^N$ with $\Haus^s(\Gamma)>0$, then  $$\liminf_{r\downarrow 0} \frac{\Haus^s(\Gamma\cap B(x,r))}{r^s}>0\quad \text{at $\Haus^s$-a.e.~$x\in\Gamma$}.$$ Even if it can be weakened, the lower regularity hypothesis in Proposition \ref{thm:flat} cannot be completely dropped: In \S\ref{sec:flatcurves-counterexample}, for any $s>1$ and $\beta_1 \in (0,1)$, we find a curve $E\subset\R^N$ with $\mathcal{H}^s(E) < \infty$ satisfying \eqref{e:mv} such that $E$ is not contained in a $(1/s)$-H\"older curve.

Sharp estimates on the Minkowski dimension of sets satisfying \eqref{e:mv} were provided by Mattila and Vuorinen \cite{MV}; for generalized \emph{Mattila-Vuorinen type sets}, see \cite{lsa}.

\subsection{Related Work} As noted above, one motivation for this paper is to develop tools to analyze the structure of Radon measures. See \S\ref{sec:gmt} for background and for an application of Corollary \ref{cor:main} to the fractional rectifiability of measures.

There is considerable interest in finding higher-dimensional analogues of the Analyst's Traveling Salesman theorem, for example finding a characterization of subsets of Lipschitz images of $[0,1]^2$. This problem is still open, but some positive steps were recently taken by Azzam and Schul \cite{AS-TST} for \emph{Hausdorff content lower regular sets}. Also see \cite{Villa}.

\subsection*{Acknowledgements} We would like to thank Mika Koskenoja for providing us with a copy of M. Remes' thesis \cite{Remes}. The first author would also like to thank Guy C. David for a useful conversation at an early stage of this project. We thank an anonymous referee for their close and careful reading of the initial manuscript.

\part{Proof of the H\"older Traveling Salesman Theorem}

In the first part of the paper, \S\S \ref{sec:prelim}--\ref{sec:Lip}, we establish several H\"older Traveling Salesman theorems, including Theorem \ref{thm:main2} and Theorem \ref{thm:main}. To start, in \S\ref{sec:prelim}, we introduce notation and essential concepts used in the proof, including nets, flat pairs, and variation excess. In \S\ref{sec:curve}, we present a refined version of Jones' Traveling Salesman construction, which takes a nested sequence $(V_k)_{k=0}^\infty$ of $\rho_kr_0$-separated sets,  approximating lines $\{\ell_{k,v}:v\in V_k\}_{k=0}^\infty$, and associated errors $\{\alpha_{k,v}:v\in V_k\}_{k=0}^\infty$ and outputs a sequence of partitions $\mathscr{I}_k$ of $[0,1]$ and piecewise linear maps $f_k$ such that $f_k([0,1])\supset V_k$. In \S\ref{sec:mass}, we define and estimate a discrete $s$-variation of the maps $f_k$, which is adapted to the partitions $\mathscr{I}_k$ of the domain. When $s>1$, the total $s$-mass $\mathcal{M}_s([0,1])$ associated to the sequence of maps $f_k$ fills the role that 1-dimensional Hausdorff measure $\Haus^1$ plays in Jones' proof of the Analyst's TST. In \S\ref{sec:Holder}, we use the algorithm of \S\ref{sec:curve} and the mass estimates of \S\ref{sec:mass} to prove our main theorem (see Theorem \ref{thm:main}). Finally, in \S\ref{sec:Lip}, we use our method to obtain a stronger version of the sufficient half of the Analyst's Traveling Salesman theorem. The construction presented below can be carried out in any finite or infinite-dimensional Hilbert space.

\section{Preliminaries}\label{sec:prelim}

Given numbers $x,y\geq 0$ and parameters $a_1,\dots, a_n$, we may write $x \lesssim_{a_1,\dots,a_n} y$ if there exists a positive and finite constant $C$ depending on at most  $a_1,\dots,a_n$ such that $x \leq C y$. We write $x \simeq_{a_1,\dots,a_n} y$ to denote $x \lesssim_{a_1,\dots,a_n} y$ and $y \lesssim_{a_1,\dots,a_n} x$. Similarly, we write $x\lesssim y$ or $x\simeq y$ to denote that the implicit constants are universal.

\subsection{Ordering flat sets}\label{sec:orderflat}

The following lemma shows that if a discrete set is sufficiently flat at the scale of separation, then there exists a natural linear ordering of its points. Estimates \eqref{e:graph1} and \eqref{e:graph2} are consequences of the Pythagorean theorem.

\begin{lem}[{\cite[Lemma 8.3]{BS3}}]\label{lem:approx}
Suppose that $V\subset\R^N$ is a $1$-separated set with $\card(V) \geq 2$ and there exist lines $\ell_1$ and $\ell_2$ and a number $\a \in (0,1/16]$ such that
\begin{equation*} \dist(v,\ell_i) \leq \a \qquad\text{for all }v\in V\text{ and }i=1,2.\end{equation*}
Let $\pi_i$ denote the orthogonal projection onto $\ell_i$. There exist compatible identifications of $\ell_1$ and $\ell_2$ with $\R$ such that $\pi_1(v) \leq \pi_1(v')$ if and only if $\pi_2(v) \leq \pi_2(v')$ for all $v,v' \in V$. If $v_1$ and $v_2$ are consecutive points in $V$ relative to the ordering of $\pi_1(V)$, then \begin{equation}\label{e:graph1} \Haus^1([u_1,u_2]) \leq (1+ 3\alpha^2)\cdot \Haus^1([\pi_1(u_1),\pi_1(u_2)]) \quad\text{for all }[u_1,u_2]\subseteq[v_1,v_2].\end{equation}
Moreover, \begin{equation}\label{e:graph2} \Haus^1([y_1,y_2]) \leq (1+12\alpha^2)\cdot\Haus^1([\pi_1(y_1),\pi_1(y_2)])\quad\text{for all }[y_1,y_2]\subseteq\ell_2.\end{equation}
\end{lem}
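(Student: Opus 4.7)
The plan is to reduce everything to Pythagoras, exploiting that each $v\in V$ lies within $\alpha\leq 1/16$ of both $\ell_1$ and $\ell_2$. I would proceed in three stages: quantitative injectivity of each projection, an angle bound between the two lines, and the resulting distortion estimates.

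For the first stage, given distinct $v,v'\in V$ and $i\in\{1,2\}$, decomposing $v-v'$ into its components parallel and perpendicular to $\ell_i$ yields $|\pi_i(v)-\pi_i(v')|^2 \geq |v-v'|^2-4\alpha^2\geq 1-4\alpha^2$, so $\pi_i|_V$ is injective and induces a linear order on $V$ once an orientation of $\ell_i$ is fixed. To bound the angle $\theta$ between $\ell_1$ and $\ell_2$, pick any two points $v_0,v_1\in V$ (possible since $\card V\geq 2$): the vector $v_1-v_0$ has perpendicular component relative to each $\ell_i$ of length at most $2\alpha$, so makes angle of sine at most $2\alpha/|v_1-v_0|\leq 2\alpha$ with each line. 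By the triangle inequality for angles between $1$-dimensional subspaces, $\theta\leq 2\arcsin(2\alpha)$, giving $\cos\theta\geq 1-8\alpha^2>0$. I then choose unit direction vectors $e_1,e_2$ on $\ell_1,\ell_2$ with $\langle e_1,e_2\rangle=\cos\theta$, and use these to identify each $\ell_i$ with $\R$.

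To check the compatibility claim, write $e_2=\cos\theta\cdot e_1+\sin\theta\cdot u$ for a unit vector $u\perp e_1$, and for $v\neq v'\in V$ expand
\[
\langle v-v',e_2\rangle=\cos\theta\,\langle v-v',e_1\rangle+\sin\theta\,\langle v-v',u\rangle.
\]
The first term has magnitude at least $\cos\theta\cdot\sqrt{|v-v'|^2-4\alpha^2}$, while the second is bounded by $\sin\theta\cdot|v-v'|\leq 4\alpha|v-v'|$; a short calculation using $|v-v'|\geq 1$ and $\alpha\leq 1/16$ shows the first dominates, so $\langle v-v',e_1\rangle$ and $\langle v-v',e_2\rangle$ share a sign, which is the compatibility claim.

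Finally, for \eqref{e:graph1} I would write $u_1-u_2=t(v_1-v_2)$ with $t\in[0,1]$ and apply Pythagoras relative to $\ell_1$: $|u_1-u_2|^2=|\pi_1(u_1)-\pi_1(u_2)|^2+t^2|v_1^\perp-v_2^\perp|^2$, where $v_i^\perp$ is the part of $v_i$ perpendicular to $\ell_1$. Combining $|v_1^\perp-v_2^\perp|\leq 2\alpha$ with $|\pi_1(u_1)-\pi_1(u_2)|=t|\pi_1(v_1)-\pi_1(v_2)|\geq t\sqrt{1-4\alpha^2}$, and using $\sqrt{1+x}\leq 1+x/2$, produces the factor $1+3\alpha^2$. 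For \eqref{e:graph2}, observing that $y_1,y_2\in\ell_2$ forces $|\pi_1(y_1)-\pi_1(y_2)|=|y_1-y_2|\cos\theta$ reduces the estimate to $1/\cos\theta\leq 1+12\alpha^2$, which follows from $\cos\theta\geq 1-8\alpha^2$ and $\alpha\leq 1/16$. The main obstacle is just bookkeeping with constants: the hypothesis $\alpha\leq 1/16$ is tight enough that some care is needed in passing from the raw Pythagorean bounds to the clean factors $1+3\alpha^2$ and $1+12\alpha^2$, but no new idea is required beyond the angle bound on $\theta$.
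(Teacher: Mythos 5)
Your proof is correct, and since the paper cites this result from \cite{BS3} without reproducing the proof (saying only that \eqref{e:graph1} and \eqref{e:graph2} ``are consequences of the Pythagorean theorem''), there is no in-text proof to compare against; your argument is precisely the expected one. The three-stage structure (quantitative injectivity of each $\pi_i$, the angle bound $\cos\theta\geq 1-8\alpha^2$ between $\ell_1$ and $\ell_2$, then the two distortion estimates via Pythagoras) is sound, and I verified the arithmetic: for \eqref{e:graph1} the ratio works out to $\sqrt{1+4\alpha^2/(1-4\alpha^2)}\leq 1+2\alpha^2/(1-4\alpha^2)\leq 1+3\alpha^2$ once $\alpha^2\leq 1/12$; for \eqref{e:graph2} the requirement $(1-8\alpha^2)(1+12\alpha^2)\geq 1$ holds once $\alpha^2\leq 1/24$; and for sign-compatibility the dominance $\cos\theta\sqrt{d^2-4\alpha^2}>4\alpha d$ (with $d=|v-v'|\geq 1$) reduces to $d^2(1-32\alpha^2+64\alpha^4)>4\alpha^2(1-8\alpha^2)^2$, which holds with substantial room at $\alpha\leq 1/16$. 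The only cosmetic slip is the claim ``$u_1-u_2=t(v_1-v_2)$ with $t\in[0,1]$'': you should say $|t|\leq 1$ or note that $t$ may be negative, but this has no effect on the estimate since $t$ cancels in the ratio.
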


Suppose that $V$, $\ell_1$, and $\pi_1$ are given as in Lemma \ref{lem:approx} and let $v,v_1,v_2 \in V$. Given an orientation of $\ell$ (that is, an identification of $\ell_1$ with $\R$), we say \emph{$v_1$ is to the left of $v_2$} and \emph{$v_2$ is to the right of $v_1$} if $\pi_{1}(v_1) < \pi_{1}(v_2)$. We say \emph{$v$ is between $v_1$ and $v_2$} if $\pi_1(v_1) \leq \pi_1 (v) \leq \pi_1(v_2)$ or $\pi_1(v_2) \leq \pi_1 (v) \leq \pi_1(v_1)$.

\begin{lem}\label{lem:var}
Suppose that $V\subset\R^N$ is a $\d$-separated set with $\card(V) \geq 2$ and there exists a line $\ell$ and a number $\a\in (0,1/16]$ such that
$$\dist(v,\ell) \leq \a\d \quad\text{for all $v\in V$}.$$
Enumerate $V=\{v_1,\dots,v_n\}$ so that $v_{i+1}$ is to the right of $v_i$ for all $1\leq i\leq n-1$. Then \begin{equation} \label{e:var2} \sum_{i=1}^{n-1} |v_{i+1}-v_i|^s \leq (1+3\alpha^2)^s |v_1-v_n|^s\quad\text{for all }s\geq 1.\end{equation} Moreover, if $\card(V)\geq 3$, then
\begin{equation} \label{e:var3} \sum_{i=1}^{n-1}|v_{i+1}-v_i|^s \leq ((1+3\alpha^2)|v_1-v_n|-\d)^s + \delta^s\quad\text{for all }s\geq 1.\end{equation}
\end{lem}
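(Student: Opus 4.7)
My plan is to apply Lemma \ref{lem:approx} (with $\ell_1=\ell_2=\ell$) to linearly order $V$ along $\ell$ and to control consecutive distances by their projections, and then to pass from $\sum a_i$ to $\sum a_i^s$ via elementary convexity, sharpened in the case of \eqref{e:var3} by the separation $a_i\geq \delta$. After rescaling distances by $\delta^{-1}$ so that $V$ becomes $1$-separated with $\dist(v,\ell)\leq\alpha\leq 1/16$, Lemma \ref{lem:approx} applies with $\ell_1=\ell_2=\ell$; let $\pi$ denote the orthogonal projection onto $\ell$. The ordering it provides agrees with the one in the statement, and \eqref{e:graph1} with $[u_1,u_2]=[v_i,v_{i+1}]$ yields $|v_{i+1}-v_i|\leq (1+3\alpha^2)|\pi(v_{i+1})-\pi(v_i)|$ for every consecutive pair. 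Since $\pi(v_1),\dots,\pi(v_n)$ are monotonic along $\ell$, these bounds telescope to
$$\sum_{i=1}^{n-1}|v_{i+1}-v_i|\leq (1+3\alpha^2)|\pi(v_1)-\pi(v_n)|\leq (1+3\alpha^2)|v_1-v_n|=:M.$$

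For \eqref{e:var2} I would combine this linear bound with the power-sum inequality $\sum b_i^s\leq (\sum b_i)^s$, valid for $b_i\geq 0$ and $s\geq 1$ (since, after normalizing, $p^s\leq p$ for $p\in[0,1]$), to conclude $\sum_{i=1}^{n-1}|v_{i+1}-v_i|^s\leq M^s=(1+3\alpha^2)^s|v_1-v_n|^s$; undoing the rescaling by a factor of $\delta^s$ produces \eqref{e:var2}.

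For \eqref{e:var3}, the extra ingredient is that each $a_i:=|v_{i+1}-v_i|\geq 1$ after rescaling. When $n\geq 3$ I would isolate the last term, letting $t=a_{n-1}$ and $T=\sum_{i=1}^{n-2}a_i$, so both $t\geq 1$ and $T\geq 1$ (the latter because $n-2\geq 1$) while $t+T\leq M$. Applying the power-sum inequality to the $T$-part gives
$$\sum_{i=1}^{n-1}a_i^s\leq t^s+T^s\leq t^s+(M-t)^s=:g(t),\qquad t\in[1,M-1],$$
where the interval is nondegenerate because $n\geq 3$ forces $M\geq 2$. The function $g$ is convex (its second derivative is $s(s-1)[t^{s-2}+(M-t)^{s-2}]\geq 0$) and symmetric about $M/2$, so it attains its maximum on $[1,M-1]$ at either endpoint, with common value $1+(M-1)^s$; undoing the rescaling yields \eqref{e:var3}. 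I do not anticipate any serious obstacle: the only point requiring care is that the separation $a_i\geq \delta$ combined with $n\geq 3$ is precisely what makes the interval $[\delta,M-\delta]$ for the convex maximization nondegenerate, after which the argument is routine bookkeeping combining Lemma \ref{lem:approx}, the power-sum inequality, and convexity of $x\mapsto x^s$.
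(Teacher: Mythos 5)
Your proof is correct and follows essentially the same route as the paper's: both use Lemma~\ref{lem:approx} to order $V$ and to obtain the factor $(1+3\alpha^2)$, apply the power-sum inequality $\sum b_i^s\le(\sum b_i)^s$, and reduce \eqref{e:var3} to the endpoint maximization of $t\mapsto t^s+(M-t)^s$. The only differences are cosmetic: you rescale so that $\delta=1$ and work directly with the gaps $a_i=|v_{i+1}-v_i|\ge 1$, whereas the paper keeps $\delta$ explicit and runs the estimate through the projections $x_i=\pi(v_i)$, separating off $|x_n-x_{n-1}|\ge\delta/(1+3\alpha^2)$ before maximizing; both correctly note that $n\ge 3$ guarantees the interval for the maximization is nondegenerate.
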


\begin{proof} Let $\pi$ denote the orthogonal projection onto $\ell$ and put $x_i:=\pi(v_i)$. Then $$|x_{i+1}-x_i|\leq |v_{i+1}-v_i| \leq (1+3\alpha^2)|x_{i+1}-x_i|\quad\text{for all } 1\leq i\leq n-1,$$ where the first inequality holds since projections are 1-Lipschitz and the second inequality holds by Lemma \ref{lem:approx}. Assume $s\geq 1$ and $\card(V)\geq 3$. Then
\begin{equation*}\begin{split} \sum_{i=1}^{n-1} \frac{|v_{i+1}-v_i|^s}{(1+3\alpha^2)^s} &\leq \left(\sum_{i=1}^{n-2} |x_{i+1}-x_i|^s\right)+|x_n-x_{n-1}|^s\\
&\leq  \left(\sum_{i=1}^{n-2} |x_{i+1}-x_i|\right)^s +|x_n-x_{n-1}|^s\\
&= (|x_n-x_1|-|x_n-x_{n-1}|)^s + |x_n-x_{n-1}|^s\\
&\leq \left(|x_n-x_1| - \frac{\delta}{1+3\alpha^2}\right)^s + \left(\frac{\delta}{1+3\alpha^2}\right)^s\\
& \leq \left(|v_n-v_1|-\frac{\delta}{1+3\alpha^2}\right)^s + \left(\frac{\delta}{1+3\alpha^2}\right)^s,
\end{split}\end{equation*}
where the penultimate inequality holds because for any $M>0$, $\e \in (0,M)$, and $s\geq 1$, the function $f(t) = t^s + (M-t)^s$ defined on $[\e,M-\e]$ attains its maximum at $t=\e$. This establishes \eqref{e:var3}. Inequality \eqref{e:var2} follows from a similar (and easier) computation.
\end{proof}

\subsection{Nets, flat pairs, and variation excess} \label{sec:flat}

Let $(X,|\cdot|)$ denote the Hilbert space $l^2(\R)$ of square summable sequences  or the Euclidean space $\R^N$ for some $N\geq 2$.

Let $\mathscr{V}=\{(V_k,\rho_k)\}_{k\geq 0}$ be a sequence of pairs of nonempty finite sets $V_k$ in $X$ and numbers $\rho_k>0$. Assume that there exist $x_0 \in X$, $r_0>0$, $C^*\geq 1$, and $0<\xi_1\leq \xi_2<1$ such that $\mathscr{V}$ satisfies the following properties.
\begin{enumerate}
\item[(\textbf{V0})] When $k=0$, we have $\rho_0=1$. For all $k\geq 0$, we have $\xi_1 \rho_k \leq \rho_{k+1} \leq \xi_2 \rho_k$.
\item[(\textbf{V1})] When $k=0$, we have $V_0 \subset B(x_0,C^*r_0)$.
\item[(\textbf{V2})] For all $k\geq 0$, we have $V_{k} \subset V_{k+1}$.
\item[(\textbf{V3})] For all $k\geq 0$ and all distinct $v,v'\in V_k$, we have $|v-v'| \geq \rho_k r_0$.
\item[(\textbf{V4})] For all $k \geq 0$ and all  $v\in V_{k+1}$, there exists $v'\in V_{k}$ such that $|v-v'| < C^*\rho_{k+1} r_0$.
\end{enumerate} With $C^*$ and $\xi_2$ given, define the associated parameter $$A^*:=\frac{C^*}{1-\xi_2}>C^*.$$
In addition to (V0)--(V4), assume that for each $k\geq 0$ and $v\in V_k$ we are given a number $\a_{k,v}\geq 0$ and a straight line $\ell_{k,v}$ in $X$ such that
\begin{equation}\tag{\textbf{V5}} \sup_{x\in V_{k+1}\cap B(v,30A^*\rho_k r_0)} \dist(x,\ell_{k,v}) \leq \a_{k,v} \rho_{k+1}r_0. \end{equation}
We call the line $\ell_{k,v}$ an \emph{approximating line at $(k,v)$}.

The formulation of (V5) is motivated by \cite[Proposition 3.6]{BS3}. We remark that the number $\rho_{k+1}r_0$ appearing on the right hand side of (V5) is the scale of separation of points in $V_{k+1}$. While we allow $X=l^2(\R)$, each $V_k$ can be identified with a subset of $\R^{N_k}$ for some increasing sequence $N_k$ if convenient, because each $V_k$ is finite and $V_k\subset V_{k+1}$.

\begin{lem}\label{rem:card}
Let $v\in V_k$ be such that $\a_{k,v} \leq 1/16$ and fix an orientation for $\ell_{k,v}$.
\begin{enumerate}
\item If $v'\in V_{k}\cap B(v,14A^*\rho_k r_0)$ is the first point to the right of $v$, then there exist fewer than $2+2.2C^*$ points of $V_{k+1}$ between $v$ and $v'$ (inclusive).
\item There exist fewer than $1.1C^*$ points of $V_{k+1}\cap B(v,C^*\rho_{k+1}r_0)$ to the right of $v$.
\end{enumerate}
\end{lem}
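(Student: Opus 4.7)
The plan is to combine the flatness hypothesis (V5) with Lemma \ref{lem:approx} to order the relevant points of $V_{k+1}$ along the projection $\pi$ onto $\ell_{k,v}$, and then to bound cardinalities by comparing the total projection span to the per-pair lower bound on consecutive projection gaps. Since $\a_{k,v}\leq 1/16$ and $V_{k+1}$ is $\rho_{k+1}r_0$-separated by (V3), Lemma \ref{lem:approx} applies to any subset of $V_{k+1}\cap B(v,30A^*\rho_k r_0)$, yielding both a consistent linear order via $\pi$ and the estimate $|\pi(u)-\pi(u')|\geq|u-u'|/(1+3\a_{k,v}^2)$ for consecutive points; in particular, consecutive points of $V_{k+1}$ have projection gap at least $\rho_{k+1}r_0/(1+3/256)$.

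For part (2), I would list the points of $V_{k+1}\cap B(v,C^*\rho_{k+1}r_0)$ strictly to the right of $v$ as $u_1,\dots,u_m$. Since projections are $1$-Lipschitz, the projection span from $\pi(v)$ to $\pi(u_m)$ is at most $C^*\rho_{k+1}r_0$; summing the consecutive projection gaps (including $\pi(u_1)-\pi(v)$) gives $m\cdot\rho_{k+1}r_0/(1+3/256)\leq C^*\rho_{k+1}r_0$, hence $m\leq(1+3/256)C^*<1.1\,C^*$.

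For part (1), I would enumerate the points of $V_{k+1}$ between $v$ and $v'$ inclusive as $v=u_0,u_1,\dots,u_n=v'$ and show that each interior $u_i$ ($1\leq i\leq n-1$) has projection within $C^*\rho_{k+1}r_0$ of either $\pi(v)$ or $\pi(v')$. To prove this, I would invoke (V4) to obtain a parent $w_i\in V_k$ with $|u_i-w_i|<C^*\rho_{k+1}r_0$; since $v,v'$ are consecutive in $V_k$, no $V_k$ point projects strictly between $\pi(v)$ and $\pi(v')$, so $w_i\in\{v,v'\}$, $\pi(w_i)\leq\pi(v)$, or $\pi(w_i)\geq\pi(v')$. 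Combined with $\pi(u_i)\in[\pi(v),\pi(v')]$ and $|\pi(u_i)-\pi(w_i)|\leq|u_i-w_i|<C^*\rho_{k+1}r_0$, this forces the claim in each case. Splitting the interior points into those with projection near $\pi(v)$ and those with projection near $\pi(v')$, and applying the counting argument of part (2) to each group, yields fewer than $1.1\,C^*$ in each, hence fewer than $2.2\,C^*$ interior points in total and $n+1<2+2.2\,C^*$ overall.

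The main obstacle I anticipate is the case analysis of the parent $w_i$ and the bookkeeping needed to apply (V5) to all auxiliary points. This requires verifying that each $w_i$ lies in $B(v,30A^*\rho_k r_0)$, using $|v'-v|\leq 14A^*\rho_k r_0$, the flatness-based bound on $|u_i-v|$ from Lemma \ref{lem:approx}, and $C^*\rho_{k+1}r_0<A^*\rho_k r_0$ (since $\rho_{k+1}\leq\rho_k$ and $A^*>C^*$).
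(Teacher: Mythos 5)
Your proof is correct and follows essentially the same route as the paper's: order the nearby $V_{k+1}$ points along the projection $\pi$ onto $\ell_{k,v}$, use (V3), (V5), and Lemma~\ref{lem:approx} to bound consecutive projection gaps below by roughly $\rho_{k+1}r_0$, and count against the projection span $\leq C^*\rho_{k+1}r_0$. Part~(2) is exactly the paper's argument. In part~(1) you are in fact more explicit than the paper, which merely asserts from (V4) that each intermediate point lies in $B(v,C^*\rho_{k+1}r_0)\cup B(v',C^*\rho_{k+1}r_0)$; you supply the actual parent case analysis and phrase it in terms of projections ($\pi(u_i)$ within $C^*\rho_{k+1}r_0$ of $\pi(v)$ or $\pi(v')$), which is cleaner and is what the counting argument really uses.

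One step is left tacit both in your proposal and in the paper's proof: the assertion that no $V_k$ point projects strictly between $\pi(v)$ and $\pi(v')$. The hypothesis only makes $v'$ the first point to the right in $V_k\cap B(v,14A^*\rho_kr_0)$, while the parent $w_i$ furnished by (V4) could a priori lie in $B(v,30A^*\rho_kr_0)\setminus B(v,14A^*\rho_kr_0)$ (your ``main obstacle'' verification places $w_i$ in the $30A^*$ ball, but not in the $14A^*$ ball). This is however no obstruction: if $p\in V_k\cap B(v,30A^*\rho_kr_0)$ projected strictly between $\pi(v)$ and $\pi(v')$, then by the $\rho_kr_0$-separation of $V_k$, (V5), and the Pythagorean theorem one has $|\pi(p)-\pi(v')|\geq\sqrt{1-4\alpha_{k,v}^2}\,\rho_kr_0>\tfrac{9}{10}\rho_kr_0$, hence $|\pi(p)-\pi(v)|<14A^*\rho_kr_0-\tfrac{9}{10}\rho_kr_0$, and a second Pythagorean estimate then gives $|p-v|<14A^*\rho_kr_0$, so $p$ is in the smaller ball after all, contradicting the minimality of $v'$. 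With this checked, your argument is complete.
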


\begin{proof} The points in $V_{k+1}\cap B(v,30A^*\rho_k r_0 )$ are $\rho_{k+1}r_0$-separated and are linearly ordered by Lemma \ref{lem:approx}. Let $v'$ be the first point in $V_k\cap B(v,14A^*\rho_k r_0)$ to the right of $v$ and let $w_1,\dots,w_m$ denote the points in $V_{k+1}$ that lie between $v$ and $v'$ (inclusive). By (V4), each point $w_i$ belongs to $B(v,C^*\rho_{k+1} r_0)\cup B(v',C^*\rho_{k+1} r_0)$. Let $\pi_{k,v}$ denote the orthogonal projection onto $\ell_{k,v}$. By (V3) and \eqref{e:graph1}, $1.1|\pi(w_i)-\pi(w_j)| > |w_i-w_j|\geq \rho_{k+1}r_0$ for all distinct $i,j$, since $(1+3(1/16)^2)<1.1$. It follows that there are fewer than $1.1C^*$ points $w_i$ in $V_{k+1}\cap B(v,C^*\rho_{k+1} r_0)$ to the right of $v$ and fewer than $1.1C^*$ points $w_i$ in $V_{k+1}\cap B(v',C^*\rho_{k+1} r_0)$ to the left of $v'$. The first claim follows. A similar argument gives the second claim.
\end{proof}

\begin{definition}[flat pairs] \label{def:flatpairs} Fix a parameter $\alpha_0\in(0,1/16]$. For all $k\geq 0$, define $\Flat(k) $ to be the set of pairs $(v,v')\in V_k\times V_k$ such that
\begin{enumerate}
\item $\rho_k r_0\leq |v-v'| < 14A^* \rho_k r_0$,
\item $\a_{k,v} <\alpha_0$ and $v'$ is the first point in $V_k\cap B(v,14A^*\rho_k r_0 )$ to the left or to the right of $v$ with respect to ordering induced by $\ell_{k,v}$.
\end{enumerate} Define the corresponding set of geometric line segments $\mathscr{L}_k=\{[v,v']: (v,v')\in \Flat(k)\}$.
\end{definition}

Note that the collection $\Flat(k)$ of flat pairs is not symmetric in the sense that $(v,v') \in \Flat(k)$ does imply $(v',v)\in\Flat(k)$, because $\alpha_{k,v}$ does not control $\alpha_{k,v'}$.

\begin{lem}\label{lem:tridents}
Let $e_1$, $e_2$, $e_3$ be distinct elements of $\mathscr{L}_{k}$ for some $k\geq 0$.
\begin{enumerate}
\item Edges $e_1$ and $e_2$ intersect at most in a common endpoint.
\item Edges $e_1$, $e_2$ and $e_3$ do not have a common point.
\end{enumerate}
\end{lem}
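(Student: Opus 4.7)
Suppose $e_1=[v,v']$ and $e_2=[u,u']$ are distinct edges in $\mathscr{L}_k$ that share a point $p$. The first observation is that by the triangle inequality together with the diameter bound $|v-v'|<14A^*\rho_kr_0$ (and similarly for $u,u'$), the four endpoints $\{v,v',u,u'\}\subset V_k$ all lie within distance $<28A^*\rho_kr_0<30A^*\rho_kr_0$ of one another; hypothesis (V5) applied at $v$ and at $u$ then places each of them within $\alpha_0\rho_{k+1}r_0\le\tfrac{1}{16}\rho_kr_0$ of both $\ell_{k,v}$ and $\ell_{k,u}$. Since $V_k$ is $\rho_kr_0$-separated, Lemma~\ref{lem:approx} produces compatible linear orderings of these four points on the two approximating lines. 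The argument for (1) splits into the case $\{v,v'\}\cap\{u,u'\}=\emptyset$ and the case of a common vertex $w$.

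In the no-shared-endpoint case, $\pi_{k,v}(p)\in[\pi_{k,v}(v),\pi_{k,v}(v')]\cap[\pi_{k,v}(u),\pi_{k,v}(u')]$. If some projection, WLOG $\pi_{k,v}(u)$, falls strictly inside $[\pi_{k,v}(v),\pi_{k,v}(v')]$, then $u\in V_k$ lies strictly between $v$ and $v'$ in the $\ell_{k,v}$-order, which by the flat-pair property of $(v,v')$ forces $|u-v|\ge 14A^*\rho_kr_0$. Combining the orthogonal identity $|u-v|^2=|\pi_{k,v}(u)-\pi_{k,v}(v)|^2+|(u-\pi_{k,v}(u))-(v-\pi_{k,v}(v))|^2$ with the inequality $|\pi_{k,v}(v)-\pi_{k,v}(v')|\le|v-v'|<14A^*\rho_kr_0$ yields $|\pi_{k,v}(u)-\pi_{k,v}(v')|\le 14A^*\rho_kr_0-\sqrt{(14A^*)^2-1/64}\,\rho_kr_0=O(\rho_kr_0/A^*)$; a second orthogonal decomposition then gives $|u-v'|<\rho_kr_0$, contradicting the $\rho_kr_0$-separation of $V_k$. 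If instead the two projected intervals overlap only at a single projection value, the same separation forces two of the four endpoints to coincide, again violating the no-shared-endpoint hypothesis.

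When $e_1$ and $e_2$ share a vertex $w$, I case-analyze the four possible roles of $w$ as first or second element in the two flat pairs. If $w$ is the first element of both, then either the two ``first to one side'' neighbors coincide (so $e_1=e_2$), or they lie on opposite sides of $w$ along $\ell_{k,w}$ and the two segments emanate in opposing directions from $w$, meeting only at $w$. In the mixed case (say $v=w=u'$), compatible orderings force the opposing endpoints to lie on the same side of $w$ along $\ell_{k,w}$; if they coincide then $e_1=e_2$, and otherwise a variant of the sharp orthogonal-plus-separation estimate from the previous paragraph places the one closer to $w$ strictly inside the $14A^*\rho_kr_0$-ball of the further one, violating the corresponding flat-pair ``first neighbor'' condition. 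The case where $w$ is the second element of both pairs is symmetric.

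For (2), a point common to three distinct edges must, by (1), be a common endpoint $w\in V_k$; the same sharpened estimate shows that every flat pair of the form $(v,w)\in\Flat(k)$ forces $v$ to be the nearest neighbor of $w$ on one side of $\ell_{k,w}$, so the distinct geometric segments of $\mathscr{L}_k$ through $w$ correspond to at most two choices (one per side of $\ell_{k,w}$), contradicting the existence of three. The main obstacle throughout is that the naive orthogonal bound only yields $|u-v'|<\sqrt{(14A^*)^2+O(1)}\,\rho_kr_0$, which just barely exceeds $14A^*\rho_kr_0$; closing each subcase requires combining the lower bound $|v-v'|\ge\rho_kr_0$ from the $V_k$-separation with the smallness $\alpha_0\xi_2\le\tfrac{1}{16}$ of the flatness factor in order to force the strict containments the flat-pair conditions need to yield contradictions.
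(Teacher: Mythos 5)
Your proposal is correct and takes essentially the same route as the paper's proof: reduce to a common $30A^*\rho_kr_0$-ball, invoke Lemma~\ref{lem:approx} to obtain a consistent linear ordering of the relevant vertices via the approximating lines, and then use the flat-pair requirement that $v'$ be the \emph{first} vertex of $V_k\cap B(v,14A^*\rho_kr_0)$ to one side of $v$. The difference is one of explicitness: the paper's proof closes with ``Claims (1) and (2) follow immediately, since the segments \ldots are only drawn to the first vertex \ldots,'' whereas you spell out the Pythagorean estimates needed to make that ``immediately'' bite. In particular, you correctly flag the subtlety that a vertex $u$ projecting strictly between $v$ and $v'$ could a priori escape the ball $B(v,14A^*\rho_kr_0)$ by a hair (the naive orthogonal bound gives only $|u-v|<\sqrt{(14A^*)^2+1/64}\,\rho_kr_0$), and you close that gap by feeding back the $\rho_kr_0$-separation of $V_k$ to force $|u-v'|<\rho_kr_0$, a contradiction. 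That is a genuine gap-filler relative to the published proof.

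Two things to tighten in a final write-up. First, several subcases are dispatched by reference to ``the same sharpened estimate'' or ``a variant of'' it; these pointers should be unwound, since the target of the estimate changes (sometimes you bound $|u-v'|$, sometimes $|w'-v|$, etc.) and the reader needs to see that the same Pythagorean-plus-separation computation applies verbatim after relabeling. Second, in part (2) the phrase ``forces $v$ to be the nearest neighbor of $w$ on one side of $\ell_{k,w}$'' is slightly off: the pair $(v,w)\in\Flat(k)$ constrains the position of $w$ relative to $v$ with respect to $\ell_{k,v}$ (and requires $\alpha_{k,v}<\alpha_0$, not $\alpha_{k,w}<\alpha_0$), so you must first pass through the compatible ordering from Lemma~\ref{lem:approx} before interpreting it as a statement about ``sides of $w$.'' Once that conversion is made explicit the case count (at most one edge per side, whether $w$ is first or second coordinate of the flat pair) goes through as you describe.
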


\begin{proof} Let $e_1=[v_1,v_1']$, $e_2=[v_2,v_2']$, and $e_3=[v_3,v_3']$ represent distinct elements of $\mathscr{L}_k$, where $(v_i,v_i')\in \Flat(k)$ for all $i\in\{1,2,3\}$. If two or more of the edges intersect in a common point, say $\{e_i:i\in I_0\}$ for some $I_0\subset\{1,2,3\}$ with $\card(I_0)\geq 2$, then those edges are contained in $B(v_j,30A^*\rho_k r_0)$ for each $j\in I_0$, since each edge has diameter at most $14A^*\rho_k r_0$. Note that $V_k$ is a $\rho_k r_0$ separated set, $\dist(v_i,\ell_{k,v_j})\leq \alpha_{k,v_j} \rho_{k+1}r_0< \alpha_{k,v_j}\rho_k r_0$ for all $i,j\in I_0$, and $\alpha_{k,v_j}\leq 1/16$. Thus, by Lemma \ref{lem:approx}, the vertices $\{v_i,v_i':i\in I_0\}$ are consistently linearly ordered according to their projections onto $\ell_{k,v_j}$ for each $j\in I_0$. Claims (1) and (2) follow immediately, since the segments in $\mathscr{L}_k$ emanating from a vertex $v\in V_k$ with $\alpha_{k,v}<\alpha_0$ are only drawn to the first vertex in $V_k \cap B(v,14A^*\rho_k r_0)$ to the left or right of $v$ with respect to the projection onto $\ell_{k,v}$.
\end{proof}

Given a pair $(v,v')\in \Flat(k)$, let $V_{k+1}(v,v')$ denote the set of all points $x \in V_{k+1}\cap B(v, 14A^*\rho_k r_0)$ such that $x$ lies between $v$ and $v'$ (including $v$ and $v'$).

\begin{definition}[variation excess] For all $s\geq 1$, for all $k\geq 0$, and for all $(v,v')\in\Flat(k)$, define the \emph{$s$-variation excess} $\tau_s(k,v,v')$ by $$\tau_s(k,v,v')|v-v'|^s = \max\left\{\left(\sum_{i=1}^{n-1} |v_{i+1}-v_i|^s\right)-|v-v'|^s,0\right\},$$  where $V_{k+1}(v,v')=\{v_1,\dots,v_n\}$ with $v_1=v$, $v_n=v'$, and $v_{i+1}$ is the first point to the right (or left) of $v_i$ for all $1\leq i\leq n-1$.\end{definition}

\begin{lem}\label{lem:monotone} For all $k\geq 0$ and $(v,v')\in\Flat(k)$, we have $\tau_1(k,v,v') \leq 3\a_{k,v}^2$.
\end{lem}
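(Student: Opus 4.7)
The plan is to recognize that this is an immediate application of Lemma \ref{lem:var} (with $s=1$) to the set $V_{k+1}(v,v')$. Write $V_{k+1}(v,v')=\{v_1,\dots,v_n\}$ in the natural order, with $v_1=v$ and $v_n=v'$. By definition of $\Flat(k)$ and Definition \ref{def:flatpairs}, we have $\alpha_{k,v}<\alpha_0\leq 1/16$, the set $V_{k+1}(v,v')$ is $\rho_{k+1}r_0$-separated by (V3), and it is contained in $V_{k+1}\cap B(v,14A^*\rho_k r_0)\subseteq V_{k+1}\cap B(v,30A^*\rho_k r_0)$, so by (V5) every point of $V_{k+1}(v,v')$ lies within distance $\alpha_{k,v}\rho_{k+1}r_0$ of the approximating line $\ell_{k,v}$.

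If $n=2$ then $\sum_{i=1}^{n-1}|v_{i+1}-v_i|=|v-v'|$, so $\tau_1(k,v,v')=0$ and the conclusion is trivial. Otherwise, $n\geq 2$, and we apply the case $s=1$ of inequality \eqref{e:var2} in Lemma \ref{lem:var} with $\delta=\rho_{k+1}r_0$ and $\alpha=\alpha_{k,v}$ to obtain
\begin{equation*}
\sum_{i=1}^{n-1}|v_{i+1}-v_i|\ \leq\ (1+3\alpha_{k,v}^2)\,|v_1-v_n|\ =\ (1+3\alpha_{k,v}^2)\,|v-v'|.
\end{equation*}
Subtracting $|v-v'|$ from both sides and dividing by $|v-v'|>0$ (which is positive since $|v-v'|\geq \rho_k r_0$) yields $\tau_1(k,v,v')\leq 3\alpha_{k,v}^2$, as desired.

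There is no real obstacle here; the statement is essentially the $s=1$ specialization of Lemma \ref{lem:var} packaged in the notation of flat pairs and variation excess. The only thing to verify carefully is that the hypotheses of Lemma \ref{lem:var} (separation scale, flatness bound, and the constraint $\alpha\leq 1/16$) are inherited from (V3), (V5), and Definition \ref{def:flatpairs}, which was done above.
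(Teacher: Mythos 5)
Your proof is correct and follows essentially the same approach as the paper's: apply inequality \eqref{e:var2} of Lemma \ref{lem:var} with $s=1$, $\delta=\rho_{k+1}r_0$, and $\alpha=\alpha_{k,v}$ to the ordered enumeration of $V_{k+1}(v,v')$, then rearrange. The only cosmetic difference is that you separate out the trivial case $n=2$ (and write ``$n\geq 2$'' where you mean $n\geq 3$), which is harmless but unnecessary since \eqref{e:var2} covers $n=2$ directly; your careful verification of the hypotheses of Lemma \ref{lem:var} from (V3), (V5), and Definition \ref{def:flatpairs} is a nice touch that the paper leaves implicit.
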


\begin{proof} Let $V_{k+1}(v,v')=\{v_1,\dots,v_n\}$, where $v_1=v$, $v_n=v'$, and $v_{i+1}$ is to the right of $v_i$ for all $1\leq i\leq n-1$. By Lemma \ref{lem:var}, with $s=1$, \begin{equation*}\sum_{i=1}^{n-1}|v_{i+1}-v_i| \leq (1+3\alpha_{k,v}^2)|v_n-v_1|=(1+3\alpha_{k,v}^2)|v-v'|.\end{equation*} Rearranging the inequality gives $\tau_1(k,v,v') \leq 3\alpha_{k,v}^2$.\end{proof}

We now demonstrate that when $s>1$, the variation excess $\tau_s(k,v,v')$ is zero whenever the set $V_{k+1}(v,v')$ lies in a sufficiently thin tube.

\begin{lem}[tube control] \label{lem:flat2} For all $s>1$, there exists $\e_{s,C^*,\xi_1,\xi_2}\in(0,1/16]$ such that if $\alpha_{k,v} \leq \e_{s,C^*,\xi_1,\xi_2}$, then $\tau_s(k,v,v')=0$ for all $(v,v')\in\Flat(k)$.\end{lem}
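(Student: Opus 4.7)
The plan is to reduce Lemma \ref{lem:flat2} to a one-variable calculus inequality and then close it with a compactness/continuity argument. Write $L := |v-v'|$, $\delta := \rho_{k+1} r_0$, and $\alpha := \alpha_{k,v}$. Since $V_{k+1}(v,v') \subset V_{k+1}\cap B(v, 30 A^* \rho_k r_0)$, axioms (V3) and (V5) put us in the hypotheses of Lemma \ref{lem:var} with separation $\delta$ and flatness parameter $\alpha \leq 1/16$. The case $\card V_{k+1}(v,v') \leq 2$ gives $\tau_s(k,v,v') = 0$ immediately, so I may assume $\card V_{k+1}(v,v') \geq 3$ and invoke \eqref{e:var3} to obtain
\begin{equation*}
\sum_{i=1}^{n-1} |v_{i+1}-v_i|^s \;\leq\; \bigl((1+3\alpha^2) L - \delta\bigr)^s + \delta^s.
\end{equation*}

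Next I would normalize by $L^s$ and set $t := \delta/L$, reducing the desired vanishing of $\tau_s(k,v,v')$ to the inequality
\begin{equation*}
F(t,\alpha) \;:=\; \bigl((1+3\alpha^2) - t\bigr)^s + t^s \;\leq\; 1.
\end{equation*}
The crucial step is to pin down the range of $t$: by Definition \ref{def:flatpairs} we have $\rho_k r_0 \leq L < 14 A^* \rho_k r_0$, and by (V0) we have $\xi_1 \rho_k \leq \rho_{k+1} \leq \xi_2 \rho_k$, so $t \in I := [\xi_1/(14 A^*),\, \xi_2]$, an interval compactly contained in $(0,1)$.

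For $\alpha = 0$, strict convexity of $x \mapsto x^s$ on $[0,\infty)$ gives $(1-t)^s + t^s < 1$ for every $t \in (0,1)$ whenever $s>1$; by compactness of $I$ there is some $\eta>0$ with $F(t,0) \leq 1-\eta$ on $I$. Since $F$ is continuous in $\alpha$ uniformly in $t \in I$, I can then choose $\epsilon = \epsilon_{s,C^*,\xi_1,\xi_2} \in (0,1/16]$ so small that $F(t,\alpha) \leq 1$ for all $(t,\alpha) \in I \times [0,\epsilon]$, finishing the proof.

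The main point to watch is that $\epsilon$ must depend on all of $s$, $C^*$, $\xi_1$, and $\xi_2$ in an essential way: the gap $\eta$ degenerates as $s \downarrow 1$ (no excess vanishing is possible in the Lipschitz case), the lower endpoint $\xi_1/(14A^*)$ of $I$ prevents $t$ from approaching $0$ (where $F_0(t) \to 1$), and the upper endpoint $\xi_2$ prevents $t$ from approaching $1$ (where $F_0(t) \to 1$ as well). Both endpoints of $I$ must stay strictly inside $(0,1)$ for the continuity argument to yield a uniform $\epsilon$, and this is exactly what axiom (V0) together with the diameter bound in Definition \ref{def:flatpairs} provides.
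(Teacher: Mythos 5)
Your proof is correct and follows essentially the same route as the paper: reduce via Lemma \ref{lem:var} (inequality \eqref{e:var3}) to the one-variable function $t\mapsto((1+3\alpha^2)-t)^s+t^s$, pin $t=\rho_{k+1}r_0/|v-v'|$ to the compact subinterval $[\xi_1/(14A^*),\xi_2]\subset(0,1)$ using Definition \ref{def:flatpairs} and (V0), and let $\alpha\downarrow0$. The only cosmetic difference is that the paper locates the maximum at the left endpoint and applies the intermediate value theorem to find $\epsilon'$ with $A_{\epsilon'}=1$, whereas you invoke compactness and uniform continuity to get a uniform gap $\eta$; these are interchangeable packagings of the same estimate.
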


\begin{proof} Let $(v,v')\in \Flat(k)$ and enumerate $V_{k+1}(v,v')=\{v_1,\dots,v_n\}$ so that $v_1=v$, $v_n=v'$, and $v_{i+1}$ is to the right of $v_i$ for all $1\leq i\leq n-1$. If $n=2$, then $\sum_{i=1}^{n-1}|v_{i+1}-v_i|^s = |v-v'|^s$. Thus, suppose that $n\geq 3$. By Lemma \ref{lem:var}, with $\delta=\rho_{k+1}r_0$ and $\alpha=\alpha_{k,v}$,
\begin{align*}\sum_{i=1}^{n-1}|v_{i+1}-v_i|^s &\leq |v-v'|^s\left [ \left((1+3\alpha_{k,v}^2)-\frac{\rho_{k+1}r_0}{|v-v'|}\right)^s+\left(\frac{\rho_{k+1}r_0}{|v-v'|}\right)^s\right]\\ &\leq |v-v'|^s\left [ \left((1+3\alpha_{k,v}^2)-\frac{\xi_1}{14A^*}\right)^s+\left(\frac{\xi_1}{14A^*}\right)^s\right]=: A_{\alpha_{k,v}}|v-v'|^s,
\end{align*} because $\xi_2^{-1}\rho_{k+1} r_0 \leq |v-v'| \leq 14A^* \xi_1^{-1}\rho_{k+1}r_0$ and the function $$f(t)=((1+3\alpha_{k,v}^2)-t)^s + t^s\quad\text{on } [\xi_1/14A^*,\xi_2]$$ takes its maximum at $t=\xi_1/14A^*$. Since $s>1$, the coefficient $$A_\epsilon \rightarrow \left(1-\frac{\xi_1}{14A^*}\right)^s+\left(\frac{\xi_1}{14A^*}\right)^s<1\quad\text{as $\e\to 0$}.$$ Thus, by continuity, there exists $\epsilon'>0$ such that $A_{\epsilon'}=1$. Let $\epsilon_{s,C^*,\xi_1,\xi_2}=\min\{\epsilon',1/16\}.$ Then $\sum_{i=1}^{n-1}|v_{i+1}-v_i|^s \leq |v-v'|^s$  whenever $\alpha_{k,v}\leq \epsilon_{s,C^*,\xi_1}$.
\end{proof}

\section{Traveling Salesman algorithm}\label{sec:curve}
For the rest of \S\S \ref{sec:curve} and \ref{sec:mass}, let $X$ denote the Hilbert space $l^2(\R)$ or $\R^N$, let $(V_k)_{k\geq 0}$ be a sequence of sets in $X$ and let $\rho_k>0$ be a sequence of numbers satisfying (V0)--(V5) defined in \textsection\ref{sec:flat}. In addition, fix the parameter $\alpha_0\in(0,1/16]$ in Definition \ref{def:flatpairs}. For each integer $k\geq 0$, we will construct
\begin{enumerate}
\item two collections of pairwise disjoint, open intervals in $[0,1]$ denoted by $\mathscr{B}_k$ (called ``bridge intervals") and $\mathscr{E}_k$ (``edge intervals"),
\item two collections  of pairwise disjoint, nondegenerate closed intervals in $[0,1]$ denoted by $\mathscr{F}_k$ (``frozen point intervals") and $\mathscr{N}_k$ (``non-frozen point intervals"), and
\item a continuous map $f_k :[0,1]\to X$
\end{enumerate}
that satisfy the following properties.

\begin{enumerate}
\item[\textbf{(P1)}] The four collections $\mathscr{B}_k$, $\mathscr{E}_k$, $\mathscr{F}_k$, $\mathscr{N}_k$ are mutually disjoint and for any $x\in[0,1]$ there exists unique interval $I$ contained in their union such that $x\in I$.
\item[\textbf{(P2)}] The map $f_k| I$ is affine on each $I\in\mathscr{E}_k\cup\mathscr{B}_k$ and the map $f_k|J$ is constant on each $J \in \mathscr{F}_k\cup\mathscr{N}_k$.
\item[\textbf{(P3)}] For all $I\in \mathscr{E}_k$, we have $ \diam{f_k(I)} <  14A^*\rho_k r_0$.
\item[\textbf{(P4)}] The map $f_k| \bigcup\mathscr{E}_k$ is $2$-to-$1$; that is, for every $x\in \bigcup\mathscr{E}_k$ there exists a unique $x' \in \bigcup\mathscr{E}_k\setminus\{x\}$ such that $f_k(x) = f_k(x')$.
\item[\textbf{(P5)}] If $(v,v')\in \Flat(k)$, then there exists $I \in\mathscr{E}_k$ such that $f_k(I)$ joins $v$ with $v'$. Conversely, if $a$ and $b$ are endpoints of an interval $I\in \mathscr{E}_k$ and $\a_{k,f_{k}(a)} < \a_0$, then $(f_k(a),f_{k}(b))\in\Flat(k)$.
\item[\textbf{(P6)}] For each $I\in \mathscr{F}_k\cup \mathscr{N}_k$, the image $f_k(I) \in V_k$ and for each $v\in V_k$ there exists a unique $I\in\mathscr{N}_k$ such that $f_k(I) = v$.
\item[\textbf{(P7)}] If $J\in \mathscr{N}_k$ is such that $f_k(J)$ is an endpoint of $f_k(I)$ for some $I\in\mathscr{E}_k$, then there exists $I'\in \mathscr{E}_k$ (possibly $I'=I$) such that $f_k(I') = f_k(I)$ and $J\cap\overline{I'} \neq \emptyset$.
\end{enumerate}

\begin{lem}\label{lem:P5} Assume (P5) holds at stage $k\geq 0$. Let $a<b<a'<b'$ be such that $(a,b)$ and $(a',b')$ belong to $\mathscr{E}_{k}$, $f_k(b) = f_k(a')$, and $\a_{k,f_{k}(b)} \leq \a_0$. Then either
\begin{enumerate}
\item $f_k((a,b)) = f_k((a',b'))$ or
\item $f_{k}(b)$ lies between $f_k(a)$ and $f_k(b')$ with respect to order induced by $\ell_{k,f_k(b)}$.
\end{enumerate}
\end{lem}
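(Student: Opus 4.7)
The plan is to apply the converse half of property (P5) at each of the two shared endpoints $b$ and $a'$. Write $v := f_k(b) = f_k(a')$. The hypothesis $\alpha_{k,v} \leq \alpha_0$ (read as the strict inequality required by Definition \ref{def:flatpairs}) lets us invoke (P5) with $b$ playing the role of the ``named'' endpoint of the interval $(a,b) \in \mathscr{E}_k$, yielding $(v, f_k(a)) \in \Flat(k)$. A second application of (P5), this time with $a'$ as the named endpoint of $(a',b') \in \mathscr{E}_k$, gives $(v, f_k(b')) \in \Flat(k)$. By Definition \ref{def:flatpairs}, each of $f_k(a)$ and $f_k(b')$ is therefore the first point of $V_k \cap B(v, 14A^* \rho_k r_0)$ either to the left or to the right of $v$ with respect to the linear ordering induced by $\ell_{k,v}$.

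The argument then splits into two cases according to whether $f_k(a)$ and $f_k(b')$ lie on the same side of $v$ along $\ell_{k,v}$. Suppose first they lie on the same side. By Lemma \ref{lem:approx}, the points of $V_k \cap B(v, 14A^*\rho_k r_0)$ are consistently linearly ordered via projection onto $\ell_{k,v}$, so there is a unique first point to the right of $v$ and a unique first point to the left of $v$ in $V_k \cap B(v, 14A^* \rho_k r_0)$; hence $f_k(a) = f_k(b')$. Combined with $f_k(b) = f_k(a') = v$ and property (P2), which forces $f_k$ to be affine on each of $(a,b)$ and $(a',b')$, the images $f_k((a,b))$ and $f_k((a',b'))$ coincide as line segments, yielding conclusion (1).

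In the remaining case, $f_k(a)$ and $f_k(b')$ lie on opposite sides of $v$ along $\ell_{k,v}$; then $v = f_k(b) = f_k(a')$ sits strictly between $f_k(a)$ and $f_k(b')$ with respect to the ordering induced by $\ell_{k,v}$, which is conclusion (2). The only subtle point I expect in writing this up is the symmetric use of the converse of (P5): although (P5) is phrased in terms of a single named endpoint, the labeling of the two endpoints of an interval in $\mathscr{E}_k$ is interchangeable, so the converse implication may be applied at whichever endpoint happens to have small $\alpha_{k,\cdot}$ value. Once this is noted, the rest is a short bookkeeping argument using the linear ordering supplied by Lemma \ref{lem:approx}.
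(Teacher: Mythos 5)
Your proof is correct and follows essentially the same route as the paper, whose one-line proof cites the converse half of (P5) together with Lemma \ref{lem:tridents}; your explicit use of the uniqueness of the first vertex to the left/right of $f_k(b)$ (via Definition \ref{def:flatpairs} and the consistent ordering from Lemma \ref{lem:approx}) is exactly the observation underlying that lemma. Your reading of the hypothesis $\a_{k,f_k(b)}\leq\a_0$ as the strict inequality required by Definition \ref{def:flatpairs} also matches how the lemma is actually applied in \textsection\ref{sec:nonterm}, so the bookkeeping is fine.
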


\begin{proof} This is an immediate consequence of (P5) and Lemma \ref{lem:tridents}.\end{proof}

In \textsection\ref{sec:k=0}, we construct $\mathscr{E}_0$, $\mathscr{B}_0$, $\mathscr{N}_0$, $\mathscr{F}_0$, and $f_0$. In \textsection\ref{sec:hypotheses}, we formulate the inductive hypothesis. We construct the collections of intervals $\mathscr{E}_{k+1}$, $\mathscr{B}_{k+1}$, $\mathscr{N}_{k+1}$, $\mathscr{F}_{k+1}$, and $f_{k+1}$ in \textsection\textsection\ref{sec:bridges}--\ref{sec:noflatpoints}. We verify properties (P1)--(P7) in \textsection\ref{sec:concl}. Finally, in \textsection\ref{sec:choices},  we review choices in all choices of the algorithm.

\subsection{Step $0$}\label{sec:k=0}
Fix a point $v_0 \in V_0$. Let $G_0$ be the (not necessarily connected) graph with vertices $V_0$ and edges $\mathscr{L}_{0}$. Suppose that $G_0$ has components $G_0^{(1)}, \dots, G_0^{(l)}$ with $v_0\in G_0^{(1)}$.

\emph{Case 1.} Suppose that $V_0 = \{v_0\}$. Then set $\mathscr{E}_0 = \emptyset$, $\mathscr{B}_0 = \emptyset$, $\mathscr{N}_{0} = \{[0,1]\}$ and $\mathscr{F}_0 = \emptyset$. Define also $f_0:[0,1]\to X$ with $f_0(x) = v_0$  for all $x\in [0,1]$. Note that properties (P1)--(P7) are trivial in this case.

\emph{Case 2.} Suppose that $\card(V_0) \geq 2$ and that $l=1$, that is, $G_0$ is connected. We apply Proposition \ref{prop:graph} for $v_0$ with $\D=[0,1]$, $G=G_0$ and we obtain a collection of intervals $\mathcal{I}$ and a continuous map $g$. By Lemma \ref{lem:tridents}, each point $v\in V_0$ has valence at most $2$ in $G_0$ and there exists a component $J_v$ of $g^{-1}(v)$ such that if $e$ is an edge of $G_0$ that contains $v$ as an endpoint, then $e$ has a preimage $I\in\mathcal{I}$ such that $\overline{I}\cap J_v \neq \emptyset$. Let $\mathcal{N}$ be the collection of all such intervals $J_v$.

Set $\mathscr{E}_0 = \mathcal{I}$, $\mathscr{B}_0 = \emptyset$, $\mathscr{N}_0 = \mathcal{N}$, define $\mathscr{F}_0$ to be the components of $[0,1]  \setminus\bigcup (\mathscr{E}_0\cup\mathscr{B}_0\cup\mathscr{N}_0)$, and let $f_0 = g$. Properties (P1)--(P7) follow from Proposition \ref{prop:graph}.

\emph{Case 3.} Suppose that $\card(V_0) \geq 2$ and that $l\geq 2$, that is, $G_0$ is disconnected. For each $j=2,\dots,l$ fix a vertex $u_j$ of $G_{0}^{(j)}$. Let $\{ I_1, \dots, I_{2l-2}\}$
be a collection of open intervals, enumerated according to the orientation of $[0,1]$, such that their closures are mutually disjoint and are contained in the interior of $[0,1]$. Let also $\{ J_{1},\dots, J_{2l-1}\}$
be the components of $I \setminus \bigcup_{j=1}^{2l-2}I_j$ enumerated according to the orientation of $[0,1]$. Applying Proposition \ref{prop:graph} for $G = G^{(1)}_0$, $v_0$ and $\D = J_{1}$, we obtain a family of open intervals $\mathcal{I}_1$, a map $g_1 : J_{1} \to G^{(1)}_0 $ and a family $\mathcal{N}_1$ of closed intervals. Similarly, for each $j=2,\dots,l$, applying Proposition \ref{prop:graph} for $G = G^{(j)}_0$, $u_j$ and $\D = J_{2j}$, we obtain a family of open intervals $\mathcal{I}_j$, a map $g_j : J_{2j} \to G^{(j)}_0 $ and a family $\mathcal{N}_j$ of closed intervals. There exists a continuous map $g:[0,1] \to X$ that extends the maps $g_j$ such that
\begin{enumerate}
\item $g(J_{2j+1}) = v_0$ for each $j\in \{1,\dots,l-1\}$;
\item $g|I_{j}$ is affine for each $j\in \{1,\dots,2l-2\}$ and $g(I_{2j-1}) = g(I_{2j}) = [u_j,v_0]$ for each $j\in \{1,\dots,l-1\}$.
\end{enumerate}

Set $\mathscr{E}_{0} = \bigcup_{j=1}^{l}\mathcal{I}_j$, $\mathscr{B}_{0} = \{I_{1},\dots, I_{2l-2}\}$, $\mathscr{N}_{0} = \bigcup_{j=1}^{l}\mathcal{N}_j$, define $\mathscr{F}_{0}$ to be the components of $[0,1]  \setminus\bigcup (\mathscr{E}_0\cup\mathscr{B}_0\cup\mathscr{N}_0)$, and let $f_{0}|[0,1] = g$. Properties (P1)--(P7) follow from Proposition \ref{prop:graph}.

\subsection{Inductive hypothesis}\label{sec:hypotheses}
Suppose that for some $k\geq 0$ we have defined collections $\mathscr{B}_k$, $\mathscr{E}_k$ of open intervals in $[0,1]$, collections $\mathscr{F}_k$, $\mathscr{N}_k$ of nondegenerate closed intervals in $[0,1]$, and a continuous map $f_k :[0,1]\to X$, which satisfy properties (P1)--(P7).

We will define a new map $f_{k+1}:[0,1]\rightarrow X$ and new collections $\mathscr{B}_{k+1} , \mathscr{E}_{k+1}, \mathscr{F}_{k+1}, \mathscr{N}_{k+1}$,
\begin{align*}
\mathscr{B}_{k+1} &= \bigcup_{I\in\mathscr{B}_k\cup\mathscr{E}_k\cup\mathscr{F}_k\cup\mathscr{N}_k}\mathscr{B}_{k+1}(I), \quad &\mathscr{E}_{k+1} = \bigcup_{I\in\mathscr{B}_k\cup\mathscr{E}_k\cup\mathscr{F}_k\cup\mathscr{N}_k}\mathscr{E}_{k+1}(I),\\
\mathscr{F}_{k+1} &= \bigcup_{I\in\mathscr{B}_k\cup\mathscr{E}_k\cup\mathscr{F}_k\cup\mathscr{N}_k}\mathscr{F}_{k+1}(I), \quad  &\mathscr{N}_{k+1} = \bigcup_{I\in\mathscr{B}_k\cup\mathscr{E}_k\cup\mathscr{F}_k\cup\mathscr{N}_k}\mathscr{N}_{k+1}(I),
\end{align*}
where $\mathscr{B}_{k+1}(I)$, $\mathscr{E}_{k+1}(I)$, $\mathscr{F}_{k+1}(I)$, $\mathscr{N}_{k+1}(I)$ are collections of intervals in $I$ that we define below. In particular:
\begin{itemize}
\item In \textsection\ref{sec:bridges}, we define the four collections and $f_{k+1}|I$ for $I\in\mathscr{B}_k$.
\item In \textsection\ref{sec:flatedges} and \textsection\ref{sec:noflatedges}, we define  the four collections and $f_{k+1}|I$ for $I\in\mathscr{E}_k$.
\item In \textsection\ref{sec:fixed}, we define  the four collections and $f_{k+1}|I$ for $I\in\mathscr{F}_k$.
\item In \textsection\ref{sec:flatpoints} and \textsection\ref{sec:noflatpoints}, we the four collections and $f_{k+1}|I$ for $I\in\mathscr{N}_k$.
\end{itemize}

\subsection{Step $k+1$: intervals in $\mathscr{B}_k$}\label{sec:bridges}
For any $I\in \mathscr{B}_k$ we set $\mathscr{B}_{k+1}(I) = \{I\}$, $\mathscr{E}_{k+1}(I) = \emptyset$, $\mathscr{F}_{k+1}(I) = \emptyset$, $\mathscr{N}_{k+1}(I) = \emptyset$, and we define $f_{k+1}|I = f_k|I$. In other words, bridge intervals are frozen and we make no changes on them.

\subsection{Step $k+1$: intervals in $\mathscr{E}_k$ with a at least one endpoint with flat image}\label{sec:flatedges}
Here we consider those intervals $I = (a_I,b_I) \in\mathscr{E}_{k}$  such that one of the $\a_{k,f_{k}(a_I)}$, $\a_{k,f_{k}(b_I)}$ is less than $\a_0$. If no such interval exists, we move to \textsection\ref{sec:noflatedges}. Assume now that such intervals exist. By (P4) and the induction step, such intervals come in pairs $\{I,I'\}$ where $f_k(I) = f_k(I')$ and $f_k(I)\cap f_k(J) = \emptyset$ for all $J\in\mathscr{E}_k\setminus\{I,I'\}$. Fix now such a pair $\{I,I'\}$. We choose one of the two intervals $I,I'$ to start with, say $I$.

Without loss of generality, assume that $\a_{k,f_k(a_{I})} < \a_0$. Let $\ell$ be the approximating line for $(k,f_k(a_I))$, oriented so that $f_k(a_{I})$ lies to the left of $f_k(b_{I})$. Let $V_{k+1,I}$ denote the points in $V_{k+1}\cap B(f_k(a_{I}),14A^* \rho_k r_0)$ that lie between $f_k(a_{I})$ and $f_k(b_{I})$ with respect to $\ell$,  including $f_k(a_{I})$ and $f_k(b_{I})$. Enumerate $V_{k+1,I}$ from left to right,
\begin{equation*} V_{k+1,I} = \{v_1,\dots, v_l\}.\end{equation*} That is, $v_{i}$ lies to the left of $v_{i+1}$ for all $i\in\{1,\dots,l-1\}$, $v_1 = f_k(a_{I})$, and $v_l = f_k(b_{I})$.

\begin{rem}\label{rem:lleq6} By Lemma \ref{rem:card}, we have that $l<2+2.2C^*$.\end{rem}

Let $\{I_{1}, \dots, I_{l-1}\}$ be a collection of open intervals in $I$ with mutually disjoint closures, enumerated according to the orientation of $[0,1]$ so that the left endpoint of $I_1$ coincides with $a_{I}$ and the right endpoint of $I_{l-1}$ coincides with $b_{I}$. Let $\mathscr{N}_{k+1}(I)$ be the components of $[0,1]\setminus \bigcup_{i=1}^{l-1}I_i$ and $\mathscr{F}_{k+1}(I) = \emptyset$. Define
\begin{align*}
\mathscr{E}_{k+1}(I) &= \{I_i : |v_i-v_{i+1}| <14A^* \rho_{k+1}r_0\}, \\
\mathscr{B}_{k+1}(I) &= \{I_i : |v_i-v_{i+1}| \geq 14A^* \rho_{k+1}r_0\}.
\end{align*} Then define  $f_{k+1}|\overline{I}$ continuously so that
\begin{enumerate}
\item $f_{k+1}$ is affine on each $J \in \mathscr{E}_{k+1}(I) \cup \mathscr{B}_{k+1}(I)$ and constant on each $J \in \mathscr{N}_{k+1}(I)\cup\mathscr{F}_{k+1}(I)$;
\item for each $j=1,\dots,l-1$, $f_{k+1}(\overline{I_j}) = [v_{j},v_{j+1}]$ mapping the left endpoint of $I_{j}$ onto $v_{j}$ and the right endpoint of $I_{j}$ onto $v_{j+1}$.
\end{enumerate}
See Figure \ref{fig:flatedges} for the image of $f_{k+1}(I)$.

Once we have defined the four families and $f_{k+1}$ for $I$, we work as follows for $I'$. First note that $V_{k+1,I}=V_{k+1,I'}$. Define $\psi_{I',I} : I' \to I$ to be the unique orientation-reversing linear map between $I'$ and $I$. Define
\begin{equation*} \mathscr{E}_{k+1}(I') = \{ \psi_{I',I}(J) : J \in \mathscr{E}_{k+1}(I)\} \quad\text{and}\quad \mathscr{B}_{k+1}(I') = \{ \psi_{I',I}(J) : J \in \mathscr{B}_{k+1}(I)\}\end{equation*}
This time, however, we set $\mathscr{F}_{k+1}(I')$ to be the components of $I' \setminus\bigcup_{i=1}^{l-1}I_i'$ and $\mathscr{N}_{k+1}(I')  = \emptyset$. Define also $f_{k+1}|\overline{I'}$ continuously so that $f_{k+1}|I' = (f_{k+1}|I) \circ \psi_{I',I}$.
\begin{figure}
  \includegraphics[width=.8\textwidth]{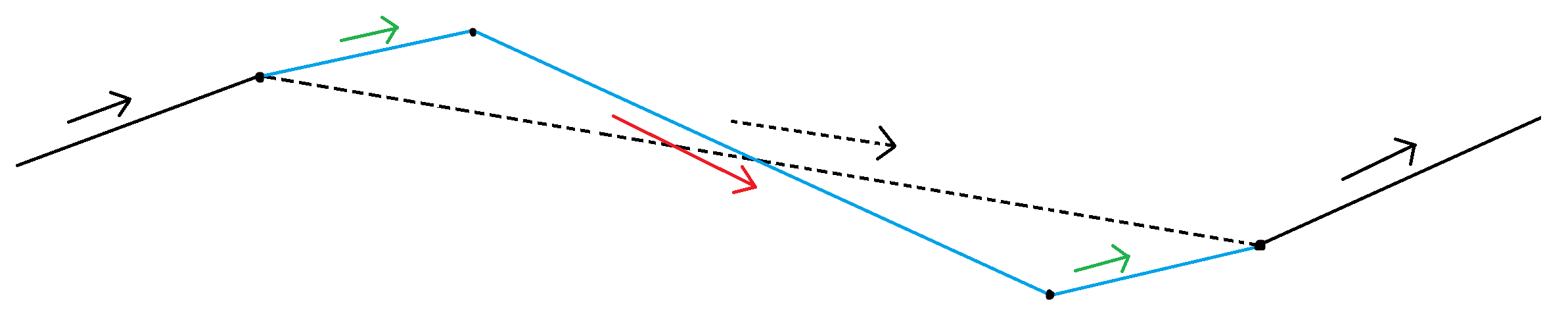}
  \caption{The image of $f_{k+1}(I)$: black arrows denote images of intervals in $\mathscr{E}_{k}\cup\mathscr{B}_k$, green arrows denote images of intervals in $\mathscr{E}_{k+1}$, and red arrows denote images of intervals in $\mathscr{B}_{k+1}$.}
  \label{fig:flatedges}
\end{figure}

\begin{lem}\label{lem:2to1(1)}
For $i=1,2$, let $I_i\in\mathscr{E}_k$ be an interval with at least endpoint having flat image and let $I_i'\in \mathscr{E}_{k+1}(I_i)$. If $f_k(I_1)\neq f_k(I_2)$, then $f_{k+1}(I_1')\cap f_{k+1}(I_2') = \emptyset$.
\end{lem}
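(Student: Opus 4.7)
The plan is to argue by contradiction with a case analysis on the geometric relationship between $\overline{f_k(I_1)}$ and $\overline{f_k(I_2)}$ in $X$. By (P5) and the flat-endpoint hypothesis, each $\overline{f_k(I_i)}$ is a segment $[v_1^{(i)},v_{l_i}^{(i)}]\in\mathscr{L}_k$ with $(v_1^{(i)},v_{l_i}^{(i)})\in\Flat(k)$, where I take $v_1^{(i)}$ to be the flat endpoint and set $\ell^{(i)}=\ell_{k,v_1^{(i)}}$. Unwinding the construction in \textsection\ref{sec:flatedges}, each $f_{k+1}(I_i')$ is an open line segment $(v_j^{(i)},v_{j+1}^{(i)})$ joining consecutive points of $V_{k+1,I_i}$ under the ordering induced by $\ell^{(i)}$. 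Since $f_k(I_1)\neq f_k(I_2)$ and $f_k$ is $2$-to-$1$ on $\bigcup\mathscr{E}_k$ by (P4), the open segments $f_k(I_1),f_k(I_2)$ are disjoint, and Lemma \ref{lem:tridents}(1) then puts $\overline{f_k(I_1)}$ and $\overline{f_k(I_2)}$ in exactly one of two configurations: either they are disjoint, or they meet in precisely one common vertex $p\in V_k$.

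Assume for contradiction that some $q\in f_{k+1}(I_1')\cap f_{k+1}(I_2')$ exists. Using $f_{k+1}(I_i')\subset B(v_1^{(i)},14A^*\rho_kr_0)$ together with $\diam f_{k+1}(I_i')<14A^*\rho_{k+1}r_0<14A^*\rho_kr_0$, I verify that every endpoint of $f_{k+1}(I_1')$ and $f_{k+1}(I_2')$ sits inside $B(v_1^{(j)},30A^*\rho_kr_0)$ for both $j=1,2$. Condition (V5) then places each such endpoint within $\alpha_0\rho_{k+1}r_0$ of both $\ell^{(1)}$ and $\ell^{(2)}$, so Lemma \ref{lem:approx} yields a common linear ordering on this finite collection of $V_{k+1}$-points that agrees with the projections onto either line.

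I then derive the contradiction in each configuration. In the common-endpoint case $\{p\}$, the ``first point to the left or right'' clause in Definition \ref{def:flatpairs} forces the two edges $\overline{f_k(I_1)}$ and $\overline{f_k(I_2)}$ to emanate from $p$ in opposite directions along the common ordering, since otherwise both flat pairs would share both endpoints and $f_k(I_1)=f_k(I_2)$; consequently $V_{k+1,I_1}\setminus\{p\}$ and $V_{k+1,I_2}\setminus\{p\}$ project strictly to opposite sides of $\pi(p)$ on $\ell^{(1)}$, and a convex-combination computation shows that the projection of $q$ would have to satisfy both strict inequalities simultaneously. In the disjoint-closures case, the same ``first-neighbor'' clause rules out an interleaving of the four $V_k$-endpoints $v_1^{(1)},v_{l_1}^{(1)},v_1^{(2)},v_{l_2}^{(2)}$ in the common ordering, so $V_{k+1,I_1}$ and $V_{k+1,I_2}$ project to disjoint intervals on $\ell^{(1)}$ and the projection of $q$ cannot lie in both.

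The main obstacle I anticipate is the bookkeeping across the four sub-configurations of the common endpoint (whether $p$ equals $v_1^{(i)}$ or $v_{l_i}^{(i)}$ for each $i=1,2$) when the two approximating lines $\ell^{(1)}\neq\ell^{(2)}$ differ. In those subcases, translating the ``first-left'' and ``first-right'' statements between the two lines by invoking Lemma \ref{lem:approx}, and verifying carefully on which side of $\pi(p)$ each remaining vertex of $V_{k+1,I_i}$ is forced to project, is the crucial geometric input; the small slack between $14A^*\rho_kr_0$ and the actual distances $|v_1^{(1)}-v_1^{(2)}|$ will need to be absorbed using the tube estimate and the $\rho_k r_0$-separation of $V_k$.
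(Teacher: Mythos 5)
Your proposal is broadly correct, but it takes a noticeably more laborious route than the paper's proof, and misses a simplification that the paper uses to avoid nearly all of your case analysis. The paper's first move is to observe that if $f_{k+1}(I_1')\cap f_{k+1}(I_2')\neq\emptyset$, then in fact $f_{k+1}(I_1')=f_{k+1}(I_2')$: both are \emph{open} segments joining consecutive $V_{k+1}$-points that (by the computation you yourself outline using $q$ and the diameter bound) all lie in a common ball of radius $\lesssim 30A^*\rho_kr_0$ about a flat $V_k$-vertex, and are therefore consistently linearly ordered by Lemma \ref{lem:approx}; two open segments between \emph{consecutive} points of a separated ordered set near a line that meet at any interior point must be the same segment. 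Once one has $f_{k+1}(I_1')=f_{k+1}(I_2')$, the common endpoints lie (inclusively) between the endpoints of $f_k(I_1)$ \emph{and} between the endpoints of $f_k(I_2)$, so the projection intervals of $f_k(I_1)$ and $f_k(I_2)$ overlap nontrivially, which Lemma \ref{lem:tridents}(1) rules out unless $f_k(I_1)=f_k(I_2)$. This is a single clean path with no case split on whether the $k$-level segments share a vertex.

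By contrast, you keep $q$ as an arbitrary intersection point and run a case analysis on the configuration of $\overline{f_k(I_1)}$ and $\overline{f_k(I_2)}$ (disjoint vs.\ sharing a vertex $p$), then in the shared-vertex case split further over which endpoint of each $I_i$ is the flat one and which line governs its ordering. I checked that each branch does close — e.g.\ in the disjoint case, overlapping projection intervals would put a $V_k$-vertex strictly between the endpoints of a flat pair, contradicting the ``first neighbor'' clause; in the shared-vertex case, after showing the two edges go to opposite sides of $p$ you get $\pi(q)<\pi(p)$ and $\pi(q)>\pi(p)$ simultaneously. But the justification you give for the ``opposite directions'' step (``otherwise both flat pairs would share both endpoints'') only covers the sub-case in which $p$ is the flat vertex of both edges; when $p$ is non-flat and the other endpoints $w_1,w_2$ are flat, you need a separate argument ($p$ is the first neighbor of $w_1$ and of $w_2$, so if $w_1,w_2$ lay on the same side of $p$ one would lie strictly between the other and $p$), and in sub-cases (2)--(4) you must also verify that all the $V_k$-endpoints you compare lie in the $30A^*\rho_kr_0$ ball about both $v_1^{(1)}$ and $v_1^{(2)}$; the naive triangle inequality gives only $\approx 42A^*\rho_k r_0$ for the far endpoint, and to close the gap you need something like Lemma \ref{lem:flatdiam} (the diameter bound for $V$-points lying between a flat pair), which implicitly assumes $\alpha_0\leq\alpha_1$ and is a forward reference. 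You flag these difficulties yourself, which is honest, but they are real. In short: your approach is a genuinely different decomposition of the argument and can be made rigorous, but the paper's move of first upgrading the nonempty intersection to equality of the level-$(k+1)$ segments is the step that lets it avoid all the bookkeeping you would otherwise have to do; you should consider adopting it.
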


\begin{proof} Suppose that $I_1' \in \mathscr{E}_{k+1}(I_1)$, $I_2' \in \mathscr{E}_{k+1}(I_2) $, and $f_{k+1}(I_1')\cap f_{k+1}(I_2') \neq \emptyset$. Because $f_{k+1}(I_1')$ and $f_{k+1}(I_2')$ do not include their endpoints (since intervals in $\mathscr{E}_{k+1}$ are open), we conclude that $f_{k+1}(I_1')=f_{k+1}(I_2')$ by Lemma \ref{lem:tridents}. Now, the endpoints of the four intervals $f_{k}(I_1)$, $f_k(I_2)$, $f_{k+1}(I_1')$, and $f_{k+1}(I_2')$ lie in the $30A^*\rho_k r_0$ neighborhood of any flat endpoint of $f_{k}(I_1)$ or $f_{k}(I_2)$. In particular, the endpoints of the four intervals are linearly ordered by Lemma \ref{lem:approx}, and the endpoints of $f_{k+1}(I_i')$ lie between the endpoints of $f_{k}(I_i)$ by the construction of $\mathscr{E}_{k+1}(I_i)$. Because $f_{k+1}(I_1')=f_{k+1}(I_2')$, this forces $f_k(I_1)=f_k(I_2)$.
\end{proof}

\subsection{Step $k+1$: intervals in $\mathscr{E}_k$ with no endpoints with flat image}\label{sec:noflatedges}
Suppose that $I = (a_I,b_I) \in \mathscr{E}_k$ is such that $\a_{k,f_k(a_I)} \geq \a_0$ and $\a_{k,f_k(b_I)} \geq \a_0$. Then set $\mathscr{E}_{k+1}(I) = \emptyset$, $\mathscr{B}_{k+1}(I) = \{I\}$, $\mathscr{N}_{k+1}(I) = \emptyset$, $\mathscr{F}_{k+1}(I) = \emptyset$, and $f_{k+1}|I = f_k|I$. In other words, edge intervals with no endpoints with flat image become bridge intervals and remain bridge intervals for the rest of the construction.

\subsection{Step $k+1$: intervals in $\mathscr{F}_{k}$}\label{sec:fixed}
For any $I\in \mathscr{F}_k$ we set $\mathscr{E}_{k+1}(I) = \emptyset$, $\mathscr{B}_{k+1}(I) = \emptyset$, $\mathscr{F}_{k+1}(I) = \{I\}$, $\mathscr{N}_{k+1}(I) =\emptyset$ and we set $f_{k+1}|I = f_k|I$. In other words, frozen point intervals in $\mathscr{F}_k$ remain frozen for the rest of the construction.

\subsection{Step $k+1$: intervals in $\mathscr{N}_k$ with flat image}\label{sec:flatpoints}
We now consider the intervals $I \in \mathscr{N}_k$ for which $\a_{k,f_{k}(I)} < \a_0$. If no such interval exists we proceed to \textsection\ref{sec:noflatpoints}. Assume now that such intervals exist.
Let $I$ be such an interval and let $\ell$ be the approximating line for $(k,f_k(I))$. We consider three cases.

\subsubsection{Non-terminal vertices}\label{sec:nonterm} Suppose that there exist distinct $v,v' \in V_k\setminus \{f_k(I)\}$ such that $(f_k(I),v)$ and $(f_k(I),v')$ are in $\Flat(k)$.
By (P5), Lemma \ref{lem:P5} and the induction step, there exist $J, J' \in \mathcal{E}_k$ such that $f_k(I)$ and $v$ are the endpoints of $f_k(J)$, while $f_k(I)$ and $v'$ are the endpoints of $f_k(J')$. Hence, all points of $V_{k+1}$ between $v$ and $v'$ are contained in the image of $f_{k+1}(J \cup J')$ defined in \textsection\ref{sec:flatedges}. Set $\mathscr{E}_{k+1}(I) = \emptyset$, $\mathscr{B}_{k+1}(I) = \emptyset$, $\mathscr{N}_{k+1}(I) = \{I\}$, $\mathscr{F}_{k+1}(I) = \emptyset$ and $f_{k+1}|I = f_k|I$.

\subsubsection{1-sided terminal vertices}\label{sec:1-term}  Suppose that there exists unique $v\in V_{k}\setminus \{f_{k}(I)\}$ such that $(f_k(I),v) \in \Flat(k)$. Fix an orientation for $\ell$ so that $v$ lies to the left of $f_k(I)$. As in \textsection\ref{sec:nonterm}, the points of $V_{k+1}$ that lie between $f_k(I)$ and $v$ are all contained in $f_{k+1}(J)$ for some $J\in \mathscr{E}_k$. Let $V_{k+1,I}$ denote the set that includes $f_k(I)$ and all points in $V_{k+1}\cap B(f_k(I),C^* \rho_{k+1} r_0)$ that lie to the right of $f_k(I)$. Enumerate $V_{k+1,I}= \{v_1,\dots,v_l\}$ from left to right. That is, $v_1 = f_k(I)$ and $v_l$ is the rightmost point of $V_{k+1,I}$

\begin{rem}\label{rem:lleq3}$\card{V_{k+1,I}}\leq 1+1.1C^*$ by Lemma \ref{rem:card}.\end{rem}

If $V_{k+1,I} = \{f_k(I)\}$, then set $\mathscr{E}_{k+1}(I) = \emptyset$, $\mathscr{B}_{k+1}(I) = \emptyset$, $\mathscr{N}_{k+1}(I) = \{I\}$, $\mathscr{F}_{k+1}(I) = \emptyset$ and $f_{k+1}|I = f_k|I$.

If $V_{k+1,I} \neq \{f_k(I)\}$, then let $G_{k+1,I}$ be the graph with vertices the points in $V_{k+1,I}$ and edges the segments $\{[v_1,v_2], \dots, [v_{l-1},v_l]\}$.
That is, $G_{k+1,I}$ forms a simple polygonal arc to the right of $f_k(I)$ joining $f_k(I)$ with $v_l$. Let $\mathcal{I}$ and $g$  be the collection and map given by Proposition \ref{prop:graph} for $\D = I$, $G=G_{k+1,I}$ and $v = f_k(I)$. For each $v'\in V_{k+1,I}$ fix a component of $I\setminus\bigcup\mathscr{I}_{k+1}(I)$ that is mapped onto $v'$ and let $\mathcal{N}$ be the collection of these components. Set $\mathscr{E}_{k+1}(I) = \mathcal{I}$, $\mathscr{B}_{k+1}(I) = \emptyset$, $\mathscr{N}_{k+1}(I) = \mathcal{N}$, and define $\mathscr{F}_{k+1}(I)$ to be the set of components of $I\setminus\bigcup(\mathscr{E}_{k+1}(I)\cup\mathscr{B}_{k+1}(I)\cup\mathscr{N}_{k+1}(I))$. Set  $f_{k+1}|I = g$. See the left half of Figure \ref{fig:flatpoints} for the image of $f_{k+1}(I)$.

\subsubsection{2-sided terminal vertices}\label{sec:2-term} Suppose that there exists no point in $V_{k} \setminus\{f_k(I)\}$ such that $(f_k(I),v)\in\Flat(k)$. That is, $V_k\cap B(f_k(I),14A^* \rho_k r_0) = \{f_k(I)\}$. Set
\begin{equation*} V_{k+1,I} = V_{k+1}\cap B(f_k(I),C^* \rho_{k+1} r_0).\end{equation*} Fix an orientation for $\ell$ and enumerate $V_{k+1,I} = \{v_1,\dots,v_l\}$ from left to right.

\begin{rem}\label{rem:lleq5}
$\card(V_{k+1,I})\leq 1+ 2.2C^*$ by Lemma \ref{rem:card}.
\end{rem}

If $V_{k+1,I} =\{f_k(I)\}$, then set $\mathscr{E}_{k+1}(I) = \emptyset$, $\mathscr{B}_{k+1}(I) = \emptyset$, $\mathscr{N}_{k+1}(I) = \{I\}$, $\mathscr{F}_{k+1}(I) = \emptyset$ and $f_{k+1}|I = f_k|I$. If $V_{k+1,I} \neq \{f_k(I)\}$, then let $G_{k+1,I}$ be the graph with vertices the points in $V_{k+1,I}$ and edges the segments $\{[v_1,v_2], \dots, [v_{l-1},v_l]\}$.  The remainder of the construction proceeds in the same way as in \textsection\ref{sec:1-term}. See the right half of Figure \ref{fig:flatpoints} for the image of $f_{k+1}(I)$.
\begin{figure}
\includegraphics[width=.8\textwidth]{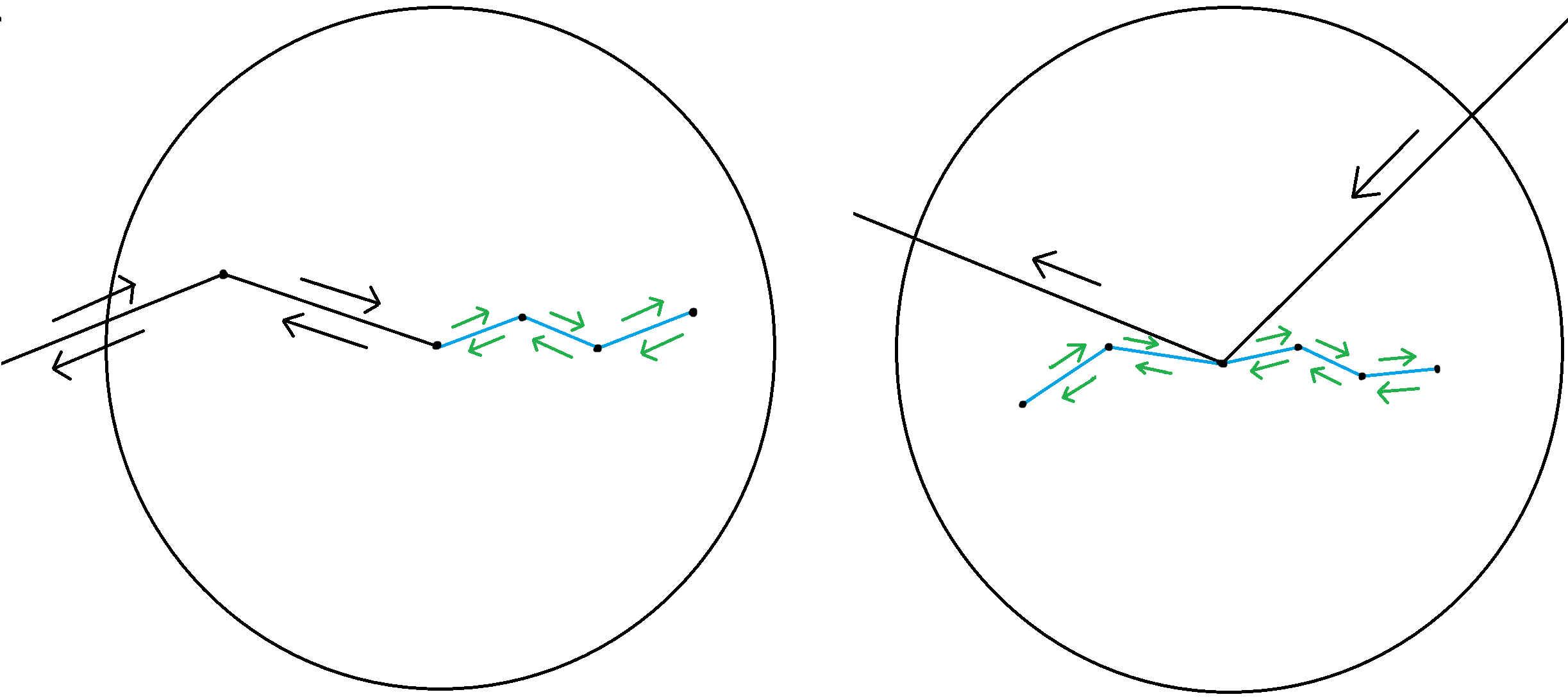}
  \caption{The image of $f_{k+1}(I)$: on the left, we have $f_{k+1}(I)$, where $I$ is as in \textsection\ref{sec:1-term}; on the right, we have $f_{k+1}(I)$, where $I$ is as in \textsection\ref{sec:2-term}.}
  \label{fig:flatpoints}
\end{figure}

\begin{lem}\label{lem:2to1(2)}
Let $I_1,I_2\in \mathscr{N}_k$ be distinct intervals as in \textsection\ref{sec:flatpoints} and let $I_3 \in \mathscr{E}_k$ be an interval as in \textsection\ref{sec:flatedges}. If $I_i' \in \mathscr{E}_{k+1}(I_i)$ for $i=1,2,3$, then the segments $f_{k+1}(I_1')$, $f_{k+1}(I_2')$, and $f_{k+1}(I_3')$ are mutually disjoint.
\end{lem}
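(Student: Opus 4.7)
The plan is to adapt the strategy of the proof of Lemma \ref{lem:2to1(1)} via a case analysis keyed on the relative positions of the three flat vertices involved. Write $v_1 = f_k(I_1)$, $v_2 = f_k(I_2)$, and let $(u,w) \in \Flat(k)$ with $\alpha_{k,u} < \alpha_0$ be such that $f_k(I_3) = [u,w]$. By (P6), $v_1 \neq v_2$. Since $\mathscr{E}_{k+1}(I_i) \neq \emptyset$ for $i=1,2$, the classification in \S\ref{sec:flatpoints} forces each $v_i$ to be 1-sided (\S\ref{sec:1-term}) or 2-sided (\S\ref{sec:2-term}) terminal; in particular $\alpha_{k,v_1},\alpha_{k,v_2},\alpha_{k,u}<\alpha_0$, and all endpoints of the segments $\sigma_i := f_{k+1}(I_i')$ lie in $V_{k+1}$.

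First I would reduce to the case where all relevant vertices lie in a single flat neighborhood. Since $\sigma_i \subset B(v_i, C^*\rho_{k+1}r_0)$ for $i=1,2$ and $\sigma_3 \subset B(u, 14A^*\rho_kr_0)$, if two of the $\sigma_j$'s meet then the distance between the corresponding flat vertices is at most $O(A^*\rho_kr_0)$; hence all of $v_1,v_2,u,w$ lie in $B(u,30A^*\rho_kr_0)$. By (V5) and Lemma \ref{lem:approx}, projections onto any of $\ell_{k,v_1},\ell_{k,v_2},\ell_{k,u}$ give a consistent linear order of the $V_{k+1}$ points in this ball. This supplies the bookkeeping tool analogous to the one used in Lemma \ref{lem:tridents}.

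The case analysis exploits the observation that each construction in \S\ref{sec:curve} fills a specific gap in the $V_k$ lattice along the linear order: \S\ref{sec:flatedges} fills the segment between $u$ and $w$, while \S\ref{sec:1-term} and \S\ref{sec:2-term} fill short intervals adjacent to a terminal vertex $v_i$, on the side(s) where $v_i$ has no flat $V_k$-neighbor. For $\sigma_1\cap\sigma_3\ne\emptyset$: if $v_1\in\{u,w\}$, then $v_1$ has the other endpoint of $[u,w]$ as a flat neighbor, so $v_1$ is 1-sided terminal with $\sigma_1$ placed on the side of $v_1$ opposite to $[u,w]$; if $v_1\notin\{u,w\}$, then $(u,w)\in\Flat(k)$ forbids $v_1$'s projection from lying strictly between $\pi(u)$ and $\pi(w)$, so $v_1$ lies to one side of $[u,w]$ in the linear order and its (necessarily unique) flat neighbor sits between $v_1$ and $[u,w]$, pushing $\sigma_1$ away from $[u,w]$; the 2-sided terminal case for $v_1$ is excluded because $u$ would otherwise be a forbidden $V_k$-vertex in $B(v_1,14A^*\rho_kr_0)$. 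An analogous argument handles $\sigma_2\cap\sigma_3$. For $\sigma_1\cap\sigma_2\ne\emptyset$: either $(v_1,v_2)\in\Flat(k)$, in which case mutual 1-sided terminality places $\sigma_1,\sigma_2$ on opposite sides of $v_1,v_2$; or some $V_k$-vertex lies between $v_1$ and $v_2$ in the linear order and acts as a flat neighbor for each, pushing both segments outward; the 2-sided case is again excluded by (V3).

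The main obstacle is the bookkeeping required by the asymmetry of $\Flat(k)$ together with the mismatch of scales $C^*\rho_{k+1}r_0$ versus $14A^*\rho_kr_0$: when $\xi_2$ is close to $1$, a $V_k$-vertex can lie near but not at an endpoint of $[u,w]$, and one must verify using (V3) and Lemma \ref{lem:approx} that the \S\ref{sec:flatpoints} construction still steers the small new segments into the correct half-gap. Once this is in place, the disjointness of $\sigma_1,\sigma_2,\sigma_3$ follows from the corresponding disjointness of their projections onto a common approximating line, together with the observation that consecutive flat gaps along the linear order are separated by at least one $V_k$-vertex.
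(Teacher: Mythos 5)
Your proposal follows the paper's general strategy — exploiting the consistent linear ordering of nearby $V_{k+1}$ points furnished by (V5) and Lemma \ref{lem:approx}, together with the placement rules of the constructions in \S\ref{sec:flatpoints} and \S\ref{sec:flatedges} — but it executes via a more elaborate direct case analysis than the paper's template, and this is where the gaps appear. The paper's proof of Lemma \ref{lem:2to1(1)} has a two-step shape: first, if two of the open segments intersect, then because each joins consecutive $V_{k+1}$ points in the same linear order, they must \emph{coincide} (the idea underlying Lemma \ref{lem:tridents}); second, one derives a contradiction from the fact that a single common segment would have to satisfy the placement constraints of both parent constructions simultaneously. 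You skip the coincidence reduction and argue about segment positions directly, which multiplies cases and creates the boundary issues you flag at the end.

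Concretely, there are two gaps in the boundary bookkeeping. First, your exclusion of the 2-sided terminal case reads ``$u$ would otherwise be a forbidden $V_k$-vertex in $B(v_1,14A^*\rho_k r_0)$,'' but $\sigma_1\cap\sigma_3\neq\emptyset$ only forces $|v_1-u|<C^*\rho_{k+1}r_0+14A^*\rho_k r_0$, which can exceed $14A^*\rho_k r_0$; so a 2-sided terminal $v_1$ is not actually excluded this way (though disjointness still holds in that sub-case because $\sigma_1\subset B(v_1,C^*\rho_{k+1}r_0)$ with $C^*\rho_{k+1}r_0 \le (A^*/4)\rho_k r_0$ cannot reach $[u,w]$, which in the 2-sided case sits at projected distance $\ge 14A^*\rho_k r_0 - O(\alpha_0)$ from $v_1$). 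Second, when $v_1\notin\{u,w\}$ you claim $v_1$'s flat neighbor must sit between $v_1$ and $[u,w]$, but this fails precisely when $u$ lies outside $B(v_1,14A^*\rho_kr_0)$; again the conclusion survives via a too-far-to-reach estimate, but the mechanism is not the one you state. Adopting the paper's coincidence step would have avoided both issues: once $\sigma_1=\sigma_3$, its endpoints lie in $B(v_1,C^*\rho_{k+1}r_0)$ on $v_1$'s terminal side \emph{and} strictly between $u$ and $w$ in the order induced by $\ell_{k,u}$, and the two constraints are incompatible in all arrangements of $v_1$ relative to $\{u,w\}$, with the single observation (worth making explicit, as you do) that when $v_1\in\{u,w\}$ the asymmetry of $\Flat(k)$ is harmless since the linear ordering forces $(w,u)\in\Flat(k)$ whenever $(u,w)\in\Flat(k)$ and $\alpha_{k,w}<\alpha_0$.
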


\begin{proof} This follows from similar arguments employed in the proof of Lemma \ref{lem:2to1(1)}.
\end{proof}

\subsection{Step $k+1$: intervals in $\mathscr{N}_k$ with non-flat image}\label{sec:noflatpoints}
In this final part of the algorithm, we define $\mathscr{E}_{k+1}(I)$, $\mathscr{B}_{k+1}(I)$, $\mathscr{N}_{k+1}(I)$, $\mathscr{F}_{k+1}(I)$ and $f_{k+1}|I$ for those $I\in\mathscr{N}_{k}$ such that $\a_{k,f_k(I)} \geq \a_0$. Let $\{I_1,\dots,I_n\} $ be an enumeration of such intervals. The construction in this case resembles that in Step $0$.

We start with $I_1$. Let $V_{k+1,I_1}$ be the set of points in $V_{k+1}\cap B(f_k(I_1),C^* \rho_{k+1} r_0)$ that are not images of some $I\in \mathscr{N}_{k+1}$ defined in \textsection\ref{sec:flatedges} and \textsection\ref{sec:flatpoints}. Let $\mathscr{L}_{k+1,I_1}$ be the set of edges in $\mathscr{L}_{k+1}$ that have an endpoint in $V_{k+1,I_1}$. Then define $\tilde{V}_{k+1,I_1}$ to be the union of $V_{k+1,I}$ and the set of all endpoints of edges in $\mathscr{L}_{k+1,I_1}$. By the triangle inequality, the set $\tilde{V}_{k+1,I_1}$ is subset of $B(v,15A^*\rho_{k+1} r_0)$. Finally, let $G_{k+1,I_1}$ denote the graph with vertices $\tilde{V}_{k+1,I_1}$ and with edges $\mathscr{L}_{k+1,I_1}$. We note that the graph $G_{k+1,I_1}$ may be connected or disconnected.

If $\tilde V_{k+1,I_1} = \{f_k(I_1)\}$, then we simply set $\mathscr{E}_{k+1}(I_1) = \emptyset$, $\mathscr{B}_{k+1}(I_1) = \emptyset$, $\mathscr{N}_{k+1}(I_1) = \{I_1\}$, $\mathscr{F}_{k+1}(I_1) = \emptyset$ and $f_{k+1}|I_1 = f_k|I_1$.

For the remainder of \S\ref{sec:noflatpoints}, let us assume that $\tilde V_{k+1,I_1}$ contains at least two points. Let $G_{k+1,I_1}^{(1)},\dots,G_{k+1,I_1}^{(l_1)}$ denote
the connected components of $G_{k+1,I_1}$, labeled so that $G_{k+1,I_1}^{(1)}$ is the component containing $f_k(I_1)$. There are two cases.

\subsubsection{Connected graph}\label{sec:conngraph} Suppose that $G_{k+1,I_1}$ is connected. Apply Proposition \ref{prop:graph} for $\D=I_1$, $G=G_{k+1,I_1}$ and $v=f_k(I_1)$ to obtain a collection of intervals $\mathcal{I}$ and a continuous map $g$.
If $v' \in V_{k+1,I_1}$, then $v'$ has valence at most 2 by Lemma \ref{lem:tridents}. Hence, by Proposition \ref{prop:graph}, there exists a component $J_{v'}$ of $g^{-1}(v')$ with the following property: \begin{quotation}If $v'$ is the endpoint of some $e \in \mathscr{L}_{k+1,I_1}$, then there exists $I \in \mathcal{I}$ such that $g(I) = e$ and $\overline{I}\cap J_{v'} \neq \emptyset$.\end{quotation} Let $\mathcal{N}$ be the collection of the fixed intervals $J_{v'}$ where $v' \in \tilde{V}_{k+1,I_1}$. Now define a set $\mathcal{E} \subseteq \mathcal{I}$ with two rules:
\begin{enumerate}
\item If $e\in \mathscr{L}_{k+1,I_1}$ has both its endpoints in $V_{k+1,I_1}$ then both components of $g^{-1}(e)$ are in $\mathcal{E}$.
\item If $e\in \mathscr{L}_{k+1,I_1}$ has one endpoint $v' \in V_{k+1,I_1}$ and another in $\tilde{V}_{k+1,I_1}\setminus V_{k+1,I_1}$, then only one component of $g^{-1}(e)$ (one that intersects $J_{v'}$) is in $\mathcal{E}$.
\end{enumerate}

Set $\mathscr{E}_{k+1}(I_1) = \mathcal{E}$, $\mathscr{B}_{k+1}(I_1) = \mathcal{I}\setminus\mathcal{E}$, $\mathscr{N}_{k+1}(I_1) = \mathcal{N}$, and define  $\mathscr{F}_{k+1}(I_1)$ to be the set of components of $I_1 \setminus\bigcup(\mathscr{E}_{k+1}(I_1)\cup\mathscr{B}_{k+1}(I_1)\cup\mathscr{N}_{k+1}(I_1))$ and $f_{k+1}|I_1 = g$.

\subsubsection{Several components} Suppose that $l_1\geq 2$; that is, $G_{k+1,I_1}$ is disconnected.  See Figure \ref{fig:noflatpoints} for the image of $f_{k+1}(I)$. We will add some edges which will make the graph connected and the preimage of these edges will be bridge intervals. To this end, for each $j\in \{2,\dots,l_1\}$ fix some point $v_j \in V_{k+1,I_1}\cap G_{k+1,I_1}^{(j)}$. Let $\{I_{1,1},\dots, I_{1,2l_1-2}\}$ be a collection of open intervals, enumerated according to the orientation of $[0,1]$, such that their closures are mutually disjoint and are contained in the interior of $I_1$. Let also $\{J_{1,1},\dots,J_{1,2l_1-1}\}$ be the components of $I_1 \setminus \bigcup_{j=1}^{2l_1-2}I_{1,j}$ enumerated according to the orientation of $[0,1]$.

Working as in \textsection\ref{sec:conngraph}, we obtain a family $\mathcal{I}_1$ of open intervals in $J_{1,1}$, a subset $\mathcal{E}_1 \subset \mathcal{I}_1$, a family $\mathcal{N}_1$ containing some components of $J_{1,1} \setminus \bigcup\mathcal{I}_1$ and a continuous map $g_1:J_{1,1} \to G_{k+1,I_1}^{(1)}$. Similarly, for each $j\in \{2,\dots,l_1\}$ we obtain a family $\mathcal{I}_{j}$ of open intervals in $J_{1,2(j-1)}$, a subset $\mathcal{E}_j \subset \mathcal{I}_{j}$, a family $\mathcal{N}_j$ containing some components of $J_{1,2(j-1)} \setminus \bigcup\mathcal{I}_j$ and a continuous map $g_j:J_{1,2(j-1)} \to G_{k+1,I_1}^{(j)}$.
There exists a continuous map $g:I_1 \to X$ that extends the maps $g_j$ such that
\begin{enumerate}
\item $g(J_{1,2j+1}) = f_k(I_1)$ for each $j\in \{1,\dots,l_1-1\}$;
\item $g|I_{1,j}$ is affine for all $j\in\{1,\dots,2l_1-1\}$ and $g(I_{1,2j-1}) = g(I_{1,2j}) = [v_j,f_k(I_1)]$ for all $j\in\{1,\dots,l_1-1\}$.
\end{enumerate}

Define edge intervals $\mathscr{E}_{k+1}(I_1) = \bigcup_{j=1}^{l_1}\mathcal{E}_j$ and bridge intervals $\mathscr{B}_{k+1}(I_1) = \bigcup_{j=1}^{2l_1-2}I_{1,j} \cup \bigcup_{j=1}^{l_1}(\mathcal{I}_j\setminus\mathcal{E}_j)$. Set $\mathscr{N}_{k+1}(I_1) = \bigcup_{j\in\{1,2,4,\dots,2l_1\}}\mathcal{N}_j$ and define $\mathscr{F}_{k+1}(I_1)$ to be the set of components of $I_1\setminus \bigcup(\mathscr{E}_{k+1}(I_1)\cup\mathscr{B}_{k+1}(I_1)\cup\mathscr{N}_{k+1}(I_1))$. Also, set $f_{k+1}|I_1 = g$.
\begin{figure}
\includegraphics[width=.5\textwidth]{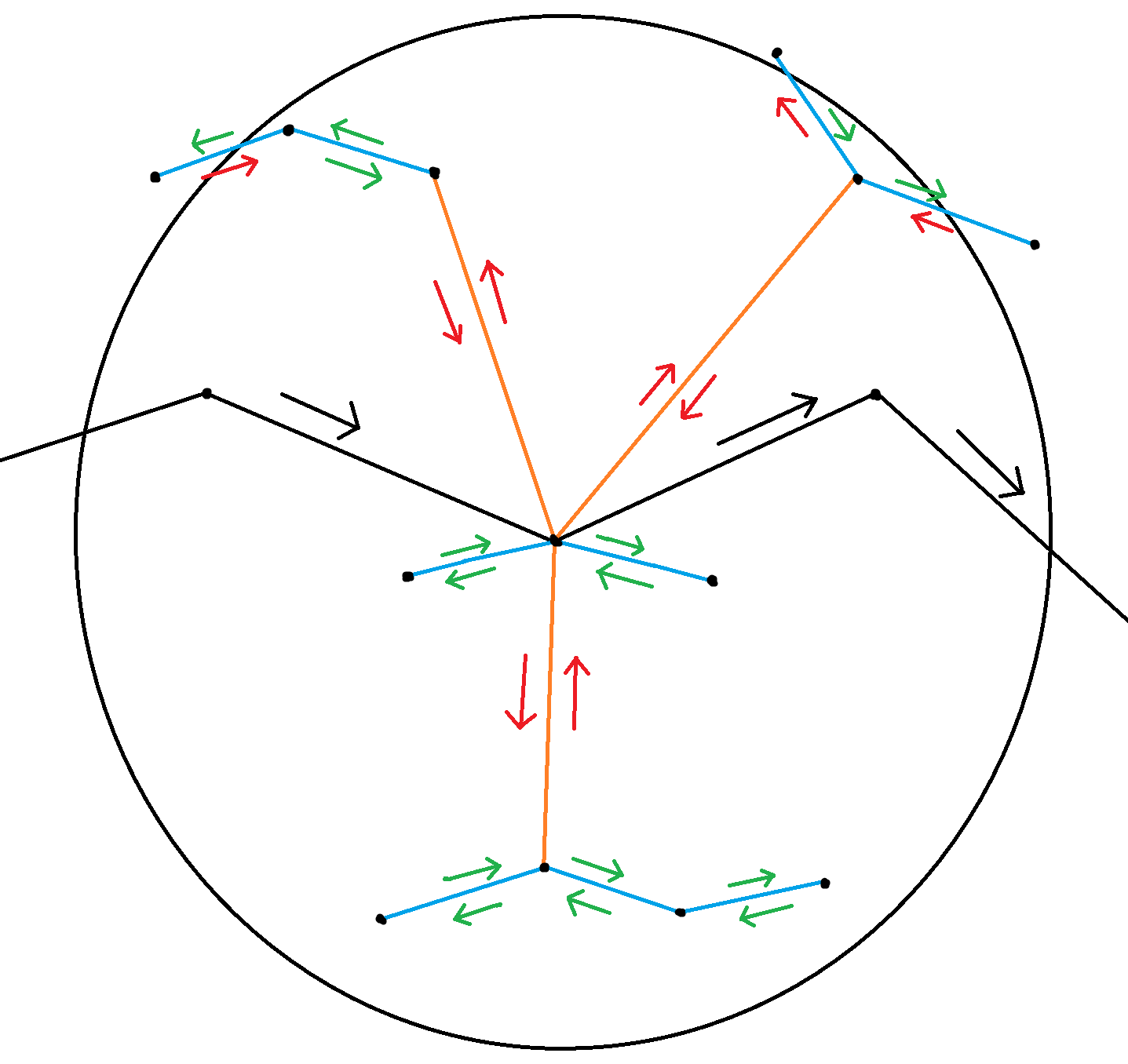}
  \caption{The image of $f_{k+1}(I)$: Blue segments represent edges in $\mathscr{L}_{k+1}$, black arrows represent images of intervals in $\mathscr{E}_k\cup\mathscr{B}_k$, green arrows represent images of intervals in $\mathscr{E}_{k+1}(I)$ and red arrows represent images of intervals in $\mathscr{B}_{k+1}(I)$.}
  \label{fig:noflatpoints}
  \end{figure}

\subsubsection{Inductive hypothesis} Inductively, suppose that for $i\in\{1,\dots,r-1\}$ we have defined $\mathscr{E}_{k+1}(I_i)$, $\mathscr{B}_{k+1}(I_i)$, $\mathscr{F}_{k+1}(I_i)$, $\mathscr{N}_{k+1}(I_i)$ and $f_{k+1}|I_i$. We work now for $I_r$. Let $V_{k+1,I_r}$ be the set of points in $V_{k+1}\cap B(f_k(I_1),C^* \rho_{k+1}r_0)$ that are not images of some $I\in \mathscr{N}_{k+1}$ defined in \textsection\ref{sec:flatedges} or in \textsection\ref{sec:flatpoints} or for the previous intervals $I_1,\dots,I_{r-1}$. Let $\mathscr{L}_{k+1,I_r}$ be the set of edges in $\mathscr{L}_{k+1}$ that have an endpoint in $V_{k+1,I_r}$ and let $\tilde{V}_{k+1,I_r}$ be the set of endpoints of edges in $\mathscr{L}_{k+1,I_r}$. Let now $G_{k+1,I_r}$ be the (not necessarily connected) graph with vertices the set $\tilde{V}_{k+1,I_r}$ and with edges the set $\mathscr{L}_{k+1,I_r}$. To continue, repeat the procedure carried out for $I_1$ \emph{mutatis mutandis}.

\begin{rem}\label{rem:neighbors}
By the choice of set $\mathcal{N}$ for $I_1$, it follows that if $I\in\mathscr{N}_{k+1}(I_1)$ and if $f_{k+1}(I)$ is the endpoint of $f_{k+1}(J)$ for some $J\in\mathscr{E}_{k+1}(I_1)$, then there exists $J'\in \mathscr{E}_{k+1}(I_1)$ (possibly $J'=J$) such that $f_{k+1}(J) = f_{k+1}(J')$ and $\overline{I}\cap\overline{J'} \neq \emptyset$. The same is true for all $I_j$.
\end{rem}

\begin{lem}\label{lem:2to1(3)}
Let $J_1\in \mathscr{N}_k$ be as in \textsection\ref{sec:noflatpoints}, let $J_2\in\mathscr{E}_k$ be as in \textsection\ref{sec:flatedges}, and let $J_3 \in \mathscr{N}_k$ be as in \textsection\ref{sec:flatpoints}. If $J_i' \in \mathscr{E}_{k+1}(J_i)$ for $i=1,2,3$, then the segments $f_{k+1}(J_1')$, $f_{k+1}(J_2')$, $f_{k+1}(J_3')$ are mutually disjoint.
\end{lem}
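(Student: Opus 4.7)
The plan is to establish the three pairwise disjointness statements $f_{k+1}(J_i')\cap f_{k+1}(J_j')=\emptyset$ by contradiction, following the template of Lemmas \ref{lem:2to1(1)} and \ref{lem:2to1(2)}. The pair $(J_2',J_3')$ reduces to exactly the situation of Lemma \ref{lem:2to1(2)}: both segments have endpoints in $V_{k+1}$ sitting inside a ball where a scale-$k$ flat approximating line (either $\ell_{k,v_*}$ for a flat endpoint $v_*$ of $J_2$, or $\ell_{k,f_k(J_3)}$) is available; Lemma \ref{lem:approx} linearly orders the endpoints, the open $\mathscr{E}_{k+1}$-segments overlap only if the closed segments coincide (by consecutiveness of their endpoints in $V_{k+1}$ within the respective flat construction), and coincidence is ruled out by the disjointness of the vertex pools assigned to distinct intervals in the flat constructions.

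The new content is disjointness of $f_{k+1}(J_1')$ from each of $f_{k+1}(J_2')$, $f_{k+1}(J_3')$. The decisive structural input is that, by \S\ref{sec:noflatpoints}, $f_{k+1}(J_1')$ is an edge of $\mathscr{L}_{k+1,J_1}\subset\mathscr{L}_{k+1}$, so I write $f_{k+1}(J_1')=[v,v']$ with $(v,v')\in\Flat(k+1)$. In particular $\alpha_{k+1,v}<\alpha_0\leq 1/16$, and by (V5), applied at scale $k+1$ together with $V_{k+1}\subset V_{k+2}$, every $x\in V_{k+1}\cap B(v,30A^*\rho_{k+1}r_0)$ lies within $\alpha_{k+1,v}\rho_{k+2}r_0$ of $\ell_{k+1,v}$. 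Suppose for contradiction that $f_{k+1}(J_1')\cap f_{k+1}(J_j')\neq\emptyset$ for some $j\in\{2,3\}$, and write $f_{k+1}(J_j')=[w_1,w_2]$. Since each of the two segments has diameter at most $14A^*\rho_{k+1}r_0$, their union and all four endpoints $v,v',w_1,w_2\in V_{k+1}$ lie in $B(v,30A^*\rho_{k+1}r_0)$ and are linearly ordered by Lemma \ref{lem:approx}.

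Because $v'$ is, by definition of $\Flat(k+1)$, the first point of $V_{k+1}\cap B(v,14A^*\rho_{k+1}r_0)$ in its direction relative to $\ell_{k+1,v}$, and because $w_1,w_2$ are consecutive points in the flat construction for $J_j$, no endpoint of either segment lies strictly between the endpoints of the other on the ordered line. Since $\mathscr{E}_{k+1}$-intervals are open, the open-segment images can intersect only if the closed segments overlap on a positive-length subinterval; combined with the previous sentence, this forces $\{v,v'\}=\{w_1,w_2\}$. But $w_1$ and $w_2$ are images of intervals in $\mathscr{N}_{k+1}$ created in \S\ref{sec:flatedges} or \S\ref{sec:flatpoints}, while the set $V_{k+1,J_1}$ is defined in \S\ref{sec:noflatpoints} precisely to exclude such images. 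Since $f_{k+1}(J_1')\in\mathscr{L}_{k+1,J_1}$ requires at least one of $v,v'$ to lie in $V_{k+1,J_1}$, we reach the desired contradiction.

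The hard part will be the bookkeeping needed to confirm that every $w_i$ arising as an endpoint of $f_{k+1}(J_j')$ is indeed the image of an $\mathscr{N}_{k+1}$-interval created in a flat construction (including shared-endpoint cases at interfaces with neighboring scale-$k$ intervals), and that the order in which \S\S\ref{sec:bridges}--\ref{sec:noflatpoints} processes intervals guarantees these exclusions are already in place by the time $J_1$ is handled; this reduces to carefully reading off what property (P6) assigns to each vertex at stage $k+1$ in the sub-algorithms.
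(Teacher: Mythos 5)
Your proposal is correct and follows the same overall strategy as the paper. For the $(J_2',J_3')$ pair you reduce to Lemma \ref{lem:2to1(2)}, exactly as the paper does. For disjointness of $f_{k+1}(J_1')$ from $f_{k+1}(J_j')$ ($j=2,3$), the paper cites Lemma \ref{lem:tridents} directly for the ``coincide-or-disjoint'' dichotomy and then invokes the exclusion built into the definition of $\mathscr{L}_{k+1,J_1}$; you re-derive the dichotomy from Lemma \ref{lem:approx} and the consecutiveness structure of $\Flat(k+1)$, then invoke the same exclusion via the definition of $V_{k+1,J_1}$. The ``hard part'' you flagged at the end is not a genuine gap: both endpoints of $f_{k+1}(J_2')$ are images of intervals in $\mathscr{N}_{k+1}(I)$ for the designated $I$ of the pair $\{I,I'\}$ in \S\ref{sec:flatedges} (even when $J_2=I'$), and both endpoints of $f_{k+1}(J_3')$ are images of intervals in $\mathscr{N}_{k+1}(J_3)$ chosen in \S\ref{sec:flatpoints}; since \S\S\ref{sec:flatedges} and \ref{sec:flatpoints} are processed before \S\ref{sec:noflatpoints}, these vertices are already excluded from every $V_{k+1,I_r}$ by construction, which is exactly what your argument needs. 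So your proof is a slightly more spelled-out version of the paper's.
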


\begin{proof} By Lemma \ref{lem:2to1(2)} we know that $f_{k+1}(J_2')$ and $f_{k+1}(J_3')$ are disjoint. Fix an interval $J_1\in \mathscr{N}_k$ as in  \textsection\ref{sec:noflatpoints}. Suppose that either $J_2 \in \mathscr{E}_k$ is as in  \textsection\ref{sec:flatedges} or $J_2 \in \mathscr{N}_k$ is as in \textsection\ref{sec:flatpoints}. Let $J_1' \in \mathscr{E}_{k+1}(J_1)$ and $J_2' \in \mathscr{E}_{k+1}(J_2')$. By Lemma \ref{lem:tridents}, either $f_{k+1}(J_1')\cap f_{k+1}(J_2') \neq \emptyset$ or $f_{k+1}(J_1') = f_{k+1}(J_2')$. However, we have defined $\mathscr{L}_{k+1,J_1}$ as those elements in $\mathscr{L}_{k+1}$ that are not contained in $f_{k+1}(J)$, where $J \in \mathscr{E}_k$ is as in  \textsection\ref{sec:flatedges} or $J \in \mathscr{N}_k$ is as in \textsection\ref{sec:flatpoints}. Thus, $f_{k+1}(J_1') \cap f_{k+1}(J_2') = \emptyset$.
\end{proof}

\subsection{Properties (P1)--(P7) for Step $k+1$}\label{sec:concl}
We have now defined $\mathscr{B}_{k+1}$, $\mathscr{E}_{k+1}$, $\mathscr{F}_{k+1}$, $\mathscr{N}_{k+1}$ and $f_{k+1}:[0,1]\rightarrow X$.
It remains to prove that $f_{k+1}$ is continuous and that properties (P1)--(P7) are satisfied by the new collections of intervals and $f_{k+1}$.
Properties (P1), (P2), and (P3) follow immediately from the construction.

\medskip

\textsc{Continuity of $f_{k+1}$.} By design, the map $f_{k+1}$ is continuous on every point interior to an interval in $\mathscr{E}_{k}\cup\mathscr{B}_{k}\cup\mathscr{N}_{k}\cup\mathscr{F}_{k}$. If $x$ is an endpoint of some interval in $\mathscr{E}_{k}\cup\mathscr{B}_{k}\cup\mathscr{N}_{k}\cup\mathscr{F}_{k}$, then $f_{k+1}(x) = f_k(x)$. Thus, continuity of $f_{k+1}$ at $x$ follows from continuity of $f_k$ at $x$.

\medskip

\textsc{Property (P6).} The first claim of (P6), that $f_{k+1}(I) \in V_{k+1}$ for all $I \in \mathscr{N}_{k+1}\cup\mathscr{F}_{k+1}$, is immediate from the construction. To check the second claim of (P6), fix $v\in V_{k+1}$. By (V4), there exists $v'\in V_k$ such that $|v-v'|< C^* \rho_{k+1} r_0$. By the inductive step, there exists $I\in \mathscr{N}_k$ such that $f_k(I) = v'$. There are two cases.

\emph{Case 1.} Suppose that $\a_{k,v'} < \a_0$. Then, following the discussion in \textsection\ref{sec:flatpoints}, either $v = f_{k+1}(J)$ for some $J \in \mathscr{N}_k(J')$ and $J'\in\mathscr{E}_k$ as in \textsection\ref{sec:flatedges}, or $v = f_{k+1}(J)$ for some $J \in \mathscr{N}_k(I)$.

\emph{Case 2.} Suppose that $\a_{k,v'} \geq \a_0$. Following the construction of the graph $G_{k+1,I}$ and the design of the map $f_{k+1}| I$, if $v$ is not the image of some $J \in \mathscr{N}_{k+1}(J')$, where $J' \in \mathscr{E}_k\cup\mathscr{B}_k\cup\mathscr{N}_k \setminus\{J'\}$, then there exists $J\in \mathscr{N}_{k+1}(I)$ such that $v = f_{k+1}(J)$.

\medskip

\textsc{Property (P4).} Fix $I\in\mathscr{E}_{k+1}$. There are three cases.

\emph{Case 1.} Suppose that $I\in \mathscr{E}_{k+1}(I_0)$ for some $I_0 \in \mathscr{E}_k$. By the inductive hypothesis, there exists unique $I_0' \in \mathscr{E}_k\setminus\{I_0\}$ such that $f_k(I_0')= f_k(I_0)$ while $f_k(I_0)\cap f_{k}(J)= \emptyset$ for all $J \in\mathscr{E}_k\setminus\{I_0,I_0'\}$. By construction, there exists $I'\in \mathscr{E}_{k+1}(I_0')$ such that $f_{k+1}(I) = f_{k+1}(I')$. Again by construction, $f_{k+1}(I)\cap f_{k+1}(J) = \emptyset$ for all $J \in \mathscr{E}_{k+1}(I_0)\cup\mathscr{E}_{k+1}(I_0') \setminus\{I,I'\}$. By Lemma \ref{lem:2to1(1)}, Lemma \ref{lem:2to1(2)} and Lemma \ref{lem:2to1(3)}, $f_{k+1}(I)$ does not intersect any $f_{k+1}(J)$ for any $J\in \mathscr{E}_{k+1}(J')$ and $J'\in\mathscr{E}_{k}\cup\mathscr{N}_{k}\setminus \{I_0,I_0'\}$.

\emph{Case 2.} Suppose that $I\in \mathscr{E}_{k+1}(I_0)$ for some $I_0 \in \mathscr{N}_k$ as in \textsection\ref{sec:flatpoints}. By construction, there exists $I' \in \mathscr{E}_{k+1}(I_0)\setminus\{I\}$ such that $f_{k+1}(I) = f_{k+1}(I')$ while $f_{k+1}(I)\cap f_{k+1}(J) = \emptyset$ for all $J\in\mathscr{E}_{k+1}(I_0)\setminus\{I,I'\}$. Moreover, by Lemma \ref{lem:2to1(2)} and Lemma \ref{lem:2to1(3)}, $f_{k+1}(I)$ does not intersect any $f_{k+1}(J)$ for any $J\in \mathscr{E}_{k+1}(J')$ and $J'\in\mathscr{E}_{k}\cup\mathscr{N}_{k}\setminus \{I_0\}$.

\emph{Case 3.} Suppose that $I \in \mathscr{E}_{k+1}(I_0)$ for some $I_0 \in \mathscr{N}_k$ as in \textsection\ref{sec:noflatpoints}. By Lemma \ref{lem:2to1(3)}, $f_{k+1}(I) \cap f_{k+1}(I') = \emptyset$ for all $I' \in \mathscr{E}_{k+1}(J)$ and all $J\in \mathscr{E}_k$ as in \textsection\ref{sec:flatedges} or $J\in \mathscr{N}_k$ as in \textsection\ref{sec:flatpoints}. By the construction of $f_{k+1}|I_0$, there are two possibilities.

\emph{Case 3a.} Suppose that both endpoints of $f_{k+1}(I_0)$ are in $V_{k_1,I_0}$. Then there exists an interval $I' \in \mathscr{E}_{k+1}(I_0)\setminus\{I\}$ such that $f_{k+1}(I)= f_{k+1}(I')$. On the other hand, $f_{k+1}(I) \not\in \mathscr{L}_{k+1,J}$ for any $J\in\mathscr{N}_{k}\setminus\{I_0\}$. Thus, by Lemma \ref{lem:tridents}, if $J' \in \mathscr{E}_{k+1}(J)$ and $J\in\mathscr{N}_{k}\setminus\{I_0\}$, then $f_{k+1}(J) \cap f_{k+1}(J') = \emptyset$.

\emph{Case 3b.} Suppose that only one endpoint of $f_{k+1}(I_0)$ is in $V_{k_1,I_0}$. In this case, by construction, $f_{k+1}(I)\cap f_{k+1}(J) = \emptyset$ for all $J \in \mathscr{E}_{k+1}(I_0)\setminus\{I\}$. Moreover, there exists unique $I_0' \in \mathscr{N}_k\setminus\{I_0\}$ as in \textsection\ref{sec:noflatpoints} such that $V_{k+1,I_0'}$ contains the other endpoint of $f_{k+1}(I)$. As with $I_0$, there exists unique $I'\in\mathscr{E}_{k+1}(I_0')$ such that $f_{k+1}(I') = f_{k+1}(I)$ while $f_{k+1}(J)\cap f_{k+1}(I) =\emptyset$ for all $J \in \mathscr{E}_{k+1}(I_0')$. Finally, by the construction and Lemma \ref{lem:tridents}, $f_{k+1}(I) \cap f_{k+1}(J) = \emptyset$ for all $J \in \mathscr{E}_{k+1}(J')$ and all $J' \in \mathscr{N}_k \setminus\{I_0,I_0'\}$ as in \textsection\ref{sec:noflatpoints}.

\medskip

\textsc{Property (P5).} To prove the first claim in (P5), fix $(v,v')\in \Flat(k+1)$. Let $v_0$ be the point of $V_k$ closest to $v$ and let $I_0 \in \mathscr{N}_k$ be such that $f_k(I_0) = v_0$. There are four cases.

\emph{Case 1.} Suppose that $\a_{k,v_0} < \a_0$ and $v_0$ is non-terminal (see \textsection\ref{sec:nonterm}). Then either both $v$ and $v'$ lie to the left of $v_0$ (with respect to $\ell-{k,v_0}$) or both lie to the right of $v_0$. In any case, $[v,v']$ is the preimage of some $I\in \mathscr{E}_{k+1}(J)$ under $f_{k+1}$ where $J \in \mathscr{E}_k$ and $f_{k}(J)$ is an edge with endpoint $f_k(I_0)$.

\emph{Case 2.} Suppose that $\a_{k,v_0} < \a_0$ and $v_0$ is 2-sided terminal (see \textsection\ref{sec:2-term}). Then $[v,v']$ is the preimage of some $I\in \mathscr{E}_{k+1}(I_0)$ under $f_{k+1}$.

\emph{Case 3.} Suppose that $\a_{k,v_0} < \a_0$ and $v_0$ is 1-sided terminal (see \textsection\ref{sec:1-term}). Then either both $v$ and $v'$ lie to the left of $v_0$ (with respect to $\ell_{k,v_0}$) or both lie to the right of $v_0$. Depending on their position, we work as in Case 1 or Case 2.

\emph{Case 4.} Suppose that $\a_{k,v_0} \geq \a_0$. By definition of graph $G_{k+1,I}$ in \textsection\ref{sec:noflatpoints}, the segment $[v,v']$ is the image of some $J\in \mathscr{E}_{k+1}$ under $f_{k+1}$.

\smallskip

To prove the second claim of (P5), fix $(a,b) \in\mathscr{E}_{k+1}$ such that one of its endpoints has flat image. Without loss of generality, assume $\alpha_{k+1,f_{k+1}(a)}< \a_0$.

\emph{Case 1.} Suppose that $(a,b) \in \mathscr{E}_{k+1}(I)$ for some $I\in\mathscr{N}_k$ as in \textsection\ref{sec:noflatpoints}. By construction of $f_{k+1}$ on such intervals, $(f_{k+1}(a),f_{k+1}(b)) \in \Flat(k+1)$.

\emph{Case 2.} Suppose that $(a,b) \in \mathscr{E}_{k+1}(I)$ for some $I\in\mathscr{N}_k$ as in \textsection\ref{sec:flatpoints}. By construction of $f_{k+1}$ on such intervals, no point of $V_{k+1}\cap B(f_{k+1}(a),14A^* \rho_{k+1}r_0)$,  $$V_{k+1}\cap B(f_{k+1}(a),14A^* \rho_{k+1}r_0)\subset V_{k+1}\cap B(f_{k}(I),30A^*\rho_k r_0),$$ lies strictly between $f_{k+1}(a)$ and $f_{k+1}(b)$ with respect to $\ell_{k,f_k(I)}$. The same is true with respect to $\ell_{k+1,f_{k+1}(a)}$ by Lemma \ref{lem:approx}. Thus, $(f_{k+1}(a),f_{k+1}(b)) \in \Flat(k+1)$.

\emph{Case 3.} Suppose that $(a,b) \in \mathscr{E}_{k+1}(I)$ for some $I\in\mathscr{E}_k$ as in \textsection\ref{sec:flatedges}. The argument is similar to Case 2

\medskip

\textsc{Property (P7).} To check the final property, fix $J\in \mathscr{N}_{k+1}$ and choose $I\in \mathcal{E}_{k+1}$ such that $f_{k+1}(J)$ is an endpoint of $f_{k+1}(I)$. There are several cases.

\emph{Case 1.} Suppose  $J\in\mathscr{N}_{k+1}(J_0)$ for some $J_0\in\mathscr{N}_k$ as in \textsection\ref{sec:noflatpoints}. Then there exists $I' \in \mathscr{E}_{k+1}(J_0)$ such that $f_{k+1}(I') = f_{k+1}(I)$. By the construction of $\mathscr{N}_{k+1}(J_0)$ in \S\ref{sec:noflatpoints}, $\overline{I'}\cap J \neq \emptyset$.

\emph{Case 2.} Suppose $J\in \mathscr{N}_{k+1}(J_0)$ for some $J_0 \in \mathscr{E}_k$ as in \textsection\ref{sec:flatedges}. By (P4), there exists $I' \in \mathscr{E}_{k+1}(J_0)$ such that $f_{k+1}(I') =f_{k+1}(I)$. The interval $I'$ satisfies $\overline{I'}\cap J \neq \emptyset$.

\emph{Case 3.} Suppose $J\in \mathscr{N}_{k+1}(J_0)$ for some $J' \in \mathscr{N}_k$ as in \textsection\ref{sec:flatpoints}. There are three subcases.

\emph{Case 3a.} Suppose that $f_{k+1}(J) \neq f_{k}(J_0)$. Then by the choice of $\mathscr{N}_{k+1}(J_0)$, there exists $I' \in \mathscr{E}_{k+1}(J_0)$ such that $f_{k+1}(I') = f_{k+1}(I)$ and $\overline{I'}\cap\overline{J} \neq \emptyset$.

\emph{Case 3b.} Suppose that $f_{k+1}(J) = f_{k}(J_0)$ and there exists $\tilde{I}\in\mathscr{E}_{k+1}(J_0)$ such that $f_{k+1}(\tilde{I})=f_{k+1}(I)$. As in Case 3a, the claim follows from the choice of $\mathscr{N}_{k+1}(J_0)$.

\emph{Case 3c.} Suppose that $f_{k+1}(J) = f_{k}(J_0)$ and there exists no $\tilde{I}\in\mathscr{E}_{k+1}(J_0)$ such that $f_{k+1}(\tilde{I})=f_{k+1}(I)$. In this case, $f_k(J_0)$ is the endpoint of $f_k(I_0)$ for some $I_0 \in \mathcal{E}_k$. By the inductive hypothesis and (P4), there exists at least one and at most two intervals $I_0' \in \mathscr{E}_{k}$ such that $f_k(I_0) = f_k(I_0')$ and $\overline{I_0'}\cap \overline{J_0} \neq \emptyset$. On one hand, if there is only one interval $I_0'$, then $J_0$ is as in \textsection\ref{sec:nonterm} and $J = J_0$. Hence there exists $I' \in I_0'$ such that $f_{k+1}(I') = f_{k+1}(I)$ and $\overline{I'} \cap J \neq \emptyset$. On the other hand, if there are two intervals $I_0', I_0''$, then one of them has a closure which intersects $J$, say $I_0'$. Then there exists $I' \in I_0'$ such that $f_{k+1}(I') = f_{k+1}(I)$ and $\overline{I'} \cap J \neq \emptyset$.

\subsection{Choices in the Traveling Salesman algorithm}\label{sec:choices}
In \S\S \ref{sec:flat} and \ref{sec:curve}, we made a series of implicit and explicit choices.
\begin{enumerate}
\item[(C0)] The choice of $\alpha_0\in(0,1/16]$ determines the set $\Flat(k)$ of flat pairs. The constant $14$ in the definition of $\Flat(k)$ is chosen to facilitate the estimates in \S\ref{sec:mass} (see (E2)), but has not been optimized. The constant 30 in the definition of $\alpha_{k,v}$ is chosen to be larger than $(1+3(1/16)^2)\cdot 2 \cdot 14$. For example, see the proof of Lemma \ref{lem:tridents}.
\item[(C1)] If $I, I'\in \mathscr{E}_k$ satisfy $f_k(I) = f_k(I')$ and are as in \textsection\ref{sec:flatedges} (that is, $I$ has at least one endpoint $x$ with $\a_{k,f_{k}(x)} < \a_0$), then either $\mathscr{N}_{k+1}(I)=\mathscr{F}_{k+1}(I') = \emptyset$ or vice-versa.
\item[(C2)] If $I \in \mathscr{N}_k$ is as in \textsection\ref{sec:2-term} (i.e.~ $\a_{k,f_{k}(I)} < \a_0$ and $V_{k}\cap B(f_k(I), 14A^* \rho_kr_0) = \{f_k(I)\}$),  then there may exist up to two different ways to parameterize the graph $G_{k+1,I}$ therein.
\item[(C3)] If $I \in \mathscr{N}_k$ is as in \textsection\ref{sec:noflatpoints} (i.e.~ $\a_{k,f_{k}(I)} \geq \a_0$) and $G_{k+1,I}^{(1)},\dots,G_{k+1,I}^{(l)}$ are the graph components of the graph $G_{k+1,I}$ therein, then
\begin{enumerate}
\item[(C3a)] we choose the order in which we parameterize the graph components and
\item[(C3b)] in each graph component, there exists up to two choices of parameterization.
\end{enumerate}
Similar choices are made in the step $0$.
\item[(C4)] We get to choose the enumeration of intervals $I$ in $\mathscr{N}_k$ such that $\a_{k,f_k(I)} \geq \a_0$.
\end{enumerate}

The algorithm can be made more flexible by permitting four additional choices. Let $\tilde{\a}_0 \in (0,\a_0)$ and $\tilde{A}>14A^*$.

\begin{enumerate}
\item[(C5)] Suppose that $I\in\mathscr{N}_k$.
\begin{itemize}
\item If $\a_{k,f_k(I)} <\tilde{\a}_0$ then we treat $I$ as in \textsection\ref{sec:flatpoints}; i.e., we treat $f_k(I)$ as a flat vertex.
\item If $\a_{k,f_k(I)} \geq \a_0$ then we treat $I$ as in \textsection\ref{sec:noflatpoints}; i.e., we treat $f_k(I)$ as a non-flat vertex.
\item If $\a_{k,f_k(I)} \in [\tilde{\a}_0,\a_0)$ then we can either treat $I$ as in \textsection\ref{sec:flatpoints} or as in \textsection\ref{sec:noflatpoints}.
\end{itemize}
\item[(C6)] Suppose that $v\in V_k$ is chosen to be considered ``flat" by (C5). Let $\ell$ be the approximating line for $(k,v)$ and let $v' \in V_k$ be such that there exists no $v''\in V_k\cap B(v,\tilde{A} \rho_{k} r_0)$ such that $\pi_{\ell}(v'')$ is between $v$ and $v'$.
\begin{itemize}
\item If $|v-v'| < 14A^* \rho_kr_0$, then $(v,v')\in \Flat(k)$.
\item If $|v-v'| \geq \tilde{A} \rho_k r_0$, then $(v,v') \not\in \Flat(k)$.
\item If $|v-v'| \in [14A^* \rho_kr_0, \tilde{A}\rho_k r_0)$, then we are free to choose whether $(v,v')$ is contained in $\Flat(k)$ or not.
\end{itemize}
\item[(C7)] Similarly to (C6), suppose that $I\in \mathscr{E}_k$ is as in \textsection\ref{sec:flatedges}; i.e., $f_k(I)$ has at least one endpoint $x$ whose image is ``flat" by (C5). Let $\{v_1,\dots,v_l\}$ and $\{I_1,\dots,I_{l-1}\}$ be as in \textsection\ref{sec:flatedges}.
\begin{itemize}
\item If $|v_i-v_{i+1}| < 14A^* \rho_{k+1}r_0$, then we set $I_i\in\mathscr{E}_{k+1}$.
\item If $|v_i-v_{i+1}| \geq \tilde{A} \rho_{k+1}r_0$, then we set $I_i\in\mathscr{B}_{k+1}$.
\item If $|v_i-v_{i+1}| \in [14A^* \rho_{k+1} r_0, \tilde{A} \rho_{k+1}r_0)$, then we can choose in each instance whether $I_i\in \mathscr{E}_{k+1}(I)$ or $I_i\in \mathscr{B}_{k+1}(I)$.
\end{itemize}
\item[(C8)] Suppose that $\{I_1,\dots, I_n\}$ are the intervals in $\mathscr{N}_k$ that have a non-flat image. Suppose also that we have defined $f_{k+1}$ on $I_1,\dots,I_{r-1}$ and on intervals in $\mathscr{N}_k$ that have an image chosen to be flat. Let $v \in V_{k+1}$ be a point which is not the image of some $I\in\mathscr{N}_{k+1}(J)$, where $J\in\{I_1,\dots,I_{r-1}\}$ or $J\in\mathscr{N}_k$ is as in \textsection\ref{sec:flatpoints}.
\begin{itemize}
\item If $v \in B(f_k(I_r),14A^* \rho_k r_0)$, then $v\in V_{k+1,I_r}$.
\item If $v \in B(f_k(I_r),\tilde{A} \rho_k r_0)\setminus B(f_k(I_r),14A^* \rho_k r_0)$, then we may choose whether $v\in V_{k+1,I_r}$ or not.
\end{itemize}
\end{enumerate}

Note that the (C5) has subsequent implications on the treatment of intervals $I\in\mathscr{E}_k$. For instance, if both endpoints if $I$ have images chosen to be non-flat, then $I\in\mathscr{B}_{k+1}$; otherwise, we treat $I$ as in \textsection\ref{sec:flatedges}. Similarly, (C6) gives us the set $\mathscr{L}_{k}$ and together with (C8) affects the parametrization near non-flat vertices.

\begin{rem}[coherence] In the original, non-parametric Analyst's Traveling Salesman construction, Jones \cite{Jones-TST} required the coherence property (V2), i.e.~ $V_k\subset V_{k+1}$. The first author and Schul \cite{BS3} established a non-parametric Traveling Salesman construction, which replaced (V2) with the weaker property that for all $v\in V_k$, the set $$v'\in V_{k+1}\cap B(v,C^* \rho_k r_0)$$ is nonempty. This relaxation was crucial for the proof of the main result in \cite{BS3}, which characterized Radon measures in $\R^N$ that are carried by rectifiable curves. We would like to emphasize that in the parametric Traveling Salesman construction described above, we heavily rely on (V2). At this time, we do not know how to build a parameterization under the relaxed condition of \cite{BS3}.\end{rem}

\section{Mass of intervals}\label{sec:mass}
In this section, we use the construction of \textsection\ref{sec:curve}, to assign mass to intervals defined in \textsection\ref{sec:curve}. The total mass $\mathcal{M}_s$  on the domain of the maps fills the role that the Hausdorff measure $\mathcal{H}^1$ of the image plays in the proof of the sufficient half of the Analyst's TST given in \cite{Jones-TST} or \cite{BS3}. The main result of this section is Proposition \ref{prop:mass}, which bounds the total mass of $[0,1]$ by a sum involving the flatness approximation errors $\a_{k,v}$ and variation excess $\tau_{s}(k,v,v')$ defined in \S\ref{sec:flat}.
For each $k\geq 0$, set
\begin{equation*} \mathscr{I}_k := \mathscr{E}_k\cup\mathscr{B}_k\cup\mathscr{N}_k\cup\mathscr{F}_k \qquad\text{and}\qquad \mathscr{I} := \bigcup_{k\geq 0}\mathscr{I}_k.\end{equation*}
For each $I \in \mathscr{I}_k$, set
$\mathscr{I}_{k+1}(I) := \mathscr{E}_{k+1}(I)\cup\mathscr{B}_{k+1}(I)\cup\mathscr{N}_{k+1}(I)\cup\mathscr{F}_{k+1}(I)$.

\begin{rem}
If $I \in \mathscr{I}_k \cap \mathscr{I}_m$ for some $m\neq k$, then $f_k | I = f_m | I$.
\end{rem}

\subsection{Trees over intervals}\label{sec:trees}

Given $k\geq 0$ and $I \in \mathscr{I}_k$, we define a \emph{finite tree $T$ over $(k,I)$} to be a finite subset of $\bigcup_{m\geq 0}(\{m\}\times\mathscr{I}_m)$ satisfying the following three conditions.
\begin{enumerate}
\item The pair $(k,I) \in T$. If $(m,J) \in T$, then $m\geq k$ and $J\subset I$.
\item If $(m,J) \in T$ and there exists $J' \in \mathscr{I}_{m+1}(J)$ such that $(m+1,J')\cap T$, then $\{m+1\}\times\mathscr{I}_{m+1}(J)\subseteq T$.
\item If $(m,J)\in T$ for some $m>k$ and $J\in \mathscr{I}_m(J')$, then $(m-1,J')\in T$.
\end{enumerate}
The first condition says that the root of the tree is $(k,I)$ and its elements are descendants of $(k,I)$. The second condition says that if one child $(m+1,J')$ of $(m,J)$ is in $T$, then every child of $(m,J)$ is in $T$. The third condition says that if $(m,J)$ is in $T$, then all its ancestors up to $(k,I)$ are in $T$.

We extend this notion to the entire domain by defining a \emph{finite tree $T$} over $[0,1]$ to be a set of the form
\begin{equation*} T = \{[0,1]\} \cup \bigcup_{I\in\mathscr{I}_0 } T_I,\end{equation*}
where $T_I$ is a finite tree over $(0,I)$. A finite tree over $[0,1]$ may be thought to belong to step $k=-1$ of the construction.

Let $T$ be a finite tree over $(k,I)$.  The \emph{boundary $\partial T$ of $T$} is defined by
\begin{equation*} \partial T := \{(m,J) \in T :(\{m+1\}\times\mathscr{I}_{m+1}(I)) \cap T = \emptyset\}.\end{equation*}
The \emph{depth $m(T)$ of $T$} is the integer defined by
\begin{equation*} m(T) := \max\{m\geq 0 : (m,J) \in T\} = \max\{k\geq 0 : (m,J) \in \partial T\}. \end{equation*}
If $m(T)\geq 1$, the \emph{parent tree $p(T)$} is defined by
\begin{equation*} p(T) := T \setminus  (\{m(T)\}\times\mathscr{I}_{m(T)})\end{equation*}
Note that $m(p(T))=m(T)-1$.

\begin{rem}\label{rem:chainpartition}
If $T$ is a finite tree over $(k,I)$ and $\partial T = \{(k_1,J_1), \dots, (k_n,J_n)\}$, then the intervals $J_1,\dots,J_n$ partition $I$. That is, for all $x\in I$, there exists a unique $i\in\{1,\dots,n\}$ such that $x\in J_i$.
\end{rem}

\subsection{Mass of intervals}\label{sec:massinterval} For all $s\geq 1$, $k\geq 0$, and intervals $I\in\mathscr{I}_k$, define the \emph{$s$-mass $\mathcal{M}_s(k,I)$ of $(k,I)$} by
\[ \mathcal{M}_s(k,I) := \sup_{T} \sum_{(k',I')\in \partial T}(\diam{f_{k'}(I')})^s \in [0,\infty],\]
where the supremum is taken over all finite trees over $(k,I)$. This notion extends to $[0,1]$ by assigning
\[ \mathcal{M}_s([0,1]) := \sum_{I \in \mathscr{I}_0 } \mathcal{M}_s(0,I) \in [0,\infty].\]

\begin{lem}\label{lem:sum}
Let $k\geq 0$ and $I\in\mathscr{I}_k$.
\begin{enumerate}
\item If $I \in \mathscr{B}_k$, then $\mathcal{M}_s(k,I) = (\diam{f_k}(I))^s$.
\item If $I \in \mathscr{F}_k$, then $\mathcal{M}_s(k,I) = 0$.
\item $\mathcal{M}_s(k,I) \geq \sum_{I'\in \mathscr{I}_{k+1}(I)} \mathcal{M}_s(k+1,I')$.
\item If $I \in \mathscr{I}_k \cap \mathscr{I}_m$ for some $m\geq 0$, then $\mathcal{M}_s(k,I) = \mathcal{M}_s(m,I)$.
\end{enumerate}
\end{lem}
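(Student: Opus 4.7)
The plan is to derive all four parts directly from the definitions of finite trees and mass in \S\S\ref{sec:trees}--\ref{sec:massinterval}, together with the key structural observation that under the construction of \S\ref{sec:curve}, if $I\in\mathscr{I}_k\cap\mathscr{I}_m$ for some $m>k$, then at every intermediate step $k\le j<m$ we have $\mathscr{I}_{j+1}(I)=\{I\}$ and $f_{j+1}|I=f_j|I$. This is because at each step the interval $I$ is either left alone (with $\mathscr{I}_{j+1}(I)=\{I\}$) or partitioned into proper sub-intervals of $I$; the hypothesis $I\in\mathscr{I}_m$ rules out the latter option at every intermediate stage.

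For part (1), \S\ref{sec:bridges} gives $\mathscr{B}_{k+1}(I)=\{I\}$ and $\mathscr{E}_{k+1}(I)=\mathscr{F}_{k+1}(I)=\mathscr{N}_{k+1}(I)=\emptyset$ whenever $I\in\mathscr{B}_k$, so by induction $I\in\mathscr{B}_j$ and $f_j|I=f_k|I$ for every $j\ge k$. Every finite tree $T$ over $(k,I)$ is therefore a chain $\{(k,I),(k+1,I),\dots,(j_0,I)\}$ with boundary $\{(j_0,I)\}$, and the associated sum equals $(\diam f_k(I))^s$. Part (2) is identical, except that \S\ref{sec:fixed} gives $\mathscr{F}_{k+1}(I)=\{I\}$ for $I\in\mathscr{F}_k$ and property (P2) ensures $f_k|I$ is constant, so each contribution is zero.

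For part (3), given finite trees $T_{I'}$ over $(k+1,I')$ for each $I'\in\mathscr{I}_{k+1}(I)$, I would set $T:=\{(k,I)\}\cup\bigcup_{I'\in\mathscr{I}_{k+1}(I)}T_{I'}$ and verify directly that $T$ satisfies the three conditions defining a finite tree over $(k,I)$. Its boundary decomposes as the disjoint union $\partial T=\bigsqcup_{I'}\partial T_{I'}$, because $(k,I)$ has all its children in $T$ and so is not a leaf. The sums therefore add, and taking the supremum over each $T_{I'}$ independently (valid because the supremum distributes over finite sums of non-negative quantities) yields the inequality.

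For part (4), assume without loss of generality $m>k$. Given a finite tree $T$ over $(k,I)$, set $j_0:=\max\{j\ge k:(j,I)\in T\}$. If $j_0<m$, then $T=\{(k,I),\dots,(j_0,I)\}$, $\partial T=\{(j_0,I)\}$, and the sum is $(\diam f_{j_0}(I))^s=(\diam f_k(I))^s$, which is also the value attained by the trivial tree $\{(m,I)\}$ over $(m,I)$. If $j_0\ge m$, then $T':=T\setminus\{(j,I):k\le j<m\}$ is a finite tree over $(m,I)$ with $\partial T'=\partial T$, since no $(j,I)$ with $j<m$ can be a leaf of $T$. The inverse construction prepends the chain $\{(k,I),\dots,(m-1,I)\}$ to any finite tree over $(m,I)$, so the two suprema coincide and $\mathcal{M}_s(k,I)=\mathcal{M}_s(m,I)$. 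Throughout, the only delicate point is the routine bookkeeping to confirm that the glued and truncated trees satisfy the three defining conditions; no new estimates are required, which is why I expect this lemma to serve as a purely organizational tool before the quantitative mass estimates of Proposition \ref{prop:mass}.
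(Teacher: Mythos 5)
Your proof is correct and follows essentially the same approach as the paper: parts (1)–(2) by observing that bridge/frozen intervals persist so every tree is a chain, part (3) by gluing trees over children and decomposing the boundary, and part (4) by the chain truncation/prepending correspondence. The only cosmetic difference is that in (3) you invoke distributivity of sup over finite independent sums in one stroke, whereas the paper spells out the $\epsilon$-approximation and the infinite-mass case separately; the underlying construction is identical.
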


Before proving Lemma \ref{lem:sum}, we make two clarifying remarks. First, it is possible for an interval $I\in\mathscr{N}_k$ to have $\mathcal{M}_s(k,I) >0$ even though $\diam f_k(I)^s=0$. This happens whenever $I\in\mathscr{N}_k$ and $\mathscr{E}_{k+1}(I)\cup\mathscr{B}_{k+1}(I)$ is non-empty. Second, Lemma \ref{lem:sum}(4) implies that given $I \in \mathscr{I}_k$, the mass $\mathcal{M}_s(k,I)$ is defined independently of the step of the construction in which $I$ appears. Nevertheless, we include the step $k$ in definition of the mass to improve exposition of the estimates in \textsection\ref{sec:totalmass} and \textsection\ref{sec:prooflemma}.

\begin{proof}[{Proof of Lemma \ref{lem:sum}}]
For the first claim, note that if $I\in\mathscr{B}_k$ and $m\geq k+1$, then $\mathscr{I}_m(I) = \{I\}$. Therefore, if $T$ is a finite tree over $I$ of depth $m$, then $\partial T = \{(m,I)\}$. Thus, $\mathcal{M}_s(k,I) = \sup_{m\geq k} (\diam{f_m}(I))^s =  (\diam{f_k}(I))^s$.

For the second claim, note that if $I\in\mathscr{F}_k$ and $m\geq k+1$, then $\mathscr{I}_m(I) = \{I\}$ and $f_m(I)$ is a point. Therefore, if $T$ is a finite tree over $I$ of depth $m$, then $\partial T = \{(m,I)\}$. Thus, $\mathcal{M}_s(k,I) = \sup_{m\geq k}(\diam{f_m}(I))^s=0$.

For the third claim, let us first assume that $\mathcal{M}_s(k+1,J) = \infty$ for some $J\in\mathscr{I}_{k+1}(I)$. Fix $M>0$ and find a finite tree $T_J$ over $(k+1,J)$ such that
\[ \sum_{(k',J') \in T_J} (\diam{f_{k'}(J')})^s >M.\]
The collection  $T = T_J\cup \{(k,I)\} \cup (\{k+1\}\times\mathscr{I}_{k+1}(I))$ is a finite tree over $(k,I)$ and $\partial T_J \subset \partial T$. Hence
\[ \mathcal{M}_s(k,I) \geq \sum_{(k',I')\in \partial T}(\diam{f_{k'}(I')})^s \geq  \sum_{(k',J')\in \partial T_J}(\diam{f_{k'}(J')})^s >M.\]
We conclude that $\mathcal{M}_s(k,I)= \infty$.

Alternatively, assume that $\mathcal{M}_s(k+1,J)$ is finite for all $J\in\mathscr{I}_{k+1}(I)$. Fix $\epsilon>0$. For each interval $J\in\mathscr{I}_{k+1}(I)$, let $T_J$ be a finite tree over $(k+1,J)$ such that
\[ \sum_{(k',J')\in \partial T_J} (\diam{f_{k'}(J')})^s > \mathcal{M}_s(k+1,J) - \frac{\e}{\card(\mathscr{I}_{k+1}(I))}.\]
Then the collection $T = \{(k,I)\} \cup\bigcup_{J\in\mathscr{I}_{k+1}(I)} T_J$ is a finite tree over $(k,I)$ with $\partial T = \bigcup_{J\in\mathscr{I}_{k+1}(I)}\partial T_J $. Therefore,
\begin{align*}
\mathcal{M}_s(k,I) \geq \sum_{J\in\mathscr{I}_{k+1}(I)} \sum_{(k',J') \in \partial T_J} (\diam{f_{k'}(J')})^s
> \sum_{J\in\mathscr{I}_{k+1}(I)} \mathcal{M}_s(k+1,J) - \e.
\end{align*} The third claim follows by taking $\epsilon\downarrow 0$.

For the fourth claim, suppose that $I\in\mathscr{I}_k\cap\mathscr{I}_m$ for some $m$ and $k$, say without loss of generality that $m>k$. Because $I \in \mathscr{I}_k\cap\mathscr{I}_m$, we have $\mathscr{I}_n(I) = \{I\}$ for all $k\leq n \leq m$. Thus, iterating the third claim, $\mathcal{M}_s(k,I) \geq \mathcal{M}_s(m,I)$. For the opposite inequality, let $T$ be a finite tree over $(k,I)$. If $(m,I)\not\in T$, then $T = \{(k,I),\dots, (l,I)\}$ for some $k\leq l\leq m-1$ and we define $T'=\{(m,I)\}$. If $(m,I) \in T$ , then $T \supset \{(k,I),\dots,(m-1,I)\}$ and we set $T'=T\setminus \{(k,I),\dots,(m-1,I)\}$ so that $T'$ is a finite tree over $(m,I)$ and $\partial T = \partial T'$. In either case,
\[ \sum_{(k',I') \in \partial T} (\diam{f_{k'}(I')})^s = \sum_{(k',I') \in \partial T'} (\diam{f_{k'}(I')})^s\]
and it follows that $\mathcal{M}_s(k,I) \leq \mathcal{M}_s(m,I)$.
\end{proof}

When $s=1$, the 1-mass is comparable to the Hausdorff measure $\Haus^1$ of the image.

\begin{lem}\label{lem:massequiv}
For each $k\geq 0$ and each $I\in\mathscr{I}_k$,
\begin{equation*}
\mathcal{M}_1(k,I) = \lim_{m\to\infty} \sum_{\substack{J \in \mathscr{I}_m \\ J\subset I}} \diam{f_m(J)} \geq \limsup_{m\to\infty} \mathcal{H}^1(f_m(I)).
\end{equation*} If there exists $n\in\N$ such that $f_m|\bigcup\mathscr{B}_k$ is at most $n$-to-1 for all $m\geq k$, then \begin{equation*} \mathcal{M}_1(k,I) \simeq_n \liminf_{m\to\infty} \mathcal{H}^1(f_m(I))\end{equation*}
\end{lem}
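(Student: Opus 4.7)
The plan is to introduce the sequence $a_m := \sum_{J \in \mathscr{I}_m,\, J \subset I} \diam f_m(J)$ and first establish the monotonicity inequality $a_m \leq a_{m+1}$. The key subclaim is that for every $J \in \mathscr{I}_m$,
\begin{equation*}
\diam f_m(J) \leq \sum_{J' \in \mathscr{I}_{m+1}(J)} \diam f_{m+1}(J').
\end{equation*}
This is immediate when $J \in \mathscr{B}_m \cup \mathscr{F}_m$ (the children equal $\{J\}$ and $f_{m+1}|_J = f_m|_J$), trivial when $J \in \mathscr{N}_m$ (left side is zero), and when $J \in \mathscr{E}_m$ it is a direct application of the triangle inequality to the polygonal path $v_1,\dots,v_l$ constructed in \textsection\ref{sec:flatedges}, since $\diam f_m(J) = |v_1 - v_l|$ and the nonzero contributions to the right side are exactly $|v_{i+1} - v_i|$. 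With monotonicity in hand, the first equality $\mathcal{M}_1(k,I) = \lim_m a_m$ follows from two observations: the ``complete tree to depth $m$'' rooted at $(k,I)$ has boundary $\{(m,J) : J \in \mathscr{I}_m,\, J \subset I\}$ and so witnesses $\mathcal{M}_1(k,I) \geq a_m$; and conversely, for any finite tree $T$ of depth $m_0 = m(T)$, iterating the subclaim gives $\diam f_{k'}(J') \leq \sum_{J \in \mathscr{I}_{m_0},\, J \subset J'} \diam f_{m_0}(J)$ for each $(k',J') \in \partial T$, which by Remark~\ref{rem:chainpartition} sums to at most $a_{m_0}$.

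For the inequality $\mathcal{M}_1(k,I) \geq \limsup_m \mathcal{H}^1(f_m(I))$, I would use that $f_m|_{\bar I}$ is piecewise affine on the intervals of $\mathscr{E}_m \cup \mathscr{B}_m$ contained in $I$ and constant on the rest, so its total variation equals $a_m$. Since the $1$-dimensional Hausdorff measure of the image of any continuous map is bounded by the total variation of the map (standard fact in any metric space), we get $\mathcal{H}^1(f_m(I)) \leq a_m$, and taking $\limsup$ yields the desired inequality.

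For the $\simeq_n$ statement, the plan is to apply the area formula for Lipschitz maps $\bar I \to \ell^2$ to $f_m$:
\begin{equation*}
a_m = \mathrm{length}(f_m|_{\bar I}) = \int_{f_m(I)} N_m(y)\, d\mathcal{H}^1(y),
\end{equation*}
where $N_m(y) = \card(f_m^{-1}(y) \cap \bar I)$. Because the derivative $f_m'$ vanishes on $\bigcup(\mathscr{N}_m \cup \mathscr{F}_m)$, the integrand effectively counts only preimages in edge and bridge intervals. By (P4) this contributes at most $2$ from $\bigcup \mathscr{E}_m$, and by hypothesis at most $n$ from the bridge part, so $N_m(y) \leq n+2$ for $\mathcal{H}^1$-a.e.~$y \in f_m(I)$. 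Thus $a_m \leq (n+2)\mathcal{H}^1(f_m(I))$ for every $m \geq k$, and passing to $\liminf$ together with the already-established reverse bound $\mathcal{M}_1(k,I) \geq \limsup_m \mathcal{H}^1(f_m(I))$ gives the comparison.

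The main obstacle is the area-formula step: I need to argue carefully that the uncountable preimage contributions coming from the constant pieces (where whole intervals collapse to a single point of the image) live on an $\mathcal{H}^1$-null subset of $f_m(I)$ and therefore do not spoil the bound $N_m(y) \leq n+2$ for almost every $y$. Everything else reduces to iterating the triangle-inequality subclaim and combinatorial bookkeeping of the trees.
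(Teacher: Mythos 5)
Your proposal is correct and takes essentially the same approach as the paper's proof: both compare $\mathcal{M}_1(k,I)$ with the level-$m$ sums $a_m$ via the partition property of tree boundaries (Remark~\ref{rem:chainpartition}), bound $\mathcal{H}^1(f_m(I))$ by the total variation $a_m$, and invoke the multiplicity bound on $\bigcup(\mathscr{E}_m\cup\mathscr{B}_m)$ for the reverse comparison; your added observation that $a_m$ is monotone is implicit in the paper's estimate $\sum_{J\in\mathcal{I}_i}\diam f_m(J)\ge \diam f_{k_i}(I_i)$. The concern you flag at the end resolves immediately: the constant pieces of $f_m$ map onto the finite set $V_m$, which is $\mathcal{H}^1$-null, so $N_m(y)\le n+2$ for $\mathcal{H}^1$-a.e.\ $y\in f_m(I)$ with no further work (equivalently, apply the one-dimensional area formula with the domain restricted to $\bigcup(\mathscr{E}_m\cup\mathscr{B}_m)\cap\overline{I}$, which leaves the total variation unchanged since $f_m'=0$ elsewhere). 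One small omission in the monotonicity subclaim: an edge interval $J\in\mathscr{E}_m$ may fall under \textsection\ref{sec:noflatedges} (neither endpoint flat), in which case $\mathscr{I}_{m+1}(J)=\{J\}$ with $f_{m+1}|J=f_m|J$ and the inequality holds trivially with equality, not by the triangle inequality on a subdivided polygonal path.
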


\begin{proof} Fix $k\geq 0$ and $I\in \mathscr{I}_k$.
By definition of the mass and (P2),
\[ \mathcal{M}_1(k,I) \geq \limsup_{m\to\infty}\sum_{\substack{J \in \mathscr{I}_m \\ J\subset I}} \diam{f_m(J)} \geq \limsup_{m\rightarrow\infty} \mathcal{H}^1(f_m(I)). \]

To establish the other direction, let $T$ be a finite tree over $I$ of depth $m\geq k$ and enumerate with $\partial T = \{(k_1,I_1),\dots,(k_n,I_n)\}$, where each $k_i \leq m$. For each $i=1,\dots, n$, let $\mathcal{I}_i$ be the set of all intervals $J \in \mathscr{I}_{m}$ such that $J\subset I_i$. Then
\[ \sum_{\substack{J \in \mathscr{I}_{m} \\ J \subset I}} \diam{f_{m}(J)} = \sum_{i=1}^n\sum_{J\in\mathcal{I}_i} \diam{f_{m}(J)} \geq \sum_{i=1}^n\diam{f_{k_i}(I_i)} = \sum_{(l,J)\in \partial T}\diam{f_l(J)}. \]
Therefore,
\[ \sup_{m\geq k}\sum_{\substack{J \in \mathscr{I}_{m}\\ J\subset I}}\diam{f_{m}(J)} \leq \mathcal{M}_1(k,I) = \sup_{T} \sum_{(l,J)\in \partial T}\diam{f_l(J)} \leq  \liminf_{m\to \infty}\sum_{\substack{J \in \mathscr{I}_{m} \\ J \subset I}} \diam{f_{m}(J)}. \]
This shows that \[ \mathcal{M}_1(k,I) = \lim_{m\to\infty} \sum_{\substack{J \in \mathscr{I}_m \\ J\subset I}} \diam{f_m(J)}.\]

By (P4), the maps $f_m|\bigcup\mathscr{E}_m$ are 2-to-1. In \S\ref{sec:curve}, we did not examine overlaps of images of bridge intervals. By modifying the algorithm, the overlap of images of bridge intervals can be made 2-to-1 (see the proof of Proposition \ref{prop:2-to-1Holder}). Nevertheless, suppose that we know the overlaps of images of bridge intervals is at most $n$-to-1 for some $n\geq 2$. Then \[ \mathcal{M}_1(k,I) = \lim_{m\to \infty}\sum_{\substack{J \in \mathscr{I}_{m} \\ J \subset I}} \diam{f_{m}(J)} \leq \frac{1}{n}\liminf_{m\to\infty} \Haus^1(f_m(I)). \qedhere \]
\end{proof}

While Lemma \ref{lem:massequiv} does not hold when $s>1$, we always have the following comparison between the $s$-mass and the Hausdorff measure $\Haus^s$ of the closure of the points in $\bigcup_{k=0}^\infty V_k$.

\begin{lem}\label{lem:mass-measure}
For all $s\geq 1$, $\mathcal{H}^s\left(\overline{\bigcup_{k=0}^\infty V_k}\right)\lesssim_{s,C^*,\xi_2} \mathcal{M}_s([0,1]).$
\end{lem}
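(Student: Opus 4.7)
The plan is to build, for every $\delta>0$, a cover of $E:=\overline{\bigcup_k V_k}$ by sets of diameter at most $\delta$ whose $s$-power sum is controlled by $\mathcal{M}_s([0,1])$, with implicit constant depending only on $s,C^*,\xi_2$. First, iterating (V4) shows that every $w\in V_k$ with $k\geq m$ lies within distance $\sum_{j=m+1}^{k}C^*\rho_j r_0<A^*\rho_m r_0$ of some point of $V_m$, where $A^*=C^*/(1-\xi_2)$. Hence
\[ E\subset\bigcup_{v\in V_m}\overline{B}(v,A^*\rho_m r_0)\quad\text{for every }m\geq 0, \]
and choosing $m$ large enough that $2A^*\rho_m r_0<\delta$ yields a $\delta$-cover of $E$.

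The bulk of the argument is to bound $\sum_{v\in V_m}(\rho_m r_0)^s$ by $\mathcal{M}_s([0,1])$ up to a constant. Since the set of isolated points of $E$ is countable in a separable metric space, it has vanishing $\mathcal{H}^s$-measure for $s>0$, so I would only cover the non-isolated points. By (P6), the assignment $v\mapsto J_v$ injects $V_m$ into $\mathscr{N}_m$ with pairwise disjoint image, and iterating Lemma \ref{lem:sum}(3,4) from level $0$ down to level $m$ gives $\sum_{v\in V_m}\mathcal{M}_s(m,J_v)\leq\mathcal{M}_s([0,1])$. For each $v\in V_m$ that is an endpoint of some edge $e\in\mathscr{L}_m$, property (P7) pairs $J_v$ with some $I\in\mathscr{E}_m$ adjacent to it; since $f_m$ is $2$-to-$1$ on $\bigcup\mathscr{E}_m$ by (P4) and each edge has at most two endpoints, the associated $v$'s contribute
\[ \sum_{v\text{ incident to some }e\in\mathscr{L}_m}(\rho_m r_0)^s\;\lesssim\;\sum_{I\in\mathscr{E}_m}(\diam f_m(I))^s\;\leq\;\mathcal{M}_s([0,1]). \]

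For each non-isolated $v\in V_m$ not incident to any edge in $\mathscr{L}_m$, non-isolation yields $w\in V_{m'+1}\setminus\{v\}$ within $14A^*\rho_{m'}r_0$ of $v$ for some (in fact arbitrarily large) $m'\geq m$. Reading off \S\S\ref{sec:flatpoints}--\ref{sec:noflatpoints}, at the first such $m'$ the descendant $J_v^{(m')}\in\mathscr{N}_{m'}$ of $J_v$ splits at level $m'+1$ into children in $\mathscr{I}_{m'+1}(J_v^{(m')})$ that include at least one edge interval with image of diameter $\geq\rho_{m'+1}r_0$. By Lemma \ref{lem:sum}(3,4) then,
\[ \mathcal{M}_s(m,J_v)=\mathcal{M}_s(m',J_v^{(m')})\geq\xi_1^s(\rho_{m'}r_0)^s. \]
If $m'=m$ the ball at $v$ is already charged against $\mathcal{M}_s(m,J_v)$; otherwise I would shrink the ball at $v$ to radius $A^*\rho_{m'}r_0$, so that $(2A^*\rho_{m'}r_0)^s\lesssim\mathcal{M}_s(m,J_v)$, and the disjointness of the $J_v$'s prevents double counting when summing.

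The main obstacle is the scale mismatch between the covering scale $\rho_m r_0$ and the scale $\rho_{m'}r_0$ at which $J_v$ accumulates mass for non-edge-incident $v$: the adaptive radius above is forced upon us, and one must verify that the resulting shrunken cover still captures every point of $E$. I would handle this by processing vertices in order of increasing action scale $m'$ and invoking the (V4)-neighborhood property within the descendants of each $v$, so that any point of $E$ left uncovered by the shrunken ball at $v$ is absorbed into the shrunken ball of some $v'\in V_m$ whose action scale is compatible.
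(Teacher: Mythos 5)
Your covering strategy is the same as the paper's, but the way you pay for the cover is much more elaborate than it needs to be, and the elaboration introduces a genuine gap. The paper bounds $\sum_{v\in V_m}(\rho_m r_0)^s$ by $\sum_{I\in\mathscr{I}_m}(\diam f_m(I))^s$ directly via a counting observation that sidesteps all of your case analysis: by (P1), the intervals in $\mathscr{I}_m$ partition $[0,1]$ with the closed intervals $\mathscr{N}_m\cup\mathscr{F}_m$ and the open intervals $\mathscr{E}_m\cup\mathscr{B}_m$ alternating, so $\card(\mathscr{N}_m\cup\mathscr{F}_m)=\card(\mathscr{E}_m\cup\mathscr{B}_m)+1$. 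Combined with the bijection $V_m\leftrightarrow\mathscr{N}_m$ from (P6) and the fact that every $I\in\mathscr{E}_m\cup\mathscr{B}_m$ has $\diam f_m(I)\geq\rho_m r_0$ (edges have endpoints in the $\rho_m r_0$-separated set $V_m$ by (V3); bridges were created at an earlier scale with even larger image diameter), one gets $\card V_m\leq\card(\mathscr{E}_m\cup\mathscr{B}_m)+1$, hence $\sum_{v\in V_m}(\rho_m r_0)^s\leq 2\sum_{I\in\mathscr{I}_m}(\diam f_m(I))^s$ whenever $\card V_m\geq 2$, and the singleton case is trivial. There is no need to distinguish isolated vertices, graph-incidence, or later ``action scales'' at all.

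Your Case 3 (non-isolated-in-$E$ vertices not incident to any edge of $\mathscr{L}_m$) has two unrepaired gaps. First, the scale you extract from non-isolation does not match the scale the algorithm actually cares about: a point $w\in V_{m'+1}\setminus\{v\}$ at distance $<14A^*\rho_{m'}r_0$ from $v$ need not lie in $B(v,C^*\rho_{m'+1}r_0)$, which is the ball the algorithm inspects when deciding whether $J_v^{(m')}$ spawns edge intervals in \S\ref{sec:1-term}, \S\ref{sec:2-term}, or \S\ref{sec:noflatpoints}; moreover, even when $w$ does lie in that smaller ball, in \S\ref{sec:noflatpoints} the set $V_{m'+1,I}$ explicitly excludes points ``already accounted for'' by other intervals, so $J_v^{(m')}$ may still fail to split. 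Thus the claimed bound $\mathcal{M}_s(m,J_v)\gtrsim (\rho_{m'}r_0)^s$ is not established. Second, and more seriously, after you shrink the ball at $v$ to radius $A^*\rho_{m'}r_0$, your argument that ``any point of $E$ left uncovered by the shrunken ball at $v$ is absorbed into the shrunken ball of some $v'$ whose action scale is compatible'' is asserted but not proved, and I don't see how to make it work: the points of $E$ in the annulus $B(v,A^*\rho_m r_0)\setminus B(v,A^*\rho_{m'}r_0)$ are generated by chains of (V4)-neighbors whose level-$m$ anchor may well be $v$ itself, so no other $v'\in V_m$ is forced to cover them. The excision of isolated points of $E$ is sound (countable sets are $\mathcal{H}^s$-null), but it doesn't dispose of vertices that are graph-isolated at level $m$ yet accumulation points of $E$, and those are exactly where your argument breaks.
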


\begin{proof}Fix $\d>0$ and choose $m\in\N$ sufficiently large such that $2C^*\xi_2^{m+1}r_0/(1-\xi_2)\leq \delta$. By (V0), (V2), and (V4), the collection
$\{ B(v,C^*\rho_{m+1}r_0/(1-\xi_2)) : v\in V_m\}$ is a cover of $\overline{\bigcup_{k=0}^{\infty} V_k}$ with elements of diameter at most $2C^*\rho_{m+1}r_0/(1-\xi_2)\leq 2C^*\xi_2^{m+1}r_0\leq \delta$. Let $T$ be the maximal finite tree over $[0,1]$ of depth $m$, i.e.~ $T=\bigcup_{k=0}^m\mathscr{I}_k$. Then
\begin{equation*}\begin{split} \mathcal{H}^s_\delta \left(\overline{\textstyle\bigcup_{k=0}^{\infty} V_k}\right) \leq \sum_{v\in V_m} \left(\frac{2C^*\rho_{m+1}r_0}{1-\xi_2}\right)^s &\leq \left(\frac{2C^*\xi_2}{1-\xi_2}\right)^s\sum_{(m,I)\in\partial T} (\diam{f_m(I)})^s\\ &\lesssim_{s,C^*,\xi_2}\mathcal{M}_s([0,1]). \end{split}\end{equation*} Taking $\delta\downarrow 0$ completes the proof.
\end{proof}

We include Lemma \ref{lem:massequiv} and Lemma \ref{lem:mass-measure} for completeness. We will not use either lemma in any the estimates below.

\subsection{Terminal vertices and phantom mass}\label{sec:phantom}
Let $I \in \mathscr{N}_k$ be an interval such that $\a_{k,f_k(I)} < \a_0$. We classify $f_k(I)$ according to the arrangement of nearby points in $V_{k+1}$.
\begin{itemize}
\item If $I$ is as in \textsection\ref{sec:nonterm}, then $f_k(I)$ is called a \emph{non-terminal vertex in $V_k$}.
\item If $I$ is as in \textsection\ref{sec:1-term}, then $f_k(I)$ is called a \emph{1-sided terminal vertex in $V_k$}.
\item If $I$ is as in \textsection\ref{sec:2-term}, then $f_k(I)$ is called a \emph{2-sided terminal vertex in $V_k$}.
\end{itemize}

Motivated by \cite{Jones-TST} and \cite{BS3},  for each $k\geq 0$ we will define a set $\mathscr{P}_k\subset \{k\}\times\mathscr{N}_k$ and for each $(k,I)\in \mathscr{P}_k$ define a number $p_{k,I}>0$, which we call the \emph{phantom mass} at $(k,I)$. The phantom mass $p_{k,I}$ will let us pay for the length of edges between vertices in $V_{k+1}$ nearby $f_k(I)$ that do not lie between vertices in $V_k$ nearby $f_k(I)$ (i.e.~ the blue edges in Figure \ref{fig:flatpoints}). To start, define an auxiliary parameter $P$ depending only on $s$, $C^*$, and $\xi_2$ by requiring that $\left[P+2(1.1C^*)^s\right]\xi_2^s = P.$ That is, \begin{equation}\label{eq:Pdefinition} P= \frac{2(1.1C^*)^s}{1-\xi_2^s}.\end{equation}
For each $k\geq 0$, define
\[ \mathscr{P}_{k}= \{(k,I) :  \text{$I \in \mathscr{N}_k$, $\a_{k,f_k(I)} < \a_0$ and $f_{k}(I)$ is $1$- or $2$-sided terminal in $V_k$}\}.\] For each $k\geq 0$ and $(k,I)\in\mathscr{P}_k$, assign
\[ p_{k,I} :=
\begin{cases}
2P\rho_k^sr_0^s, &\text{ if }f_k(I)\text{ is 2-sided terminal}\\
P\rho_k^sr_0^s, &\text{ if }f_k(I)\text{ is 1-sided terminal}.
\end{cases}\]

\begin{lem}\label{lem:flatpointsum} Let $I\in\mathscr{N}_k$ be an interval such that $\a_{k,f_k(I)}<\a_0$. If $f_k(I)$ is 1-sided terminal, then $$\sum_{J\in \mathscr{I}_{k+1}(I)} (\diam f_{k+1}(J))^s < 2(1.1C^*)^s\rho_{k+1}^sr_0^s.$$ If $f_k(I)$ is 2-sided terminal, then $$\sum_{J\in \mathscr{I}_{k+1}(I)} (\diam f_{k+1}(J))^s < 4(1.1C^*)^s \rho_{k+1}^sr_0^s.$$
\end{lem}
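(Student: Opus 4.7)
First I would unpack the construction in \S\ref{sec:1-term} and \S\ref{sec:2-term} to see what actually contributes to $\sum_{J\in\mathscr{I}_{k+1}(I)}(\diam f_{k+1}(J))^s$. In both terminal cases the recipe yields $\mathscr{B}_{k+1}(I)=\emptyset$, while on intervals in $\mathscr{N}_{k+1}(I)\cup\mathscr{F}_{k+1}(I)$ the map $f_{k+1}$ is constant (by (P2) and (P6)), so those summands vanish. This reduces the problem to bounding $\sum_{J\in\mathscr{E}_{k+1}(I)}(\diam f_{k+1}(J))^s$.

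Next I would invoke Proposition~\ref{prop:graph} to say that $f_{k+1}$ restricted to $\bigcup\mathscr{E}_{k+1}(I)$ is a $2$-to-$1$ parameterization of the simple polygonal arc $G_{k+1,I}$, whose vertices $V_{k+1,I}=\{v_1,\dots,v_l\}$ are listed from left to right along $\ell_{k,f_k(I)}$ and whose edges are the segments $[v_i,v_{i+1}]$. This gives
\[
\sum_{J\in\mathscr{E}_{k+1}(I)}(\diam f_{k+1}(J))^s \;=\; 2\sum_{i=1}^{l-1}|v_{i+1}-v_i|^s,
\]
and reduces the lemma to an estimate of the polygonal $s$-variation on the right. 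The setup of Lemma~\ref{lem:var} is already available: $V_{k+1,I}$ is $\rho_{k+1}r_0$-separated by (V3) and, by (V5) together with $C^*\rho_{k+1}r_0 < 30A^*\rho_k r_0$, lies in a tube of half-width $\alpha_{k,f_k(I)}\rho_{k+1}r_0$ around $\ell_{k,f_k(I)}$ with $\alpha_{k,f_k(I)}<\alpha_0\le 1/16$, so that $(1+3\alpha_{k,f_k(I)}^2)^s<(1+3/256)^s<1.1^s$.

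In the $1$-sided terminal case, $v_1=f_k(I)$ and every $v_i\in B(f_k(I),C^*\rho_{k+1}r_0)$, so $|v_1-v_l|\le C^*\rho_{k+1}r_0$. A single application of Lemma~\ref{lem:var} to $\{v_1,\dots,v_l\}$ yields
\[
\sum_{i=1}^{l-1}|v_{i+1}-v_i|^s \;\le\; (1+3\alpha_{k,f_k(I)}^2)^s|v_1-v_l|^s \;<\; (1.1C^*)^s\rho_{k+1}^sr_0^s,
\]
and doubling gives the $1$-sided bound.

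For the $2$-sided case, the main subtlety is that the naive estimate $|v_1-v_l|\le 2C^*\rho_{k+1}r_0$ combined with Lemma~\ref{lem:var} costs a factor $2^s$, which exceeds the target $2$ whenever $s>1$. The fix is to cut the sum at the index $j$ with $v_j=f_k(I)$, which exists because $V_k\subset V_{k+1}$ by (V2). Then $|v_1-v_j|\le C^*\rho_{k+1}r_0$ and $|v_j-v_l|\le C^*\rho_{k+1}r_0$, and applying Lemma~\ref{lem:var} separately to $\{v_1,\dots,v_j\}$ and $\{v_j,\dots,v_l\}$ (where either subset collapses if $j\in\{1,l\}$) produces
\[
\sum_{i=1}^{l-1}|v_{i+1}-v_i|^s \;<\; 1.1^s\bigl[(C^*)^s + (C^*)^s\bigr]\rho_{k+1}^sr_0^s \;=\; 2(1.1C^*)^s\rho_{k+1}^sr_0^s,
\]
which doubles to the claimed $2$-sided bound. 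The main (though minor) obstacle is precisely this split: one must remember to cut the polygonal sum at the base vertex $f_k(I)$ rather than apply Lemma~\ref{lem:var} monolithically.
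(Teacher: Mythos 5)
Your proposal is correct and follows essentially the same approach as the paper: reduce to the $2$-to-$1$ count over $\mathscr{E}_{k+1}(I)$, identify the sum with twice the polygonal $s$-variation of $V_{k+1,I}$ along the ordering induced by $\ell_{k,f_k(I)}$, and invoke Lemma~\ref{lem:var} together with $1+3\alpha_{k,f_k(I)}^2<1.1$ and $|v_1-v_l|<C^*\rho_{k+1}r_0$. The paper only writes out the $1$-sided case explicitly and dismisses the $2$-sided case as "a similar computation"; your explicit split of the polygonal sum at the base vertex $v_j=f_k(I)$ (to avoid the extraneous $2^s$ factor that a single application of \eqref{e:var2} across the full diameter $2C^*\rho_{k+1}r_0$ would incur when $s>1$) is exactly the right reading of that remark and is a useful clarification to have spelled out.
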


\begin{proof} Suppose $v=f_k(I)$ is 1-sided terminal and let $\{v_1,\dots,v_n\}$ be an enumeration of the points in $V_{k+1}\cap B(v, C^*\rho_{k+1}r_0)$ starting from $v_1=v$ and moving consecutively towards the terminal direction. Then $$\sum_{J\in \mathscr{I}_{k+1}(I)} (\diam f_{k+1}(J))^s = 2 \sum_{i=1}^n |v_{i+1}-v_i|^s \leq 2(1+3\alpha_{k,v}^2)^s |v_1-v_n|^s < 2(1.1)^s (C^*\rho_{k+1} r_0)^s$$ by Lemma \ref{lem:var}, since $1+3\alpha_{k,v}^2 \leq 1+3(1/16)^2 < 1.1$. The case that $v$ is 2-sided terminal follows from a similar computation.
\end{proof}

\subsection{Special bridge intervals}
Given $k\geq 1$, we define
\[ \mathscr{B}_k^*:= \{(k,I) : \text{$I \in \mathscr{B}_k(J)$, $J\in \mathscr{E}_{k-1}$ is as in \textsection\ref{sec:flatedges}}\}.\]
Recall from \textsection\ref{sec:flatedges} that if $(k,I) \in \mathscr{B}_k^*$, then $14A^*\rho_k r_0\leq \diam{f_k(I)} <14 A^*\rho_{k-1} r_0$.

\begin{lem}\label{lem:bridgelength} If $I\in\mathscr{B}_k^*$, then $$\frac{1}{6}(\diam f_k(I))^s \geq P\rho_{k}^s r_0^s.$$\end{lem}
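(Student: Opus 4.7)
The plan is to reduce the inequality to a direct numerical verification using the explicit formulas for $A^*$ and $P$. By the statement immediately preceding the lemma, every $I \in \mathscr{B}_k^*$ satisfies $\diam f_k(I) \geq 14A^* \rho_k r_0$, so it suffices to show that
\[
\tfrac{1}{6}(14A^*)^s \rho_k^s r_0^s \ \geq\ P \rho_k^s r_0^s,
\]
i.e.\ that $(14A^*)^s \geq 6P$. Substituting $A^* = C^*/(1-\xi_2)$ and $P = 2(1.1C^*)^s/(1-\xi_2^s)$ from \eqref{eq:Pdefinition}, the factor $(C^*)^s$ cancels and the claim becomes
\[
\frac{14^s}{6(1-\xi_2)^s} \ \geq\ \frac{2\cdot 1.1^s}{1-\xi_2^s},
\qquad\text{equivalently,}\qquad
14^s(1-\xi_2^s) \ \geq\ 12\cdot 1.1^s (1-\xi_2)^s.
\]

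To dispatch this, I would use two elementary facts for $s\geq 1$ and $\xi_2\in(0,1)$: first, $\xi_2^s \leq \xi_2$, so $1-\xi_2^s \geq 1-\xi_2$; second, $(1-\xi_2)^{s-1}\leq 1$. Applying the first to the left and dividing both sides by $(1-\xi_2)$ (positive), the inequality follows from
\[
\frac{14^s}{(1-\xi_2)^{s-1}} \ \geq\ 12\cdot 1.1^s,
\]
which, using the second fact, follows in turn from $14^s \geq 12 \cdot 1.1^s$. The latter is equivalent to $(14/1.1)^s \geq 12$; since $14/1.1 > 12$ and $s\geq 1$, this is immediate.

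The computation is essentially a verification that the constant $14$ built into the definition of $\Flat(k)$ (and inherited by the diameter lower bound for intervals in $\mathscr{B}_k^*$) is large enough relative to the constant $1.1$ appearing in the lemmas of \S\ref{sec:orderflat} (which in turn propagates into $P$ through Lemma \ref{lem:flatpointsum}). There is no real obstacle; the point of the lemma is to record that the constant $\tfrac{1}{6}$ and the choice of $P$ in \eqref{eq:Pdefinition} were calibrated precisely so that special bridge intervals carry enough mass to absorb a phantom-mass contribution at the next scale. This compatibility is what will later permit the bookkeeping in the mass estimate of Proposition \ref{prop:mass}.
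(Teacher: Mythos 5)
Your proposal is correct and follows essentially the same approach as the paper: reduce to $(14A^*)^s \geq 6P$, substitute the explicit formulas for $A^*$ and $P$, and finish with elementary inequalities and the numeric check $14^s \geq 12\cdot 1.1^s$. The only cosmetic difference is that you verify $(1-\xi_2)^s \leq 1-\xi_2^s$ by passing through the intermediate bound $1-\xi_2$, whereas the paper states it in one step.
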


\begin{proof} Because $\diam f_k(I)\geq 14A^*\rho_kr_0$, it suffices to check that $(14A^*)^s \geq 6P$. Recalling the definition of $A^*$, we find that $$(14A^*)^s = \left(\frac{14C^*}{1-\xi_2}\right)^s \geq \frac{12(1.1C^*)^s}{1-\xi_2^s}=6P.$$ Here $1/(1-\xi_2)^s \geq 1/(1-\xi_2^s)$ because $0<\xi_2<1$ and $s\geq 1$.
\end{proof}

Let $T$ be a finite tree over $[0,1]$, let $m$ be the depth of $T$ and let $0\leq k \leq m$ be an integer.
Define
\[ \mathscr{B}_k^*(T) := \{(k,I) \in \mathscr{B}^*_k : \text{ there exists $(k,J)\in T\cap(\{k\}\times\mathscr{N}_k)$ such that $J\cap \overline{I}\neq \emptyset$} \}.\]
Although the sets $\mathscr{B}_k^*(T)$ are not necessarily subsets of $\partial T$, we show in the next lemma that each element in $\partial T$ generates at most two elements in $\bigcup_{k=1}^m\mathscr{B}_k^*(T)$.

\begin{lem}\label{lem:bridges*2}
Let $T$ be a finite tree over $[0,1]$ with depth $m\geq 1$ and let $k\leq m$.
\begin{enumerate}
\item For each $(k,I) \in \mathscr{B}_{k}^*(T)$, there exists a unique $(l,J) \in \partial T$ such that $I \subset J$. In fact, $J\in \mathscr{E}_l\cup\mathscr{B}_l$.
\item For each $(l,J) \in \partial T$ such that $J\in \mathscr{E}_l\cup\mathscr{B}_l$, there exist at most two distinct $(k,I) \in \bigcup_{i=0}^m\mathscr{B}_{i}^*(T)$ such that $I\subset J$.
\end{enumerate}
\end{lem}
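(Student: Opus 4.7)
Both claims rest on the bridge-preservation principle from \textsection\ref{sec:bridges}: once $I \in \mathscr{B}_k$, we have $\mathscr{I}_{m+1}(I) = \{I\}$ for every $m \geq k$. Two consequences follow. First, if $I \in \mathscr{B}_k$ and $(k,I) \in T$, then $(k,I) \in \partial T$; otherwise the single-child structure combined with property (2) would create an infinite subtree of the finite $T$. Second, no ancestor of $I \in \mathscr{B}_k^*$ at a level $m < k$ can belong to $\mathscr{B}_m$: a bridge ancestor would remain a bridge, forcing $E = A_{k-1} \in \mathscr{B}_{k-1}$, contradicting $E \in \mathscr{E}_{k-1}$.

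For claim (1), I would split on whether $(k,I) \in T$. If yes, $(k,I) \in \partial T$ with $I \in \mathscr{B}_k$, and uniqueness follows from Remark \ref{rem:chainpartition}. If $(k,I) \notin T$, let $A_l$ denote the level-$l$ ancestor of $I$ and take $l_0$ maximal with $A_{l_0} \in T$; property (2) then forces $A_{l_0} \in \partial T$, and $A_{l_0}$ is the unique host of $I$ in $\partial T$. The main task is to show $A_{l_0} \in \mathscr{E}_{l_0}$. Invoke the witness $K \in \mathscr{N}_k \cap T$ from the definition of $\mathscr{B}_k^*(T)$, touching $\overline{I}$ at a point $p$, let $B_l$ be its ancestor chain, and let $l_p$ be the deepest level with $A_{l_p} = B_{l_p}$. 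At level $l_p+1$, the distinct children $A_{l_p+1}$ and $B_{l_p+1}$ both have $p$ in their closures; since $K$, hence $B_{l_p+1}$, is closed and contains $p$, the mutual disjointness (P1) of the four families forces $A_{l_p+1}$ to be open, so $A_{l_p+1} \in \mathscr{E}_{l_p+1} \cup \mathscr{B}_{l_p+1}$. Property (3) gives $B_{l_p+1} \in T$, and property (2) on the common parent $A_{l_p}$ then yields $A_{l_p+1} \in T$, so $l_p+1 \leq l_0 \leq k-1$.

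A short induction on $m$ from $l_p+1$ to $k-1$ upgrades this to $A_m \in \mathscr{E}_m$, completing claim (1). The base case combines the openness of $A_{l_p+1}$ with the bridge exclusion. For the inductive step, $A_m$ retains $p$ as an endpoint: if $A_m$ is processed as in \textsection\ref{sec:noflatedges}, its only child is itself as a bridge, contradicting bridge exclusion; if processed as in \textsection\ref{sec:flatedges}, then because $I \subset A_{m+1}$ has $p$ as an endpoint, $A_{m+1}$ must be the extreme open subinterval at the end of $A_m$ containing $p$, rather than any interior closed gap in $\mathscr{N}_{m+1}$, and again lies in $\mathscr{E}_{m+1}$ by bridge exclusion.

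For claim (2), fix $(l,J) \in \partial T$ with $J \in \mathscr{E}_l \cup \mathscr{B}_l$ and a candidate $(k,I) \in \mathscr{B}_k^*(T)$ with $I \subset J$, witnessed by $K \in \mathscr{N}_k \cap T$. Since $J \in \partial T$ has no descendants in $T$, $K$ is not a descendant of $J$; if $K$ met the open interior of $J$, the level-$l$ ancestor of $K$ would equal $J$ by the partition property, a contradiction. Hence $K \cap \overline{J} \subset \overline{J} \setminus J$, which contains only the two endpoints of $J$, and $I$ must have an endpoint at one of those. Two distinct such candidates sharing the same boundary endpoint would, at a common refinement level, be two elements of $\mathscr{B}$ both containing a one-sided neighborhood of that endpoint inside $J$, violating the partition of $\mathscr{I}_m$; so each boundary point of $J$ accommodates at most one such $I$, giving at most two in total. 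The main obstacle is the downward induction in claim (1), which hinges on the geometric invariant that the touch-point $p$ survives as an endpoint of every ancestor of $I$ between levels $l_p+1$ and $k-1$.
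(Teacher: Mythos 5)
Your proof is correct in substance and is a genuine alternative to the paper's, but one preliminary remark is false, and your route through Case~2 of claim~(1) is considerably longer than necessary. The false remark: ``if $I \in \mathscr{B}_k$ and $(k,I) \in T$, then $(k,I) \in \partial T$'' does not hold. Take $T=\bigcup_{j\le m}\{j\}\times\mathscr{I}_j$ with $m>k$; then $(k+1,I),\dots,(m,I)\in T$, so $(k,I)\notin\partial T$. There is no ``infinite subtree'' problem, because the chain simply stops at depth $m$. What is true, and what the paper uses, is that since $\mathscr{I}_n(I)=\{I\}$ and $I\in\mathscr{B}_n$ for all $n>k$, the chain inside $T$ terminates at some $(l,I)\in\partial T$ with $l\ge k$, and $I\in\mathscr{B}_l$. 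The host is still a bridge, but the index $l$ is generally larger than $k$; this does not change the lemma's conclusion, only your justification for it.

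The substantive divergence is in Case~2 of claim~(1). The paper argues by containment: if the host $J$ lay in $\mathscr{N}_l\cup\mathscr{F}_l$, then $\overline{I}$ would lie in the interior of $J$ (because whenever an open subinterval is generated inside a point interval, Proposition~\ref{prop:graph} places its closure in the interior of the parent), so every later-generation interval touching $\overline{I}$ — in particular the witness in $\mathscr{N}_k\cap T$ from the definition of $\mathscr{B}^*_k(T)$ — would be a proper descendant of $J$ and hence not in $T$, a contradiction. You instead track the touch point $p$ and run a downward induction propagating the invariant ``$A_m\in\mathscr{E}_m$ with $p$ an endpoint'' from level $l_p+1$ to $k-1$. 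This does work — the invariant survives the \S\ref{sec:flatedges} subdivision because the only children of an open edge interval whose closures contain the parent's endpoint are the extreme open subintervals, and \S\ref{sec:noflatedges} and bridge ancestors are excluded as you say — and it even yields the slightly stronger output $A_{l_0}\in\mathscr{E}_{l_0}$, but it reproves pointwise the same geometric fact that the paper applies in a single containment step. Your claim~(2) argument (each candidate must abut an endpoint of $J$, and two distinct bridge candidates cannot share an endpoint because they would collide in $\mathscr{I}_{\max(k_1,k_2)}$) is also a sound alternative to the paper's ``middle candidate'' contradiction, and is arguably more explicit about why the bound is exactly two.
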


\begin{proof}
The first claim of (1) follows immediately from Remark \ref{rem:chainpartition}. For the second claim, let $(k,I) \in \mathscr{B}_{k}^*(T)$. There are two cases.

\emph{Case 1.} If $(k,I) \in T$, then $\mathscr{I}_{n}(I)  = \{I\}$ and $I\in\mathscr{B}_n$ for all $n > k$, since $I$ is a bridge interval. Therefore, $(l,I) \in \partial T$ for some $l\leq m$ and $I\in\mathscr{B}_l$.

\emph{Case 2.} Suppose now that $(k,I) \not\in T$.
If $J$ was in $\mathscr{N}_l\cup\mathscr{F}_l$, then the closure of $I$ would be contained in the interior of $J$ and $I$ would intersect only intervals $I' \in \mathscr{I}_{l'}$ for which $(l',I') \not\in T$, which is a contradiction.

For (2), fix $(l,J) \in \partial T$ with $J\in \mathscr{E}_l\cup\mathscr{B}_l$ and assume that $\{(k_i,J_i):i=1,2,3\}$ are distinct elements in $\bigcup_{j=0}^m\mathscr{B}_{j}^*(T)$ such that $J_i \subset J$ for $i=1,2,3$. Without loss of generality, assume that $J_2$ is between $J_1$ and $J_3$, in the orientation of $J$. Then there exists $l' \geq l+1$ and $J'\subseteq J$ such that $J' \in\mathscr{N}_{l'}$ and $(l',J') \in T$. But then $(l,J) \not\in \partial T$ which is a contradiction.
\end{proof}

We now impose an additional restriction on $\alpha_0$.

\begin{lem}\label{lem:flatdiam} For all $C^*$, $\xi_1$, and $\xi_2$, there exists $\alpha_1\in(0,1/16]$ such that if $\alpha_0\leq \alpha_1$, then for all $k\geq 0$, $(v,v')\in\Flat(k)$, and $y,y'\in V_{k+1}(v,v')$, we have $|y-y'| \leq |v-v'|$. \end{lem}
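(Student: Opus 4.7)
The plan is to project onto the approximating line $\ell_{k,v}$ and reduce the lemma to a quadratic inequality whose solvability rests on the strict bound $\xi_2<1$ from (V0). Let $\pi$ denote the orthogonal projection onto $\ell_{k,v}$ and decompose each $z\in V_{k+1}(v,v')$ as $z=\pi(z)+z^{\perp}$ with $z^{\perp}\perp\ell_{k,v}$. By (V5), $|z^{\perp}|\leq\alpha_0\rho_{k+1}r_0$, and by Lemma \ref{lem:approx} the points $\pi(y),\pi(y')$ lie between $\pi(v)$ and $\pi(v')$. The Pythagorean theorem gives $|y-y'|^{2}\leq|\pi(y)-\pi(y')|^{2}+4\alpha_{0}^{2}\rho_{k+1}^{2}r_{0}^{2}$ and $|v-v'|^{2}\geq|\pi(v)-\pi(v')|^{2}$, so after dispensing with the trivial cases $y=y'$ and $\{y,y'\}=\{v,v'\}$, it suffices to prove
\[
|\pi(y)-\pi(y')|^{2}+4\alpha_{0}^{2}\rho_{k+1}^{2}r_{0}^{2}\leq|\pi(v)-\pi(v')|^{2}.
\]

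The next step is to record two separation estimates from (V3) via Pythagoras. For any $z\in V_{k+1}(v,v')\setminus\{v\}$, the bound $|z-v|\geq\rho_{k+1}r_0$ combined with $|z^{\perp}-v^{\perp}|\leq2\alpha_0\rho_{k+1}r_0$ yields $|\pi(z)-\pi(v)|\geq\rho_{k+1}r_{0}\sqrt{1-4\alpha_{0}^{2}}$, and similarly with $v'$ in place of $v$. Meanwhile, (V0) and (V3) give $|v-v'|\geq\rho_{k}r_{0}\geq\rho_{k+1}r_{0}/\xi_{2}$, whence
\[
|\pi(v)-\pi(v')|\geq\rho_{k+1}r_{0}\sqrt{\xi_{2}^{-2}-4\alpha_{0}^{2}}.
\]
The key observation is that at $\alpha_{0}=0$ this last bound is \emph{strictly} greater than $\rho_{k+1}r_{0}$, precisely because $\xi_2<1$ in (V0); this gap is the source of all slack below.

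Writing $L=|\pi(v)-\pi(v')|$, $r=\rho_{k+1}r_{0}$, and $s=\sqrt{1-4\alpha_{0}^{2}}$, I would then split the remaining case $y\neq y'$, $\{y,y'\}\neq\{v,v'\}$ into two subcases. If exactly one of $y,y'$ lies in $\{v,v'\}$, the separation estimate yields $|\pi(y)-\pi(y')|\leq L-rs$; squaring and plugging in reduces the desired inequality to $4(\xi_{2}^{-2}-4\alpha_{0}^{2})(1-4\alpha_{0}^{2})\geq1$. If neither $y$ nor $y'$ lies in $\{v,v'\}$, both projections sit in $[\pi(v)+rs,\pi(v')-rs]$, giving $|\pi(y)-\pi(y')|\leq L-2rs$, which (using $s^2+\alpha_0^2=1-3\alpha_0^2$) reduces the goal to $(\xi_{2}^{-2}-4\alpha_{0}^{2})(1-4\alpha_{0}^{2})\geq(1-3\alpha_{0}^{2})^{2}$.

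Both reduced inequalities are strict at $\alpha_0=0$: they become $4\xi_{2}^{-2}\geq1$ and $\xi_{2}^{-2}\geq1$ respectively, the second being the binding condition and holding strictly by (V0). By continuity, both hold simultaneously for every $\alpha_0$ smaller than some $\alpha_1\in(0,1/16]$ depending only on $\xi_2$; the parameters $C^*$ and $\xi_1$ do not enter. The main obstacle is that a naive projection argument is not enough: the estimate $|y-y'|^{2}\leq L^{2}+4\alpha_{0}^{2}r^{2}$ can exceed $|v-v'|^{2}$ whenever $v^{\perp}=v'^{\perp}$, so one cannot conclude by Pythagoras alone. The necessary reserve has to be manufactured by combining the $r$-separation of $V_{k+1}$ (to strictly shrink $|\pi(y)-\pi(y')|$ below $L$ whenever $\{y,y'\}\neq\{v,v'\}$) with the gap $\rho_k/\rho_{k+1}\geq\xi_2^{-1}>1$ from (V0) (to force $L$ strictly above $r$).
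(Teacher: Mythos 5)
Your argument is correct, but it takes a genuinely different route from the paper's. The paper enumerates $V_{k+1}(v,v')=\{v_1,\dots,v_n\}$ with $v_1=v$, $v_n=v'$, reduces (after WLOG $l\geq 2$) to the chain estimate $|v_1-v_2|+|v_l-v_m|\leq (1+3\alpha_0^2)(|x_1-x_2|+|x_l-x_m|)\leq (1+3\alpha_0^2)|v-v'|$ using the $(1+3\alpha^2)$ comparison from Lemma \ref{lem:approx}, and then absorbs the $3\alpha_0^2|v-v'|$ loss against $|v_1-v_2|\geq\rho_{k+1}r_0$ via the \emph{upper} bound $|v-v'|<14A^*\rho_k r_0\leq 14A^*\xi_1^{-1}\rho_{k+1}r_0$ from Definition \ref{def:flatpairs}; this gives the explicit formula $\alpha_1=\min\{1/16,(\xi_1/42A^*)^{1/2}\}$ appearing in \eqref{alphabound}, which depends on $\xi_1$ and $A^*$ (hence $C^*$). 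You instead work with Pythagoras directly, split into subcases depending on whether one or neither of $y,y'$ is an endpoint, and invoke the \emph{lower} bound $|v-v'|\geq\rho_k r_0\geq\xi_2^{-1}\rho_{k+1}r_0$ from (V3)--(V0); the projected-gap estimate $|\pi(z)-\pi(v)|\geq\rho_{k+1}r_0\sqrt{1-4\alpha_0^2}$ then reduces the goal to the pair of quadratic inequalities $4(\xi_2^{-2}-4\alpha_0^2)(1-4\alpha_0^2)\geq 1$ and $(\xi_2^{-2}-4\alpha_0^2)(1-4\alpha_0^2)\geq(1-3\alpha_0^2)^2$. Both hold strictly at $\alpha_0=0$ because $\xi_2<1$, so a threshold $\alpha_1$ exists and, notably, depends only on $\xi_2$ --- a cleaner dependence than the paper's, though yours is produced by a continuity argument rather than a closed formula. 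Two minor remarks: (i) the fact that $\pi(y),\pi(y')$ lie between $\pi(v)$ and $\pi(v')$ is not a consequence of Lemma \ref{lem:approx} but is built into the definition of $V_{k+1}(v,v')$ following Lemma \ref{lem:tridents}; and (ii) the paper's explicit formula \eqref{alphabound} is referenced again before Theorem \ref{thm:main} and compared against $\epsilon_{s,C^*,\xi_1,\xi_2}$ in the proof of Corollary \ref{cor:nets1}, so if one substituted your (implicit) $\alpha_1$, that downstream comparison would need to be redone; this does not affect the correctness of your proof of the lemma as stated.
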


\begin{proof} Enumerate $V_{k+1}(v,v')=\{v_1,\dots,v_n\}$ from left to right, so that $v_1=v$ and $v_n=v'$. Let $y=v_l$ and $y'=v_m$ for some $1\leq l<m\leq n$. If $y=v$ and $y'=v'$, the conclusion is trivial. Thus, let us suppose that there exists at least one point to the left of $y$ or the right of $y'$, say without loss of generality that $l\geq 2$. Let $x_i=\pi_{\ell_{k,v}}$ for all $1\leq i\leq n$. Then, arguing as in the proof of Lemma \ref{lem:var},
$$\frac{|v_1-v_2| + |v_l-v_m|}{1+3\alpha_0^2} \leq |x_1-x_2| + |x_l-x_m| \leq |x_1-x_n|\leq |v-v'|.$$ Because $V_{k+1}$ is $\rho_{k+1}$ separated, \begin{equation*}\begin{split}|v_l-v_m| \leq (1+3\alpha_0^2) |v-v'| - |v_1-v_2| &\leq (1+3\alpha_0^2)|v-v'| - \rho_{k+1}r_0\\ &=|v-v'|\left(1+3\alpha_0^2 -\frac{\rho_{k+1}r_0}{|v-v'|}\right).\end{split}\end{equation*} Now $|v-v'| \leq 14A^*\rho_k r_0 \leq 14A^*\xi_1^{-1}\rho_{k+1} r_0$. Hence $|v_l-v_m|\leq |v-v'|$ provided that $$1+3\alpha_0^2 - \frac{\xi_1}{14A^*} \leq 1.$$ Thus, we can take \begin{equation}\label{alphabound} \alpha_1 := \min\left\{\frac{1}{16}, \left( \frac{\xi_1}{42 A^*} \right)^{1/2}\right\}.\qedhere\end{equation}\end{proof}

Together Lemma \ref{lem:bridges*2} and Lemma \ref{lem:flatdiam} yield the following result.

\begin{cor}\label{cor:bridges*} Assume $\alpha_0\leq \alpha_1$.
If $T$ is a finite tree over $[0,1]$ of depth $m$, then,
\[ \sum_{k=1}^{m}\sum_{(k,J) \in \mathscr{B}_{k}^*(T)} (\diam{f_k(J)})^s \leq 2\sum_{(l,J) \in \partial T} (\diam{f_l(J)})^s.\]
\end{cor}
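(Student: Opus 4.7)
The plan is to combine the at-most-two-to-one correspondence of Lemma \ref{lem:bridges*2}(2) with a pointwise diameter bound $\diam f_k(I) \leq \diam f_l(J)$ for each $(k, I)\in\mathscr{B}_k^*(T)$ and the unique associated pair $(l, J)\in\partial T$ supplied by Lemma \ref{lem:bridges*2}(1). Granting this estimate, grouping terms by their associated $(l, J)$ and noting that contributions from $J\in\mathscr{N}_l\cup\mathscr{F}_l$ vanish yields
\begin{equation*}
\sum_{k=1}^{m}\sum_{(k,I) \in \mathscr{B}_{k}^*(T)} (\diam f_k(I))^s \;\leq\; 2\sum_{(l,J)\in\partial T}(\diam f_l(J))^s,
\end{equation*}
as required.

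The diameter estimate splits along the two cases of the proof of Lemma \ref{lem:bridges*2}(1). In Case 1, $(k, I)\in T$ and the associated pair is $J = I\in\mathscr{B}_l$ with $l\geq k$; bridges are frozen by \S\ref{sec:bridges}, so $f_n|I = f_k|I$ for $n\geq k$ and $\diam f_l(J) = \diam f_k(I)$ with equality.

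Case 2 ($(k, I)\notin T$, $J\in\mathscr{E}_l$, $I\subsetneq J$) is the core of the argument. First I would pin $I$ to an endpoint of $J$: the witness $(k, J')\in T$ with $J'\in\mathscr{N}_k$ and $J'\cap\overline{I}\neq\emptyset$ from the definition of $\mathscr{B}_k^*(T)$ cannot satisfy $J'\subset J$, for that would force a descendant of $(l, J)$ into $T$ (via tree properties (2) and (3)), contradicting $(l, J)\in\partial T$. Since distinct intervals of $\mathscr{I}_k$ meet only at boundary points, $J'\cap\overline{I}\subset\{a_J, b_J\}$, so without loss of generality $a_I = a_J$. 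In the ancestry chain $J = J_l\supset J_{l+1}\supset\cdots\supset J_k = I$, this forces $a_{J_n} = a_J$ for every $n$, and hence $J_{n+1}$ is the leftmost child of $J_n$ at every stage. A glance at \S\ref{sec:flatedges} shows that the intervals in $\mathscr{N}_{n+1}(J_n)\cup\mathscr{F}_{n+1}(J_n)$ correspond only to interior vertices of the subdivision, so the leftmost child must in fact lie in $\mathscr{E}_{n+1}(J_n)\cup\mathscr{B}_{n+1}(J_n)$.

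Next I would verify that each intermediate $J_n$ with $l\leq n\leq k-1$ is processed via \S\ref{sec:flatedges}, i.e., has a flat endpoint. Otherwise $J_n$ would become its own bridge from stage $n+1$ onward via \S\ref{sec:noflatedges}, and a short check on the definition of $\mathscr{B}_k^*$ (splitting on whether $k = n+1$ or $k > n+1$) shows $I = J_n$ could not lie in $\mathscr{B}_k^*$, contradicting the hypothesis $(k, I)\in\mathscr{B}_k^*(T)$. With this confirmed, $(f_n(a_{J_n}), f_n(b_{J_n}))\in\Flat(n)$ by (P5), and Lemma \ref{lem:flatdiam} (applicable because $\alpha_0\leq\alpha_1$) gives $\diam f_{n+1}(J_{n+1})\leq\diam f_n(J_n)$ at every step. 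Telescoping through the chain produces $\diam f_k(I)\leq\diam f_l(J)$. The main obstacle is precisely this structural claim that the entire chain is confined to \S\ref{sec:flatedges}-refinements; once that is settled, the telescoping via Lemma \ref{lem:flatdiam} is immediate and the corollary follows.
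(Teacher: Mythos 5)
Your argument is correct and is essentially the paper's own proof, which the paper compresses into the remark that Lemma \ref{lem:bridges*2} and Lemma \ref{lem:flatdiam} together yield the corollary: Lemma \ref{lem:bridges*2}(1)--(2) supplies the at-most-two-to-one assignment to boundary elements, and Lemma \ref{lem:flatdiam} gives the monotonicity $\diam f_{n+1}(J_{n+1})\leq \diam f_n(J_n)$ along the refinement chain, which you telescope. Your filling-in of the structural details (pinning $I$ to an endpoint of the open interval $J$ so the chain avoids the closed intervals of $\mathscr{N}\cup\mathscr{F}$, and using the definition of $\mathscr{B}_k^*$ to rule out frozen intermediate intervals) is sound and, organized as an induction along the chain starting from $J\in\mathscr{E}_l$, closes every case.
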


\subsection{An upper bound for the total mass}\label{sec:totalmass}
Here we prove the following proposition which gives an upper bound for the mass of $[0,1]$ in terms of the variation excess $\tau_{s}(k,v,v')$ of flat pairs $(v,v')\in \Flat(k)$ defined in \textsection\ref{sec:flat}.

\begin{prop}\label{prop:mass} Assume that $\alpha_0\leq \alpha_1$ (see Lemma \ref{lem:flatdiam}). For all $s\geq 1$,
\[ \mathcal{M}_s([0,1]) \lesssim_{s,C^*,\xi_2} r_0^s + \sum_{k=0}^{\infty} \sum_{(v,v')\in\Flat(k)} \tau_{s}(k,v,v') \rho_{k}^s r_0^s + \sum_{k=0}^{\infty} \sum_{\substack{v\in V_k \\ \a_{k,v}\geq \a_0}} \rho_k^s r_0^s.\]
\end{prop}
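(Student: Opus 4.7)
The plan is to reduce the problem to a maximal-depth bound and then to telescope a one-step estimate.

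First I would show that for every finite tree $T$ over $[0,1]$, every $(l,J)\in\partial T$, and every $m\geq l$,
\[ (\diam f_l(J))^s \;\leq\; \sum_{J'\in\mathscr{I}_m,\ J'\subseteq J}(\diam f_m(J'))^s. \]
This sub-additivity is trivial if $J\in\mathscr{B}_l\cup\mathscr{F}_l$, amounts to the chain inequality $|v-v'|^s\leq \sum_i|v_{i+1}-v_i|^s$ if $J\in\mathscr{E}_l$ falls under \S\ref{sec:flatedges}, and is vacuous in the remaining cases since $\diam f_l(J)=0$. Iterating yields $\mathcal{M}_s([0,1])=\sup_{m\geq 0} S_m$, where $S_m:=\sum_{I\in\mathscr{I}_m}(\diam f_m(I))^s$, so it suffices to bound $S_m$ uniformly in $m$.

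Next I would introduce the augmented mass $\Sigma_m:=S_m+\sum_{(m,I)\in\mathscr{P}_m} p_{m,I}$ and establish the one-step estimate
\[ \Sigma_{m+1}\;\leq\; \Sigma_m + C_{s,C^*}\!\!\sum_{(v,v')\in\Flat(m)}\!\!\tau_s(m,v,v')\rho_m^s r_0^s + C_{s,C^*}\!\!\!\sum_{v\in V_m,\,\alpha_{m,v}\geq\alpha_0}\!\!\!\rho_m^s r_0^s. \]
The verification is interval-by-interval. For $I\in\mathscr{B}_m\cup\mathscr{F}_m$ and for $I\in\mathscr{N}_m$ as in \S\ref{sec:nonterm} the two sides match. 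For $I\in\mathscr{E}_m$ in \S\ref{sec:flatedges}, property (P5) identifies the pair of endpoints with an element $(v,v')\in\Flat(m)$, and the definition of variation excess bounds the increase by $\tau_s(m,v,v')|v-v'|^s\leq (14A^*)^s\tau_s(m,v,v')\rho_m^s r_0^s$. For $I\in\mathscr{E}_m$ in \S\ref{sec:noflatedges} the mass is preserved. For $I\in\mathscr{N}_m$ in \S\ref{sec:noflatpoints}, Lemma \ref{rem:card} bounds $|\tilde V_{m+1,I}|=O_{C^*}(1)$ inside a ball of radius $15A^*\rho_{m+1}r_0$, so the total new mass is $O_{C^*}(\rho_m^s r_0^s)$, charged to the non-flat vertex $f_m(I)$.

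The heart of the recursion is the case $I\in\mathscr{N}_m$ of \S\S\ref{sec:1-term}--\ref{sec:2-term}. Lemma \ref{lem:flatpointsum} bounds the new edge mass in $\mathscr{I}_{m+1}(I)$ by $4(1.1C^*)^s\rho_{m+1}^s r_0^s$. Using $A^*=C^*/(1-\xi_2)$, a direct computation shows that the outermost new vertex $v_l$ has no $V_{m+1}$-neighbor within $14A^*\rho_{m+1}r_0$ on its outward side (since the closest $V_m$-vertex on that side sits $\geq 14A^*\rho_m r_0$ from $v$ and $14A^*(1-\xi_2)=14C^*>(1+\xi_2)C^*$ comfortably) but has $v_{l-1}$ as an inward neighbor, so $v_l$ is $1$-sided terminal at level $m+1$ and contributes at most $P\rho_{m+1}^s r_0^s$ of new phantom mass. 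The defining relation $(P+2(1.1C^*)^s)\xi_2^s=P$ then gives $p_{m,I}\geq 2(1.1C^*)^s\rho_{m+1}^s r_0^s+P\rho_{m+1}^s r_0^s$, so the old phantom mass pays for both the new edges and the new phantom mass, and $\Sigma$ does not grow from this source. Finally, $\Sigma_0\lesssim_{s,C^*,\xi_2}r_0^s+\sum_{v\in V_0,\,\alpha_{0,v}\geq\alpha_0}r_0^s$ follows from the step-$0$ construction: each connected flat component of the initial graph $G_0$ lies in a tube of diameter $\leq 2C^*r_0$ and so has chain $s$-mass $\leq (2C^*r_0)^s$, while the $2(l-1)$ bridges added in \S\ref{sec:k=0} can be charged to the isolated (typically non-flat) vertices they reach. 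Telescoping the one-step estimate then yields the proposition. The principal obstacle I anticipate is the bookkeeping near terminal flat vertices, in particular the geometric verification that new terminals are $1$-sided rather than $2$-sided (so that the weaker recursion $(P+2(1.1C^*)^s)\xi_2^s=P$ suffices instead of a stronger one incompatible with the range of $\xi_2\in(0,1)$), together with tracking the subcases of (P5) and (P7) at transitions between algorithm branches so that no mass is double-counted.
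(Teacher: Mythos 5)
Your opening reduction is where the argument breaks. For $s>1$ the chain inequality you invoke for edge intervals, $|v-v'|^s\leq\sum_i|v_{i+1}-v_i|^s$, is false in general --- indeed it is typically reversed: if the new vertices are nearly collinear (the flat case, controlled by Lemma \ref{lem:flat2}), subdividing an edge of length $1$ into two collinear halves gives $2\cdot 2^{-s}<1$. Consequently refining can strictly decrease the level sums $S_m=\sum_{I\in\mathscr{I}_m}(\diam f_m(I))^s$, and the identity $\mathcal{M}_s([0,1])=\sup_m S_m$ does not hold: a tree whose boundary stops early on one branch and goes deep on another can have a strictly larger boundary sum than every $S_m$. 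This is exactly why the mass is defined as a supremum over finite trees with mixed-depth boundaries, and why the paper's proof runs the one-step estimate (Lemma \ref{lem:estimates}) on an arbitrary finite tree $T$ and its parent tree $p(T)$, rather than on the full level families $\mathscr{I}_m$. Bounding $\sup_m S_m$, as you propose, does not bound $\mathcal{M}_s([0,1])$.

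The second gap is in the "heart of the recursion." Your claim that at a terminal flat vertex the old phantom mass pays for the new edges and the new phantom mass covers only the benign case; it fails whenever a new sub-edge has length $\geq 14A^*\rho_{k+1}r_0$ and so becomes a bridge interval. Then a $1$-sided terminal vertex can become $2$-sided terminal (new phantom mass $2P\rho_{k+1}^sr_0^s$, which is \emph{not} dominated by $P\rho_k^sr_0^s$ unless $2\xi_2^s\leq 1$, and $\xi_2$ may be close to $1$), a previously non-terminal flat vertex can become terminal with no old phantom mass at all, and new terminal vertices can appear strictly between the endpoints of the image of an old edge interval. These are the paper's Cases 2c, 2d, 3b, 3c of Estimate 2 and all of Estimate 4, and they cannot be charged to $\tau_s$, to non-flat vertices, or to old phantom mass; the paper pays for them with $\tfrac16(\diam f_{k+1}(K))^s$ for special bridge intervals $K\in\mathscr{B}^*_{k+1}(T)$ (Lemma \ref{lem:bridgelength}), and then absorbs the accumulated bridge payments globally via Lemma \ref{lem:bridges*2}, Lemma \ref{lem:flatdiam}, and Corollary \ref{cor:bridges*} --- which is the one and only place the hypothesis $\alpha_0\leq\alpha_1$ is used. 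Your one-step inequality omits this bridge term entirely and your argument never invokes $\alpha_0\leq\alpha_1$, which signals that this bookkeeping, and with it the proof, is missing an essential mechanism.
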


The proof of Proposition \ref{prop:mass} reduces to proving the following lemma (cf. \cite[(9.4)]{BS3}). Recall the definition of the parent tree from \textsection\ref{sec:trees}.

\begin{lem}\label{lem:estimates} Let $k\geq 1$ be an integer, let $T$ be a finite tree over $[0,1]$ of depth $k+1$ and let $p(T)$ be the parent tree. There exists a constant $C>0$ depending only on $s$, $C^*$, and $\xi_2$ such that
\begin{align}\label{eq:estimates}\tag{E}
\begin{split}
&\sum_{(l,I)\in \partial T}(\diam{f_l(I)})^s  + \sum_{(k+1,I)\in \mathscr{P}_{k+1} \cap\partial T}p_{k+1,I} \\
&\quad\leq\sum_{(l,I)\in \partial p(T)}(\diam{f_l(I)})^s + \sum_{(k,I)\in \mathscr{P}_{k}\cap \partial p(T)}p_{k,I} + \frac13\sum_{(k+1,I)\in \mathscr{B}_{k+1}^*(T)}(\diam{f_{k+1}(I)})^s\\
&\quad\ + C\sum_{\substack{v\in V_k \\ \a_{k,v}\geq \a_0}} \rho_{k}^sr_0^s+ C\sum_{\substack{w\in V_{k+1} \\ \a_{k+1,w}\geq \a_0}} \rho_{k+1}^sr_0^s + C\sum_{(v,v')\in\Flat(k)} \tau_{s}(k,v,v') \rho_k^s r_0^s.
 \end{split}
 \end{align}
\end{lem}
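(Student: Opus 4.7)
The plan is to reduce (E) to a local, per-interval estimate. Let
\[ S := \{(k,I) \in \partial p(T) : (k,I) \in T \text{ and } \{k+1\} \times \mathscr{I}_{k+1}(I) \subseteq T\} \]
denote the set of level-$k$ leaves of $p(T)$ that are subdivided in $T$. Then $\partial T$ differs from $\partial p(T)$ only by replacing each $(k,I) \in S$ with its children $\{k+1\}\times\mathscr{I}_{k+1}(I)$, and similarly for the phantom mass collections $\mathscr{P}_k \cap \partial p(T)$ and $\mathscr{P}_{k+1} \cap \partial T$. Thus (E) follows by summation from a local inequality asserting, for each $(k,I) \in S$, that the total diameter $s$-power and phantom mass of $\mathscr{I}_{k+1}(I)$ is bounded by $(\diam f_k(I))^s$, plus parent phantom mass $p_{k,I}$ (when $(k,I) \in \mathscr{P}_k$), plus a $\tfrac{1}{3}$ credit from children in $\mathscr{B}_{k+1}^*(T)$, plus localized $\a$- and $\tau_s$-corrections.

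I would then run a case analysis matching the construction of \S\ref{sec:curve}. The cases $I \in \mathscr{B}_k$, $I \in \mathscr{F}_k$, $I \in \mathscr{E}_k$ with no flat endpoint (\S\ref{sec:noflatedges}), and $I \in \mathscr{N}_k$ flat non-terminal (\S\ref{sec:nonterm}) are trivial because $\mathscr{I}_{k+1}(I) = \{I\}$ and $f_{k+1}|I = f_k|I$. For $I \in \mathscr{E}_k$ with a flat endpoint (\S\ref{sec:flatedges}), the definition of variation excess gives $\sum_{i=1}^{l-1}|v_{i+1}-v_i|^s \leq (1+\tau_s(k,v,v'))|v-v'|^s$, so the child diameter sum is at most $(\diam f_k(I))^s + C_{s,C^*,\xi_2}\,\tau_s(k,v,v')\rho_k^s r_0^s$; any terminal child in $\mathscr{P}_{k+1}$ is adjacent to a bridge child in $\mathscr{B}_{k+1}^*(T)$ of length at least $14A^*\rho_{k+1}r_0$, whose $\tfrac{1}{3}$ credit covers its $\leq 2P\rho_{k+1}^s r_0^s$ phantom mass by Lemma \ref{lem:bridgelength}. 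For $I \in \mathscr{N}_k$ non-flat (\S\ref{sec:noflatpoints}), Lemma \ref{rem:card} bounds the number of children and each has diameter $\lesssim \rho_{k+1}r_0$, so the total local contribution is $\lesssim_{s,C^*}\rho_k^s r_0^s$ and is absorbed by the $\a_{k,f_k(I)} \geq \a_0$ term on the right; any non-flat neighbor $w \in V_{k+1}$ appearing in $G_{k+1,I}$ is absorbed by the $\a_{k+1,w} \geq \a_0$ term. For $I \in \mathscr{N}_k$ flat terminal (\S\ref{sec:1-term}--\S\ref{sec:2-term}), the parent phantom mass $p_{k,I}$, which equals either $P\rho_k^s r_0^s$ or $2P\rho_k^s r_0^s$, pays both for the child diameter sum (bounded by Lemma \ref{lem:flatpointsum}) and for any inherited phantom mass at step $k+1$, via the recursive identity $[P+2(1.1C^*)^s]\xi_2^s = P$ together with $\rho_{k+1} \leq \xi_2 \rho_k$.

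Summing the local estimates over $(k,I) \in S$ then assembles (E); the $\tau_s$-corrections aggregate into $\sum_{(v,v')\in\Flat(k)}\tau_s(k,v,v')\rho_k^s r_0^s$ (each flat pair arises from at most two intervals $I \in \mathscr{E}_k$, absorbed into the constant), and the $\a$-large corrections at scales $k$ and $k+1$ aggregate into the corresponding sums on the right. The hard part will be the careful bookkeeping of terminal-vertex phantom mass at step $k+1$: each such contribution must be uniquely charged either to parent phantom mass inherited from step $k$ or to the $\tfrac{1}{3}$ credit of exactly one adjacent bridge in $\mathscr{B}_{k+1}^*(T)$, without double-counting across the several children of distinct intervals in $S$ that might touch a common vertex. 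The restriction $\a_0 \leq \a_1$ (Lemma \ref{lem:flatdiam}), the recursive definition of $P$, and the inequality $\tfrac{1}{3}(14A^*)^s \geq 2P$ of Lemma \ref{lem:bridgelength} are exactly what make this charging scheme consistent.
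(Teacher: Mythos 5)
Your overall skeleton is correct and matches the paper's: reducing (E) to a summable family of local estimates indexed by the level-$k$ leaves $S\subset \partial p(T)$ that are subdivided in $T$, then running a case analysis on the type of $I$ according to the construction of \S\ref{sec:curve}. The identification of the key inputs---Lemma \ref{lem:flatpointsum}, Lemma \ref{lem:bridgelength}, the recursive identity $[P+2(1.1C^*)^s]\xi_2^s=P$, and the definition of variation excess---is also right, and the $\tau_s$-aggregation you describe reproduces (\ref{eq:est5'}). Where the proposal goes wrong is in the case analysis itself.

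The genuine gap is in your dismissal of the flat non-terminal case. You write that ``$I\in\mathscr{N}_k$ flat non-terminal (\S\ref{sec:nonterm}) are trivial because $\mathscr{I}_{k+1}(I)=\{I\}$ and $f_{k+1}|I=f_k|I$,'' but that reasoning only shows the diameter contributions of $I$ are unchanged. It does not control the phantom mass: we can have $(k,I)\notin\mathscr{P}_k$ yet $(k+1,I)\in\mathscr{P}_{k+1}$, because $f_{k+1}(I)=f_k(I)$ can become $1$- or $2$-sided terminal in $V_{k+1}$ when a neighboring edge interval as in \S\ref{sec:flatedges} spawns a bridge interval in $\mathscr{B}_{k+1}^*$. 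This is precisely the paper's Case 3b/3c of Estimate~2, where $p_{k+1,I}$ appears on the left of (E) with no matching $p_{k,I}$ on the right, and must be charged against $\tfrac16(\diam f_{k+1}(K_i))^s$ for the adjacent bridge(s). A closely related problem infects your flat terminal case: you assert that $p_{k,I}$ ``pays both for the child diameter sum and for any inherited phantom mass at step $k+1$'' via the recursion for $P$, but the recursion only gives $p_{k+1,I}\le p_{k,I}$ when the degree of terminality does not increase. If a $1$-sided terminal $f_k(I)$ becomes $2$-sided terminal in $V_{k+1}$ (paper's Case 2c, and similarly 2d), then $p_{k+1,I}=2P\rho_{k+1}^sr_0^s$ need not be $\le P\rho_k^sr_0^s$ for general $\xi_1,\xi_2$, and the deficit must again be paid from the $\tfrac13$ bridge credit.

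You do acknowledge that the charging of step-$(k+1)$ phantom mass between $p_{k,I}$ and the $\tfrac13$ bridge credit ``without double-counting'' is the hard part, but the sketched cases already make incorrect claims in exactly that part. The paper resolves the double-counting by working with the formal disjoint union $\mathscr{B}^{\,**}_{k+1}(T)=\mathscr{B}_{k+1}^*(T)\sqcup\mathscr{B}_{k+1}^*(T)$ and choosing injections $\mathcal{B}_1^*,\mathcal{B}_2^*\hookrightarrow\mathscr{B}^{\,**}_{k+1}(T)$ so that each bridge interval's credit is drawn at most twice---once per endpoint---whence $\tfrac16\sum_{\mathcal{B}_1^*\cup\mathcal{B}_2^*}\le\tfrac13\sum_{\mathscr{B}^*_{k+1}(T)}$. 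Without this (or an equivalent device), your local estimates do not sum to the stated form of (E).
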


We prove Lemma \ref{lem:estimates} in \textsection\ref{sec:prooflemma}. Assuming that Lemma \ref{lem:estimates} holds, here is the proof of Proposition \ref{prop:mass}.

\begin{proof}[{Proof of Proposition \ref{prop:mass}}] Assume that $\alpha_1\leq \alpha_0$.
Let $T$ be a finite tree over $[0,1]$ of depth $m$. Set $T_m = T$ and for each $1\leq k\leq m$ (if any) set $T_{k-1} = p(T_{k})$. Note that $T_{0} = \{0\}\times\mathscr{I}_0$. By Lemma \ref{lem:estimates}, for all $1\leq k\leq m$ (if any),
\begin{align*}
&\sum_{(l,I) \in \partial T_{k}} (\diam{f_l(I)})^s + \sum_{(k,I)\in \mathscr{P}_{k}\cap\partial T_{k}}p_{k,I}  \\
&\quad\leq \sum_{(l,I) \in \partial T_{k-1}} (\diam{f_l(I)})^s + \sum_{(k-1,I)\in \mathscr{P}_{k-1}\cap\partial T_{k-1}}p_{k-1,I}
+ \frac13\sum_{(k,I)\in \mathscr{B}_{k}^*(T_{k})}(\diam{f_k(I)})^s\\
&\quad\ + C\sum_{\substack{v\in V_{k-1} \\ \a_{k-1,v}\geq \a_0}} \rho_{k-1}^s r_0^s+ C\sum_{\substack{w\in V_{k} \\ \a_{k,w}\geq \a_0}} \rho_{k}^s r_0^s + C\sum_{(v,v')\in\Flat(k-1)} \tau_{s}(k-1,v,v') \rho_{k-1}^s r_0^s.
\end{align*}
Iterating the latter inequality,
\begin{align*}
\sum_{(l,I) \in \partial T} (\diam{f_l(I)})^s &\leq \sum_{I \in \mathscr{E}_0\cup \mathscr{B}_0} (\diam{f_0(I)})^s + \sum_{I \in \mathscr{P}_0}p_{0,I}
+ \frac13\sum_{k=1}^{m}\sum_{I\in\mathscr{B}_k^*(T)} (\diam{f_k(I)})^s \\
&\ + 2C\sum_{k=0}^{m}\sum_{\substack{v\in V_k \\ \a_{k,v}\geq \a_0}} \rho_{k}^s r_0^s+ C\sum_{k=1}^{m}\sum_{(v,v')\in\Flat(k)} \tau_{s}(k,v,v') \rho_{k}^s r_0^s.
\end{align*}
Since $\alpha_0\leq \alpha_1$, we obtain $\frac13\sum_{k=1}^{m}\sum_{I\in\mathscr{B}_k^*(T)} (\diam{f_k(I)})^s \leq \frac23\sum_{(l,I) \in \partial T} (\diam{f_l(I)})^s$ by Corollary \ref{cor:bridges*}. This is the only place in the proof of the Proposition that we use the restriction $\alpha_0\leq \alpha_1$.
Therefore,
\begin{align*}
\sum_{(l,I) \in \partial T} (\diam{f_l(I)})^s &\leq 3\sum_{I \in \mathscr{E}_0\cup \mathscr{B}_0} (\diam{f_0(I)})^s + 3\sum_{I \in \mathscr{P}_0}p_{0,I}\\
 &\ + 6C\sum_{k=0}^{m}\sum_{\substack{v\in\ V_k\\ \a_{k,v}\geq \a_0}}\rho_{k}^s r_0^s + 3C\sum_{k=0}^{m-1}\sum_{(v,v')\in \Flat(k) }\tau_{s}(k,v,v') \rho_{k}^s r_0^s.
\end{align*}
There are now two alternatives. On one hand, suppose that $\alpha_{0,v}<\alpha_0$ for some $v\in V_0$. Then $V_{0}$ projects onto an $(1+3(1/16)^2)^{-1}r_0$ separated set in $\ell_{0,v}$ of diameter at most $2C^*r_0$  by Lemma \ref{lem:approx}. Hence $\card{V_0} \lesssim_{C^*} 1$ and $$\sum_{I \in \mathscr{E}_0\cup \mathscr{B}_0} (\diam{f_0(I)})^s + \sum_{I \in \mathscr{P}_0}p_{0,I} \lesssim_{s,C^*,\xi_2}  r_0^s.$$
On the other hand, suppose that $\alpha_{0,v}\geq \alpha_0$ for all $v\in V_0$. Then $$\sum_{I \in \mathscr{E}_0\cup \mathscr{B}_0} (\diam{f_0(I)})^s + \sum_{I \in \mathscr{P}_0}p_{0,I} \lesssim_{s,C^*,\xi_2} \sum_{\substack{v\in V_0\\ \alpha_{0,v}\geq \alpha_0}} r_0^s$$ In either case, we arrive at
\begin{align*}
\sum_{(l,I)\in\partial T} (\diam{f_l(I)})^s &\lesssim_{s,C^*,\xi_2} r_0^s+ \sum_{k=0}^{m-1}\sum_{(v,v')\in \Flat(k) }\tau_{s}(k,v,v') \rho_{k}^s r_0^s + \sum_{k=0}^{m}\sum_{\substack{v\in\ V_k\\ \a_{k,v}\geq \a_0}}\rho_{k}^s r_0^s \\
&\lesssim_{s,C^*,\xi_2} r_0^s+ \sum_{k=0}^{\infty}\sum_{(v,v')\in \Flat(k) }\tau_{s}(k,v,v') \rho_{k}^sr_0^s + \sum_{k=0}^{\infty}\sum_{\substack{v\in\ V_k\\ \a_{k,v}\geq \a_0}}\rho_{k}^s r_0^s.
\end{align*}
Since $T$ was an arbitrary tree over $[0,1]$, we obtain the desired bound on $\mathcal{M}_s([0,1])$.
\end{proof}

\subsection{Proof of Lemma \ref{lem:estimates}}\label{sec:prooflemma}
The proof is divided into five estimates (\ref{eq:est1}), (\ref{eq:est2}), (\ref{eq:est3}), (\ref{eq:est4}) and (\ref{eq:est5}), whose sum gives (\ref{eq:estimates}). Towards this end, we split the left hand side of (\ref{eq:estimates}) into smaller sums by making the following four decompositions.

Firstly, $\partial T$ can be partitioned as $\mathcal{E}_1\cup \mathcal{E}_2\cup \mathcal{E}_3\cup \mathcal{B}_1\cup \mathcal{B}_2\cup \mathcal{B}_3\cup\mathcal{B}_4 \cup \mathcal{F}\cup (\partial T \cap \partial p(T))$, where
\begin{align*}
\mathcal{E}_1 &= \{(k+1,I) \in \partial T : \text{$I \in \mathscr{E}_{k+1}(J)$ and  $J\in\mathscr{N}_k$  is as in \textsection\ref{sec:noflatpoints}}\},\\
\mathcal{E}_2 &= \{(k+1,I) \in \partial T : \text{$I \in \mathscr{E}_{k+1}(J)$ and  $J\in\mathscr{N}_k$  is as in \textsection\ref{sec:flatpoints}}\},\\
\mathcal{E}_3 &= \{(k+1,I) \in \partial T : \text{$I \in \mathscr{E}_{k+1}(J)$ and  $J\in\mathscr{E}_k$  is as in \textsection\ref{sec:flatedges}}\},\\
\mathcal{B}_1 &= \{(k+1,I) \in \partial T : \text{$I \in\mathscr{B}_{k+1}(J)$ and $J \in \mathscr{N}_k$ is as in \textsection\ref{sec:noflatpoints}}\},\\
\mathcal{B}_2 &= \{(k+1,I) \in \partial T : \text{$I \in \mathscr{B}_{k+1}(J)$ and  $J\in\mathscr{E}_k$  is as in \textsection\ref{sec:noflatedges}}\},\\
\mathcal{B}_3 &= \{(k+1,I) \in \partial T : \text{$I \in \mathscr{B}_{k+1}(J)$ and  $J\in\mathscr{E}_k$  is as in \textsection\ref{sec:flatedges}}\},\\
\mathcal{B}_4 &= \{(k+1,I) \in \partial T : \text{$I \in \mathscr{B}_{k+1}(J)$ and  $J\in\mathscr{B}_k$ is as in \textsection\ref{sec:bridges}}\},\\
\mathcal{F} &\subseteq \{k+1\}\times(\mathscr{N}_{k+1}\cup\mathscr{F}_{k+1}).
\end{align*}

Secondly, $\mathscr{P}_{k+1}\cap\partial T$ can be partitioned as $\mathcal{P}_1\cup \mathcal{P}_2\cup \mathcal{P}_3$, where
\begin{align*}
\mathcal{P}_1 &= \{(k+1,I) \in \partial T\cap\mathscr{P}_{k+1}: \text{$I \in \mathscr{N}_{k+1}(J)$ and $J\in \mathscr{N}_k$ is as in \textsection\ref{sec:noflatpoints}}\},   \\
\mathcal{P}_2 &= \{(k+1,I) \in \partial T\cap\mathscr{P}_{k+1}: \text{$I \in \mathscr{N}_{k+1}(J)$ and $J\in \mathscr{N}_k$ is as in \textsection\ref{sec:flatpoints}}\},\\
\mathcal{P}_3 &= \{(k+1,I) \in \partial T\cap\mathscr{P}_{k+1}: \text{$I \in \mathscr{N}_{k+1}(J)$ and $J\in \mathscr{E}_k$ is as in \textsection\ref{sec:flatedges}}\}.
\end{align*}

Thirdly, $ \partial p(T)$ can be partitioned as $\mathcal{E}_1'\cup \mathcal{E}_2' \cup \mathcal{F}' \cup (\partial T \cap \partial p(T))$, where
\begin{align*}
\mathcal{E}_1' &= \{(k,I) \in \partial p(T) \setminus \partial T : \text{$I \in \mathscr{E}_{k}$ is as in \textsection\ref{sec:noflatedges}}\},\\
\mathcal{E}_2' &= \{(k,I) \in \partial p(T) \setminus \partial T : \text{$I \in \mathscr{E}_{k}$ is as in \textsection\ref{sec:flatedges}}\},\\
\mathcal{B}_1' &= \{(k,I) \in \partial p(T) \setminus \partial T : \text{$I \in \mathscr{B}_{k}$ is as in \textsection\ref{sec:bridges}}\},\\
\mathcal{F}' &\subseteq \{k\}\times(\mathscr{N}_{k}\cup\mathscr{F}_{k}).
\end{align*}

Fourthly, set $\mathscr{B}^{\,**}_{k+1}(T)= \mathscr{B}_{k+1}^*(T)\sqcup\mathscr{B}_{k+1}^*(T)$. Then define collections $\mathcal{B}^*_1$ and $\mathcal{B}^*_2$ as follows. If $I\in \mathscr{N}_{k}$, $\alpha_{k,f_k(I)}<\a_0$, $\mathscr{I}_{k+1}(I)\subset\partial T$, and $f_k(I)$ is an endpoint of the image $f_{k+1}(J)$ of $(k+1,J)\in\mathscr{B}^*_{k+1}$, then we include a copy of $(k+1,J)$ from $\mathscr{B}^{\,**}_{k+1}(T)$ in $\mathcal{B}^*_1$. If $K\in\mathscr{N}_k$ and $f_k(K)$ is the endpoint of $f_{k+1}(I)$ for some $(k+1,I)\in\mathcal{B}_3$ that lies strictly between the endpoints of the image $f_k(J)$ of the associated edge interval $J\in\mathscr{E}_{k}$, then we include a copy of $(k+1,I)$ from $\mathscr{B}^{\,**}_{k+1}(T)$ in $\mathcal{B}^*_2$. Because each bridge has only two endpoints, we can choose the included copies so that $\mathcal{B}^*_1\cup\mathcal{B}^*_2\subset \mathscr{B}^{\,**}_{k+1}.$

Before proceeding to the estimates, we remark that
\[ \sum_{(k,I) \in \mathcal{F}'}(\diam{f_k(I)})^s = \sum_{(k+1,I) \in \mathcal{F}}(\diam{f_{k+1}(I)})^s = 0,\] because $f_k$ is constant on each interval in $\mathscr{N}_k\cup\mathscr{F}_k$ by (P2).

\medskip

\underline{Estimate 1.} Here we deal with phantom masses and new intervals coming from some $I\in \mathscr{N}_k$ whose image is not flat. In particular, we will show that there exists $C_1$ depending only on $s$, $C^*$, and $\xi_2$ such that
\begin{equation} \label{eq:est1}\tag{E1}\begin{split}
\sum_{(k+1,I) \in \mathcal{E}_1\cup \mathcal{B}_1}&(\diam{f_{k+1}(I)})^s + \sum_{(k+1,J) \in \mathcal{P}_1}p_{k+1,J}\\ & \leq C_1 \sum_{\substack{v\in V_k \\ \a_{k,v}\geq \a_0}} \rho_{k}^sr_0^s + C_1\sum_{\substack{w\in V_{k+1} \\ \a_{k+1,w}\geq \a_0}} \rho_{k+1}^sr_0^s.
\end{split}\end{equation}
Since
\[ \{I \in \mathscr{N}_k : \a_{k,f_k(I)}\geq \a_0 \text{ and }\mathscr{I}_{k+1}(I)\subset \partial T\} \subseteq \{I \in \mathscr{N}_k\cap \partial p(T) : \a_{k,f_k(I)}\geq \a_0\},\]
inequality (\ref{eq:est1}) follows from the inequality
\begin{equation}\label{eq:est1'}
\begin{split}
&\sum_{\substack{I \in \mathscr{N}_k \\  \{k+1\}\times\mathscr{I}_{k+1}(I) \subset \partial T \\ \a_{k,f_k(I)}\geq \a_0}} \left ( \sum_{J \in \mathscr{E}_{k+1}(I) \cup \mathscr{B}_{k+1}(I)}(\diam{f_{k+1}(I)})^s + \sum_{\substack{J \in \mathscr{N}_{k+1}(I) \\ (k+1,J) \in \mathscr{P}_{k+1}}} p_{k+1,J} \right )\\
&\qquad\leq C_1\sum_{\substack{I \in \mathscr{N}_k \\  \{k+1\}\times\mathscr{I}_{k+1}(I) \subset \partial T \\ \a_{k,f_k(I)}\geq \a_0}} \rho_{k}^sr_0^s
+ C_1\sum_{\substack{I \in \mathscr{N}_k \\  \{k+1\}\times\mathscr{I}_{k+1}(I) \subset \partial T \\ \a_{k,f_k(I)}\geq \a_0}} \sum_{\substack{J\in\mathscr{N}_{k+1}(I)\\ \a_{k+1,f_{k+1}(J)}\geq \a_0}} \rho_{k+1}^sr_0^s.
\end{split}
\end{equation}
To prove (\ref{eq:est1'}), fix any $I \in \mathscr{N}_k$ such that $\{k+1\}\times\mathscr{I}_{k+1}(I) \subset \partial T$ and $ \a_{k,f_k(I)}\geq \a_0$. Recall that $\mathcal{N}_{k+1}(I)$ is in one-to-one correspondence with $V_{k+1,I}$ defined in \S\ref{sec:noflatpoints}. There are now two possibilities.
On one hand, suppose that $\alpha_{k+1,w}<\alpha_0$ for some $w\in V_{k+1,I}$. Then $V_{k+1,I}$ projects onto an $(1+3(1/16)^2)^{-1}\rho_{k+1}r_0$ separated set in $\ell_{k+1,w}$ of diameter at most $2C^*\rho_{k+1}r_0$ by Lemma \ref{lem:approx}. Hence $\card{\mathscr{I}_{k+1}(I)} \lesssim_{C^*} 1$ and
\begin{equation*}\sum_{J \in \mathscr{E}_{k+1}(I) \cup \mathscr{B}_{k+1}(I)}(\diam{f_{k+1}(J)})^s + \sum_{\substack{J \in \mathscr{N}_{k+1}(I) \\ (k+1,J) \in \mathscr{P}_{k+1}}} p_{k+1,J} \lesssim_{s,C^*,\xi_2} \rho_{k+1}^sr_0^s\lesssim_{s,C^*,\xi_2}\rho_{k}^sr_0^s. \end{equation*}
On the other hand, suppose that $\alpha_{k+1,w}\geq \alpha_0$ for all $w\in V_{k+1,I}$. Then \begin{equation*} \sum_{J \in \mathscr{E}_{k+1}(I) \cup \mathscr{B}_{k+1}(I)}(\diam{f_{k+1}(J)})^s + \sum_{\substack{J \in \mathscr{N}_{k+1}(I) \\ (k+1,J) \in \mathscr{P}_{k+1}}} p_{k+1,J} \lesssim_{s,C^*,\xi_2} \sum_{J\in\mathscr{N}_{k+1}(I)} \rho_{k+1}^sr_0^s. \end{equation*}
Because the sets $V_{k+1,I}$ for different $I\in\mathscr{N}_k$ are pairwise disjoint (see \S\ref{sec:noflatpoints}), summing over all such $I$ yields (\ref{eq:est1'}).

\medskip
\underline{Estimate 2.} We estimate the new phantom masses and new intervals coming from some $I\in \mathscr{N}_{k}$ such that $\a_{k,f_k(I)} < \a_0$ and $\mathscr{I}_{k+1}(I) \subset \partial T$.  In particular, we show that
\begin{equation}\label{eq:est2}\tag{E2} \begin{split}
&\sum_{(k+1,I)\in\mathcal{E}_2}(\diam{f_{k+1}(I)})^s + \sum_{(k+1,I) \in \mathcal{P}_2}p_{k+1,I}\\ &\qquad\leq \sum_{(k,I)\in \mathscr{P}_{k}\cap \partial p(T)}p_{k,I} + \frac16\sum_{(k+1,I) \in \mathcal{B}^*_1} (\diam{f_{k+1}(I)})^s.\end{split}
\end{equation} This estimate is responsible for the choice of the constant $14A^*$ appearing in the definition of $\Flat(k)$, and thus, for the constant $30A^*$ appearing in the definition of $\alpha_{k,v}$. Inequality (\ref{eq:est2}) is equivalent to
\begin{equation}\label{eq:est2*}
\begin{split}
\sum_{\substack{I \in \mathscr{N}_k\\ \{k+1\}\times\mathscr{N}_{k+1}(I) \subset \partial T \\ \a_{k,f_k(I)} < \a_0 }} &\left ( \sum_{J \in \mathscr{E}_{k+1}(I)} (\diam{f_{k+1}(I)})^s + \sum_{\substack{J \in \mathscr{N}_{k+1}(I) \\ (k+1,J) \in \mathscr{P}_{k+1}}}p_{k+1,J} \right )\\
&\leq \sum_{\substack{I \in \mathscr{N}_k\\ \{k+1\}\times\mathscr{N}_{k+1}(I) \subset \partial T \\ \a_{k,f_k(I)} < \a_0 }}p_{k,I} + \frac16 \sum_{(k+1,I') \in \mathcal{B}^*_1}(\diam{f_{k+1}(I)})^s.
 \end{split}\end{equation}
To prove (\ref{eq:est2*}) fix $I \in \mathscr{N}_k$ such that $\{k+1\}\times\mathscr{N}_{k+1}(I) \subset \partial T$ and $\a_{k,f_k(I)} < \a_0$. There are nine cases (1a, 1b, 2a, 2b, 2c, 2d, 3a, 3b, 3c). We sincerely apologize to the reader.

For the first four cases (1a, 1b, 2a, 2b), we show that
\begin{equation}\label{eq:est2'}
\sum_{J \in \mathscr{E}_{k+1}(I)} (\diam{f_{k+1}(J)})^s + \sum_{\substack{J \in \mathscr{N}_{k+1}(I)\\ (k+1,J) \in \mathscr{P}_{k+1}}}p_{k+1,J} \leq  p_{k,I} .
\end{equation}

\emph{Case 1.} Suppose $f_k(I)$ is 2-sided terminal in $V_k$.

\emph{Case 1a.} If $\mathscr{N}_{k+1}(I)=\{I\}$, then $f_{k+1}(I)$ is 2-sided terminal in $V_{k+1}$, $\mathscr{E}_{k+1}(I)=\emptyset$ and the new phantom mass $p_{k+1,I}=2P \rho_{k+1}^sr_0^s$ is dominated by the old phantom mass $p_{k,I}=2P\rho_{k}^sr_0^s$. Hence \eqref{eq:est2'} holds.

\emph{Case 1b.} Assume that $\mathscr{N}_{k+1}(I)$ contains at least two elements (see Figure \ref{fig:flatpoints} above). In this case, at most two elements of $\mathcal{N}_{k+1}(I)$ map to 1-sided terminal vertices in $V_{k+1}$. By Lemma \ref{lem:flatpointsum},
\begin{align*} p_{k+1,J_1}+p_{k+1,J_2} + \sum_{J\in\mathscr{I}_{k+1}(I)} (\diam f_{k+1}(J))^s & \leq 2P\rho_{k+1}^sr_0^s + 4(1.1C^*)^s\rho_{k+1}^s r_0^s\\
&\leq 2P \rho_k^sr_0^s = p_{k,I},\end{align*} because $P$ was chosen to be sufficiently large such that $$[P+2(1.1C^*)^s]\xi_2^s \leq P.$$
Thus, \eqref{eq:est2'} holds in this case, as well. \smallskip

\smallskip

\emph{Case 2.} Suppose that $f_k(I)$ is 1-sided terminal in $V_k$. Let $(v_1,v')$ be the unique element in $\Flat(k)$ with $v_1=f_k(I)$ and let $v_2$ be the first vertex in $V_{k+1}$ between $v_1$ and $v'$ in the direction going from $v_1$ to $v'$. By property (P7), we can find an interval $L$ in $\mathscr{E}_k$ such that $f_k(L)=(v_1,v')$ and $I\cap \overline{L}\neq\emptyset$. Let $K$ be an interval in $\mathscr{I}_{k+1}(L)$ such that $f_k(K)=(v_1,v_2)$. There will be four cases, depending on whether $\mathscr{N}_{k+1}(I)$ contains one or more elements and whether $K$ belongs to $\mathscr{E}_{k+1}(L)$ or $\mathscr{B}_{k+1}(L)$.

\emph{Case 2a.} Suppose that $\mathscr{N}_{k+1}(I)=\{I\}$ and $K\in\mathscr{E}_{k+1}(L)$. Then $f_{k+1}(I)$ is 1-sided terminal in $V_{k+1}$, $\mathscr{E}_{k+1}(I)=\emptyset$, and the new phantom mass $p_{k+1,I}=P\rho_{k+1}^sr_0^s$ is dominated by the old phantom mass $p_{k,I}=P\rho_k^sr_0^s$. Hence \eqref{eq:est2'} holds.

\emph{Case 2b.} Suppose that $\mathscr{N}_{k+1}(I)$ contains at least two elements (see Figure \ref{fig:flatpoints} above) and $K\in\mathscr{E}_{k+1}(L)$. Then at most one element of $\mathcal{N}_{k+1}(K)$ maps to a 1-sided terminal vertex in $V_{k+1}$. By Lemma \ref{lem:flatpointsum}, \begin{align*}
p_{k+1,J_1} + \sum_{J\in\mathscr{I}_{k+1}(I)} (\diam f_{k+1}(J))^s & \leq P\rho_{k+1}^sr_0^s + 2(1.1C^*)^s \rho_{k+1}^s r_0^s\\
& \leq P \rho_k^s r_0^s = p_{k,I},\end{align*} because $P$ was chosen to be sufficiently large such that $$[P+2(1.1C^*)^s]\xi_2^s \leq P.$$ Thus, \eqref{eq:est2'} holds, once again.

\emph{Case 2c.} Suppose that $\mathscr{N}_{k+1}(I)=\{I\}$ and $K\in\mathscr{B}_{k+1}(L)$. Then $f_{k+1}(I)$ is 2-sided terminal in $V_{k+1}$ and $\mathscr{E}_{k+1}(I)=\emptyset$. The new phantom mass that must be paid for is $p_{k+1,I}=2P\rho_{k+1}^sr_0^s$. In this case, we pay for one half of $p_{k+1,I}$ with $p_{k,I}=P\rho_{k}^sr_0^s$ and use Lemma \ref{lem:bridgelength} to pay for the other half of $p_{k+1,I}$ with $\frac16(\diam f_{k+1}(K))^s$, where $K\in\mathscr{B}_{k+1}^*(T)$. That is, $$p_{k+1,I} \leq p_{k,I} + \frac{1}{6}(\diam f_{k+1}(K))^s.$$

\emph{Case 2d.} Suppose that $\mathscr{N}_{k+1}(I)$ contains at least two points and $K\in\mathscr{B}_{k+1}(L)$. Then $f_{k+1}(I)$ is 1-sided terminal in $V_{k+1}$, $\mathscr{E}_{k+1}(I)$ is nonempty, and up to one of the new vertices drawn could be 1-sided terminal in $V_{k+1}$, as well. In this case, $$p_{k+1,I}+\underbrace{p_{k+1,J_1} + \sum_{J\in \mathscr{E}_{k+1}(I)} (\diam f_{k+1}(J))^s} \leq \frac{1}{6}(\diam f_{k+1}(K))^s+ p_{k,I}.$$ by Lemma \ref{lem:bridgelength}, Lemma \ref{lem:flatpointsum}, and the choice of $P$.

\smallskip

\emph{Case 3.} Suppose $f_k(I)$ is not terminal in $V_k$. Then $\mathscr{E}_{k+1}(I)=\emptyset$. It remains to pay for $p_{k+1,I}$ as needed. Let $L_1$, $L_2$, $K_1$, and $K_2$ be defined by analogy with $L$ and $K$ from Case 2, but corresponding to the two distinct flat pairs $(f_k(I),v')$ and $(f_k(I),v'')$. There are three cases, depending on whether $K_1$ and $K_2$ both edge intervals, one of $K_1$ or $K_2$ is an edge interval and the other is a bridge interval, or both $K_1$ and $K_2$ are bridge intervals.

\emph{Case 3a.} Suppose that $K_1$ belongs to $\mathscr{E}_{k+1}(L_1)$ and $K_2$ belongs to $\mathscr{E}_{k+1}(L_2)$. Then $f_{k+1}(I)$ is non-terminal in $V_{k+1}$. Hence $(k+1,I) \not\in\mathscr{P}_{k+1}$ and both sides of (\ref{eq:est2'}) are zero. In other words, there is nothing to pay for in Case 3a.

\emph{Case 3b.} Suppose that one of $K_1$ or $K_2$ is an edge interval and the other is a bridge interval, say without loss of generality that $K_1\in\mathscr{E}_{k+1}(L_1)$ and $K_2\in\mathscr{B}_{k+1}(L_2)$. Then $f_{k+1}(I)$ is 1-sided terminal in $V_{k+1}$ and $p_{k+1,I}=P\rho_{k+1}^sr_0^s \leq  \frac{1}{6}(\diam f_{k+1}(K_2))^s$ by Lemma \ref{lem:bridgelength}.

\emph{Case 3c.} Suppose that $K_1$ belongs to $\mathscr{B}_{k+1}(L_1)$ and $K_2$ belongs to $\mathscr{B}_{k+1}(L_2)$. Then $f_{k+1}(I)$ is 2-sided terminal in $V_{k+1}$ and $$p_{k+1,I} = 2P\rho_{k+1}^sr_0^s \leq \  \frac{1}{6}(\diam f_{k+1}(K_1))^s + \frac{1}{6}(\diam f_{k+1}(K_2))^s.$$

Adding up the estimates in the nine cases, we obtain (\ref{eq:est2}).

\medskip

\underline{Estimate 3.} On one hand, $(k,I)\in \mathcal{B}_1'$ if and only if $(k+1,I)\in\mathcal{B}_4$ (see \S\ref{sec:bridges}). When $(k,I)\in \mathcal{B}_1'$, we have $\diam{f_k(I)} = \diam{f_{k+1}(I)}$.  Thus,
$$\sum_{(k+1,I)\in\mathcal{B}_4}(\diam{f_{k+1}(I)})^s = \sum_{(k,I) \in \mathcal{B}_1'}(\diam{f_{k}(I)})^s.$$
On the other hand, when both endpoints of the image of an edge interval are non-flat, the edge interval becomes a bridge interval and pays for itself (see \S\ref{sec:noflatedges}):
$$\sum_{(k+1,I)\in\mathcal{B}_2}(\diam{f_{k+1}(I)})^s =  \sum_{(k,I) \in \mathcal{E}_1'}(\diam{f_{k}(I)})^s.$$ All together, we have \begin{equation}\label{eq:est3}\tag{E3}
\sum_{(k+1,I)\in\mathcal{B}_2\cup\mathcal{B}_4}(\diam{f_{k+1}(I)})^s =  \sum_{(k,I) \in \mathcal{E}_1'\cup\mathcal{B}_1'}(\diam{f_{k}(I)})^s.
\end{equation}

\medskip

\underline{Estimate 4.} Next, we control the new phantom masses at endpoints of images $f_{k+1}(J)$ of bridge intervals $J\in\mathscr{B}_{k+1}(I)$ coming from some edge interval $I\in\mathscr{E}_k$ as in \S\ref{sec:flatedges} such that the endpoint lies between the endpoints of $f_k(I)$. Specifically, we show that
\begin{align}\label{eq:est4}\tag{E4}
\sum_{(k+1,I)\in \mathcal{P}_3}p_{k+1,I} \leq \frac16\sum_{(k+1,I)\in\mathcal{B}^*_2}(\diam{f_{k+1}(I)})^s.
\end{align}
Inequality (\ref{eq:est4}) is equivalent to
\begin{align}\label{eq:est4'}
\sum_{(k,I)\in\mathcal{E}_2'}\sum_{\substack{J\in \mathscr{N}_{k+1}(I)\\ (k+1,J) \in \mathscr{P}_{k+1}}}p_{k+1,J} \leq\frac16\sum_{(k+1,I)\in\mathcal{B}^*_2}(\diam{f_{k+1}(I)})^s.
\end{align}
To prove (\ref{eq:est4'}), fix $(k,I)\in\mathcal{E}_2'$ and $J\in \mathscr{N}_{k+1}(I)$ be such that $(k+1,J)\in\mathscr{P}_{k+1}$. Then $f_{k+1}(J)$ lies strictly between the endpoints of $f_k(I)$ and $f_{k+1}(J)$ is either 1- or 2-sided terminal in $V_{k+1}$. On one hand, if $f_{k+1}(J)$ is 1-sided terminal, then there exists precisely one element $(k+1,K)\in\mathcal{B}_2^*$ such that $f_{k+1}(J)$ is an endpoint of $f_{k+1}(K)$. In this case, $$p_{k+1,J} =P\rho_{k+1}^sr_0^s\leq \frac{1}{6}(\diam f_{k+1}(K))^s$$ by Lemma \ref{lem:bridgelength}. On the other hand, if $f_{k+1}(J)$ is 2-sided terminal, then there exist two elements $(k+1,K_1)$ and $(k+1,K_2)$ in $\mathcal{B}_2^*$ such that $f_{k+1}(J)$ is the common endpoint of $f_{k+1}(K_1)$ and $f_{k+1}(K_2)$. In this case, $$p_{k+1,J} = 2P\rho_{k+1}^sr_0^s \leq \frac{1}{6}(\diam f_{k+1}(K_1))^s+\frac{1}{6}(\diam f_{k+1}(K_2))^s$$ by Lemma \ref{lem:bridgelength}.

\begin{rem} In Estimates 2 and Estimate 4, each endpoint of the image $f_{k+1}(I)$ of $(k+1,I)\in\mathcal{B}_1^*\cup \mathcal{B}_2^*$ is used once and each $f_{k+1}(I)$ has only two endpoints. Hence $$\frac16\sum_{(k+1,I)\in\mathcal{B}^*_1\cup \mathcal{B}^*_2}(\diam{f_{k+1}(I)})^s\leq \frac{1}{6}\sum_{(k+1,I)\in\mathscr{B}^{\,**}_{k+1}(T)}=\frac{1}{3}\sum_{(k+1,I)\in\mathscr{B}^{*}_{k+1}(T)}.$$
\end{rem}

\medskip

\underline{Estimate 5.} In this final estimate, we deal with new intervals in $\partial T$ coming from an edge interval in $\partial p(T)$ which has an endpoint with flat image. We will show that
\begin{align}\label{eq:est5}\tag{E5}
\sum_{(k+1,I)\in\mathcal{E}_3\cup\mathcal{B}_3} (\diam{f_{k+1}(I)})^s \leq \sum_{(k,I) \in \mathcal{E}_2'}(\diam{f_{k+1}(I)})^s + (14A^*)^s\sum_{(v,v')\in\Flat(k)} \tau_{s}(k,v,v') \rho_k^sr_0^s.
\end{align}
For each $I \in \mathcal{E}_2'$, pick an endpoint $x_I$ of $I$ such that $\a_{k,f_k(x_I)} < \a_0$ and let $y_I$ be the other endpoint of $I$. Estimate (E5) follows immediately from
\begin{equation}\label{eq:est5'}
\begin{split}
&\sum_{(k,I) \in \mathcal{E}_2' } \sum_{J\in \mathscr{E}_{k+1}(I)\cup\mathscr{B}_{k+1}(I)}(\diam{f_{k+1}(J)})^s \\
&\quad\leq \sum_{(k,I) \in \mathcal{E}_2'}\left((\diam f_{k}(I)\right)^s + (14A^*)^s\tau_{s}(k,f_k(x_I),f_k(y_I)))\rho_k^sr_0^s.
\end{split}
\end{equation}
To show (\ref{eq:est5'}), fix $(k,I)\in \mathcal{E}_2'$ and enumerate $\mathscr{E}_{k+1}(I)\cup\mathscr{B}_{k+1}(I) = \{I_1,\dots, I_{n}\}$. Let $v=f_k(x_I)$ and let $v' = f_{k}(y_I)$. By definition of $\tau_s(k,v,v')$ and (P3), we have \begin{equation*}\begin{split}\sum_{i=1}^{n-1}(\diam f_{k+1}(I_i))^s &\leq |v-v'|^s + \tau_s(k,v,v')|v-v'|^s \\ &\leq (\diam f_k(I))^s + (14A^*)^2\tau_s(k,v,v')\rho_k^sr_0^s.\end{split}\end{equation*} Summing over all pairs $(k,I)\in\mathcal{E}_2'$, we obtain \eqref{eq:est5'}.

\medskip

Adding (\ref{eq:est1}), (\ref{eq:est2}), (\ref{eq:est3}), (\ref{eq:est4}), and (\ref{eq:est5}), we arrive at (\ref{eq:estimates}). This completes the proof of Lemma \ref{lem:estimates}.

\section{H\"older parametrization}\label{sec:Holder}

In \S\ref{sec:proof1}, we prove the following theorem, which is the paper's main result. Afterwards, in \S\ref{sec:cortothm}, we derive several corollaries, including Theorem \ref{thm:main2}. In \S\ref{sec:modifiedTST}, we state and prove a refinement of Theorem \ref{thm:main} that gives an essentially 2-to-1 curve. In \S\ref{sec:Carleson}, we show that replacing \eqref{eq:main-condition} with a Carleson type condition produces an upper Ahlfors regular curve.

Given parameters $C^*$, $\xi_1$, and $\xi_2$, let $\alpha_1$ be defined by \eqref{alphabound}. That is, \begin{equation*}\alpha_1 = \min\left\{\frac{1}{16}, \left( \frac{\xi_1(1-\xi_2)}{42 C^*} \right)^{1/2}\right\}.\qedhere\end{equation*}

\begin{thm}[H\"older Traveling Salesman with Nets] \label{thm:main}
Assume that $X=l^2(\R)$ or $X=\R^N$ for some $N\geq 2$. Let $s\geq 1$, let $\mathscr{V}=(V_k,\rho_k)_{k\geq 0}$ be a sequence of finite sets $V_k$ in $X$ and numbers $\rho_k>0$ that satisfy properties (V0)--(V5) defined in \textsection\ref{sec:flat}. If $\alpha_0\in(0,\alpha_1]$ and
\begin{equation} \label{eq:main-condition} S^s_{\mathscr{V}} := \sum_{k=0}^{\infty}\sum_{(v,v')\in \Flat(k)}\tau_{s}(k,v,v') \rho_k^s+ \sum_{k=0}^{\infty}\sum_{\substack{v\in V_k\\ \a_{k,v} \geq \a_0}} \rho_k^s < \infty,\end{equation}
then there exists a  $(1/s)$-H\"older map $f:[0,1] \to X$ such that $f([0,1])\supset \bigcup_{k\geq 0}V_k$ and the H\"older constant of $f$ satisfies $H \lesssim_{s,C^*,\xi_1,\xi_2} r_0 (1+  S^s_{\mathscr{V}})$.
\end{thm}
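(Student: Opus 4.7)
The plan is to apply the Traveling Salesman algorithm from Section \ref{sec:curve} to the data $\mathscr{V}$, then use the total mass estimate of Proposition \ref{prop:mass} to control the Lipschitz constants of the approximating maps $f_k$, and finally transfer these Lipschitz estimates into a $(1/s)$-H\"older bound on the uniform limit $f$ via the standard interpolation technique of Appendix \ref{sec:lip-hold}. Throughout, I would fix $\alpha_0 \in (0,\alpha_1]$ so that Lemma \ref{lem:flatdiam} is in force and Proposition \ref{prop:mass} gives the bound $W := \mathcal{M}_s([0,1]) \lesssim_{s,C^*,\xi_2} r_0^s(1+S^s_\mathscr{V})$.

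The construction of Section \ref{sec:curve} leaves the lengths of refinement intervals free at each step (see the selection of $\{I_1,\ldots,I_{l-1}\}$ in \S\ref{sec:flatedges}, \S\ref{sec:flatpoints}, and \S\ref{sec:noflatpoints}), and I would exercise these choices recursively so that
\[ |I| \geq c \,\mathcal{M}_s(k,I)/W \]
for a universal constant $c > 0$ and every $I \in \mathscr{I}_k$, $k \geq 0$. Starting from $|I| = \mathcal{M}_s(0,I)/W$ at the top level and distributing $|I|$ among the children in $\mathscr{I}_{k+1}(I)$ in proportions approximately $\mathcal{M}_s(k+1,\cdot)$, the subadditivity $\sum_{J \in \mathscr{I}_{k+1}(I)}\mathcal{M}_s(k+1,J)\leq \mathcal{M}_s(k,I)$ from Lemma \ref{lem:sum}(3) propagates the bound inductively. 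A vanishing additive correction can be used to give positive length to frozen intervals (for which $\mathcal{M}_s=0$) without degrading the asymptotic lower bound.

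Once the lengths are fixed, uniform convergence $f_k\to f$ follows by case-by-case inspection of \S\ref{sec:bridges}--\S\ref{sec:noflatpoints}: on each $I \in \mathscr{I}_k$, the image $f_{k+1}(I)$ sits within a $C\rho_k r_0$-ball of $f_k(I)$, so $\|f_{k+1}-f_k\|_\infty \lesssim_{C^*} \rho_k r_0$, and summing a geometric series yields $\|f-f_k\|_\infty \lesssim_{\xi_2}\rho_k r_0$. For the Lipschitz estimate, $f_k$ is constant off $\mathscr{E}_k\cup\mathscr{B}_k$, and on an edge $I \in \mathscr{E}_k$, properties (V3) and (P3) give $\rho_k r_0 \leq \diam f_k(I) \leq 14A^*\rho_k r_0$. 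Combined with the length bound,
\[ \Lip(f_k|_I) = \frac{\diam f_k(I)}{|I|} \leq \frac{W\diam f_k(I)}{c\,(\diam f_k(I))^{s}} \lesssim \frac{W}{(\rho_k r_0)^{s-1}} \lesssim_{s,C^*,\xi_2} H_0\,\rho_k^{1-s}, \]
where $H_0 := r_0(1+S^s_\mathscr{V})$. For a bridge $I \in \mathscr{B}_k$ created at some step $k_0 \leq k$, $f_k|_I=f_{k_0}|_I$ is unchanged, $\diam f_{k_0}(I) \gtrsim \rho_{k_0} r_0$ by inspection of how bridges are formed, and the same estimate holds with $\rho_{k_0}$ in place of $\rho_k$; since $s\geq 1$ and $\rho_{k_0}\geq\rho_k$, this still yields $\Lip(f_k|_I)\lesssim H_0\rho_k^{1-s}$.

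The H\"older bound is then obtained by the interpolation technique of Appendix \ref{sec:lip-hold}. For $x,y\in[0,1]$ with $\delta=|x-y|$, I would choose the integer $k$ with $\rho_{k+1}^s\leq\delta\leq\rho_k^s$ (and handle $\delta\geq\rho_0^s=1$ via the diameter bound $\diam f([0,1])\lesssim r_0$); then
\[ |f(x)-f(y)| \leq 2\|f-f_k\|_\infty + \Lip(f_k)\,\delta \lesssim \rho_k r_0 + H_0\,\rho_k^{1-s}\,\delta, \]
and using $\rho_k\leq\xi_1^{-1}\delta^{1/s}$ together with $\rho_k^{1-s}\delta\leq\delta^{1/s}$ bounds both summands by a constant multiple of $H_0\delta^{1/s}$, producing the claimed H\"older constant. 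The inclusion $\bigcup_k V_k\subset f([0,1])$ is immediate from (P6) and uniform convergence: each $v\in V_k$ is the image under $f_k$ of some $J\in\mathscr{N}_k$, and this value is preserved as the image of a nested family of sub-intervals in all subsequent $f_{k'}$. The main obstacle I anticipate is the length-assignment step: engineering a recursive distribution so that the comparability $|I|\gtrsim\mathcal{M}_s(k,I)/W$ survives through all generations of refinement (including frozen intervals of zero mass), without introducing parasitic slack that spoils the Lipschitz bound. Once this is in place, the remaining steps combine the construction, the mass inequality, and the interpolation lemma in a straightforward way.
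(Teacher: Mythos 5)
Your proposal is correct and follows essentially the same route as the paper's proof in \S\ref{sec:proof1}: reparameterize by assigning interval lengths in proportion to the $s$-mass $\mathcal{M}_s$ (the paper does this with constant $c=1$, allowing frozen intervals to degenerate to singletons rather than inserting additive corrections), bound the Lipschitz constant of each affine piece using $\mathcal{M}_s(k,I)\geq(\diam f_k(I))^s$ together with (V3), and interpolate via Lemma~\ref{l:LipHold}. The recursive length-assignment you flag as the main obstacle is exactly what the subadditivity in Lemma~\ref{lem:sum}(3) guarantees, so that step closes without further work.
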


\subsection{Proof of Theorem \ref{thm:main} }\label{sec:proof1}

In this subsection, $w$ will always denote a finite word in the alphabet $\N=\{1,2,\dots\}$, including the empty word $\emptyset$. We denote the length of a word $w$ by $|w|$.

The conclusion holds trivially if $\bigcup_{k\geq 0}V_k$ is a singleton. Thus, in addition to $S^s_{\mathscr{V}}<\infty$, we may assume that $\bigcup_{k\geq 0}V_k$ contains at least two points. Because $\alpha_0\leq \alpha_1$, Proposition \ref{prop:mass} gives \begin{equation}\label{e:S-mass-bound} 0<\mathcal{M}_s([0,1]) \lesssim_{s,C^*,\xi_2} r_0^s(1+ S^s_{\mathscr{V}})<\infty.\end{equation}

To proceed, we start by renaming the intervals in $\{[0,1]\}\cup\mathscr{I}$. Denote $\D_{\emptyset} = [0,1]$ and write $\mathscr{I}_0= \{\D_1,\dots, \D_{n_{\emptyset}}\}$, enumerated according to the orientation of $[0,1]$. Inductively, suppose that for some word $w$ with $|w|=k$, we have defined $\D_{w} \in \mathscr{I}_{k}$. Suppose also that
\[ \mathscr{I}_{k+1}(\D_w) = \{J_1,\dots,J_{n_w}\},\]
enumerated according to the orientation of $[0,1]$. Then for each $i\in\{1,\dots,n_w\}$, denote $\D_{wi} = J_i$. Denote by $\mathcal{W}$ the set of all finite words with letters from $\N$ for which an interval $\D_{w}$ has been defined.

Next, we use the masses of intervals defined in \S\ref{sec:mass} to modify the length of intervals $\D_w$.
Define $\D'_{\emptyset} = [0,1]$. Let $\{\D'_{1},\dots,\D'_{n_{\emptyset}}\}$ be a partition of $\D'_{\emptyset}$, enumerated according to the orientation of $[0,1]$, satisfying
\begin{enumerate}
\item $\D_i'$ is open (resp.~ closed) if and only if $\D_i$ is open (resp.~ closed), and
\item $\diam{\D'_{i}} = \mathcal{M}_s(0,\D_{i})/\mathcal{M}_s([0,1])$.
\end{enumerate}
These intervals exist, because $\mathcal{M}_s([0,1]) = \sum_{i=1}^{n_{\emptyset}}\mathcal{M}_s(0,\D_i)$.
Inductively, suppose that an interval $\D'_w\subset [0,1]$ has been defined for some $w\in\mathcal{W}$ such that
\[ \diam{\D'_w} \geq \frac{\mathcal{M}_s(|w|-1,\D_w)}{\mathcal{M}_s([0,1])}\]
and $\D_{w}'$ is open (resp.~ closed) if and only if $\D_{w}$ is open (resp.~ closed). Let $\{\D'_{w1},\dots,\D'_{wn_w}\}$ be a partition of $\D'_w$, enumerated according to the orientation of $[0,1]$, satisfying
\begin{enumerate}
\item $\D_{wi}'$ is open (resp.~ closed) if and only if $\D_{wi}$ is open (resp.~ closed), and
\item $\diam{\D'_{wi}} \geq \mathcal{M}_s(|w|,\D_{wi})/\mathcal{M}_s([0,1])$.
\end{enumerate}
This partition exists by Lemma \ref{lem:sum}(3).

Define the family $$\mathscr{E}_k' = \{\D'_w : \D_w \in \mathscr{E}_k\}$$ and similarly define the families $\mathscr{B}_k'$, $\mathscr{N}_k'$, $\mathscr{F}_k'$, and $\mathscr{I}_k'$.
For each $k\geq 0$, define a continuous map $F_k:[0,1] \to X$ by
\[ F_k|\D_{w}' = (f_k|\D_w)\circ \phi_w \qquad\text{for all }w\in \W,\]
where $\phi_w$ is the unique increasing affine homeomorphism mapping $\D_w'$ onto $\D_w$ when $\D_w'$ is nondegenerate and $\phi_w$ maps to any point in $\D_w$ when $\D_w'$ is a singleton. (The latter possibility occurs only when $\Delta_w'$ belongs to $\mathscr{F}'_k$ or $\mathscr{N}'_k$.)

We now prove two auxiliary results for the sequence $(F_k)_{k\geq 0}$.

\begin{lem}\label{lem:Hold-conv1}
For all $k\geq 0$ and $x\in[0,1]$, we have $|F_{k+1}(x)-F_k(x)| \leq 30A^*\xi_2r_0\rho_{k}.$
\end{lem}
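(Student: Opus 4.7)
The proof proceeds by case analysis on the type of interval $\Delta_w \in \mathscr{I}_k$ with $|w|=k$ such that $x \in \Delta_w'$.

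First, I dispose of the cases where $F_{k+1}(x)=F_k(x)$ exactly. If $\Delta_w \in \mathscr{B}_k\cup\mathscr{F}_k$, or if $\Delta_w \in \mathscr{E}_k$ has both endpoints with non-flat image, then by the constructions in \S\ref{sec:bridges}, \S\ref{sec:noflatedges}, and \S\ref{sec:fixed}, we have $\mathscr{I}_{k+1}(\Delta_w)=\{\Delta_w\}$ with $f_{k+1}|\Delta_w=f_k|\Delta_w$. Consequently $\Delta_{w1}'=\Delta_w'$ and $\phi_{w1}=\phi_w$, so $F_{k+1}$ agrees with $F_k$ on $\Delta_w'$.

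Next, I handle $\Delta_w \in \mathscr{N}_k$. Here $F_k|\Delta_w' \equiv v := f_k(\Delta_w)$ is constant. If $\alpha_{k,v} < \alpha_0$ (\S\ref{sec:flatpoints}), then $V_{k+1,\Delta_w} \subset B(v,C^*\rho_{k+1}r_0)$; if $\alpha_{k,v} \geq \alpha_0$ (\S\ref{sec:noflatpoints}), then $\tilde V_{k+1,\Delta_w} \subset B(v,15A^*\rho_{k+1}r_0)$ and any short bridges added to connect components of $G_{k+1,\Delta_w}$ stay within this ball. In either subcase $F_{k+1}(\Delta_w') \subset B(v,15A^*\rho_{k+1}r_0)$, yielding $|F_{k+1}(x)-F_k(x)| \leq 15A^*\rho_{k+1}r_0 \leq 15A^*\xi_2\rho_k r_0$ via (V0).

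The main obstacle is the case $\Delta_w \in \mathscr{E}_k$ with at least one flat endpoint (\S\ref{sec:flatedges}). Setting $v_1 := f_k(a_{\Delta_w})$ and $v_l := f_k(b_{\Delta_w})$, the map $F_k|\Delta_w'$ affinely traverses $[v_1,v_l]$, while $F_{k+1}|\Delta_w'$ is the mass-reparametrized piecewise-affine traversal through the intermediate vertices of $V_{k+1,\Delta_w}$, and both maps agree at the endpoints $a',b'$ of $\Delta_w'$. By (V5) and Lemma~\ref{lem:approx}, these vertices lie in a tube of radius $2\alpha_0\rho_{k+1}r_0$ around $\ell_{k,v_1}$ projecting onto $[\pi(v_1),\pi(v_l)]$, which already gives a crude bound of order $\rho_k r_0$. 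To sharpen this to a bound of order $\xi_2\rho_k r_0$, I would use (V4) together with the definition of $\Flat(k)$ to force clustering of the intermediate vertices: each such $v_i$ lies within $C^*\rho_{k+1}r_0$ of some $v' \in V_k$, and since no $V_k$ point may lie strictly between $v_1$ and $v_l$ in the ordering induced by $\ell_{k,v_1}$, the companion $v'$ must project outside $[\pi(v_1),\pi(v_l)]$ or coincide with $v_1$ or $v_l$; combined with (V5) applied to $v'$ inside the enlarged ball $B(v_1,30A^*\rho_k r_0)$, this forces either $|v_i-v_1|$ or $|v_i-v_l|$ to be $O(\rho_{k+1}r_0)$.

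Consequently the path $f_{k+1}(\Delta_w)$ decomposes into a single long segment of length within $O(\rho_{k+1}r_0)$ of $|v_l-v_1|$ bookended by clusters of short edges whose total length is $O(\rho_{k+1}r_0)$. Since $\mathcal{M}_s$ weights the long segment by at least $(|v_l-v_1|-O(\rho_{k+1}r_0))^s$ against $O(\rho_{k+1}^s r_0^s)$ for the short edges, the reparametrized long sub-interval occupies all but an $O(\xi_2^s)$-fraction of $\Delta_w'$. On this dominant portion, $F_k$ and $F_{k+1}$ are both affine with almost matching speeds, so their difference is controlled by the tube thickness and the Hausdorff distance between the long edge and $[v_1,v_l]$, both of order $\rho_{k+1}r_0$. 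On the remaining short-edge portions both maps stay within $O(\rho_{k+1}r_0)$ of $v_1$ or $v_l$. Taking the maximum of the three cases gives the stated bound with constant $30A^*$.
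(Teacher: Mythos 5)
Your treatment of the bridge, frozen, nonflat-edge, and $\mathscr{N}_k$ cases matches the paper, but in the $\mathscr{E}_k$ flat case you try to sharpen the "crude bound of order $\rho_kr_0$" down to order $\rho_{k+1}r_0$, and that is where things go astray. The sharpening is unnecessary: the paper uses only the crude diameter bound. Writing $v_1=f_k(a_{\D_w})$, both $F_k(\D_w')=f_k(\D_w)=[v_1,v_l]$ and $F_{k+1}(\D_w')=f_{k+1}(\D_w)$ lie in $B\bigl(v_1,\diam f_k(\D_w)\bigr)$ (the latter because Lemma~\ref{lem:flatdiam} gives $|v_i-v_1|\leq|v_1-v_l|$ for every $v_i\in V_{k+1}(v_1,v_l)$), hence $|F_{k+1}(x)-F_k(x)|\leq 2\diam f_k(\D_w)<28A^*\rho_kr_0$ by (P3). (The paper's text writes $28A^*\rho_{k+1}r_0$ here, but (P3) controls $\diam f_k(\D_w)$ by $14A^*\rho_kr_0$, not $14A^*\rho_{k+1}r_0$; this looks like a misprint, and the resulting constant $\lesssim r_0\rho_k$ in place of $r_0\xi_2\rho_k$ is harmless in the H\"older constant computation of Theorem~\ref{thm:main}.)

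Your proposed sharpening also has concrete gaps. The clustering claim---that each intermediate $v_i$ lies within $O(\rho_{k+1}r_0)$ of $v_1$ or $v_l$---would follow if the (V4)-companion $v'\in V_k$ of $v_i$ had to be $v_1$ or $v_l$; but a $V_k$-point $v'$ that projects just past $\pi(v_l)$ (or before $\pi(v_1)$) on $\ell_{k,v_1}$ can still lie within $C^*\rho_{k+1}r_0$ of an interior $v_i$ unless $C^*\rho_{k+1}\lesssim\rho_k$, and (V0) only gives $\rho_{k+1}\geq\xi_1\rho_k$, so for $\xi_1$ not small relative to $1/C^*$ the companion need not be $v_1$ or $v_l$. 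Worse, even granting clustering, the mass step does not close: the construction in the proof of Theorem~\ref{thm:main} only provides the \emph{lower} bound $\diam\D'_{wi}\geq\mathcal{M}_s(k+1,\D_{wi})/\mathcal{M}_s([0,1])$, with no complementary upper bound on $\diam\D'_{wi}$ for the short-edge pieces (whose masses you cannot control from above by their edge lengths, since they may be refined further). So nothing prevents the short subintervals from occupying most of $\D_w'$, and $F_k$, $F_{k+1}$ need not track each other pointwise. The robust estimate is simply the two-sided diameter bound above.
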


\begin{proof}
Fix $x\in [0,1]$ and $w\in \W$ such that $x\in\D_w$ and $|w|=k$. Let also $i\in\{1,\dots,n_{w}\}$ be such that $x\in\D_{wi}'$.

If $\D_w'\in \mathscr{B}_k'$, then $\D_w' = \D_{wi}'$, $\D_w = \D_{wi}$, and $F_k|\D_w' = F_{k+1}|\D_{wi}'$. We conclude that $|F_{k+1}(x) - F_k(x)| = 0$.

If $\D_w'\in \mathscr{F}_k'$, then $\D_w' = \D_{wi}' = \{x\}$, and $F_k(x) = F_{k+1}(x)$. Hence $|F_{k+1}(x) - F_k(x)| = 0$.

If $\D'_w\in \mathscr{E}_k'$, then $|F_{k+1}(x) - F_k(x)|\leq 2\diam{F_k(\D_w')} = 2\diam{f_k(\D_w)} \leq 28A^* \rho_{k+1}r_0$.

If $\D'_w\in \mathscr{N}_k'$, then $|F_{k+1}(x) - F_k(x)|\leq 2\diam{f_{k}(\D_w)} \leq \diam{\tilde{V}_{k+1,I}} \leq 30A^* \rho_{k+1}r_0$, where $\tilde V_{k+1,I}$ is a set defined in \S\ref{sec:noflatpoints}.
\end{proof}

\begin{lem}\label{lem:Hold-conv2}
For all $k\geq 0$ and $x,y \in[0,1]$,
\[|F_{k}(x) - F_k(y)| \leq \mathcal{M}_s([0,1]) r_0^{1-s}\rho_k^{1-s}|x-y|
\lesssim_{s,C^*,\xi_2} r_0 (1+  S^s_{\mathscr{V}}) \rho_k^{1-s} |x-y|.\]
\end{lem}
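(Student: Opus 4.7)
The plan is to control the slope of $F_k$ piece-by-piece on the partition $\mathscr{I}_k'$ of $[0,1]$ and then invoke the triangle inequality. By construction, $F_k$ is constant on every $\Delta_w' \in \mathscr{N}_k' \cup \mathscr{F}_k'$ and affine on every $\Delta_w' \in \mathscr{E}_k' \cup \mathscr{B}_k'$, inheriting the latter from the affinity of $f_k | \Delta_w$ through the affine reparameterization $\phi_w$. Thus, setting
\[
L := \sup_{\Delta_w \in \mathscr{E}_k \cup \mathscr{B}_k} \frac{\diam f_k(\Delta_w)}{\diam \Delta_w'},
\]
we have $|F_k(x)-F_k(y)| \leq L|x-y|$ on $[0,1]$, so the task reduces to proving $L \leq \mathcal{M}_s([0,1])\, r_0^{1-s}\rho_k^{1-s}$.

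The key upper bound on each slope comes from combining two inputs. First, for any $\Delta_w \in \mathscr{I}_k$, the singleton $\{(k,\Delta_w)\}$ is a finite tree over $(k,\Delta_w)$, hence $\mathcal{M}_s(k,\Delta_w) \geq (\diam f_k(\Delta_w))^s$. Combined with the size bound $\diam \Delta_w' \geq \mathcal{M}_s(k,\Delta_w)/\mathcal{M}_s([0,1])$ built into the construction of the $\Delta_w'$, this yields
\[
\frac{\diam f_k(\Delta_w)}{\diam \Delta_w'} \leq \mathcal{M}_s([0,1])\,(\diam f_k(\Delta_w))^{1-s}.
\]
Second, I claim the uniform lower bound $\diam f_k(\Delta_w) \geq \rho_k r_0$ holds for every $\Delta_w \in \mathscr{E}_k \cup \mathscr{B}_k$. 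For edge intervals, $f_k(\Delta_w)$ is the segment $[v,v']$ with $(v,v') \in \Flat(k)$, so $\diam f_k(\Delta_w) = |v-v'| \geq \rho_k r_0$ directly from Definition \ref{def:flatpairs}(1). For bridge intervals, one inspects the cases in which new bridges are introduced (\textsection\ref{sec:k=0}, \textsection\ref{sec:flatedges}, \textsection\ref{sec:noflatedges}, \textsection\ref{sec:noflatpoints}): in every case, a newly created bridge at step $k_0$ has image joining two distinct points of $V_{k_0}$ or $V_{k_0+1}$, hence diameter at least $\rho_{k_0+1}r_0$ by (V3); because bridges are frozen forever after (\textsection\ref{sec:bridges}), this diameter coincides with $\diam f_k(\Delta_w)$, and the monotonicity of $\rho$ from (V0) yields $\diam f_k(\Delta_w) \geq \rho_k r_0$.

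Since $s \geq 1$, the function $t \mapsto t^{1-s}$ is nonincreasing on $(0,\infty)$, so combining the two displays above with the lower bound produces
\[
L \leq \mathcal{M}_s([0,1])\,(\rho_k r_0)^{1-s} = \mathcal{M}_s([0,1])\,r_0^{1-s}\rho_k^{1-s},
\]
establishing the first inequality. The second inequality is then immediate from \eqref{e:S-mass-bound}. The main obstacle in executing the plan is the case-by-case verification of the uniform diameter bound $\diam f_k(\Delta_w) \geq \rho_k r_0$ on bridge intervals, since bridges are manufactured in several different construction steps; once this book-keeping is done, the rest of the argument is mechanical.
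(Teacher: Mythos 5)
Your proposal is correct and follows essentially the same approach as the paper. The paper's proof of Lemma~\ref{lem:Hold-conv2} also begins from the observation that $F_k$ is piecewise affine with slope $\diam f_k(\D_w)/\diam \D_w'$ on each $\D_w'\in\mathscr{E}_k'\cup\mathscr{B}_k'$ and zero elsewhere, and it establishes exactly the chain
\[
\frac{\diam f_k(\D_w)}{\diam \D_w'}\leq \mathcal{M}_s([0,1])\frac{\diam f_k(\D_w)}{\mathcal{M}_s(|w|-1,\D_w)}\leq \mathcal{M}_s([0,1])(\diam f_k(\D_w))^{1-s}\leq \mathcal{M}_s([0,1])(\rho_k r_0)^{1-s}
\]
that you write, using the singleton tree to get $\mathcal{M}_s(|w|-1,\D_w)\geq(\diam f_k(\D_w))^s$ and (V3) together with $s\geq 1$ for the last step. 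The only cosmetic difference is that the paper handles the passage from a per-piece slope bound to a global Lipschitz bound by splitting into three cases (constant pieces, a single affine piece, and points in different pieces treated via the Fundamental Theorem of Calculus), while you absorb all three cases at once into the supremum $L$; these are the same argument. You also spell out the bridge-interval lower bound $\diam f_k(\D_w)\geq\rho_k r_0$ in more detail than the paper, which just invokes (V3); your case-check (newly created bridges join distinct points of $V_{k_0}$ or $V_{k_0+1}$, bridges are thereafter frozen, and $\rho$ is nonincreasing by (V0)) is a welcome elaboration of a point the paper leaves implicit.
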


\begin{proof}
Fix $k\geq 0$  and $x,y \in [0,1]$. Without loss of generality, we may assume that $x<y$. We consider three cases. In the first two cases, the points $x$ and $y$ belong to  the same interval $\D_w'$, $|w|=k$, while in the third case they belong to different intervals.

\emph{Case 1.} If $x,y \in \D_{w}' \in \mathscr{N}_k'\cup\mathscr{F}_k'$, then $|F_k(x)-F_k(y)| = 0|x-y|$, because the map $F_k|\D_w'$ is constant.

\emph{Case 2.} Suppose that $x,y \in \D_{w}' \in \mathscr{E}_{k}'\cup\mathscr{B}_k'$. Since $F_k|\D_w'$ is affine,
\begin{align*}
|F_k(x) - F_k(y)| &\leq \mathcal{M}_s([0,1])\frac{\diam{f_k(\D_w)}}{\mathcal{M}_s(|w|-1,\D_w)} |x-y|\\
&\leq \mathcal{M}_s([0,1])\diam{f_k(\D_w)}^{1-s}|x-y|\\
&\leq \mathcal{M}_s([0,1])r_0^{1-s}\rho_k^{1-s}|x-y|,
\end{align*} by (V3) and the assumption $s\geq 1$. Thus, by \eqref{e:S-mass-bound}, $$|F_k(x)-F_k(y)| \lesssim_{s,C^*,\xi_2} r_0(1+S_{\mathscr{V}}^s)\rho_k^{1-s}|x-y|.$$

\emph{Case 3.} Suppose that $x\in \D_{w}'$ and $y\in \D_{u}'$ for some $\D_w',\D_u' \in \mathscr{I}_k'$ with $\D_w' \cap \D_u' = \emptyset$. By the preceding cases and the Fundamental Theorem of Calculus,
\[ |F_k(x)-F_k(y)| \leq \int_x^y |\nabla F_k(t)| \, dt \leq \mathcal{M}_s([0,1])r_0^{1-s}\rho_k^{1-s}|x-y| \lesssim_{s,C^*,\xi_2} r_0(1+S_{\mathscr{V}}^s)\rho_{k}^{1-s}|x-y|.\qedhere\]
\end{proof}

We are now ready to prove Theorem \ref{thm:main}.

\begin{proof}[{Proof of Theorem \ref{thm:main}}]
Define $F:[0,1]  \to X$ pointwise by
\[ F(x) := F_0(x) + \sum_{k=0}^{\infty} (F_{k+1}(x) - F_k(x)).\]
By Lemma \ref{lem:Hold-conv1}, $F$ is well defined and continuous in all $[0,1]$. By Lemma \ref{lem:Hold-conv1}, Lemma \ref{lem:Hold-conv2}, and Lemma \ref{l:LipHold} from the appendix, $F$ is $(1/s)$-H\"older continuous with H\"older constant
\[ H \leq \frac{1}{\xi_1}\left(\mathcal{M}_s([0,1])r_0^{1-s} + 60A^*r_0 \frac{\xi_2}{1-\xi_2}\right)\lesssim_{s,C^*,\xi_1,\xi_2} r_0 (1+ S^s_{\mathscr{V}}).\]
Finally, for any integer $k\geq 0$ and any integer $m\geq k$, we have $V_k \subset F_m([0,1])$. Therefore, $V_k \subset F([0,1])$ for all integers $k\geq 0$. \end{proof}

\subsection{Corollaries to Theorem \ref{thm:main} and Proof of Theorem \ref{thm:main2}}\label{sec:cortothm}

\begin{cor}[tube approximation] \label{cor:nets1}
For all $s>1$, $C^*\geq 1$, and $0<\xi_1<\xi_2<1$, there exists $\a^*>0$ with the following property. Assume that $X=l^2(\R)$ or $X=\R^N$ for some $N\geq 2$. Let  $\mathscr{V} = (V_k,\rho_k)_{k\geq 0}$ be a sequence of finite sets in $X$ and numbers $\rho_k>0$ satisfying properties (V0)--(V5) of \textsection\ref{sec:flat} with constants $C^*$, $\xi_1$, and $\xi_2$. If
\[S^{s,+}_{\mathscr{V}} := \sum_{k=0}^{\infty}\sum_{\substack{v\in V_k\\ \a_{k,v} \geq \a^*}} \rho_k^s < \infty,\]
then there exists a  $(1/s)$-H\"older map $f:[0,1] \to X$ such that $\bigcup_{k\geq 0}V_k \subset f([0,1])$ and the H\"older constant of $f$ satisfies $H \lesssim_{s,C^*,\xi_1,\xi_2} r_0 (1+ S^{s,+}_{\mathscr{V}})$.
\end{cor}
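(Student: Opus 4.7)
The proof plan is to derive Corollary \ref{cor:nets1} as an essentially immediate consequence of Theorem \ref{thm:main} by choosing $\alpha^*$ small enough that the tube-control Lemma \ref{lem:flat2} annihilates the variation-excess contribution to $S^s_{\mathscr{V}}$. Specifically, I would set
$$\alpha^* := \min\{\alpha_1,\,\epsilon_{s,C^*,\xi_1,\xi_2}\},$$
where $\alpha_1\in(0,1/16]$ is the threshold from Lemma \ref{lem:flatdiam} that Theorem \ref{thm:main} requires as the upper bound on its flatness parameter, and $\epsilon_{s,C^*,\xi_1,\xi_2}\in(0,1/16]$ is the constant produced by Lemma \ref{lem:flat2}. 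The hypothesis $s>1$ is what makes $\epsilon_{s,C^*,\xi_1,\xi_2}$ available; this is the only place the restriction $s>1$ (rather than $s\geq 1$) enters.

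With this choice in hand, I would run the Traveling Salesman algorithm of \S\ref{sec:curve} using $\alpha_0 := \alpha^*$ as the flatness parameter in Definition \ref{def:flatpairs}. The key observation is that for every $(v,v')\in\Flat(k)$ one has $\alpha_{k,v}<\alpha_0\leq \epsilon_{s,C^*,\xi_1,\xi_2}$ by the definition of a flat pair, so Lemma \ref{lem:flat2} forces $\tau_s(k,v,v')=0$. The first sum in \eqref{eq:main-condition} therefore vanishes identically, and since the second sum already uses the threshold $\alpha_0=\alpha^*$, it matches the hypothesis of the corollary:
$$S^s_{\mathscr{V}} \;=\; \sum_{k=0}^\infty\sum_{\substack{v\in V_k\\ \alpha_{k,v}\geq \alpha_0}}\rho_k^s \;=\; S^{s,+}_{\mathscr{V}} \;<\; \infty.$$

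Because $\alpha_0\leq \alpha_1$, Theorem \ref{thm:main} now applies verbatim and produces a $(1/s)$-H\"older map $f\colon[0,1]\to X$ whose image contains $\bigcup_{k\geq 0}V_k$ with H\"older constant $H\lesssim_{s,C^*,\xi_1,\xi_2}r_0(1+S^s_{\mathscr{V}})=r_0(1+S^{s,+}_{\mathscr{V}})$, which is exactly the required bound. There is no genuine obstacle — all the substantive work already lives in \S\S\ref{sec:curve}--\ref{sec:Holder} — and the only delicate point is to pick a single $\alpha^*$ that simultaneously activates the tube-control collapse of Lemma \ref{lem:flat2} and respects the structural constraint $\alpha_0\leq \alpha_1$ inherited from Proposition \ref{prop:mass}. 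Taking the minimum of the two thresholds handles both conditions at once and completes the argument.
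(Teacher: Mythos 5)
Your proposal is correct and is essentially the same as the paper's proof: set $\alpha^* = \min\{\alpha_1,\epsilon_{s,C^*,\xi_1,\xi_2}\}$, take $\alpha_0=\alpha^*$, invoke Lemma \ref{lem:flat2} to kill the variation-excess sum, and apply Theorem \ref{thm:main}. The only cosmetic difference is that the paper remarks parenthetically that $\epsilon_{s,C^*,\xi_1,\xi_2}$ is in fact strictly smaller than $\alpha_1$, making the minimum redundant, whereas you keep the minimum as a safety net — both are fine.
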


\begin{proof}
By Lemma \ref{lem:flat2}, there exists $\epsilon_{s,C^*,\xi_1,\xi_2}\in(0,1/16]$ such that if $k\geq 0$, $v\in V_k$, and $\a_{k,v} \leq \epsilon_{s,C^*,\xi,\xi_2}$, then $\tau_{s}(k,v,v') = 0$ for all $(v,v')\in\Flat(k)$. Set $\alpha^*=\min\{\epsilon_{s,C^*,\xi_1,\xi_2},\alpha_1\}$ (a careful inspection shows $\epsilon_{s,C^*,\xi_1,\xi_2}$ is strictly smaller than $\alpha_1$). Thus, with $\alpha_0=\alpha^*$,
\[ \sum_{k=0}^{\infty}\sum_{(v,v')\in \Flat(k)}\tau_{s}(k,v,v') \rho_k^s + \sum_{k=0}^{\infty}\sum_{\substack{v\in V_k\\ \a_{k,v} \geq \a^*}} \rho_k^s = \sum_{k=0}^{\infty}\sum_{\substack{v\in V_k\\ \a_{k,v} \geq \a^*}} \rho_k^s< \infty.\] The conclusion follows immediately by Theorem \ref{thm:main}.
\end{proof}

\begin{cor}\label{cor:nets2} Alternatively, if
\[S^{s,p}_{\mathscr{V}} := \sum_{k=0}^{\infty}\sum_{v\in V_k} \a_{k,v}^p\, \rho_k^s < \infty\quad\text{for some }p>0,\]
then there exists a  $(1/s)$-H\"older map $f:[0,1] \to X$ such that $\bigcup_{k\geq 0}V_k \subset f([0,1])$ and the H\"older constant of $f$ satisfies $H \lesssim_{s,C^*,\xi_1,\xi_2} r_0 (1+ (\alpha^*)^{-p}S^{s,p}_{\mathscr{V}})$.
\end{cor}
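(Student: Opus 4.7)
The plan is to derive Corollary \ref{cor:nets2} as a direct consequence of Corollary \ref{cor:nets1} via a one-line comparison of sums. With $\alpha^*$ as in Corollary \ref{cor:nets1}, observe that for any $v\in V_k$ satisfying $\alpha_{k,v}\geq \alpha^*$, the trivial bound $1\leq (\alpha^*)^{-p}\alpha_{k,v}^p$ holds. Therefore,
\[
S^{s,+}_{\mathscr{V}} \;=\; \sum_{k=0}^{\infty}\sum_{\substack{v\in V_k\\ \a_{k,v} \geq \a^*}} \rho_k^s
\;\leq\; (\alpha^*)^{-p}\sum_{k=0}^{\infty}\sum_{\substack{v\in V_k\\ \a_{k,v} \geq \a^*}} \alpha_{k,v}^p\,\rho_k^s
\;\leq\; (\alpha^*)^{-p}\, S^{s,p}_{\mathscr{V}} \;<\;\infty.
\]

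Since the hypothesis $S^{s,+}_{\mathscr{V}}<\infty$ of Corollary \ref{cor:nets1} is now verified, the corollary produces a $(1/s)$-H\"older map $f:[0,1]\to X$ with $\bigcup_{k\geq 0}V_k\subset f([0,1])$ and H\"older constant
\[
H \;\lesssim_{s,C^*,\xi_1,\xi_2}\; r_0(1+S^{s,+}_{\mathscr{V}}) \;\leq\; r_0\bigl(1+(\alpha^*)^{-p}S^{s,p}_{\mathscr{V}}\bigr),
\]
which is exactly the claimed bound. There is no genuine obstacle here; the only thing to notice is that the tail of the $p$-power sum controls the ``bad vertex'' count with a loss of $(\alpha^*)^{-p}$, and this loss is exactly what appears in the stated H\"older constant. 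The proof is therefore essentially a single display followed by an invocation of Corollary \ref{cor:nets1}.
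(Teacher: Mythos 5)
Your proof is correct and matches the paper's approach exactly: both compare $S^{s,+}_{\mathscr{V}}$ to $(\alpha^*)^{-p}S^{s,p}_{\mathscr{V}}$ by the trivial bound $1\leq(\alpha^*)^{-p}\alpha_{k,v}^p$ on the bad vertices and then invoke Corollary \ref{cor:nets1}.
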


\begin{proof} Inspecting the definitions of the two sums, $S^{s,+}_{\mathscr{V}} \leq (\alpha^*)^{-p} S^{s,p}_{\mathscr{V}}$.\end{proof}

We now turn to the proof of Theorem \ref{thm:main2}.

\begin{rem}\label{rem:alpha-beta}
For all $x\in\R^N$ and $r>0$, a minimal dyadic cube $Q$ in $\R^N$ such that $x\in Q$ and $3Q$ contains $B(x,r)$ satisfies $\diam 3Q \leq Cr$ for some $C=C(N)>0$.
\end{rem}

\begin{proof}[{Proof of Theorem \ref{thm:main2}}] Let $N\geq 2$ and $s>1$ be given. Fix $\beta_0>0$ to be specified below. Assume that $E\subset\R^N$ is a bounded set such that $$S_E^{s,+}=\sum_{\stackrel{Q\in\Delta(\R^N)}{\beta_E(3Q)\geq \beta_0}} (\diam Q)^s<\infty.$$
Pick any $x_0 \in E$ and set $r_0=\diam E$. Define $V_0 = \{x_0\}$. Assume that $V_k$ has been defined for some $k$. Choose a maximal $2^{-(k+1)}$-separated set in $E$ such that $V_{k+1}\supset V_k$. Then the sequence $\mathscr{V}=(V_k,2^{-k})_{k\geq 0}$ satisfies conditions (V0)--(V4) in \textsection\ref{sec:flat} with $C^*=2$ and $\xi_1=\xi_2 = 1/2$. Note that $$A^*=\frac{C^*}{1-\xi_2}=4,\quad 30A^*=120.$$ For all $k\geq 0$ and $v\in V_k$, let $Q_{k,v}$ be a minimal dyadic cube such that $v\in Q_{k,v}$ and $3Q_{k,v}$ contains $B(v,120\cdot 2^{-k}r_0)$ and choose $\ell_{k,v}$ be a line such that
\[ \sup_{x\in E \cap 3Q} \dist(x, \ell_{k,v})=\beta_E(3Q) \diam{3Q}.\] Then, by Remark \ref{rem:alpha-beta}, there exists $C=C(N)>0$ such that $$\alpha_{k,v}:=\frac{1}{2^{-{k+1}}r_0}\sup_{x\in V_{k+1}\cap B(v,120\cdot 2^{-k}r_0)} \dist (x,\ell_{k,v}) \leq  \frac{\diam 3Q}{2^{-{k+1}}r_0} \beta_E(3Q) \leq 240C \beta_E(3Q).$$ We now specify that $\beta_0 = \a^*/240C$, where $\a^*$ is the constant from Corollary \ref{cor:nets1} and $C$ the constant from Remark \ref{rem:alpha-beta}. Because each dyadic cube $Q$ in $\R^N$ is associated to some $(k,v)$ at most $C(n)$ times, it follows that
\[S^{s,+}_{\mathscr{V}} \lesssim_N r_0^{-s}S^{s,+}_E < \infty.\]
Therefore, by Corollary \ref{cor:nets1}, there exists a $(1/s)$-H\"older m fap $f:[0,1] \to \R^N$ such that $\bigcup_{k\geq 0}V_k \subset f([0,1])$ and the H\"older constant of $f$ satisfies
\[H \lesssim_{N,s} r_0 (1+ S^{s,+}_{\mathscr{V}})  \lesssim_{N,s} \diam{E} + \frac{S^{s,+}_E}{(\diam{E})^{s-1}}.\]
Because $\bigcup_{k\geq 0}V_k$ is dense in $E$, the curve $f([0,1])$ also contains the set $E$.
\end{proof}

\subsection{A refinement of Theorem \ref{thm:main}}\label{sec:modifiedTST}
The parameterization in Theorem \ref{thm:main} can be made in such a way so that the sequence of maps $F_k$ obtained are \emph{essentially 2-to-1} in the sense of the following proposition.

\begin{prop}\label{prop:2-to-1Holder}
Let $\mathscr{V}=(V_k,\rho_k)_{k\geq 0}$ and $\alpha_0$ satisfy the hypothesis of Theorem \ref{thm:main} and let $x_0\in V_0$. There exists a sequence of piecewise linear maps $F_k:[0,1]\rightarrow X$ with the following properties.
\begin{enumerate}
\item For all $k\geq 0$, $F_k(0)=x_0=F_k(1)$.
\item For all $k\geq 0$, there exists $G_k\subset[0,1]$ such that $F_k|G_k$ is 2-1 and $F_k([0,1]\setminus G_k)$ is a finite set.
\item For all $k\geq 0$, $F_k([0,1])\supset V_k$; for all $x\in V_{k+1}$, $\dist(x,F_k([0,1])) \leq C^*\rho_{k+1}r_0$.
\item For all $k\geq 0$, $\|F_k - F_{k+1}\|_{\infty} \lesssim_{C^*,\xi_2} \rho_{k+1}r_0$.
\item For all $k\geq 0$, the map $F_k$ is Lipschitz with $\Lip(F_k) \lesssim_{s,C^*,\xi_1,\xi_2} r_0(1+S^s_{\mathscr{V}})\rho_k^{1-s}$.
\end{enumerate} The maps $F_k$ converge uniformly to a $(1/s)$-H\"older map $F:[0,1]\rightarrow X$ whose image contains $\bigcup_{k\geq 0}V_k$, the parameterization $F$ starts and ends at $x_0$ in the sense of (1), and the H\"older constant of $F$ satisfies $H\lesssim_{s,C^*,\xi_1,\xi_2} r_0(1+S^s_{\mathscr{V}})$.
\end{prop}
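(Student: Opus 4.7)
The plan is to run the Traveling Salesman algorithm of \S\ref{sec:curve} with basepoint $v_0 := x_0$, then reparameterize exactly as in the proof of Theorem \ref{thm:main} in \S\ref{sec:proof1}, with a small adjustment to ensure bridge intervals match up globally into pairs. The algorithm in \S\ref{sec:k=0} is free to fix any $v_0 \in V_0$ and invokes a graph-tour construction (Proposition \ref{prop:graph}) that begins and ends at the marked vertex; taking $v_0 = x_0$ therefore forces $f_k(0) = f_k(1) = x_0$ for every $k \geq 0$. This endpoint condition is preserved by the affine reparameterization that turns $f_k$ into $F_k$, giving (1).

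For property (2), edge intervals are already 2-to-1 by (P4), and each time the algorithm introduces a bridge it does so in a matched pair with identical image. Concretely, in \S\ref{sec:k=0} Case 3 the pair $I_{2j-1}, I_{2j}$ maps to $[u_j, v_0]$; in \S\ref{sec:noflatpoints} the pair $I_{1, 2j-1}, I_{1, 2j}$ maps to $[v_j, f_k(I_1)]$; in \S\ref{sec:noflatedges} an edge interval $I$ becomes a bridge together with its (P4)-partner $I'$ (which also satisfies the non-flat condition); and in \S\ref{sec:flatedges} any bridge created in $I$ has its twin in $I'$ via the orientation-reversing affine map $\psi_{I',I}$. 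The remaining issue is that bridges created at distinct loci might coincide as segments; using the freedom in choices (C3a) and (C4) of \S\ref{sec:choices} to permute the order in which connected components are traversed, and to choose the connecting vertex $v_j$ in each component, the bridges can be arranged so that only declared twins coincide. Setting $G_k := \bigcup(\mathscr{E}_k \cup \mathscr{B}_k)$ and noting that $F_k(\mathscr{F}_k \cup \mathscr{N}_k) \subset V_k$ is finite then yields (2).

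The remaining conditions are essentially read off from the proof of Theorem \ref{thm:main}. The coverage in (3) follows from (P6), which gives $V_k \subset f_k([0,1]) = F_k([0,1])$, together with (V4), which places every $v \in V_{k+1}$ within $C^* \rho_{k+1} r_0$ of some point of $V_k \subset F_k([0,1])$. The sup-norm bound (4) is exactly Lemma \ref{lem:Hold-conv1} after using $\rho_k \leq \xi_2^{-1} \rho_{k+1}$. The Lipschitz estimate (5) is Lemma \ref{lem:Hold-conv2} combined with the total-mass bound $\mathcal{M}_s([0,1]) \lesssim_{s, C^*, \xi_2} r_0^s (1 + S^s_{\mathscr{V}})$ from Proposition \ref{prop:mass}.

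Finally, I would combine (4) and (5) with Lemma \ref{l:LipHold} from Appendix \ref{sec:lip-hold} to conclude that $F_k$ converges uniformly to a $(1/s)$-H\"older map $F : [0,1] \to X$ with $H \lesssim_{s, C^*, \xi_1, \xi_2} r_0 (1 + S^s_{\mathscr{V}})$; the endpoint condition $F(0) = F(1) = x_0$ and the inclusion $\bigcup_k V_k \subset F([0,1])$ pass to the uniform limit. I expect the main obstacle to be the bookkeeping in (2), namely verifying that the matched-pair structure of bridges, inherited from the matched-pair structure of edge intervals via (P4), can be maintained globally rather than merely within each local subproblem, and that the choices in \S\ref{sec:choices} are in fact sufficient to avoid coincidental bridge overlaps across distant parts of the construction.
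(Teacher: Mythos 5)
You correctly identify the crux: the algorithm of \S\ref{sec:curve} controls overlaps among edge intervals via (P4) but says nothing about overlaps among images of bridge intervals, and you flag the bridges as the main obstacle. However, the mechanism you propose for resolving it --- reordering components via (C3a)/(C4) and choosing different connecting vertices --- does not suffice and is not what the paper does. Those choices cannot prevent two distinct bridge pairs from having overlapping straight-segment images: bridges are, under $f_k$, literal line segments between fixed vertices, they persist forever once created (\S\ref{sec:bridges}), and at step $k$ you have no freedom at all to move a bridge frozen at some earlier step $j<k$. In a degenerate configuration (e.g.~many vertices lying along a common line in $\R^2$), bridges created at different times and places will coincide on positive-length subsegments no matter how you permute the traversal order, and then $F_k$ is $\geq 4$-to-$1$ on a nondegenerate segment, so no $G_k$ with $F_k([0,1]\setminus G_k)$ finite can exist. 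What the paper actually does is change the map, not the combinatorics: on each bridge interval $I=(a_I,b_I)$ it replaces the affine map $f_k|I$ by a piecewise-linear path $\tilde f_k|I$ with the same endpoints, total length $\Haus^1(\tilde f_k(I)) \leq |f_k(a_I)-f_k(b_I)| + \rho_k r_0$, and --- processing the bridge pairs one at a time --- only finite intersection with $\bigcup_{J\in\mathscr{E}_k} f_k(J)$ and with all previously drawn bridge paths. The twin $I'$ is then given the mirrored path via $\psi_{I',I}$, so bridge pairs stay $2$-to-$1$ by fiat and distinct pairs meet in a finite set by construction. The small price is that all subsequent estimates must be run with $\tilde f_k$ in place of $f_k$, which works because (b) gives $\diam\tilde f_k(I)\simeq\diam f_k(I)$, so Lemmas \ref{lem:Hold-conv1} and \ref{lem:Hold-conv2} and Proposition \ref{prop:mass} carry over with the same dependence. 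Your treatment of (1), (3), (4), (5) and the passage to the H\"older limit via Lemma \ref{l:LipHold} matches the paper; the gap is confined to (2), and the fix is to redraw bridge images rather than to re-order choices in the algorithm.
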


\begin{proof}
Following the algorithm of \textsection\ref{sec:curve}, we construct for each $k\geq 0$, four families $\mathscr{E}_k$, $\mathscr{B}_k$, $\mathscr{F}_k$, $\mathscr{N}_k$ of intervals in $[0,1]$ and a continuous piecewise linear map $f_k:[0,1] \to X$ that satisfy (P1)--(P7). In Step 0, we may assume that $f_0(0) = f_0(1) = x_0$. Thus, $f_k(0) = f_k(1) = x_0$ for all $k\geq 0$. Moreover, for all $x\in V_{k+1}$,
\[ \dist(x,F_k([0,1])) \leq \dist(x,V_k) < C^*r_0\rho_{k+1}\] by (V4). From the construction, $\|f_k-f_{k+1}\|_{\infty} \lesssim_{A^*}\rho_{k+1}r_0$. Hence $\|f_k-f_{k+1}\|_{\infty} \lesssim_{C^*,\xi_2}\rho_{k+1}r_0$. We have shown that the maps $f_k$ satisfy properties (1), (3) and (4).

As for property (2), we already know from (P4) that $f_k|\bigcup \mathscr{E}_k$ is 2-to-1. We proceed to modify $f_k$ on each $I\in\mathscr{B}_k$. From the algorithm in \textsection\ref{sec:curve}, recall that for each $I\in \mathscr{B}_k$, there exists unique $I' \in \mathscr{B}_k$, $I'\neq I$ such that $f_k(I) = f_k(I')$. Enumerate
\[\mathscr{B}_k = \{I_1, I_1',\dots,I_l, I_l'\},\]
where $f_k(I_j) = f_k(I_j')$. Starting with $I_1 = (a_1,b_1)$, define $\tilde{f}_k|I_1$ so that
\begin{enumerate}
\item[(a)] $\tilde{f}_k| I_1$ is piecewise linear and continuous, $\tilde{f}_k(a_1) = f_{k}(a_1)$ and  $\tilde{f}_k(b_1) = f_{k}(b_1)$;
\item[(b)] $\mathcal{H}^1(\tilde{f}_k(I_1)) \leq |f_k(a_1) - f_k(b_1)| + \rho_{k}r_0$;
\item[(c)] $\tilde{f}_k(I_1) \cap \bigcup_{I \in \mathscr{E}_k} f_k(I)$ is a finite set.
\end{enumerate}
Let $\psi_1: I_1' \to I_1$ be the unique orientation-reversing linear map from $I_1'$ onto $I_1$. Then define $\tilde{f}_k|I_1' = (\tilde{f}_k|I_1)\circ \psi_1$. For induction, assume that we have defined $\tilde{f}_k$ on $$I_1,I_1',\dots, I_{r-1},I_{r-1}'.$$ Define $\tilde{f}_k|I_r$ as with $I_1$, only this time we require that the set
\[\tilde{f}_k(I_r) \cap\left (\bigcup_{i=1}^{r-1}\tilde{f}_k(I_i)\cup \bigcup_{i=1}^{r-1}\tilde{f}_k(I_i')\cup \bigcup_{I \in \mathscr{E}_k} f_k(I) \right)\]
be finite. Let $\psi_r: I_r' \to I_r$ be the unique orientation-reversing linear map from $I_r'$ onto $I_r$ and define $\tilde{f}_k|I_r' = (\tilde{f}_k|I_r)\circ \psi_r$. Extending $\tilde{f}_k|I = f_k|I$ for all $I\in\mathscr{E}_k\cup\mathscr{N}_k\cup\mathscr{F}_k$, we obtain a sequence $\tilde{f}_k$ of maps that satisfy properties (1)--(4).

The rest of the proof is similar to that of Theorem \ref{thm:main} and we only sketch the steps. Define the $\mathcal{M}_s$ for each $I \in \mathscr{I}_k$ and define the collections of intervals $\{\D_w\}$ and $\{\D_w'\}$. For each $w$, let $\phi_w: \D_w' \to \D_w$ be the unique affine homeomorphism from $\D_w'$ onto $\D_w$ and let $F_k|\D_w' = (\tilde{f}_k|\D_w)\circ\phi_w$. Although the maps $f_k$ are different from $\tilde{f}_k$, we have by (b) that $\diam{f_k(I)} \simeq_{\xi_2}\diam{\tilde{f}_k(I)}$ for all $k\geq 0$ and all $I\in\mathscr{I}_k$. Thus, Lemma \ref{lem:Hold-conv1} and Lemma \ref{lem:Hold-conv2} still hold with constants depending at most on $s$, $C^*$, $\xi_1$ and $\xi_2$. Therefore, the maps $F_k$ satisfy properties (1)--(5).
\end{proof}

\subsection{A Carleson condition for an upper Ahlfors $s$-regular curve}\label{sec:Carleson}

Replacing \eqref{eq:main-condition} in the main theorem with a Carleson-type condition ensures that the H\"older curve is upper Ahlfors regular. This answers a question posed to us by T. Orponen.

\begin{thm} \label{cor:carleson}
Assume that $X=l^2(\R)$ or $X=\R^N$ for some $N\geq 2$. Let $s\geq 1$, let $\mathscr{V}=((V_k,\rho_k))_{k\geq 0}$ be a sequence of finite sets $V_k$ in $X$ and numbers $\rho_k>0$ that satisfy properties (V0)--(V4) defined in \textsection\ref{sec:flat}. Let $\Lambda \geq C^*$ and $\Lambda^*:=\Lambda/(1-\xi_2)$. Suppose for all $k\geq 0$ and $v\in V_{k+1}$, we are given a line $\ell_{k,v}$ and $\alpha_{k,v}\geq 0$ such that \begin{equation} \tag{$\widetilde{\mathrm{V5}}$} \sup_{x\in V_{k+1}\cap B(v,30\Lambda^*\rho_k r_0)} \dist(x,\ell) \leq \alpha_{k,v} \rho_{k+1}.\end{equation} If $\Lambda\gg_{\xi_1,\xi_2} C^*$, $\alpha_0\in(0,\alpha_1]$, and there exists $M<\infty$ such that for all $j\geq 0$ and $w\in V_{j}$,
\begin{equation}\label{eq:carleson} S^s_{\mathscr{V}}(j,w) := \sum_{k=j}^{\infty}\sum_{\substack{(v,v')\in \Flat(k) \\ v,v'\in B(w,\Lambda \rho_{j}r_0)}} \tau_{s}(k,v,v') \rho_k^s+ \sum_{k=j}^{\infty}\sum_{\substack{v\in V_k\\ \a_{k,v} \geq \a_0 \\  v\in B(w,\Lambda \rho_{j}r_0)}} \rho_k^s \leq M \rho_{j}^s,\end{equation}
then there exists a  $(1/s)$-H\"older map $f:[0,1] \to X$ such that $f([0,1])\supset \bigcup_{k\geq 0}V_k$ and the curve $f([0,1])$ is upper Ahlfors $s$-regular with constant depending on at most $s$, $C^*$, $\xi_1$, $\xi_2$, and $M$.
\end{thm}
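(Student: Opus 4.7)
The plan is to first apply Theorem \ref{thm:main} to produce the $(1/s)$-H\"older parameterization $f$, and then upgrade the global mass bound of Proposition \ref{prop:mass} to a \emph{localized} mass bound at scale $\rho_j r_0$, from which upper Ahlfors $s$-regularity follows directly. Since $\Lambda\geq C^*$, condition $(\widetilde{\mathrm{V5}})$ implies the original (V5). Provided $\Lambda\gg_{\xi_1,\xi_2}C^*$ is large enough that $B(x_0,\Lambda r_0)$ contains $\bigcup_k V_k$ (which holds since, by telescoping (V4), every descendant of $x_0$ lies in $B(x_0,A^* r_0)$ with $A^*=C^*/(1-\xi_2)$), specializing \eqref{eq:carleson} to $j=0$, $w=x_0$ yields $S^s_{\mathscr{V}}\leq M$. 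Theorem \ref{thm:main} then produces $f:[0,1]\to X$ with $f([0,1])\supset\bigcup_k V_k$ and H\"older constant $H\lesssim_{s,C^*,\xi_1,\xi_2}r_0(1+M)$.

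\textbf{Localized mass estimate.} The key intermediate step is to prove, for every $j\geq 0$ and every $w\in V_j$, the estimate
\[
\sum_{\substack{I\in\mathscr{I}_j\\ f_j(I)\subset B(w,\tfrac{1}{2}\Lambda\rho_j r_0)}}\!\!\!\mathcal{M}_s(j,I)\;\lesssim_{s,C^*,\xi_1,\xi_2}\; r_0^s\rho_j^s\bigl(1+S^s_{\mathscr{V}}(j,w)\bigr)\;\lesssim_{s,C^*,\xi_1,\xi_2,M}\; r_0^s\rho_j^s.
\]
This mirrors the proof of Proposition \ref{prop:mass}: one iterates the one-step inequality of Lemma \ref{lem:estimates}, but restricted to subtrees whose descendants at every depth $k\geq j$ remain inside $B(w,\Lambda\rho_j r_0)$. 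The five sub-estimates (E1)--(E5) go through verbatim as long as, for every flat pair $(v,v')\in\Flat(k)$ and every edge/bridge interval appearing in the localized subtree, the approximating line $\ell_{k,v}$ supplied by $(\widetilde{\mathrm{V5}})$ is available on the $30\Lambda^*\rho_k r_0$-neighborhood of $v$; this is precisely the reason we strengthened (V5) to $(\widetilde{\mathrm{V5}})$. The base of the induction at depth $j$ contributes $O(\rho_j^s r_0^s)$, because Lemma \ref{rem:card} bounds the number of depth-$j$ intervals near $w$ by a constant depending only on $C^*$; this plays the role of the $r_0^s$ term in the global statement.

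\textbf{Upper Ahlfors regularity.} Fix $x\in f([0,1])$ and $0<r\leq\diam f([0,1])$; choose $j\geq 0$ with $\rho_{j+1}r_0<r\leq\rho_j r_0$ and let $w\in V_j$ be a nearest vertex to $x$, so that $|x-w|\lesssim_{C^*}r$. By the construction in \S\ref{sec:curve}, every point of $f([0,1])\cap B(x,r)$ lies in $f(\Delta_v)$ for some depth-$j$ interval $\Delta_v$ whose image sits inside $B(w,c\Lambda\rho_j r_0)$ for a suitable absolute $c$, and Lemma \ref{rem:card} bounds the number of such $\Delta_v$. Running the $\delta$-cover argument from the proof of Lemma \ref{lem:mass-measure} on the localized subtree (using vertex covers at depth $m\to\infty$) and invoking the localized mass bound above gives
\[
\mathcal{H}^s\bigl(f([0,1])\cap B(x,r)\bigr)\;\lesssim_{s,C^*,\xi_1,\xi_2}\!\!\sum_{\substack{I\in\mathscr{I}_j\\ f_j(I)\subset B(w,\tfrac{1}{2}\Lambda\rho_j r_0)}}\!\!\!\mathcal{M}_s(j,I)\;\lesssim_{s,C^*,\xi_1,\xi_2,M}\; r^s,
\]
which is exactly upper Ahlfors $s$-regularity with the advertised constant.

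\textbf{Main obstacle.} The principal difficulty is the careful bookkeeping in the localized version of Lemma \ref{lem:estimates}. In particular, a flat pair $(v,v')\in\Flat(k)$ with $v\in B(w,\tfrac{1}{2}\Lambda\rho_j r_0)$ can have $v'$ as much as $14A^*\rho_k r_0$ away, so one needs a cushion zone (the annulus between radii $\tfrac{1}{2}\Lambda\rho_j r_0$ and $\Lambda\rho_j r_0$) to ensure that partner vertices, the bridges between them counted by Corollary \ref{cor:bridges*}, and the phantom masses attached to terminal vertices all still lie in the Carleson ball $B(w,\Lambda\rho_j r_0)$. Making this precise forces $\Lambda$ to exceed a universal multiple of $A^*$, which is encoded in the hypothesis $\Lambda\gg_{\xi_1,\xi_2}C^*$. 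Once this cushion is set up correctly, the five inequalities (E1)--(E5) localize term-by-term and the rest of the argument is a mechanical parallel of the proof of Proposition \ref{prop:mass}.
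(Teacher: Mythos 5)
Your overall strategy (apply Theorem \ref{thm:main} once to get $f$, then obtain a localized $\Haus^s$ bound to prove upper regularity) is in the same spirit as the paper, and your localized mass estimate is morally correct. However, there is a genuine gap in the passage from the localized mass bound to the conclusion $\mathcal{H}^s\bigl(f([0,1])\cap B(x,r)\bigr)\lesssim r^s$. The covering argument from Lemma \ref{lem:mass-measure} bounds $\mathcal{H}^s\bigl(\overline{\bigcup_k V_k}\bigr)$, not $\mathcal{H}^s(f([0,1]))$: balls of radius $\sim\rho_m r_0$ about points of $V_m$ do not cover the image, because the curve also passes through bridge segments whose lengths are of order $\rho_{m'}r_0$ for $m'<m$ and which therefore escape these covers (indeed the paper explicitly remarks that Lemmas \ref{lem:massequiv} and \ref{lem:mass-measure} are ``not used in any of the estimates below''). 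If instead you try to cover $f([0,1])\cap B(x,r)$ by the images $f(\Delta'_w)$ for $|w|=j$ and estimate each via the H\"older constant of $f$, you run into the problem that the H\"older constant of $f$ (and of $f|\Delta'_w$) carries the global factor $\mathcal{M}_s([0,1])\simeq r_0^s(1+M)$, while the partition lengths only satisfy the one-sided inequality $|\Delta'_w|\geq\mathcal{M}_s(|w|-1,\Delta_w)/\mathcal{M}_s([0,1])$; there is no upper bound on $\sum_w|\Delta'_w|$ in terms of the localized masses alone. So a localized mass estimate, by itself, does not convert into a localized Hausdorff measure estimate for the fixed parameterization $f$.

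The missing idea, which is what the paper does, is to produce a \emph{second} parameterization whose H\"older constant is already localized. Concretely: for $\rho_{j+1}r_0\leq r<\rho_j r_0$ and $w\in V_j$ nearest $x$, define a rescaled net $\widetilde{\mathscr{V}}=(\widetilde V_k,\widetilde\rho_k)$ with $\widetilde V_k\subset V_{j+k}\cap B(w,\Lambda\rho_j r_0)$, $\widetilde\rho_k=\rho_{j+k}/\rho_j$, $\widetilde r_0=\rho_j r_0$, $\widetilde{C^*}=\Lambda$, and check (V0)--(V5). Then \eqref{eq:carleson} gives $S^s_{\widetilde{\mathscr{V}}}\leq M$, and Theorem \ref{thm:main} applied to $\widetilde{\mathscr{V}}$ yields a map $g$ with H\"older constant $H_g\lesssim\rho_j r_0(1+M)$. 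Because the algorithm works locally, $g([0,1])\supset f([0,1])\cap B(x,r)$ once $\Lambda\gg_{\xi_1,\xi_2}C^*$, so $\mathcal{H}^s\bigl(f([0,1])\cap B(x,r)\bigr)\leq\mathcal{H}^s(g([0,1]))\leq H_g^s\lesssim(\rho_j r_0)^s\lesssim r^s$. This two-parameterization step replaces (and supersedes) your proposed hand-localization of Lemma \ref{lem:estimates}, which in any case you would obtain for free by applying Proposition \ref{prop:mass} directly to $\widetilde{\mathscr{V}}$. Your observation that $(\widetilde{\mathrm{V5}})$ supplies approximating lines on the larger radius $30\Lambda^*\rho_k r_0$ is exactly what is needed to verify (V5) for $\widetilde{\mathscr{V}}$, so that part of your analysis is on target.
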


\begin{proof} By (V0) and (V4), $\excess\left(\textstyle\bigcup_{k=1}^\infty V_k, V_0\right) \leq \rho_1r_0+\rho_2r_0+\dots = \frac{r_0}{1-\xi_2}\leq  A^*r_0,$ where $$\excess(A,B)=\sup_{x\in A}\inf_{y\in B} |x-y|$$ whenever $A$ and $B$ are nonempty sets in $X$. Hence $\bigcup_{k=0}^\infty V_k \subset B(x_0,2A^*r_0)$ by (V1). Thus, there exists a $(1/s)$-H\"older map $f:[0,1]\rightarrow X$ such that $\Gamma:=f([0,1])\supset \bigcup_{k=0}^\infty V_k$ and the H\"older constant of $f$ satisfies $H_f \lesssim_{s,C^*,\xi_1,\xi_2} r_0(1+M)$ by Theorem \ref{thm:main}, since $S^s_\mathscr{V}=S^s_{\mathscr{V}}(0,w)\leq M$. In particular, $$\Haus^s(\Gamma) \leq H_f^s\, \Haus^1([0,1]) \lesssim_{s,C^*,\xi_1,\xi_2,M} r_0^s.$$
Let $x\in \Gamma$ and let $0<r\leq \diam \Gamma$. Because $\bigcup_{k=0}^\infty V_k \subset B(x_0,2A^*r_0)$, we have $\diam \Gamma \leq 4A^*r_0$. If $r\geq r_0$, then $$\Haus^s(\Gamma\cap B(x,r))\leq \Haus^s(\Gamma)\lesssim_{s,C^*,\xi_1,\xi_2,M} r_0^s \lesssim_{s,C^*,\xi_1,\xi_2,M} r^s.$$ Otherwise, $0<r<r_0$, say $\rho_{j+1}r_0\leq r < \rho_j r_0$ for some integer $j\geq 0$.

Choose $w\in V_j$ such that $|w-x|= \dist(x, V_j)$. By Lemma \ref{lem:Hold-conv1}, (V0), and (V4), $$\dist(x,f_j([0,1])) \leq \frac{30\xi_2}{1-\xi_2}A^*\rho_jr_0.$$ Because $\alpha_0\leq \alpha_1$, the longest line segment drawn between vertices in $V_j$ has length at most $14 A^* \rho_{k-1}r_0$. By (V0), it follows that $$\excess(f_j([0,1]),V_j) \leq \frac{7}{\xi_1} A^*\rho_jr_0.$$ Thus, $\dist(x,V_j)\lesssim_{\xi_1,\xi_2}A^* \rho_jr_0 \ll_{\xi_1,\xi_2} \Lambda\rho_jr_0.$  For all $k\geq 0$, define $$\widetilde{\rho_k} := \frac{\rho_{j+k}}{\rho_j}.$$ Define $\widetilde{V_0} := V_{j} \cap B(w,\Lambda\rho_jr_0)$. Then, for each $k\geq 1$, recursively define $\widetilde{V_{k}}$ to be set of all $x\in V_{j+k} \cap B(w,\Lambda\rho_jr_0)$ such that $\dist(x,\widetilde{V}_{k-1})\leq \Lambda \rho_{j+k}r_0$. Then $\widetilde{\mathscr{V}}=(\widetilde{V_k},\widetilde{\rho_k})_{k\geq 0}$ satisfy (V0)--(V4) with respect to $\widetilde x_0=w$ and $\widetilde r_0=\rho_jr_0$, $\widetilde{C^*}=\Lambda$, $\widetilde{\xi_1}=\xi_1$, and $\widetilde{\xi_2}=\xi_2$. For all $k\geq 0$ and $v\in \widetilde{V}_k$, assign $\widetilde{\ell}_{k,v}:=\ell_{j+k,v}\quad\text{and}\quad \widetilde{\alpha}_{k,v}=\alpha_{j+k,v}.$ Then $\widetilde{\mathscr{V}}$ satisfies (V5) with respect to $\widetilde{\ell}_{k,v}$ and $\widetilde{\alpha}_{k,v}$ by ($\widetilde{\mathrm{V5}}$). Moreover, by \eqref{eq:carleson}, $$S_{\widetilde{\mathscr{V}}}^s=\sum_{k=0}^\infty \sum_{(v,v')\in\widetilde{\Flat(k)}} \widetilde{\tau}(k,v,v') \widetilde{\rho_k}^s+ \sum_{k=0}^\infty \sum_{\substack{v\in \widetilde{V_k} \\ \widetilde{\alpha_{k,v}}\geq \alpha_0}} \widetilde{\rho_k}^s = \frac{S_{\mathscr{V}}(j,w)}{\rho_j^s}\leq M.$$ Thus, by Theorem \ref{thm:main}, there is a $(1/s)$-H\"older map $g$ with H\"older constant $H_g \lesssim_{s,\Lambda,\xi_1,\xi_2} \widetilde{r_0}(1+M)$ such that $g([0,1])$ contains $\bigcup_{k\geq 0} \widetilde V_{k}.$ Because the algorithm in \S\ref{sec:curve} works locally in the image, $\dist(x,V_j)\lesssim_{\xi_1,\xi_2}A^* \rho_jr_0 \ll_{\xi_1,\xi_2} \Lambda\rho_jr_0$, and $r<\rho_jr_0$, we can guarantee that $g([0,1])$ contains $f([0,1])\cap B(x,r)$ provided that $\Lambda$ is sufficiently large. Therefore, $$\Haus^s(\Gamma\cap B(x,r)) \leq H_g^s\, \Haus^1([0,1]) \lesssim_{s,\Lambda,\xi_1,\xi_2,} \widetilde{r_0}(1+M) \lesssim_{s,C^*,\xi_1,\xi_2,M} (\rho_jr_0)^s \lesssim_{s,C^*,\xi_1,\xi_2,M} r^s, $$ where the final inequality holds because $\rho_{j+1}r_0\leq r$.\end{proof}

\section{Lipschitz parameterization}\label{sec:Lip}
Using the method above, we obtain the following refinement of the sufficient half of the Analyst's TST in Hilbert space, which is originally due to Jones \cite{Jones-TST} in the Euclidean case and due to Schul \cite{Schul-Hilbert} in the infinite-dimensional case.

\begin{prop}[Sufficient half of the Analyst's Traveling Salesman with Nets]\label{prop:2-to-1Lip} Assume that $X=l^2(\R)$ or $X=\R^N$ for some $N\geq 2$. Let $\mathscr{V}=(V_k,\rho_k)_{k\geq 0}$ be a sequence of finite sets $V_k$ in $X$ and numbers $\rho_k>0$ that satisfy properties (V0)--(V5) defined in \textsection\ref{sec:flat}. If
\begin{equation} S_{\mathscr{V}} := \sum_{k=0}^\infty \sum_{v\in V_k} \alpha_{k,v}^2 \rho_k < \infty,\end{equation}
then for every $x_0\in V_0$, we can find a sequence of piecewise linear maps $F_k:[0,1]\rightarrow X$ with the following properties.
\begin{enumerate}
\item For all $k\geq 0$, $F_k(0)=x_0=F_k(1)$.
\item For all $k\geq 0$, there exists $G_k\subset[0,1]$ such that $F_k|G_k$ is 2-1 and $F_k([0,1]\setminus G_k)$ is a finite set.
\item For all $k\geq 0$, $F_k([0,1])\supset V_k$; for all $x\in V_{k+1}$, $\dist(x,F_k([0,1])) \leq C^*\rho_{k+1}r_0$.
\item For all $k\geq 0$, $\|F_k - F_{k+1}\|_{\infty} \lesssim_{C^*,\xi_2} \rho_{k+1}r_0$.
\item For all $k\geq 0$, the map $F_k$ is Lipschitz with $\Lip(F_k) \lesssim_{C^*,\xi_1,\xi_2} r_0(1+ S_{\mathscr{V}})$.
\end{enumerate} The maps $F_k$ converge uniformly to a Lipschitz map $F:[0,1]\rightarrow X$ whose image contains $\bigcup_{k\geq 0}V_k$, the parameterization $F$ starts and ends at $x_0$ in the sense of (1), and the Lipschitz constant of $F$ satisfies $L\lesssim_{C^*,\xi_1,\xi_2} r_0(1+S_{\mathscr{V}})$.\end{prop}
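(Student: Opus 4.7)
The plan is to deduce Proposition \ref{prop:2-to-1Lip} from Proposition \ref{prop:2-to-1Holder} applied with $s=1$, by showing that the Jones-type sum $S_{\mathscr{V}}<\infty$ already dominates the two sums in hypothesis \eqref{eq:main-condition} when the H\"older exponent is $1$. In other words, all of the parametric machinery of \S\ref{sec:curve}--\S\ref{sec:Holder} is inherited wholesale and the proof reduces to an elementary estimate relating variation excess to the approximation errors $\alpha_{k,v}$.

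First, I would fix $\alpha_0=\alpha_1$ from \eqref{alphabound} so that Theorem \ref{thm:main} is available, and verify the reduction $S^1_{\mathscr{V}}\lesssim_{C^*,\xi_1,\xi_2} S_{\mathscr{V}}$. The flat-pair sum is immediately controlled by Lemma \ref{lem:monotone}: for each $k\geq 0$ and $(v,v')\in\Flat(k)$, the variation excess satisfies $\tau_1(k,v,v')\leq 3\alpha_{k,v}^2$. Since any vertex $v\in V_k$ serves as the first coordinate of at most two pairs in $\Flat(k)$ (one nearest neighbor to each side under the ordering induced by $\ell_{k,v}$),
\[
\sum_{(v,v')\in\Flat(k)}\tau_1(k,v,v')\,\rho_k \leq 6\sum_{v\in V_k}\alpha_{k,v}^2\,\rho_k.
\]
For the non-flat vertex sum, the elementary bound $\chi_{\{\alpha_{k,v}\geq \alpha_0\}}\leq \alpha_0^{-2}\alpha_{k,v}^2$ yields
\[
\sum_{\substack{v\in V_k \\ \alpha_{k,v}\geq \alpha_0}}\rho_k \leq \alpha_0^{-2}\sum_{v\in V_k}\alpha_{k,v}^2\,\rho_k.
\]
Summing over $k$ and combining the two bounds gives $S^1_{\mathscr{V}}\lesssim_{C^*,\xi_1,\xi_2} S_{\mathscr{V}}<\infty$, so the hypothesis of Theorem \ref{thm:main} holds with $s=1$.

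With this reduction in hand, I would invoke Proposition \ref{prop:2-to-1Holder} for $s=1$ and the given basepoint $x_0\in V_0$. Its conclusions (1)--(5) match the present statement verbatim; in particular, the Lipschitz estimate
\[
\Lip(F_k)\lesssim_{1,C^*,\xi_1,\xi_2} r_0(1+S^1_{\mathscr{V}})\lesssim_{C^*,\xi_1,\xi_2}r_0(1+S_{\mathscr{V}})
\]
follows from property (5) of Proposition \ref{prop:2-to-1Holder} together with the reduction above. The uniform limit $F$ is $(1/s)$-H\"older with $s=1$, i.e., Lipschitz, with constant $L\lesssim_{C^*,\xi_1,\xi_2}r_0(1+S_{\mathscr{V}})$, and $F([0,1])\supset\bigcup_{k\geq 0}V_k$, as required.

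No substantive obstacle remains: the sharpness of Lemma \ref{lem:monotone} in the Lipschitz regime is precisely what makes the classical $\beta^2$-type sum sufficient, bypassing the more delicate tube-control estimate of Lemma \ref{lem:flat2} that was needed when $s>1$. As a sanity check, Lemma \ref{lem:massequiv} confirms that the $1$-mass $\mathcal{M}_1$ is comparable to the Hausdorff length of the image, so the mass bound of Proposition \ref{prop:mass} specializes exactly to Jones' incremental length estimate \eqref{e:Jones-estimate} for the approximating graphs, and our parametric argument recovers the sufficient half of the Analyst's TST in Hilbert space along the way.
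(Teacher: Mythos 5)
Your proposal is correct and follows essentially the same route as the paper: fix $\alpha_0=\alpha_1$, invoke Lemma~\ref{lem:monotone} to bound $\tau_1(k,v,v')\leq 3\alpha_{k,v}^2$, use the crude bound $\chi_{\{\alpha_{k,v}\geq\alpha_0\}}\leq \alpha_0^{-2}\alpha_{k,v}^2$ on the non-flat sum, conclude $S^1_{\mathscr{V}}\lesssim_{C^*,\xi_1,\xi_2} S_{\mathscr{V}}$, and then apply Proposition~\ref{prop:2-to-1Holder} with $s=1$. The paper's proof is precisely this argument (the paper's factor of $6$ in the flat-pair bound is the same $2\cdot 3$ you get from each vertex being first coordinate of at most two flat pairs).
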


\begin{proof} Let $\alpha_0=\alpha_1$ (see \eqref{alphabound}), which depends only on $C^*$, $\xi_1$, and $\xi_2$. If $(v,v')\in\Flat(k)$, then $\tau_1(k,v,v') \leq 3\alpha_{k,v}^2$ by Lemma \ref{lem:monotone}. Thus, by definition of $S_{\mathscr{V}}^1$ (see Theorem \ref{thm:main}), \begin{align*} S_{\mathscr{V}}^{1} &= \sum_{k=0}^{\infty}\sum_{(v,v')\in \Flat(k)}\tau_{1}(k,v,v') \rho_k+ \sum_{k=0}^{\infty}\sum_{\substack{v\in V_k\\ \a_{k,v} \geq \a_0}} \rho_k\\ &\leq 6\sum_{k=0}^{\infty}\sum_{\substack{v\in V_k\\ \a_{k,v} < \a_0}}\alpha_{k,v}^2 \rho_k+ \frac{1}{\alpha_0^2}\sum_{k=0}^{\infty}\sum_{\substack{v\in V_k\\ \a_{k,v} \geq \a_0}} \alpha_{k,v}^2\rho_k \leq \frac{1}{\alpha_0^2} S_{\mathscr{V}}.\end{align*} The conclusion now follows from Proposition \ref{prop:2-to-1Holder}.
\end{proof}

\part{Applications and Further Results}

In \S\ref{sec:gmt}, we give an application of the H\"older Traveling Salesman theorem to the geometry of measures. In particular, we obtain sufficient conditions for a pointwise doubling measure in $\R^N$ to be carried by $(1/s)$-H\"older curves, $s>1$. This extends the work \cite{BS1,BS3} by the first author and Schul, which characterizes 1-rectifiable Radon measures in $\R^N$ in terms of geometric square functions. In \S8, we use the method of Part I to obtain a Wa\.{z}ewski type theorem for flat continua, which we described above in \S\ref{sec:wazewski}. Finally, in \S\ref{sec:examples}, we present examples of H\"older curves and of sets that are not contained in any H\"older curve to highlight the rich geometry of sets in $\R^N$ and illustrate the strengths and limitations of our principal results.

\section{Fractional rectifiability of measures}\label{sec:gmt}

One goal of geometric measure theory is to understand the structure of a measure in $\R^N$ through its interaction with families of lower dimensional sets. For an extended introduction, see the survey \cite{Badger-survey} by the first author. In this section, we use the H\"older Traveling Salesman theorem to establish criteria for \emph{fractional rectifiability} of pointwise doubling measures in terms of $L^p$ Jones beta numbers. In particular, we extend part of the recent work of the first author and Schul \cite{BS3} on measures carried by rectifiable curves to measures carried by H\"older curves (see Theorem \ref{thm:Jsp}). The study of fractional (that is, non-integer dimensional) rectifiability of measures was first proposed by Mart\'in and Mattila \cite{MM1993,MM2000} and examined further by the first and third author \cite{BV}.

\subsection{Generalized rectifiability}
Let $\mathcal{A}$ be a nonempty family of Borel sets in $\R^N$ and let $\mu$ be a Borel measure on $\R^N$.
We say that $\mu$ is \emph{carried by} $\mathcal{A}$ if there exists a sequence $(A_i)_{i=1}^\infty$ of sets in $\mathcal{A}$ such that $\mu(\R^N\setminus\bigcup_i A_i)=0$. At the other extreme, we say that $\mu$ is \emph{singular to} $\mathcal{A}$ if $\mu(A)=0$ for all $A\in\mathcal{A}$. If $\mu$ is $\sigma$-finite, then $\mu$ can be uniquely written as the sum of a Borel measure $\mu_\mathcal{A}$ carried by $\mathcal{A}$ and a Borel measure $\mu^\perp_\mathcal{A}$ singular to $\mathcal{A}$ (e.g.~see the appendix of \cite{BV}). These definitions encode several commonly used notions of rectifiability of measures (see \cite{Badger-survey}).

Let $1\leq m\leq N-1$. Let $\mathcal{A}$ denote the family of Lipschitz images of $[0,1]^m$ in $\R^N$. We say that a Borel measure $\mu$ is \emph{$m$-rectifiable} if $\mu$ is carried by $\mathcal{A}$; we say that $\mu$ is \emph{purely $m$-unrectifiable} if $\mu$ is singular to $\mathcal{A}$. A Borel set $E\subset\R^n$ with $0<\Haus^m(E)<\infty$ is called $m$-rectifiable or purely $m$-unrectifiable if $\Haus^m\res E$, the $m$-dimensional Hausdorff measure restricted to $E$, has that property. The classes of 1-rectifiable sets and purely 1-unrectifiable sets are also called \emph{Besicovitch regular} sets and \emph{Besicovitch irregular} sets, respectively, in reference to the pioneering investigations by Besicovitch \cite{Bes28,Bes38} into the geometry of 1-sets in the plane.

\begin{ex} Let $\Gamma_1,\Gamma_2,\dots$ be a sequence of rectifiable curves in $\R^N$ and let $a_1,a_2,\dots$ be a sequence of positive weights. Then the measure $\mu=\sum_i a_i\Haus^1\res \Gamma_i$ is 1-rectifiable. Note that if the closure of $\bigcup_i \Gamma_i$ is $\R^N$ and the weights are chosen so that $\sum_{i} a_i \Haus^1(\Gamma_i)=1$, we get a 1-rectifiable Borel probability measure $\mu$ whose support is $\R^N$. \end{ex}

\begin{ex} Let $C\subset\R$ be  the middle halves Cantor set (formed by replacing $[0,1]$ with $[0,\frac14]\cup[\frac{3}{4},1]$ at iterating). Then $E=C\times C\subset\R^2$ is a Cantor set of Hausdorff dimension one, $0<\Haus^1(E)<\infty$, $E$ is Ahlfors 1-regular in the sense that $$\Haus^1(E\cap B(x,r))\simeq r\quad\text{for all $x\in E$ and $0<r\leq \diam E$},$$ and $E$ is Besicovitch irregular (e.g.~see \cite{MM1993}). In particular, the set $E$ is compact and measure-theoretically one-dimensional, but $E$ is not contained in any rectifiable curve.\end{ex}

\subsection{$L^p$ Jones beta numbers and rectifiability}
Let $\mu$ be a Radon measure on $\R^N$, that is, a locally finite Borel regular measure, let $1\leq m\leq N-1$, let $p>0$, let $x\in\R^N$, let $r>0$, and let $L$ be an $m$-dimensional affine subspace of $\R^N$. We define
\begin{equation}\label{e:beta-p-line} \beta^{(m)}_p(\mu,x,r,L) := \left ( \int_{B(x,r)} \left (\frac{\dist(z,L)}{r}\right )^p \frac{d\mu(z)}{\mu(B(x,r))} \right )^{1/p},\end{equation} \begin{equation}\label{e:beta-p}
 \beta^{(m)}_p(\mu,x,r) := \inf_{L}\beta^{(m)}_p(\mu,x,r,L),\end{equation}
where the infimum is taken over all $m$-planes $L$ in $\R^N$. The quantity $\beta^{(m)}_p(\mu,x,r)$ is called the \emph{$m$-dimensional $L^p$ Jones beta number of $\mu$ in $B(x,r)$}. The $L^p$ Jones beta numbers were introduced by David and Semmes \cite{DS91,DS93} to study quantitative rectifiability of Ahlfors regular sets and boundedness of singular integral operators. The normalization of the measure in \eqref{e:beta-p-line} that we have chosen (i.e.~ dividing by $\mu(B(x,r))$) ensures that $\beta^{(m)}_p(\mu,x,r)\in[0,1]$ and $\beta^{(m)}_p$ is invariant under dilations $T_\lambda(z)=\lambda z$ in the sense that \begin{equation}\beta^{(m)}_p(\mu,x,r)=\beta^{(m)}_p(T_{\lambda}[\mu],\lambda x,\lambda r),\quad T_\lambda[\mu](E)=\mu(\lambda^{-1}E)\end{equation} for all $\mu$, $x\in\R^N$, $r>0$, and $\lambda>0$. By monotonicity of the integral, \begin{equation}\label{beta-double} s\mu(B(y,s))^{1/p}\beta_p^{(m)}(\mu,y,s)\leq r\mu(B(x,r))^{1/p}\beta_p^{(m)}(\mu,x,r)\quad\text{when }B(y,s)\subset B(x,r).\end{equation}

In a pair of papers, Tolsa \cite{Tolsa-n} and Azzam and Tolsa \cite{AT15} characterize $m$-rectifiable Radon measures $\mu$ on $\R^N$ with $\mu\ll\Haus^m$ in terms of $L^2$ Jones beta numbers. The restriction $\mu\ll\Haus^m$ is equivalent to the upper density bound $\limsup_{r\downarrow 0} r^{-m}\mu(B(x,r))<\infty$ $\mu$-a.e. (e.g.~see \cite[Chapter 6]{Mattila}) and implies that the Hausdorff dimension of the measure is at least $m$ (see \cite{measure-dimension}). The proof that \eqref{e:AS-Jones} implies the measure $\mu$ is $m$-rectifiable uses an intricate stopping time argument in conjunction with David and Toro's \emph{Reifenberg algorithm for sets with holes} \cite{DT} to construct bi-Lipschitz images of $\R^m$ inside $\R^N$ that carry $\mu$. For related developments, see \cite{ENV,Ghinassi}.

\begin{thm}[see {\cite{Tolsa-n}, \cite{AT15}}] \label{t:AT} Let $\mu$ be a Radon measure on $\R^N$. Assume that \begin{equation}\label{e:ubound} 0<\limsup_{r\downarrow 0} \frac{\mu(B(x,r))}{r^m}<\infty\quad\text{for $\mu$-a.e. $x\in\R^N$}.\end{equation} Then $\mu$ is $m$-rectifiable if and only if \begin{equation}\label{e:AS-Jones} \int_0^1 \beta^{(m)}_2(\mu,x,r)^2\,\frac{\mu(B(x,r))}{r^m}\,\frac{dr}{r}<\infty\quad\text{for $\mu$-a.e. $x\in\R^N$.}\end{equation}\end{thm}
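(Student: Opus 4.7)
The plan is to handle the two implications by very different methods, reflecting that the ``only if'' direction is a classical Dorronsoro-type square function bound while the ``if'' direction requires a corona decomposition and a Reifenberg-type construction.

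For the necessary direction, I would first reduce to the case where $\mu = \Haus^m\res \Sigma$ and $\Sigma$ is a single Lipschitz graph over an $m$-plane $L_0$, using that $m$-rectifiability means $\mu$ is carried by countably many such graphs and that \eqref{e:ubound} forces $\mu|_\Sigma \ll \Haus^m\res\Sigma$ with bounded density. On a Lipschitz graph, the optimal plane in \eqref{e:beta-p} at scale $r$ can be compared to the affine tangent plane coming from the averaged gradient on $B(x,r)$, and one obtains $\beta_2^{(m)}(\Haus^m\res\Sigma,x,r)^2 \lesssim \fint_{B(x,Cr)} |\nabla \varphi(y) - (\nabla\varphi)_{B(x,Cr)}|^2\, dy$ where $\varphi$ is the Lipschitz defining function. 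Dorronsoro's $L^2$ square-function estimate for Sobolev functions then integrates in $r$ to give the desired $\mu$-a.e.\ finiteness, with the factor $\mu(B(x,r))/r^m$ harmlessly absorbed into the upper density.

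For the sufficient direction, the approach is to work locally at a $\mu$-typical point $x_0$ at a scale $r_0$ where the Jones integral is small, and construct a bi-Lipschitz image of $\R^m$ carrying a positive fraction of $\mu|_{B(x_0,r_0)}$. First I would apply Chebyshev to pass to a ``good'' subset $E\subset B(x_0,r_0)$ of large $\mu$-measure on which $\sum_k \beta_2^{(m)}(\mu,x,2^{-k}r_0)^2 \mu(B(x,2^{-k}r_0))/(2^{-k}r_0)^m$ is uniformly tiny. Next, for each such $x$ and scale, I would record an optimal $m$-plane $L_{x,k}$. The crucial analytic step is to promote the $L^2$ bound on $\beta_2^{(m)}$ into an $L^\infty$-in-plane-choice bound: using that both $L_{x,k}$ and $L_{x,k+1}$ approximately minimize the same integral and that $\mu$ assigns enough mass to $B(x,2^{-k}r_0)$, a Pythagorean/tilted-plane argument shows the angle $\angle(L_{x,k},L_{x,k+1}) \lesssim \beta_2^{(m)}(\mu,x,2^{-k}r_0)$. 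This produces a summable Carleson packing condition on angular changes over a Christ-type dyadic cube structure adapted to $\mu$. I would then feed this data into David--Toro's Reifenberg-with-holes theorem to build a bi-Lipschitz graph $\Sigma$ through the selected planes, and prove that $\mu(E\setminus \Sigma) = 0$.

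The hardest step will be the second one: turning $L^2$ smallness of the beta numbers into pointwise/angular control of approximating planes, while simultaneously running a stopping-time argument that controls the loss of mass when subsets of $E$ fail the smallness hypothesis. The delicate bookkeeping — showing that one can iterate the construction on the bad set and sum up a geometric series in $\mu$-measure — is what makes \cite{AT15} technically involved. A further subtlety is that Reifenberg requires small constants, so the scale $r_0$ has to be chosen after Chebyshev, which forces the whole argument to be local and then patched together via standard covering and $\sigma$-finiteness of $\mu$.
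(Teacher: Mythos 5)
The paper does not prove this theorem; it is stated as a cited result of Tolsa \cite{Tolsa-n} (necessity) and Azzam--Tolsa \cite{AT15} (sufficiency), accompanied only by a one-sentence description of the sufficiency argument (a stopping-time construction fed into David--Toro's Reifenberg algorithm for sets with holes \cite{DT}). Your high-level sketch of the sufficient direction agrees with that description, and you correctly locate the technical difficulty there.

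Your reduction for the necessary direction has a genuine gap at its first step. You propose to reduce to $\mu=\Haus^m\res\Sigma$ for a single Lipschitz graph $\Sigma$ and then invoke a Dorronsoro estimate. But $\beta^{(m)}_2(\mu,x,r)$ is computed against \emph{all} of $\mu$ on $B(x,r)$, not merely the piece carried by $\Sigma$. After you decompose $\mu=\sum_i\mu_i$ with each $\mu_i\ll\Haus^m\res\Sigma_i$ of bounded density, the off-graph mass $\mu\res\bigl(B(x,r)\setminus\Sigma_i\bigr)$ can dominate the $L^2$ distance to any plane even when $\beta^{(m)}_2(\Haus^m\res\Sigma_i,x,r)$ is tiny. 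The Lebesgue density theorem only tells you $\mu(B(x,r)\setminus\Sigma_i)/\mu(B(x,r))\to 0$ for $\mu$-a.e.\ $x\in\Sigma_i$; it gives no summability of this error over dyadic scales, which is what the integral in \eqref{e:AS-Jones} demands. Tolsa's proof of necessity confronts exactly this difficulty through a corona-type decomposition of a dyadic lattice adapted to $\mu$, with Lipschitz-graph approximation of $\mu$ on trees and a packing estimate on the stopped cubes; a Dorronsoro estimate powers each tree, but reaching a single graph is not a soft reduction. In your sufficient direction there is a smaller soft spot as well: the angle estimate $\angle(L_{x,k},L_{x,k+1})\lesssim\beta^{(m)}_2(\mu,x,2^{-k}r_0)$ requires $\mu(B(x,2^{-k}r_0))\gtrsim(2^{-k}r_0)^m$, while \eqref{e:ubound} only guarantees positive \emph{upper} density, i.e., that lower bound along a subsequence of scales. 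Azzam--Tolsa's stopping time must therefore also stop on low-density cubes, which is a distinct obstruction from the ``iterate on the bad set'' bookkeeping you describe.
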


In \cite{BS3}, the first author and Schul characterize $1$-rectifiable Radon measure $\mu$ on $\R^N$ in terms of $L^p$ Jones beta numbers and the lower density $\liminf_{r\downarrow 0} r^{-1}\mu(B(x,r))$. In contrast with Theorem \ref{t:AT}, the main theorem in \cite{BS3} does not require an \emph{a priori} relationship between the null sets of $\mu$ and $\Haus^1$, nor a bound on the Hausdorff dimension of $\mu$. To lighten the notation, we present Badger and Schul's theorem for pointwise doubling measures and refer the reader to \cite[Theorem A]{BS3} for the full result. Although the classes of measures satisfying \eqref{e:ubound} and \eqref{e:point-double} have no direct relationship with each other, \emph{a posteriori} an $m$-rectifiable measure satisfying \eqref{e:ubound} also satisfies \eqref{e:point-double}. The proof that \eqref{e:Jones-BS} implies the measure $\mu$ is 1-rectifiable uses a technical extension of the sufficient half of the Analyst's Traveling Salesman theorem. See \cite[Proposition 3.6]{BS3}.

\begin{thm}[see {\cite[Theorem E]{BS3}}] \label{t:BS} Let $\mu$ be a Radon measure on $\R^n$ and let $p\geq 1$. Assume that $\mu$ is pointwise doubling in the sense that \begin{equation}\label{e:point-double} \limsup_{r\downarrow 0} \frac{\mu(B(x,2r))}{\mu(B(x,r))} <\infty\quad\text{for $\mu$-a.e. $x\in\R^N$.}\end{equation} Then $\mu$ is 1-rectifiable if and only if \begin{equation}\label{e:Jones-BS}\int_0^1 \beta^{(1)}_p(\mu,x,r)^2\,\frac{r}{\mu(B(x,r))}\,\frac{dr}{r}<\infty\quad\text{for $\mu$-a.e. $x\in\R^N$.}\end{equation}
\end{thm}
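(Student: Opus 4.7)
The plan is to prove the two implications separately. The necessity of \eqref{e:Jones-BS} will follow from the necessary half of the Analyst's Traveling Salesman Theorem applied one-curve-at-a-time to any rectifiable carrying family, while the sufficiency will be proved by extracting scale-$2^{-k}$ nets $V_k$ from $\mu$ via a stopping-time argument and invoking a generalized, non-parametric sufficient half of the ATST (proved in \cite{BS3}) that requires only an approximate coherence condition between consecutive nets rather than the strict nesting $V_k\subset V_{k+1}$.

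\textbf{Necessity.} Suppose $\mu$ is $1$-rectifiable, so there exist rectifiable curves $\Gamma_i$ with $\mu(\R^N\setminus\bigcup_i\Gamma_i)=0$. Since $\beta^{(1)}_p(\mu,x,r)\leq\beta_{\mathrm{spt}\,\mu}(B(x,r))$ by monotonicity of $L^p$ norms, and since $\mu$-a.e. $x$ lies on some $\Gamma_i$ where the support of $\mu$ near $x$ is concentrated on $\Gamma_i$ (up to a null set that may be discarded), one controls $\beta^{(1)}_p(\mu,x,r)$ from above by Jones' beta number of $\Gamma_i$ on a ball of comparable radius, modulo errors which are tolerable after integration. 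Discretizing $r$ dyadically via the doubling hypothesis, then integrating \eqref{e:Jones-BS} against $d\mu(x)$ and swapping sum and integral by Fubini, the total is dominated by $\sum_i\sum_{Q\in\D(\R^N)}\beta_{\Gamma_i}(3Q)^2\diam Q$, which is finite on bounded pieces by the necessary half of the ATST.

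\textbf{Sufficiency.} A standard exhaustion argument (by truncating doubling constants, lower densities, and the integral uniformly) reduces the claim to showing that $\mu\res E$ is $1$-rectifiable, where $E$ is compact, $\mu$ is uniformly doubling on $E$, $\liminf_{r\downarrow 0}\mu(B(x,r))/r$ is uniformly bounded below on $E$, and the integral \eqref{e:Jones-BS} is uniformly bounded on $E$. For each $k\geq 0$, choose a maximally $2^{-k}$-separated set $V_k\subset E$ and a line $\ell_{k,v}$ nearly minimizing $\beta^{(1)}_p(\mu,v,C2^{-k})$. A Chebyshev inequality applied to the $L^p$-average converts the measure-theoretic flatness into the pointwise flatness $\alpha_{k,v}$ used in the TST construction, the loss being absorbed by the lower density bound. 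Summing $\alpha_{k,v}^2\cdot 2^{-k}$ over $(k,v)$ and applying Fubini bounds the total by a constant multiple of $\int_E\int_0^1\beta^{(1)}_p(\mu,x,r)^2\cdot r/\mu(B(x,r))\,dr/r\,d\mu(x)<\infty$. The extended sufficient half of the ATST from \cite{BS3} then produces a rectifiable curve $\Gamma\supset E$.

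\textbf{Main obstacle.} The hard direction is sufficiency, and within it the central difficulty is that nets $V_k$ extracted directly from $\mu$ do not generally satisfy the strict coherence property $V_k\subset V_{k+1}$ on which Jones' original construction relies: a point chosen as a maximally $2^{-k}$-separated representative need not survive as a $2^{-(k+1)}$-net point. The generalized Traveling Salesman construction of \cite{BS3} is designed precisely to accommodate this failure by requiring only that every $v\in V_k$ have a neighbor in $V_{k+1}$ within $C2^{-(k+1)}$. Verifying the hypotheses of that construction---especially the summability of the flatness errors in terms of \eqref{e:Jones-BS}---is the technical heart of the argument and the reason one needs a finer non-parametric framework than the parametric H\"older version developed in this paper.
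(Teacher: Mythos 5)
The paper does not prove Theorem \ref{t:BS}; it cites it from \cite{BS3} and gives only a one-line pointer (to Proposition 3.6 of \cite{BS3}) for the sufficient direction, so there is no in-paper proof to compare your argument against. That said, your high-level structure aligns with what the paper says about the \cite{BS3} argument, in particular the remark at the end of \S\ref{sec:curve} noting that \cite{BS3} replaced the coherence condition (V2) by the weaker requirement that each $v\in V_k$ have a nearby point in $V_{k+1}$.

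Two substantive comments, though. First, your claim that the failure of nesting $V_k\subset V_{k+1}$ is ``the central difficulty'' applies to the full Theorem A of \cite{BS3} (which replaces pointwise doubling with a lower-density hypothesis), not necessarily to the pointwise-doubling version (Theorem E) quoted here. Under pointwise doubling one may, after the exhaustion step, fix a single compact good set $A'$ and extract \emph{nested} $2^{-k}$-nets of $A'$, which is exactly what the paper does in Lemma \ref{lem:Jsp} for its own fractional generalization Theorem \ref{thm:Jsp}. The paper even states it does not know how to run the parametric construction under the relaxed \cite{BS3} condition, and it gets by with nesting precisely because of the doubling reduction. So for the specific statement in question, the nesting obstruction is not load-bearing. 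Second, your necessity sketch hides a real localization step: $\beta_p^{(1)}(\mu,x,r)$ is an $L^p(\mu)$ average over all of $B(x,r)$, not just over $\Gamma_i$, and the bound $\beta_p^{(1)}(\mu,x,r)\leq\beta_{\Gamma_i}(B(x,Cr))$ fails in general. One must first use that $\mu$-a.e.\ $x\in\Gamma_i$ is a $\mu$-density point of $\Gamma_i$ to split the integral defining $\beta_p^{(1)}$ into the part near $\Gamma_i$ (controlled by the ATST necessary half) and a remainder whose relative $\mu$-mass tends to zero; the phrase ``modulo errors which are tolerable after integration'' is doing a lot of unacknowledged work there, and it is exactly where the doubling hypothesis and a density argument enter.
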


\subsection{Sufficient conditions for fractional rectifiability}

The following theorem is an application of the H\"older Traveling Salesman theorem and generalizes the ``sufficient half'' of Theorem \ref{t:BS} (also see \cite[Theorem A]{BV}). The exponents $p$ and $q$ in the H\"older case ($s>1$) are less restrictive than in the Lipschitz case ($s=1$).

\begin{thm} \label{thm:Jsp} Let $\mu$ be a Radon measure on $\R^N$, let $s>1$, and let $p,q>0$. Then
\[
\mu\res\left\{x\in\R^n: \int_0^1 \beta^{(1)}_p(\mu,x,r)^q\frac{r^s}{\mu(B(x,r))}\,\frac{dr}{r}<\infty\text{ and } \limsup_{r\downarrow 0}\frac{\mu(B(x,2r))}{\mu(B(x,r))}<\infty \right\}\]
is carried by $(1/s)$-H\"older curves.
\end{thm}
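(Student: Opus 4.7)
The plan is to show that $\mu \res E$ is carried by $(1/s)$-H\"older curves, where $E$ denotes the set in the statement, via a standard measure-theoretic reduction followed by an application of Corollary~\ref{cor:nets2}. First I would perform a countable decomposition: for each $n \in \N$, let $E_n$ denote the set of $x \in E \cap B(0,n)$ such that $\mu(B(x,2r)) \leq n\,\mu(B(x,r))$ for all $r \in (0,1/n]$, $\mu(B(x,1/n)) \geq 1/n$, and $\int_0^{1/n}\beta_p^{(1)}(\mu,x,r)^q\,\frac{r^s}{\mu(B(x,r))}\,\frac{dr}{r} \leq n$. The pointwise doubling hypothesis and the $\mu$-a.e.~finiteness of the integral give $\mu(E \setminus \bigcup_n E_n) = 0$, reducing the theorem to each $\mu \res E_n$. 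Applying Egorov's and Lusin's theorems together with the Lebesgue density theorem for $\mu\res E_n$, I would extract a compact subset $K \subset E_n$ of nearly full $\mu$-measure on which the lower density bound $\mu(K \cap B(x,r)) \geq c\,\mu(B(x,r))$ holds uniformly for $x \in K$ and $0 < r \leq r_K$. By iterating the construction on what remains, it suffices to show that each such $K$ contains a subset of positive $\mu$-measure lying in a single $(1/s)$-H\"older curve.

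Fix $K$ and $x_0 \in K$. I would then construct a nested family of maximal $(2^{-k}r_K)$-separated nets $V_k \subset K$ with $V_0 = \{x_0\}$ and $V_{k-1} \subset V_k$, so that $\mathscr{V} = (V_k, 2^{-k})_{k \geq 0}$ satisfies (V0)--(V4) of \S\ref{sec:flat} with $C^* = 2$ and $\xi_1 = \xi_2 = 1/2$. For each pair $(k,v)$, let $\ell_{k,v}$ nearly attain the infimum defining $\beta_p^{(1)}(\mu,v,30A^*2^{-k}r_K)$, and let $\alpha_{k,v}$ be the induced constant in (V5). The aim is to verify
\begin{equation*}
 S^{s,p'}_{\mathscr{V}} \,:=\, \sum_{k \geq 0}\sum_{v \in V_k} \alpha_{k,v}^{p'}\,2^{-ks} \,<\, \infty
\end{equation*}
for some exponent $p' > 0$, whereupon Corollary~\ref{cor:nets2} furnishes a $(1/s)$-H\"older map $f$ with $K \subset \overline{\bigcup_k V_k} \subset f([0,1])$.

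The core estimate relates $\alpha_{k,v}$ to the $L^p$ beta number. If $w \in V_{k+1} \cap B(v,30A^*2^{-k}r_K)$ satisfies $\dist(w,\ell_{k,v}) \geq 2^{-k}r_K$, then $B(w,2^{-(k+2)}r_K)$ lies at distance $\geq \tfrac12\dist(w,\ell_{k,v})$ from $\ell_{k,v}$ and carries $\mu$-mass $\gtrsim \mu(B(v,30A^*2^{-k}r_K))$ by the lower density of $K$ and iterated doubling. Comparing with the $L^p$-integral in the definition of $\beta_p^{(1)}(\mu,v,\cdot)$ yields the pointwise estimate $\alpha_{k,v} \lesssim 1 + \beta_p^{(1)}(\mu,v,30A^*2^{-k}r_K)$. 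Raising this to the power $q$, summing over $(k,v)$, and using bounded overlap of the dilated balls together with doubling to pass from the discrete sum in $k$ to a continuous integral via Fubini, one obtains
\begin{equation*}
 \sum_{k,v} \beta_p^{(1)}(\mu,v,C\,2^{-k}r_K)^q\,(2^{-k})^s \,\lesssim\, \int_K \int_0^{r_K} \beta_p^{(1)}(\mu,x,r)^q\,\frac{r^s}{\mu(B(x,r))}\,\frac{dr}{r}\,d\mu(x) \,<\, \infty.
\end{equation*}

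The main obstacle is the $L^p$-to-$L^\infty$ passage: the additive $1$ in the pointwise bound on $\alpha_{k,v}$ cannot be removed, since a solitary outlier net-point can contribute non-trivially to $\alpha_{k,v}$ while carrying only negligible $\mu$-mass, so $\alpha_{k,v}$ need not fall below the threshold $\alpha^*$ of Corollary~\ref{cor:nets1}. Following the stopping-time and pruning strategy of \cite{BS3}, I would halt the local extension of $V_{k+1}$ at each $(k,v)$ where $\beta_p^{(1)}$ is large enough to force $\alpha_{k,v} \geq \alpha^*$; the total $\mu$-mass discarded is controlled by the hypothesis integral via the same Fubini computation, so on the pruned nets the summability above holds, Corollary~\ref{cor:nets2} applies, and iterating the procedure on the discarded mass exhausts $\mu\res E$ by countably many $(1/s)$-H\"older curves.
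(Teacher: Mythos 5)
Your proposal gets the architecture right (countable decomposition, nested nets in a set of uniform lower density, feeding a summable $\alpha_{k,v}$-series into Corollary~\ref{cor:nets2}), but the core estimate breaks down, and your patch is pointed in the wrong direction.

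The fatal step is your ``core estimate,'' where you place a ball of fixed relative radius $2^{-(k+2)}r_K$ around an outlying net point $w$. This only gives useful information when $\dist(w,\ell_{k,v})\gtrsim 2^{-k}r_K$, which is precisely why you end up with the additive constant in $\alpha_{k,v}\lesssim 1+\beta_p^{(1)}(\mu,v,\cdot)$. You then correctly observe that this estimate is useless for making the sum finite, and reach for a stopping-time/pruning scheme in the spirit of \cite{BS3}. That machinery is not needed, and its justification is not actually supplied in your sketch (whether the discarded mass iterates to zero would require its own argument).

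The missed idea is to scale the test ball with the distance, and to exploit the \emph{quantitative} form of doubling. Write $t\,2^{-(k+1)}=\dist(z,\ell_{k,v})$ for a net point $z$, and use the ball $B(z,\tfrac12 t\,2^{-(k+1)})$, whose radius is proportional to $t$. Because $z$ itself lies in the good set (where doubling holds with explicit constant $P$), the mass of this small ball is comparable to the mass of the large ball $B(v,\cdot)$ up to a factor $P^{-O(\log_2(1/t))}$, i.e.\ a \emph{polynomial} in $t$, not an additive loss. Plugging this into the $L^p$ integral gives $\beta_p^{(1)}(\mu,v,\cdot)\gtrsim t^{1+(1/p)\log_2 P}$, which inverts to the clean power-type bound $\alpha_{k,v}\lesssim_{p,q,P}\beta_p^{(1)}(\mu,v,\cdot)^\eta$ with $\eta=q/[q+(q/p)\log_2 P]>0$. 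There is no additive constant to remove, and summing $\alpha_{k,v}^{q/\eta}\,2^{-ks}$ over nets is directly dominated by the hypothesis integral via Fubini and disjointness of the cores $B(y,2^{-(k+1)})$, $y\in V_k$. In short: your worry that a lone net point could ``carry only negligible $\mu$-mass'' is exactly what the pointwise doubling with explicit constant $P$ rules out — the mass at the distance-scale decays at worst polynomially, and the exponent $\log_2 P$ is what enters the final power $\eta$.
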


At the core of the proof of Theorem \ref{thm:Jsp} is the following lemma.

\begin{lem}\label{lem:Jsp}
Let $\mu$ be a Radon measure in $\R^N$, and let $s>1$ and $p,q>0$ be fixed. Given $x_0\in\R^N$ and parameters $M>0$, $\theta>0$, and $P>0$, let $A$ denote the set of points $x\in B(x_0,1/2)$ such that
\begin{equation}\label{eq:Jsp1} \int_0^1 \beta^{(1)}_p(\mu,x,r)^q\frac{r^s}{\mu(B(x,r))}\,\frac{dr}{r} \leq M,\end{equation} \begin{equation}\label{double-3} \mu(B(x,2r)) \leq P\mu(B(x,r))\text{ for all }r\in (0,1],\end{equation} and let $A'$ denote the set of points in $A$ such that
\begin{equation}\label{eq:Jsp3}\mu(A\cap B(x,r)) \geq \theta\mu(B(x,r))\text{ for all }r\in (0,1].\end{equation}
Then $A'$ is contained in a $(1/s)$-H\"older curve $\Gamma=f([0,1])$ with H\"older constant depending on at most $N$, $s$, $p$, $q$, $M$, $P$, $\theta$, and $\mu(A)$.
\end{lem}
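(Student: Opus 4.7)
The plan is to apply Corollary~\ref{cor:main} to the bounded set $E := A' \subset B(x_0, 1/2)$: producing a positive exponent $p'$ together with a bound on
\[ S^{s, p'}_{A'} := \sum_{Q \in \D(\R^N)} \beta_{A'}(3Q)^{p'}(\diam Q)^s \]
depending only on $N, s, p, q, M, P, \theta$, and $\mu(A)$ will furnish a $(1/s)$-H\"older curve containing $A'$ with H\"older constant controlled by $\diam E + \beta_0^{-p'}(\diam E)^{1-s} S^{s,p'}_{A'}$, as required.

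The first step will be a pointwise dual between geometric and $L^p$ Jones beta numbers: for every dyadic cube $Q$ with $Q\cap A'\neq\emptyset$, I plan to show
\[\beta_{A'}(3Q)\lesssim_{p,P} \beta^{(1)}_p(\mu,x_Q,r_Q)^{\sigma},\qquad \sigma:=\frac{p}{p+\log_2 P}\in(0,1],\]
where $x_Q\in Q\cap A'$ is arbitrary and $r_Q:=2\diam(3Q)$. The strategy is to let $L^*$ almost attain the infimum in $\beta^{(1)}_p(\mu,x_Q,r_Q)$, extract from the definition of the geometric beta a point $y\in A'\cap 3Q$ with $\dist(y,L^*)\geq \beta\diam(3Q)$ (writing $\beta:=\beta_{A'}(3Q)$), observe that the entire ball $B(y,\beta r_Q/4)\subset B(x_Q,r_Q)$ lies at distance $\geq \beta r_Q/4$ from $L^*$, apply Markov's inequality to obtain $\mu(B(y,\beta r_Q/4))\leq (4/\beta)^p\,\beta^{(1)}_p(\mu,x_Q,r_Q)^p\,\mu(B(x_Q,r_Q))$, and finally invoke the doubling hypothesis \eqref{double-3} roughly $\log_2(6/\beta)$ times to compare $\mu(B(x_Q,r_Q))$ with $\mu(B(y,\beta r_Q/4))$. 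The density hypothesis \eqref{eq:Jsp3} is not used at this stage.

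With $p':=q/\sigma$ in hand, I would aggregate the dyadic estimates into a single integral via the monotonicity \eqref{beta-double}, the doubling \eqref{double-3}, and the density \eqref{eq:Jsp3}. For each cube $Q$ as above and each $x\in A\cap B(x_Q,\diam Q)$ and $r\in[2r_Q,4r_Q]$, the inclusion $B(x_Q,r_Q)\subset B(x,r)$ combined with \eqref{beta-double} and \eqref{double-3} yields $\beta^{(1)}_p(\mu,x_Q,r_Q)\lesssim_P \beta^{(1)}_p(\mu,x,r)$ and $r_Q^s/\mu(B(x_Q,r_Q))\lesssim_P r^s/\mu(B(x,r))$; integrating this against $d\mu(x)\,dr/r$ over $(A\cap B(x_Q,\diam Q))\times[2r_Q,4r_Q]$ and using \eqref{eq:Jsp3} at $x_Q$ (plus one more application of \eqref{double-3}) to lower bound $\mu(A\cap B(x_Q,\diam Q))\gtrsim_{\theta,P}\mu(B(x_Q,r_Q))$ should deliver
\[\beta^{(1)}_p(\mu,x_Q,r_Q)^q\, r_Q^s\,\lesssim_{P,\theta}\,\int_{A\cap B(x_Q,\diam Q)}\int_{2r_Q}^{4 r_Q}\beta^{(1)}_p(\mu,x,r)^q\,\frac{r^s}{\mu(B(x,r))}\,\frac{dr}{r}\,d\mu(x).\]
Summing over dyadic cubes $Q$ with $\diam Q\leq 1/24$ and $Q\cap A'\neq\emptyset$, the right-hand side covers $A\times(0,1]$ with multiplicity bounded in terms of $N$, so Fubini and the integral hypothesis \eqref{eq:Jsp1} should give $\sum_Q\beta^{(1)}_p(\mu,x_Q,r_Q)^q r_Q^s\lesssim_{N,P,\theta} M\mu(A)$. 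Only finitely many larger dyadic cubes intersect $B(x_0,1/2)$, and their total contribution is $\lesssim_{N,s} 1$. Combining with step one bounds $S^{s,p'}_{A'}$ as required, and Corollary~\ref{cor:main} completes the argument.

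The main obstacle will be this dyadic-to-integral aggregation: step one attaches to each cube a specific $(x_Q, r_Q)$, while hypothesis \eqref{eq:Jsp1} integrates over all $(x, r)\in A\times(0, 1]$. The density hypothesis \eqref{eq:Jsp3} is exactly what permits replacing the single point $x_Q$ by an $x$-integral of $\mu$-mass comparable to $\mu(B(x_Q, r_Q))$, while the doubling hypothesis \eqref{double-3} is what absorbs both the change of center and the change of scale between balls. Without either, the Fubini step would fail to produce an integral controlled by $M$.
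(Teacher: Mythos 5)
Your overall strategy parallels the paper's closely. The pointwise comparison $\beta_{A'}(3Q)\lesssim_{p,P}\beta_p^{(1)}(\mu,x_Q,r_Q)^\sigma$ via Chebyshev plus iterated doubling is exactly the mechanism the paper uses (with $\eta=\sigma=p/(p+\log_2 P)$, and the aggregate exponent $q+(q/p)\log_2 P=q/\sigma$ you call $p'$ is the same as the paper's). The subsequent aggregation — integrate the point estimate over a $\mu$-substantial neighborhood of $x_Q$ and a logarithmic band of scales, then apply Fubini with bounded overlap — also mirrors the paper's computation after the change of variables $r\mapsto 512r$. The only real difference is organizational: you work with dyadic cubes and aim for Corollary~\ref{cor:main}, while the paper builds nested $2^{-k}$-nets $A_k'\subset A'$ and applies Corollary~\ref{cor:nets2}. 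Either packaging should work, but yours trips over a side condition that the nets version automatically sidesteps.

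There is a genuine gap in the final step, at the sentence ``Only finitely many larger dyadic cubes intersect $B(x_0,1/2)$, and their total contribution is $\lesssim_{N,s}1$.'' Both claims are false in general. Infinitely many dyadic cubes with $\diam Q>1/24$ meet $A'$ (at each dyadic scale $2^k$ above the threshold there is at least one cube of side $2^k$ containing a given point of $A'$). And their contribution to $S^{s,p'}_{A'}$ need not be bounded: when $\diam Q\gg\diam A'$ one has the two-sided estimate $\beta_{A'}(3Q)\simeq(\diam A')/\diam(3Q)$ whenever $A'$ is not contained in a line, so the term $\beta_{A'}(3Q)^{p'}(\diam Q)^s\simeq(\diam A')^{p'}(\diam Q)^{s-p'}$, and summing over the $O_N(1)$ cubes at each large scale gives a geometric series that diverges precisely when $p'\leq s$. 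Since $p'=q+(q/p)\log_2 P$ is dictated by the data $p,q,P$ and nothing in the hypotheses forces $p'>s$ (take $q<s$ and $P$ close to $1$, for instance), your $S^{s,p'}_{A'}$ can be infinite, and Corollary~\ref{cor:main} then says nothing. The fix is to apply Theorem~\ref{thm:main2} directly rather than Corollary~\ref{cor:main}: the hypothesis there is $S^{s,+}_{A'}=\sum_{Q:\beta_{A'}(3Q)\geq\beta_0}(\diam Q)^s<\infty$, and because $\beta_{A'}(3Q)\lesssim\diam(A')/\diam Q$, no cube with $\diam Q\gtrsim 1/\beta_0$ enters that sum. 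For the cubes that do enter, your step one gives $\beta_p^{(1)}(\mu,x_Q,r_Q)\gtrsim\beta_0^{1/\sigma}$ so that $(\diam Q)^s\lesssim_{\beta_0,p,q,P}\beta_p^{(1)}(\mu,x_Q,r_Q)^q\,r_Q^s$, and your aggregation argument then controls $S^{s,+}_{A'}$ by $M\mu(A)$ (up to constants) as desired. Alternatively, replicate the paper's construction of nested nets and invoke Corollary~\ref{cor:nets2}, which has no analogous large-scale tail to worry about.
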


\begin{proof}
Let $\{A_k'\}_{k\geq 0}$ be a nested sequence of $2^{-k}$-nets in $A'$, so that the sets $V_k\equiv A_k'$ and scales $\rho_k=2^{-k}$ satisfy conditions (V0)--(V4) of \S\ref{sec:prelim} with parameters $r_0=1$, $C^*=2$, $\xi_1=\xi_2=1/2$. Note that $$A^*=\frac{C^*}{1-\xi_2}=4\quad\text{and}\quad 30A^*=120.$$ By \eqref{eq:Jsp1},
\begin{equation}\begin{split}\label{eq:meas3}
M \mu(A) &\geq \int_A\int_0^1 \beta^{(1)}_p(\mu,x,r)^q \frac{r^s}{\mu(B(x,r))} \frac{dr}{r} d\mu(x) \\ &= \sum_{k=9}^\infty \int_{2^{-(k+1)}}^{2^{-k}} (512r)^s \int_A \frac{\beta^{(1)}_p(\mu,x,512r)^q} {\mu(B(x,512r))}d\mu(x)\frac{dr}{r}
\end{split}\end{equation} where in the second line we used the change of variables $r\mapsto 512r$ (note $512=2^9$) and Tonelli's theorem. Now, the open balls $\{B(y,2^{-(k+1)}):y\in A_k'\}$ are pairwise disjoint, because the points in $A'_k$ are separated by distance at least $2^{-k}$. Thus, \begin{equation}\label{eq:combo1} M \mu(A) \geq \sum_{k=9}^{\infty}\int_{2^{-(k+1)}}^{2^{-k}}r^s\sum_{y\in A_k'}\underbrace{\int_{A\cap B(y,2^{-(k+1)})} \frac{\beta^{(1)}_p(\mu,x,512r)^q }{\mu(B(x,512r))} d\mu(x)}_{I(k,y,r)} \frac{dr}{r}.\end{equation}

Next, we  bound $I(k,y,r)$ from below. Fix $k\geq 9$, $y\in A_k'$, and $r\in[2^{-(k+1)},2^{-k}]$. Suppose that $x\in A\cap B(y,2^{-(k+1)})$. Then \begin{equation} \mu(B(x,512r)) \leq P^2 \mu(B(x,128 r))\leq P^2 \mu(B(y,129r))\leq P^2 \mu(B(y,255\cdot 2^{-k}))\end{equation} by \eqref{double-3}. Since $B(y,255\cdot 2^{-k})\subset B(x,256\cdot 2^{-k})\subset B(x,512r)$, it follows that \begin{equation}\begin{split} \beta^{(1)}_p(y,255\cdot 2^{-k}) &\leq \left(\frac{512r}{255\cdot 2^{-k}}\right)\left(\frac{\mu(B(x,512r))}{\mu(B(y,255\cdot 2^{-k}))}\right)^{1/p} \beta_p^{(1)}(\mu,x,512r)\\ &\leq 3P^{2/p}\beta_p^{(1)}(\mu,x,512r) \end{split}\end{equation} by \eqref{beta-double}. Hence \begin{equation}I(k,y,r) \geq 3^{-q}P^{-2-2q/p}\frac{\beta_p^{(1)}(\mu,y,255\cdot 2^{-k})^q}{\mu(B(y,255\cdot 2^{-k}))}\int_{A\cap B(y,2^{-(k+1)})} d\mu(x). \end{equation} Invoking doubling again, $\mu(B(y,255 \cdot 2^{-k}))\leq P^{9} \mu(B(y,2^{-(k+1)}))$. Thus, by \eqref{eq:Jsp3}, \begin{equation}\frac{1}{\mu(B(y,255\cdot 2^{-k}))} \int_{A\cap B(y,2^{-(k+1)})} d\mu(x) \geq P^{-9}\frac{\mu(A\cap B(y,2^{-(k+1)}))}{\mu(B(y,2^{-(k+1)}))}\geq P^{-9}\theta.\end{equation} Therefore, \begin{equation} \label{eq:combo2} I(k,y,r) \geq 3^{-q}P^{-11-2q/p}\theta \, \beta_p^{(1)}(\mu,y,255\cdot 2^{-k})^q\end{equation}

Combining \eqref{eq:combo1} and \eqref{eq:combo2}, we obtain \begin{equation} 3^q P^{11+2q/p}\theta^{-1}M\mu(A) \geq \sum_{k=9}^\infty \left(\int_{2^{-(k+1)}}^{2^{-k}} r^s\frac{dr}{r}\right) \sum_{y\in A_k'} \beta_p^{(1)}(\mu,y,255\cdot 2^{-k})^q.\end{equation} In particular, we conclude that \begin{equation}\sum_{k=9}^\infty  \sum_{y\in A_k'} \beta_p^{(1)}(\mu,y,255\cdot 2^{-k})^q\, 2^{-ks} \leq \frac{s}{1-2^{-s}}3^q P^{11+2q/p}\theta^{-1}M\mu(A)<\infty \end{equation} For each $k\geq 9$ and $y\in A'_k$, let $\ell_{k,v}$ be any line such that \begin{equation}\beta_p^{(1)}(\mu,y,255\cdot 2^{-k},\ell_{k,v})\leq 2\beta_p^{(1)}(\mu,y,255\cdot 2^{-k}).\end{equation} We will now bound the distance of points in $A'_{k+1}\cap B(y,120\cdot 2^{-k})$ to $\ell_{k,v}$. Fix any point $z\in A'_{k+1}\cap B(y,120\cdot 2^{-k})$ and let $t2^{-{k+1}}=\dist(z,\ell_{k,v})$. Then \begin{equation}\begin{split} \beta_p(\mu,y,255\cdot 2^{-k},\ell_{k,v})^q &\geq\left(\frac{\frac{1}{2}t2^{-(k+1)}}{255\cdot 2^{-k}}\right)^q\left(\frac{\mu(B(z,\frac{1}{2}t2^{-(k+1)}))}{\mu(B(y,255\cdot 2^{-k}))}\right)^{q/p} \\ &\geq \left(\frac{t}{1020}\right)^q P^{-(q/p)\log_2(1920/t)} \geq \left(\frac{t}{1920}\right)^{q+(q/p)\log_2(P)},
\end{split}  \end{equation} where in the second line we used doubling of $\mu$ at $z$. It follows that \begin{equation}\alpha_{k,v}:= 2^{k+1}\sup_{z\in A'_{k+1}\cap B(y,120\cdot 2^{-k})}
\dist (z,\ell_{k,v}) \leq C(p,q,P) \beta_p(\mu,y,255\cdot 2^{-k},\ell_{k,v})^\eta,\end{equation} where $\eta[q+(q/p)\log_2(P)] = q$. Therefore, all together, \begin{equation} \sum_{k=9}^\infty \sum_{y\in A'_k} \alpha_{k,v}^{q+(q/p)\log_2(P)} 2^{-ks} \leq C(s,p,q,M,P,\theta,\mu(A))<\infty.\end{equation} Finally, by Corollary \ref{cor:nets2}, the set $A'$ is contained in the Hausdorff limit of $A_k'$ and this is contained in a $(1/s)$-H\"older curve $\Gamma=f([0,1])$ with H\"older constant depending on at most $s$, $p$, $q$, $M$, $P$, $\theta$, and  $\mu(A)$. \end{proof}

Theorem \ref{thm:Jsp} follows from countably many applications of Lemma \ref{lem:Jsp} and a standard density theorem for Radon measures in $\R^N$. See the proof of \cite[Theorem 6.7]{BV}, where a similar argument is employed. We leave the details to the reader.

\section{H\"older parameterization of flat continua}\label{sec:flatcontinua}

The goal of this section is to prove Proposition \ref{thm:flat}, which we now restate.

\begin{prop}\label{thm:flat-statedagain}
There exists a constant $\beta_1 \in (0,1)$ such that if $s>1$ and $E\subset\R^N$ is compact, connected, $\Haus^s(E)<\infty$, $E$ is lower Ahlfors $s$-regular with constant $c$, and
\begin{equation}\label{e:mv-statedagain}
 \beta_{E}\left(\overline{B}(x,r)\right) \leq \beta_1\quad \text{for all }x\in E\text{ and }0<r\leq \diam E, \end{equation}
then $E=f([0,1])$ for some injective $(1/s)$-H\"older continuous map $f:[0,1]\to\R^N$ with H\"older constant $H \lesssim_{s} c^{-1}\mathcal{H}^s(E)(\diam E)^{1-s}$.
\end{prop}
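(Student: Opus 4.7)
The plan is to feed $E$ into the machinery of Part I and then upgrade the conclusion to equality and injectivity using the lower Ahlfors regularity. Fix $x_0\in E$, set $r_0=\diam E$, and for each $k\geq 0$ choose a nested sequence of maximal $2^{-k}r_0$-separated subsets $V_k$ of $E$, so that $\mathscr{V}=(V_k,2^{-k})_{k\geq 0}$ satisfies (V0)--(V4) with $C^*=2$ and $\xi_1=\xi_2=1/2$ (and hence $A^*=4$, $30A^*=120$). For each $(k,v)$, pick a line $\ell_{k,v}$ realizing (up to a factor of $2$) the infimum defining $\beta_E(\overline{B}(v,120\cdot 2^{-k}r_0))$; the hypothesis \eqref{e:mv-statedagain} then yields $\alpha_{k,v}\lesssim \beta_1$ uniformly in $k$ and $v$. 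Choose $\beta_1$ so small that $C\beta_1\leq \min\{\alpha_1,\alpha^*\}$, where $\alpha_1$ is the constant of Lemma \ref{lem:flatdiam} and $\alpha^*$ is the threshold of Corollary \ref{cor:nets1}. Then $\tau_s(k,v,v')=0$ for all flat pairs by Lemma \ref{lem:flat2}, and the set $\{v:\alpha_{k,v}\geq \alpha_0\}$ is empty at every scale when $\alpha_0:=\alpha^*$. In particular, Theorem \ref{thm:main} produces a $(1/s)$-H\"older map parameterizing the Hausdorff limit of the nets.

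To sharpen the H\"older constant, I will return to the proof of Theorem \ref{thm:main} and replace the crude bound $\mathcal{M}_s([0,1])\lesssim r_0^s$ by a bound in terms of $\Haus^s(E)$. Lower Ahlfors $s$-regularity implies that the separated balls $\{B(v,2^{-(k+1)}r_0):v\in V_k\}$ have $\Haus^s$-mass at least $c(2^{-(k+1)}r_0)^s$ apiece, so $\card V_k\lesssim_s c^{-1}\Haus^s(E)(2^{-k}r_0)^{-s}$. Because all vertices are flat and all variation excesses vanish, every edge, bridge, and new-vertex contribution at stage $k$ is controlled by $(2^{-k}r_0)^s$, and there are $\lesssim \card V_k$ of them. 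Summing the telescoping mass estimate of Lemma \ref{lem:estimates} across all scales and using Corollary \ref{cor:bridges*}, I expect
\[ \mathcal{M}_s([0,1])\lesssim_s c^{-1}\Haus^s(E) r_0. \]
Inserting this into the inequality $H\leq \xi_1^{-1}(\mathcal{M}_s([0,1])r_0^{1-s}+\text{lower order})$ from the proof of Theorem \ref{thm:main} yields the advertised bound $H\lesssim_s c^{-1}\Haus^s(E)(\diam E)^{1-s}$.

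It remains to promote the inclusion $E\subset f([0,1])$ to equality with injectivity. For equality, note that the piecewise linear approximants $f_k$ take values in the $\alpha_{k,v}\rho_{k+1}r_0$-tube around $E$, and the flatness condition together with lower regularity and connectedness forces the trace of $E$ between two consecutive vertices in $V_{k+1}$ to be a near-diameter arc. Reparametrizing each edge of $f_k$ by the corresponding piece of $E$ (a standard Vitali/cover-style selection using the Mattila--Vuorinen type thinness estimate; cf.~\cite{MV}) keeps the images inside $E$ without destroying the Lipschitz bounds, and passing to the uniform limit gives a H\"older map $f$ with $f([0,1])\subseteq E$; the reverse inclusion is automatic from $\bigcup_k V_k$ being dense in $E$ and $f([0,1])$ being compact. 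For injectivity, I will invoke Proposition \ref{prop:2-to-1Holder} to arrange the approximants to be essentially $2$-to-$1$, then argue that in the flat, lower-regular setting no genuine double points can survive: whenever two distinct parameters were to map to the same point $x\in E$, the image of the corresponding loop would have diameter $\gtrsim \rho_k r_0$ and $\Haus^s$-mass $\gtrsim c(\rho_k r_0)^s$, contradicting the sharpened mass estimate when summed over scales. A short post-processing (collapsing any remaining trivial loops in the domain) produces the desired injective parameterization of $E$.

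The main obstacle, and the step requiring the most care, is the passage from containment to equality and from $2$-to-$1$ to $1$-to-$1$: the algorithm of \S\ref{sec:curve} is indifferent to whether the chords $[v,v']$ in $\mathscr{L}_k$ actually lie in $E$, and it deliberately introduces bridge-based backtracking. Reconciling the combinatorial freedom of the construction with the rigidity of $E$ (through the flatness and lower regularity hypotheses) is where the argument must be most delicate, and it is the only place where the lower-regularity constant $c$ enters the H\"older estimate.
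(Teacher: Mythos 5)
Your high-level setup — maximal $2^{-k}$-nets $V_k\subset E$, a small threshold $\beta_1$ making every $\alpha_{k,v}$ flat and every $\tau_s$ vanish, lower regularity controlling the mass — is in the right spirit. But the "apply Theorem~\ref{thm:main} as a black box, then post-process" strategy has two genuine gaps, and the paper avoids both by rebuilding the algorithm of \S\ref{sec:curve} with stronger invariants rather than by post-processing.

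\textbf{Gap 1: bridge intervals and the mass bound.} Your argument does not rule out bridge intervals. In \S\ref{sec:flatedges} a child interval $I_i$ becomes a bridge precisely when $|v_i-v_{i+1}|\geq 14A^*\rho_{k+1}r_0$, and nothing in the flatness of $E$ \emph{per se} prevents this; what prevents it is the combination of connectedness, flatness, and the maximality of the nets, through Lemma~\ref{lem:flatcont} (a flat continuum passes within $2^{-4}|x-y|$ of every point on the chord $[x,y]$). That lemma gives Lemma~\ref{lem:TSPflat} ($\mathscr{B}_k=\emptyset$ and $\diam f_k(I)<3\cdot 2^{-k}$), and without it your map $f$ can have bridge intervals whose fixed-length images exit $E$, so $f([0,1])\supsetneq E$. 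Separately, your route to $\mathcal{M}_s([0,1])\lesssim c^{-1}\Haus^s(E)$ is not valid: summing "$\lesssim\card V_k$ contributions of size $(2^{-k}r_0)^s$" over $k$ gives $\sum_k c^{-1}\Haus^s(E)=\infty$ (and the expression $c^{-1}\Haus^s(E)r_0$ you wrote has the wrong homogeneity anyway). The telescoping estimate of Lemma~\ref{lem:estimates}, with all corrections vanishing, only yields $\mathcal{M}_s([0,1])\lesssim r_0^s$. The correct argument (the paper's Claim~\ref{lem:edgesfar} and Lemma~\ref{lem:flat3}) is spatial, not scale-additive: for a fixed tree $T$, the midpoints of the boundary edges carry pairwise-disjoint balls centered in $E$ of radius comparable to the edge length (again via Lemma~\ref{lem:flatcont}), and summing their $\Haus^s$-mass bounded below by lower regularity yields $\sum_{\partial T}(\diam f(I))^s\lesssim_s c^{-1}\Haus^s(E)$.

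\textbf{Gap 2: injectivity.} The algorithm of Theorem~\ref{thm:main} and Proposition~\ref{prop:2-to-1Holder} is 2-to-1 by construction (edges are traced twice), and your plan to "collapse remaining loops" via a mass-counting contradiction does not go through: the mass estimate bounds a single sum over a partition and cannot detect whether two parameter intervals happen to wind over the same edge, since 2-to-1 behaviour is exactly what the mass estimate already accommodates. The paper's remedy is structural, not post hoc: flatness forces every $G_k$ to be a simple polygonal arc with two 1-sided terminal vertices, so the algorithm can be run with properties (P4'), (P6'), (P7') — one-to-one on edge intervals, frozen interior vertex preimages, no bridge intervals — giving arc parameterizations $f_k$ from the start. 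Injectivity of the limit is then a quantitative statement (Lemma~\ref{lem:inj}: if $k_0$ is the first scale at which an edge interval separates $x<y$, then $|F_k(x)-F_k(y)|\gtrsim 2^{-k_0}$ for all $k\geq k_0$), combined with Lemma~\ref{lem:diamtozero} (interval diameters tend to zero, which uses compactness and $\Haus^s(E)<\infty$ via non-atomicity). None of this is available in the black-box route. To repair your proposal you would need to state and prove Lemma~\ref{lem:flatcont}, replace the scale-sum mass estimate with a disjoint-ball covering argument, and abandon the 2-to-1-plus-collapse scheme in favor of a one-to-one algorithm.
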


The proposition is trivial if $E$ is a singleton (in which case \eqref{e:mv-statedagain} is vacuous). Thus, we may assume that $E\subset\R^N$ is a \emph{continuum}; that is, $E$ is compact, connected, and contains at least two points. Furthermore, as the hypothesis and the conclusion are scale-invariant, we may assume without loss of generality that $\diam E=1$. To complete the proof of the proposition, we mimic the proof of Theorem \ref{thm:main}, but with a few modifications. In \S\ref{sec:flatcontinua_algo}, we perform a simplified version of the algorithm in \textsection\ref{sec:curve}. Then, in \textsection\ref{sec:massflat}, we establish an upper bound on $\mathcal{M}_s([0,1])$ in terms of $c^{-1}\Haus^s(E)$, which fills the role that Proposition \ref{prop:mass} played for Theorem \ref{thm:main}. Equipped with this mass bound, the proof of Proposition \ref{thm:flat-statedagain} essentially follows by repeating the proof of Theorem \ref{thm:main} \emph{mutatis mutandis}.

At the core of the proof of Proposition \ref{thm:flat-statedagain} is the following property, which is satisfied by continua that are sufficiently flat at all locations and scales. We defer a proof of Lemma \ref{lem:flatcont} to \S\ref{sec:prop-continua}. An adventurous reader may wish to supply their own proof.

\begin{lem}\label{lem:flatcont}
Suppose that $E\subset\R^N$ is a continuum satisfying
\[ \beta_E\left(\overline{B}(x,r)\right) \leq 2^{-11} \qquad\text{for all $x\in E$ and $0<r\leq \diam E$}.\]
Then for all distinct $x,y \in E$ and for all $z\in [x,y]$, there exists $z' \in E$ such that $\pi_{\ell_{x,y}}(z') = z$ and $|z-z'|\leq 2^{-4}|x-y|$, where $\ell_{x,y}$ denotes the line containing $x$ and $y$.
\end{lem}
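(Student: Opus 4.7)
First I would fix distinct $x,y\in E$, set $d:=|x-y|$, $\ell:=\ell_{x,y}$, and pick $z\in[x,y]$. Since both the hypothesis and the conclusion are scale-invariant, I may assume $d\leq\diam E$ and apply the flatness assumption at $\overline{B}(x,2d)$. This yields a line $L$ with $\sup_{v\in E\cap\overline{B}(x,2d)}\dist(v,L)\leq 2^{-11}\cdot 4d=2^{-9}d$. Because $x$ and $y$ both lie in this ball at distance exactly $d$, a Pythagorean bookkeeping in the spirit of Lemma~\ref{lem:approx} forces the chord $\ell$ to differ from $L$ by angle $O(2^{-9})$, and composing the tube bound around $L$ with this small angle yields the key tube estimate
\[
\dist(v,\ell)\;\leq\; C\cdot 2^{-9}d\;\leq\; 2^{-4}d\qquad\text{for every }v\in E\cap\overline{B}(x,2d),
\]
with $C$ a universal constant that can be checked to be less than $32$.

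The plan is now to reduce the lemma to a connectivity claim: if the connected component $K_x$ of $E\cap\overline{B}(x,2d)$ containing $x$ also contains $y$, then $\pi_\ell(K_x)$ is a connected subset of $\ell$ that includes $\{x,y\}$, hence covers $[x,y]$; any $z'\in K_x$ with $\pi_\ell(z')=z$ then satisfies $|z-z'|=\dist(z',\ell)\leq 2^{-4}d$ by the tube estimate. To prove this connectivity claim I would argue by contradiction. Assuming the components $K_x$ and $K_y$ containing $x$ and $y$ respectively are disjoint, the boundary bumping theorem for continua, applied to the proper closed subset $E\cap\overline{B}(x,2d)\subsetneq E$, forces each of $K_x$ and $K_y$ to meet $\partial\overline{B}(x,2d)$; choose $x^*\in K_x$ and $y^*\in K_y$ with $|x^*-x|=|y^*-x|=2d$. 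Combining the tube estimate with Pythagoras then gives $|\pi_\ell(x^*)-x|,\,|\pi_\ell(y^*)-x|\geq 1.99\,d$, and the argument splits into cases on the directions of these projections along $\ell$.

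The main obstacle is the case analysis. In the \emph{easy} case one of the projections lands past the opposite endpoint---say $\pi_\ell(x^*)\geq x+1.99d\geq y$---so connectedness of $\pi_\ell(K_x)$ already forces $[x,y]\subset\pi_\ell(K_x)$ and hence produces $y'\in K_x$ with $\pi_\ell(y')=y$ and $|y-y'|\leq 2^{-4}d$; since $y\in K_y\neq K_x$, this yields two \emph{distinct} points of $E$ at separation $\leq 2^{-4}d$ that lie on different components of $E\cap\overline{B}(x,2d)$. I would then iterate the whole argument at scale $|y-y'|$ centered at $y$: a ``same-component in the smaller ball implies same-component in the larger ball'' observation guarantees that the iteration either terminates with a direct contradiction or spawns another oppositely componented pair that is geometrically closer (by a factor $\leq 2^{-4}$). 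Compactness of $E$ then forces such a shrinking sequence to concentrate on a limit point lying in two disjoint local branches of $E$, which is ruled out by the tube estimate applied at arbitrarily small scales around the limit. The hard residual sub-case is when both $K_x$ and $K_y$ turn \emph{away} from the opposite endpoint (so $\pi_\ell(x^*)\leq x-1.99d$ and $\pi_\ell(y^*)\geq x+1.99d$); here I would enlarge to the smallest radius $R>2d$ at which $x$ and $y$ share a component of $E\cap\overline{B}(x,R)$ (such $R\leq\diam E$ exists since $E$ itself is connected), rerun the Pythagorean tube argument at scale $R$, and exploit the geometry of the required detour outside $\overline{B}(x,2d)$ to either conclude directly when $R$ is comparable to $d$ or reduce back to the easy case. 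The principal technical burden is this final rigidity step, which amounts to a topological regularity statement for flat continua in the spirit of Reifenberg's theorem: a sufficiently flat continuum admits no essential detour between any two of its points.
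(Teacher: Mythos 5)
Your opening moves are fine and match the paper's treatment of the ``large-scale'' case: the single-tube estimate around the chord $\ell_{x,y}$ (essentially Lemma \ref{lem:flat-twolines}), plus the observation that if the component of $E\cap\overline{B}(x,2d)$ containing $x$ also contains $y$ (or merely projects onto $[x,y]$), then connectedness of its projection produces the desired $z'$. But there is a genuine gap exactly where you flag ``the principal technical burden'': the case $|x-y|\ll\diam E$, where $x$ and $y$ may a priori lie in different components of $E\cap\overline{B}(x,2d)$ and be joined only by a detour leaving that ball. Your two proposed mechanisms do not close this. First, the iteration-plus-compactness scheme breaks because your ``easy case'' does not strictly self-improve: the new pair $(y,y')$ at scale $\leq 2^{-4}d$ can land in the ``both components turn away'' configuration, which is consistent with the tube estimate at that scale (two nearly collinear branches with a gap have small beta at every scale centered on them), so the limit point you extract is not ruled out ``by the tube estimate applied at arbitrarily small scales''; the obstruction to a detour is not local. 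Second, ``rerun the Pythagorean tube argument at scale $R$'' fails when $R\gg d$: the error in approximating $E\cap B(x,R)$ by the fixed chord $\ell_{x,y}$ grows like $\beta R^{2}/d$ (this is the factor $(1+1.1r/|x-y|)$ in Lemma \ref{lem:flat-twolines}), so a single tube around $\ell_{x,y}$ is useless beyond $R\lesssim 5d$, and the best-fit directions at widely separated scales need not agree, so there is no single flat set in which to ``exploit the geometry of the detour''; the promised reduction ``back to the easy case'' is not supplied. (A minor additional point: applying the beta hypothesis at radius $2d$ requires $2d\leq\diam E$, so the top scale must be capped at $\diam E$, as in the paper's Case 1.)

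The paper resolves precisely this difficulty with a different, genuinely multi-scale device rather than a component/contradiction argument. For each dyadic scale $2^{-k}$ between $|x-y|$ and $\diam E$ it chooses a point $y_k\in E\cap B(x,2^{-k})$ and the chord $\ell_k$ through $x$ and $y_k$, shows via the two-line estimate that $E\cap\overline{B}(x,2^{-(k-1)})$ lies in the thin tube $T_k$ around $\ell_k$, and then verifies that $E$ is contained in the chain $T$ consisting of the innermost tube together with the two outer ends $T_{k,1},T_{k,2}$ of each larger tube, these pieces meeting only consecutively. A small $(N-1)$-dimensional disk through $z$ perpendicular to $\ell_{x,y}$ then separates $x$ from $y$ inside the chain $T$, and connectedness of $E\subset T$ forces $E$ to meet the disk, producing $z'$. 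In other words, the ``no essential detour'' rigidity you defer to a Reifenberg-type principle is the actual content of the lemma, and it is proved by confining all of $E$ to a chain of tubes whose directions are allowed to change from scale to scale --- something no argument based on a single tube around $\ell_{x,y}$, boundary bumping, and scale iteration can substitute for without substantial new input.
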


\subsection{Traveling Salesman algorithm for flat continua}\label{sec:flatcontinua_algo}
Fix a constant $\beta_1>0$ (small) to be specified later. Let $E\subset\R^N$ be a continuum satisfying the hypothesis of Proposition \ref{thm:flat-statedagain}. Without loss of generality, we assume that $\diam E=1$. Pick $x_0$ and $y_0$ such that $|x_0-y_0|=1$, set $r_0= 1$, $C^*=2$, and $\xi_1=\xi_2=1/2$. Then $$A^*=\frac{C^*}{1-\xi_2}=4,\quad 14A^*=56,\quad 30A^*=120.$$ Furthermore, the scales $\rho_k = 2^{-k}$ satisfy (V0).

Set $V_0=\{x_0,y_0\}$, $\alpha_{0,x_0}=4\beta_E\left(\overline{B}(x_0,1)\right)$ and $\alpha_{0,y_0}=4\beta_E\left(\overline{B}(y_0,1)\right)$, and let $\ell_{0,x_0}$ and $\ell_{0,y_0}$ be best fitting lines corresponding to $\beta_E$ on the closed balls $\overline{B}(x_0,1)$ and $\overline{B}(y_0,1)$, respectively.

Suppose that for some $k\geq 0$ and for all $0\leq j\leq k$, we have defined sets $V_j$, numbers $\alpha_{j,v}\geq 0$ for all $v\in V_j$, and lines $\ell_{j,v}$ for all $v\in V_j$ satisfying (V5). Choose $V_{k+1}$ to be any maximal $2^{-(k+1)}$-separated subset of $E$ such that $V_{k+1}\supset V_k$. For each $v\in V_{k+1}$, set $$\alpha_{k+1,v}:=\frac{2r_{k+1}}{2^{-(k+2)}}\beta_E\left(\overline{B}(v,r_{k+1})\right)\leq 480\beta_E\left(\overline{B}(v,r_{k+1})\right),$$ where $r_{k+1}:=\min\{120\cdot 2^{-(k+1)},1\}$, and let $\ell_{k+1,v}$ be a best fitting line corresponding to $\beta_E\left(\overline{B}(v,r_k)\right)$. The reader may check that the sequence of sets $(V_k)_{k=0}^\infty$ satisfy properties (V1)--(V5) in \ref{sec:flat}.

We now specify $\alpha_0=512\beta_1\leq 1/16$. This ensures that $\alpha_{k,v}<\alpha_0$ for all $k\geq 0$ and $v\in V_k$. Moreover, $\beta_1$ is sufficiently small that we may invoke Lemma \ref{lem:flatcont} for $E$.
With $\alpha_0$ fixed, carry out a modified version of the algorithm in \S\ref{sec:curve}, in which (P4), (P6), and (P7) are replaced by: \begin{enumerate}
\item[(P4')] $f_k|\bigcup{\mathscr{E}_k}$ is one-to-one.
\item[(P6')] For each $I\in\mathscr{N}_k\cup\mathscr{F}_k$, the image $f_k(I)\in V_k$. For every $v\in V_k$, there exists a unique interval $I\in\mathscr{N}_k\cup\mathscr{F}_k$ such that $v=f_k(I)$.
\item[(P7')] If $\mathscr{I}_k=\mathscr{E}_k\cup\mathscr{B}_k\cup\mathscr{N}_k\cup\mathscr{F}_k$ is enumerated according to the natural order in $[0,1]$, say $\mathscr{I}_k=\{I_1,\dots,I_{2l+1}\}$, then the intervals alternate between elements of $\mathscr{N}_k\cup \mathscr{F}_k$ and $\mathscr{E}_k$. (Thus, the family $\mathscr{B}_k=\emptyset$.) Moreover, $\card \mathscr{N}_k=2$ and $I_1,I_{2l+1}\in\mathscr{N}_k$. The vertices $f_k(I_1)$ and $f_{k}(I_{2l+1})$ are 1-sided terminal in $V_{k}$, while each other vertex $f_{k}(I_{2j+1})$ is non-terminal in $V_{k}$ for all $1\leq j\leq l-1$.
\end{enumerate}
We now sketch some steps in the modified algorithm.

\subsubsection{Step 0} Partition $[0,1]=[0,1/3]\cup (1/3,2/3) \cup [2/3,1]$ and assign  $$\mathscr{E}_0=\{(1/3,2/3)\},\quad \mathscr{B}_0=\emptyset,\quad \mathscr{N}_0=\{[0,1/3],  [2/3,1]\},\quad  \mathscr{F}_0=\emptyset.$$ Also set $f_0([0,1/3])=x_0$ and $f_0([2/3,1])=y_0$, and define $f_0|(1/3,2/3)$ to be the restriction of the affine map which interpolates between $x_0$ and $y_0$. Verifying properties (P1), (P2), (P3), (P4'), (P5), (P6'), and (P7') is straightforward. We omit the details.

\subsubsection{Induction Step} Suppose that $\mathscr{E}_k$, $\mathscr{B}_k$, $\mathscr{N}_k$, $\mathscr{F}_k$, and $f_k$ have been defined and satisfy properties (P1), (P2), (P3), (P4'), (P5), (P6'), and (P7').

By (P7') and the induction assumption, the procedure in \S\ref{sec:bridges} is moot, because there are no bridge intervals.

Follow the procedure in \S\ref{sec:flatedges} for $I\in\mathscr{E}_k$ as written, except assign all closed subintervals $\mathscr{N}_{k+1}(I)\cup\mathscr{F}_{k+1}(I)$ generated in $I$ to $\mathscr{F}_{k+1}(I)$ instead of $\mathscr{N}_{k+1}(I)$. Also set $\mathscr{N}_{k+1}(I)=\emptyset$. Below, we check that $\mathscr{B}_{k+1}(I)=\emptyset$; see Lemma \ref{lem:TSPflat}.

Because $\alpha_{k,v}<\alpha_0$ for all $v\in V_k$, the procedure in \S\ref{sec:noflatedges} is moot.

Follow the procedure in \S\ref{sec:fixed} as written.

Replace the procedure in \S\ref{sec:flatpoints} as follows. \begin{quotation}
By property (P7'), only the intervals in $\mathscr{I}_k$ containing $0$ and $1$ belong to $\mathscr{N}_k$ and $f_k$ maps each of them onto a 1-sided terminal vertex in $V_k$.
Let $I$ be the interval in $\mathscr{N}_k$ containing $0$ and let $v=f_k(I)$. Let $(v,v')\in\Flat(k)$ be the unique flat pair with first coordinate $v$. Choose an orientation for $\ell_{k,v}$ so that $[v,v']$ lies on the right side of $v$. Enumerate the points in $V_{k+1}\cap B(f_k(I), C^*\rho_{k+1}r_0)$ on the left side of $v$ (including $v$) as $v_{l}, v_{l-1},\dots, v_1=v$, starting at the leftmost vertex and working right. The construction splits into three cases.

\emph{Case 1a.} If $l=1$, then no new points appeared to the left of $v$ and we set $\mathscr{N}_{k+1}(I)=I$ and $f_{k+1}|I=f_k|I$. Set $\mathscr{E}_{k+1}(I)=\mathscr{B}_{k+1}(I)=\mathscr{F}_{k+1}(I)=\emptyset$.

\emph{Case 1b.} If $l=2$, then one new point appeared to the left of $v$, the new point is 1-sided terminal in $V_{k+1}$ and $v$ is non-terminal in $V_{k+1}$. Subdivide $I=[0,a]=[0,\frac13a]\cup(\frac13a,\frac23a)\cup[\frac23a,a]$, set $\mathscr{N}_{k+1}(I)=[0,\frac13a]$, $\mathscr{E}_{k+1}(I)=(\frac13a,\frac23a)$, $\mathscr{F}_{k+1}(I)=[\frac23a,a]$, and $\mathscr{B}_{k+1}=\emptyset$. Define $f_{k+1}|I$ by assigning $f_{k+1}([0,\frac13a])=v_2$, $f_{k+1}|(\frac13a,\frac23a)$ to be the restriction of the affine map that interpolates between $v_2$ and $v_1$, and $f_{k+1}([\frac23a,a])=v_1$.

\emph{Case 1c.} If $l\geq 3$, then subdivide $I$ into $2l-1$ intervals, alternating between closed and open intervals. Assign the first interval to $\mathscr{N}_{k+1}(I)$ and the subsequent closed intervals to $\mathscr{F}_{k+1}(I)$. Assign the open intervals in $\mathscr{E}_{k+1}(I)$ and set $\mathscr{B}_{k+1}(I)=\emptyset$. The map $f_{k+1}|I$ is the piecewise linear map starting at $v_l$, connecting $v_l$ to $v_{l-1}$, \dots, connecting $v_2$ to $v_1$, which is constant on the intervals in $\mathscr{N}_{k+1}(I)\cup\mathscr{F}_{k+1}(I)$ and constant speed on the intervals in $\mathscr{E}_{k+1}(I)$.

Carry out a similar construction for the interval $J$ in $\mathscr{N}_k$ containing 1, but modified so that $\mathscr{N}_{k+1}(J)$ contains only one interval and that interval contains $1$.
\end{quotation}

Because $\alpha_{k,v}<\alpha_0$ for all $v\in V_k$, the procedure in \S\ref{sec:noflatpoints} is not used.

\begin{lem}\label{lem:TSPflat} For all $k\geq 0$, $\mathscr{B}_k=\emptyset$. Moreover, for all $I\in\mathscr{E}_k$, $\diam f_k(I) < 3 \cdot 2^{-k}$.
\end{lem}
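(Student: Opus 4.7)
The proof will be by induction on $k$. The base case $k = 0$ is built in: $\mathscr{B}_0 = \emptyset$ by construction, and the unique edge $(1/3, 2/3)$ maps to $[x_0, y_0]$ of diameter $1 < 3$. For the inductive step, observe that the modifications of \S\ref{sec:flatcontinua_algo} render most sub-algorithms inert: \S\ref{sec:bridges} is vacuous by the induction hypothesis, \S\ref{sec:noflatedges} and \S\ref{sec:noflatpoints} never trigger because the flatness hypothesis on $E$ combined with the choice $\alpha_0 = 512\beta_1$ forces $\alpha_{k,v} < \alpha_0$ at every $v \in V_k$, \S\ref{sec:fixed} produces no new edges, and the modified \S\ref{sec:flatpoints} sets $\mathscr{B}_{k+1}(I) = \emptyset$ by fiat. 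Hence new intervals in $\mathscr{E}_{k+1} \cup \mathscr{B}_{k+1}$ arise only from applying \S\ref{sec:flatedges} to $I \in \mathscr{E}_k$ or from the modified \S\ref{sec:flatpoints} applied to the two terminal intervals in $\mathscr{N}_k$. In both cases, it suffices to prove that every consecutive pair $v_i, v_{i+1}$ in the ambient set $V_{k+1,I}$ satisfies
\[ L := |v_i - v_{i+1}| < 3 \cdot 2^{-(k+1)}, \]
because this simultaneously rules out bridges (since $3 \cdot 2^{-(k+1)} \ll 14 A^* \rho_{k+1} r_0 = 28 \cdot 2^{-k}$) and delivers the diameter bound on the resulting edges.

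The terminal case (modified \S\ref{sec:flatpoints}) is easy: $V_{k+1,I}$ lies in $B(f_k(I), C^* \rho_{k+1} r_0)$, which has radius $2^{-k}$, with $f_k(I)$ at a projection-extreme end under $\ell_{k, f_k(I)}$; Lemma \ref{lem:approx} then bounds each consecutive gap by $(1 + 3\alpha_0^2) \cdot 2^{-k}$, which is strictly less than $3 \cdot 2^{-(k+1)}$ whenever $\alpha_0 \leq 1/16$.

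For the edge case (\S\ref{sec:flatedges}), I will argue by contradiction. Suppose $L \geq 3 \cdot 2^{-(k+1)}$ for some consecutive pair in $V_{k+1,I}$ arising from $I \in \mathscr{E}_k$. Choose $\beta_1$ small enough that Lemma \ref{lem:flatcont} applies; applying it to $v_i, v_{i+1} \in E$ with $z := (v_i + v_{i+1})/2$ produces $z' \in E$ with $\pi_{\ell_{v_i, v_{i+1}}}(z') = z$ and $|z - z'| \leq 2^{-4} L$. Since $V_{k+1}$ is a maximal $2^{-(k+1)}$-separated subset of $E$ containing $V_k$, there is $w \in V_{k+1}$ with $|w - z'| < 2^{-(k+1)}$. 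The contradiction will be obtained by showing that $w \in V_{k+1,I}$ and that $\pi_\ell(w)$ lies strictly between $\pi_\ell(v_i)$ and $\pi_\ell(v_{i+1})$ under $\ell := \ell_{k, f_k(a_I)}$, contradicting consecutiveness. Strict betweenness follows by combining linearity of $\pi_\ell$ with the identity $\pi_\ell(z) = (\pi_\ell(v_i) + \pi_\ell(v_{i+1}))/2$, the lower bound $|\pi_\ell(v_{i+1}) - \pi_\ell(v_i)| \geq L/(1 + 3\alpha_0^2)$ from Lemma \ref{lem:approx}, and the crude estimate $|\pi_\ell(w) - \pi_\ell(z)| \leq 2^{-(k+1)} + 2^{-4} L$; admissibility $w \in V_{k+1,I}$ follows from the triangle inequality using the inductive estimate $\diam f_k(I) < 3 \cdot 2^{-k}$, the spacious radius $14 A^* \rho_k r_0 = 56 \cdot 2^{-k}$, and Lemma \ref{lem:flatdiam} to keep $v_i$ close to $f_k(a_I)$.

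The main obstacle is constant-juggling: one must choose $\beta_1$ small enough that $\alpha_0 = 512\beta_1 \leq \alpha_1$ (for Lemma \ref{lem:flatdiam}), that $\beta_1 \leq 2^{-11}$ (for Lemma \ref{lem:flatcont}), and that the numerical inequality forcing strict betweenness under $L \geq 3 \cdot 2^{-(k+1)}$ closes; all three are simultaneously satisfiable by taking $\beta_1$ sufficiently small. Once the constants cooperate, the conceptual content is simple: local flatness of $E$ together with the maximality of the net $V_{k+1}$ forces the next-generation algorithm to insert a vertex inside any would-be edge of length exceeding roughly twice the net scale, and this one fact both eliminates bridges and halves the edge diameter at each step.
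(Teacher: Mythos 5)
Your proof is correct, and it is built from the same key ingredient as the paper's: the flatness Lemma \ref{lem:flatcont} guarantees a point of $E$ near the midpoint of any would-be long segment, and the maximality of $V_{k+1}$ as a $2^{-(k+1)}$-net turns this into a contradiction. The difference is in how the contradiction is phrased and in the overall organization. The paper argues that the point $y\in E$ near the midpoint has $\dist(y,V_{k+1}) > 2^{-(k+1)}$, which directly violates maximality; and it treats the two conclusions of the lemma separately, with the bridge statement $\mathscr{B}_{k+1}(I) = \emptyset$ requiring $|v_j - v_{j+1}| \geq 56 \cdot 2^{-(k+1)}$ as the contradiction hypothesis and the diameter bound $\diam f_k(I) < 3 \cdot 2^{-k}$ being established directly at each level $k$ with a separate (non-inductive) contradiction against maximality of $V_k$. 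You instead take the contrapositive view: maximality \emph{produces} a net point $w$ near $z'$, and you show $w$ must project strictly between $v_i$ and $v_{i+1}$, contradicting consecutiveness. You also unify both conclusions into a single inductive claim, that every consecutive gap in $V_{k+1,I}$ is $< 3 \cdot 2^{-(k+1)}$, which simultaneously rules out bridges and yields the diameter bound at the next generation. This unification is clean, and the constant bookkeeping you sketch ($L/3 + L/16 = 19L/48$ versus $L/(2(1+3\alpha_0^2)) \geq 0.49 L$) closes comfortably when $\alpha_0 \leq 1/16$. Your separate treatment of the terminal (modified \S\ref{sec:flatpoints}) case via Lemma \ref{lem:approx} is a small extra step the paper skips (it only handles the modified \S\ref{sec:flatpoints} for bridges by construction and phrases the diameter bound only for $\mathscr{E}_k$-intervals), but it is correct and makes the unified claim go through. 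The paper's route is shorter; yours makes the inductive structure more explicit and avoids a direct estimate of $\dist(x,V_{k+1})$ from the midpoint (which, if done carelessly, would give the constant $1/2$ rather than the true $1/(2(1+3\alpha^2))$, a tiny gap the paper elides).
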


\begin{proof} We need to check that, for flat continua, the procedure in \S\ref{sec:flatedges} does not generate bridge intervals. Given $I\in\mathscr{E}_k$, let $v$ and $v'$ denote the endpoints of $f_k(I)$. Choose an orientation of $\ell_{k,v}$ so that $v'$ lies to the right of $v$. Enumerate $V_{k+1}(v,v')=\{v_1,\dots,v_n\}$, where $v_1=v$, $v_n=v'$, and $v_{i+1}$ is the first point to the right of $v_i$ for all $1\leq i\leq n-1$. Suppose for contradiction that $\mathscr{B}_{k+1}(I)\neq\emptyset$. Then  $$|v_{j+1}-v_j|\geq 14 A^* \rho_{k+1}= 56\cdot 2^{-(k+1)}\quad \text{for some $1\leq j\leq n-1$.}$$ Let $x=(v_j+v_{j+1})/2$ denote the midpoint between $v_j$ and $v_{j+1}$. By Lemma \ref{lem:flatcont}, there exists $y\in E$ such that $|y-x|\leq (1/16)|v_{j+1}-v_j|$. Thus, $$\dist(y, V_{k+1}) \geq \dist(x,V_{k+1}) - |y-x| \geq \frac{7}{16}|v_{j}-v_{j-1}| > 24 \cdot 2^{-(k+1)}.$$ This contradicts our assumption that $V_{k+1}$ is a maximal $2^{-(k+1)}$-separated set for $E$. Therefore, $\mathscr{B}_{k+1}(I)=\emptyset$ for all $I\in\mathscr{E}_k$. The only other instances in the algorithm where bridge intervals could be generated are in the procedures in \S\S \ref{sec:noflatedges} and \ref{sec:noflatpoints}. However, since $\alpha_{k,v}<\alpha_0$ for all $k\geq 0$ and $v\in V_k$, these procedures were never used.

Similarly, suppose to get a contradiction that there exists $I\in \mathscr{E}_k$ such that $\diam f_k(I) \geq 3 \cdot 2^{-k}$. Let $v$ and $v'$ denote the endpoints of $f_k(I)$, and let $x=(v+v')/2$ denote their midpoint. By Lemma \ref{lem:flatcont}, there exists $y\in E$ such that $|y-x|<(1/16)|v-v'|$. Then $$\dist(y,V_k) \geq \dist(x,V_k) - |x-y| \geq \frac{7}{16}|v-v'| \geq \frac{21}{16} 2^{-k}.$$ This contradicts our assumption that $V_{k+1}$ is a maximal $2^{-(k+1)}$-separated set for $E$.
\end{proof}

Verifying properties (P1), (P2), (P3), (P4'), (P5), (P6'), and (P7') for $\mathscr{E}_{k+1}$, $\mathscr{B}_{k+1}$, $\mathscr{N}_{k+1}$, $\mathscr{F}_{k+1}$, and $f_{k+1}$ is again routine. We leave the details to the reader.

\subsection{Mass estimate}\label{sec:massflat} Let $\mathcal{M}_s([0,1])$ be defined as in \S\ref{sec:mass}.

\begin{lem}\label{lem:flat3}
$\mathcal{M}_s([0,1]) \leq 48^s c^{-1}\mathcal{H}^s(E)$.\end{lem}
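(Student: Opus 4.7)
I plan to bound $\mathcal{M}_s([0,1])$ via a disjoint packing argument in $E$. To each edge interval $(k,I) \in \partial T$ I attach a ball $B_{k,I}$ of radius $\rho_k/16 = 2^{-k}/16$ centered at a point of $E$ near the midpoint of $f_k(I)$; applying lower Ahlfors regularity to each ball and summing bounds the tree sum by $48^s c^{-1}\Haus^s(E)$, where the factor $48 = 3 \cdot 16$ absorbs the diameter bound $\diam f_k(I) < 3 \cdot 2^{-k}$ from Lemma \ref{lem:TSPflat}. Taking the supremum over finite trees $T$ then delivers the lemma.

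For each $(k,I) \in \partial T$ with $I \in \mathscr{E}_k$ and endpoints $v_I, v_I' \in V_k$ of $f_k(I)$, set $m_{k,I} := (v_I + v_I')/2$. Because $\beta_1 \leq 2^{-11}$, $E$ satisfies the hypothesis of Lemma \ref{lem:flatcont}, which applied to $v_I, v_I' \in E$ produces a point $m_{k,I}^E \in E$ with $\pi_{\ell_{v_I,v_I'}}(m_{k,I}^E) = m_{k,I}$ and $|m_{k,I}^E - m_{k,I}| \leq 2^{-4}|v_I - v_I'|$. Set $r_{k,I} := 2^{-k}/16$ and $B_{k,I} := B(m_{k,I}^E, r_{k,I})$. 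Since $r_{k,I} \leq 1/16 < 1 = \diam E$ and $m_{k,I}^E \in E$, lower Ahlfors regularity gives $\Haus^s(E \cap B_{k,I}) \geq c\, r_{k,I}^s$. Non-edge intervals in $\partial T$ have constant image by (P2), so once pairwise disjointness of the balls is verified,
\[
\sum_{(k,I) \in \partial T}(\diam f_k(I))^s < \sum_{\substack{(k,I) \in \partial T \\ I \in \mathscr{E}_k}} (3 \cdot 2^{-k})^s = 48^s \sum r_{k,I}^s \leq \frac{48^s}{c}\Haus^s(E).
\]

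The main obstacle is verifying the disjointness of $\{B_{k,I}\}$. Fix two distinct edges $(k,I), (k',I') \in \partial T$ with $k \leq k'$, and write $u, u'$ for the endpoints of $f_{k'}(I')$; by Remark \ref{rem:chainpartition} the intervals $I$ and $I'$ are disjoint in $[0,1]$. Trace $I'$ back to its ancestor $J \in \mathscr{I}_k$. Frozen intervals propagate, so $J \notin \mathscr{F}_k$. The remaining cases ($k = k'$; $k < k'$ with $J \in \mathscr{E}_k \setminus \{I\}$; $k < k'$ with $J \in \mathscr{N}_k$) are handled by projecting $m_{k,I}$ and $m_{k',I'}$ onto a suitable approximating line. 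In the edge-ancestor cases the projections land on opposite sides of a shared vertex of $I$ and $J$ (the non-adjacent sub-case is even easier, since the segments are separated); in the terminal-ancestor case the projections land on opposite sides of $f_k(J)$ because the algorithm of \S\ref{sec:flatcontinua_algo} appends every new edge outward from the current terminal vertex. Lemma \ref{lem:approx} then yields
\[
|m_{k,I} - m_{k',I'}| \geq \tfrac{1-3\alpha_0^2}{2}\bigl(|v_I - v_I'| + |u - u'|\bigr),
\]
and subtracting the $E$-shift corrections of combined size at most $2^{-4}(|v_I - v_I'| + |u - u'|)$ gives
\[
|m_{k,I}^E - m_{k',I'}^E| \geq \Bigl(\tfrac{1-3\alpha_0^2}{2} - \tfrac{1}{16}\Bigr)\bigl(|v_I - v_I'| + |u - u'|\bigr) > \tfrac{|v_I - v_I'| + |u - u'|}{16} \geq \tfrac{2^{-k} + 2^{-k'}}{16} = r_{k,I} + r_{k',I'},
\]
where $\alpha_0 \leq 1/16$ produces the strict inequality $(1-3\alpha_0^2)/2 - 1/16 > 1/16$ and (V3) gives the penultimate one.

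I expect the most delicate step to be verifying the opposite-sides condition in the $\mathscr{N}_k$-ancestor case, where the terminal vertex drifts by up to $\lesssim 2^{-k}$ under descent and $m_{k',I'}$ could a priori approach $f_k(J)$ from either direction; confirming the outward orientation hardwired into the modified algorithm of \S\ref{sec:flatcontinua_algo}, and then using $\ell_{k, f_k(J)}$ as the common approximating line, will nail down the separation estimate uniformly across all cases.
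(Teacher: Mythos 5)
Your high-level strategy matches the paper's proof of Lemma~\ref{lem:flat3} essentially verbatim: attach a ball of radius $2^{-k}/16$ centered at a point of $E$ that Lemma~\ref{lem:flatcont} places within $2^{-4}\diam f_k(I)$ of the midpoint of $f_k(I)$, invoke lower regularity, and absorb the Lemma~\ref{lem:TSPflat} bound $\diam f_k(I) < 3\cdot 2^{-k}$ into the $48^s = (3\cdot 16)^s$ constant. The paper does exactly this (its Claim~\ref{lem:edgesfar} packs the sets $E \cap B(x_i, \tfrac14 2^{-k_i})$ and then inscribes balls $B(z_i, \tfrac{1}{16}2^{-k_i})$). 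Where you diverge is the verification of disjointness: the paper argues by contradiction, assuming a common point $z \in E \cap B(x_i,\cdot) \cap B(x_j,\cdot)$, splitting on $k_j - k_i \geq 3$ vs.\ $k_j - k_i \leq 2$, and in each case deducing either that the balls are too far apart or that $I_j \subset I_i$, contradicting $(k_i,I_i) \in \partial T$. Your direct separation estimate is structured differently (trace $I'$ to its ancestor $J$ at level $k$, then project), but it rests on the same two ingredients — linear ordering via Lemma~\ref{lem:approx} and the partition structure of $\partial T$ — so I would call this a reorganization rather than a different route.

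The one step I would flag is the stated inequality
$|m_{k,I} - m_{k',I'}| \geq \tfrac{1-3\alpha_0^2}{2}(|v_I - v_I'| + |u - u'|)$
as a direct consequence of Lemma~\ref{lem:approx} applied along a single approximating line. When $k' - k$ is large, a level-$k$ line $\ell_{k,\cdot}$ approximates $E$ only to precision $\lesssim \alpha_0 \cdot 2^{-(k+1)}$, which can dominate $|u - u'| \simeq 2^{-k'}$; the orthogonal projection onto $\ell_{k,\cdot}$ may then collapse $u$ and $u'$ and the $|u-u'|$ contribution is lost. What you can extract from the single-line projection is only that $\pi(u),\pi(u')$ lie on the far side of the shared vertex $w$, giving $|m_{k,I} - m_{k',I'}| \geq \tfrac{1}{2(1+3\alpha^2)}|v_I - v_I'|$. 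Fortunately this weaker bound still closes the argument: with $\alpha \leq \alpha_0/2 \leq 1/32$ the coefficient exceeds $0.49$, and after subtracting the two $E$-shift corrections ($\leq \tfrac{1}{16}\cdot 3\cdot 2^{-k}$ and $\leq \tfrac{1}{16}\cdot 3\cdot 2^{-k'}$) one finds $|m^E_{k,I} - m^E_{k',I'}| > \tfrac{1}{16}(2^{-k}+2^{-k'})$ for all $k' \geq k$; the arithmetic is tight but works. You should also record explicitly why the descendant vertices $u, u'$ project to the correct side of the shared vertex across several levels of descent — the compatibility of orderings from Lemma~\ref{lem:approx} is the ingredient, but since the approximating lines change level by level this needs to be iterated (the paper sidesteps this by placing all four relevant vertices into a single ball $B(v_j, 120\cdot 2^{-k_j})$ via the contradiction hypothesis). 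With those two adjustments your proof goes through.
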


\begin{proof} Fix a finite tree $T$ over $[0,1]$ of depth $m$ (see \S\ref{sec:mass}) and suppose that
\[ \partial T = \{(k_1',J_1),(k_1,I_1),\dots,(k_l',J_l),(k_l,I_l),(k_{l+1}',J_{l+1})\},\]
enumerated according to the orientation of $[0,1]$ so that
\[ \{I_1,\dots,I_l\} \subseteq \bigcup_{k\geq 0}\mathscr{E}_k \qquad\text{and}\qquad\{J_1,\dots,J_{l+1}\} \subseteq \bigcup_{k\geq 0}(\mathscr{N}_k\cup\mathscr{F}_k).\]
The first interval $J_1\in\mathscr{N}_{k_1'}$ and the last interval $J_{l+1}\in\mathscr{N}_{k_{l+1}'}$, since they contain $0$ and $1$, respectively. The remaining intervals $J_i\in\mathscr{F}_{k'_i}$, because they do not contain $0$ or $1$.

For each $1\leq i\leq l$, let $x_i$ denote the midpoint of $f_{k_i}(I_i)$.

\begin{claim}\label{lem:edgesfar} One one hand, the set $E\cap B(x_i,(3/16)2^{-k_i})$ is nonempty for all $1\leq i\leq l$. On the other hand, the family $\left\{E\cap B(x_i, (1/4) 2^{-k_i}): 1\leq i\leq l\right\}$ is pairwise disjoint.\end{claim}

\begin{proof} Given $1\leq i\leq l$, let $v_i$ and $v_{i}'$ denote the endpoints of $f_{k_i}(I_i)$. By Lemma \ref{lem:TSPflat}, $$|v_i-v_i'| < 3\cdot 2^{-k_i}.$$ Hence there exists $z_i\in E\cap B(x_i, (1/16)|v_i-v_i'|) \subset E \cap B(x_i, (3/16)\cdot 2^{-k_i})$ by Lemma \ref{lem:flatcont}.

Suppose in order to reach a contradiction that there exists $$z\in E\cap B(x_i, (1/4) 2^{-k_i})\cap B(x_j, (1/4) 2^{-k_j})$$ for some $i\neq j$ with $0\leq k_i \leq k_j\leq m$.

\emph{Case 1}. Suppose that $k_j \geq k_i + 3$. Then $$|v_j-x_i| \leq |v_j-x_j|+|x_j-z|+|z-x_i| < \frac32\cdot 2^{-k_j} + \frac{1}{4} 2^{-k_j} + \frac14 \cdot 2^{-k_i}< \frac12 \cdot 2^{-k_i}$$ and similarly for $v_j'$.  Because $\{v_j,v_j'\}\subset B(x_i,\frac{1}{2} 2^{-k_i})$, we conclude that $v_j$ and $v_j'$ lie between $v_i$ and $v_i'$ with respect to the linear ordering of $V_i\cap B(v_i,120\cdot 2^{-k_i})$. It follows that $I_j$ is contained in $I_i$, but $I_j\neq I_i$. This contradicts the assertion that $I_i\in\partial T$.

\emph{Case 2.} Suppose that $k_j \leq k_i + 2$. Then $$|v_i-v_j| \leq |v_i-x_i|+|x_i-z|+|z-x_j|+|x_j-v_j|< \frac32 2^{-k_i} + \frac14 \cdot 2^{-k_i} + \frac14 \cdot 2^{-k_j}
+ \frac32 2^{-k_j}<8\cdot 2^{-k_j},$$ $$|v_i'-v_j| \leq |v_i'-v_i|+|v_i-v_j| < 3\cdot  2^{-k_i} + 8\cdot 2^{-k_j} \leq   20 \cdot 2^{-k_j}.$$ In particular, $v_i$, $v_i'$, $v_j$, $v_j'$ belong to $V_j\cap B(v_j,120\cdot 2^{-k_j})$, which is linearly ordered by Lemma \ref{lem:var}, where $v_j$ and $v_{j+1}$ are consecutive points. Assume that $v_i$ and $v_i'$ both lie on the left or the right side of $[v_j,v_j']$, say without loss of generality that the appear from left to right as $v_j,v_j',v_i,v_i'$. Then $$|x_i-v_j'| \geq \frac{1}{2} |v_i'-v_i| \geq \frac{1}{2}\cdot 2^{-k_i} >\frac14\cdot 2^{-k_i}.$$ It follows that $B(v_j,\frac14\cdot{2^{-k_j}}) \cap B(v_i,\frac14\cdot2^{-k_i})$ is empty, which contradicts our assumption. Thus, one of $v_i$ or $v_i'$ lies on the left side of $[v_j,v_j']$ and the other lies on the right side. Then $I_j\subset I_i$. If $k_j \geq k_i+1$, then we reach the same contradiction as in Case 1. If $k_j=k_i$, then it follows that $I_j=I_i$, which contradicts our assumption that $i\neq j$.
\end{proof}

We now continue with the proof of Lemma \ref{lem:flat3}. By Claim \ref{lem:edgesfar}, we can find balls $B(z_i, (1/16) 2^{-k_i})$ centered in $E$ for all $1\leq i\leq l$, which are pairwise disjoint. Moreover, because $E$ is lower Ahlfors regular, we have $c[(1/16) 2^{-k_i}]^s \leq \Haus^s(E\cap B(z_i, (1/16) 2^{-k_i}))$ for all $1\leq i\leq l$. Therefore, by Lemma \ref{lem:TSPflat} and additivity of measures on disjoint sets, \begin{equation*}
 \sum_{(k,I)\in\partial T} (\diam{f_k(I)})^s = \sum_{i=1}^l (\diam {f_{k_i}(I_i)})^s \leq \sum_{i=1}^l (3\cdot 2^{-k_i})^s \leq 48^s c^{-1}\Haus^s(E).\end{equation*} Because $T$ was an arbitrary finite tree over $[0,1]$, we obtain the corresponding inequality for the total mass $\mathcal{M}_s([0,1])$.
\end{proof}

\begin{cor}\label{cor:massest}
If $k\geq 0$, $I\in \mathscr{I}_k$, and $a$ is an endpoint of $I$, then
\[ \mathcal{M}_s(k,I) \leq 48^s c^{-1}\,\mathcal{H}^s(E\cap B(f_k(a),6\cdot 2^{-k})).\]
\end{cor}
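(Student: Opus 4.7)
The strategy is to imitate the proof of Lemma \ref{lem:flat3}, localized to finite trees $T$ rooted at $(k, I)$. The key new ingredient is a uniform geometric bound showing that the images $f_m(I)$ at all later levels $m\geq k$ stay inside a bounded neighborhood of $f_k(a)$, so that the disjoint witness balls produced in the proof of Lemma \ref{lem:flat3} all sit inside $\overline{B}(f_k(a), 6\cdot 2^{-k})$.

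The crucial claim to prove is that for every $m\geq k$, $f_m(I) \subset \overline{B}(f_k(a), (23/4)\cdot 2^{-k})$. I would prove this by induction on $m$. The base case $m=k$ is immediate: by Lemma \ref{lem:TSPflat}, $\diam f_k(I) < 3\cdot 2^{-k}$ when $I\in\mathscr{E}_k$ and $f_k(I)$ is a single point when $I\in\mathscr{N}_k\cup\mathscr{F}_k$. For the inductive step, I would examine each sub-interval $J$ of $I$ in $\mathscr{I}_m$ according to its type: frozen sub-intervals undergo no change; edge sub-intervals $J\in\mathscr{E}_m$ with endpoint images $u, u'$ have $f_{m+1}(J)$ contained in a $2\alpha_0\rho_{m+1}r_0$-tube of $[u,u']$ by Lemma \ref{lem:flatdiam} and (V5), producing per-step expansion at most $(1/8)\cdot 2^{-(m+1)}$; non-edge sub-intervals at non-terminal flat vertices are unchanged (\S\ref{sec:nonterm}); and non-edge sub-intervals at terminal vertices, which can only occur when $I$ contains $0$ or $1$, expand the image by at most $C^*\rho_{m+1}r_0 = 2^{-m}$. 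Summing these two geometric series yields total expansion at most $(1/8 + 2)\cdot 2^{-k}$, so $R_\infty \leq 3\cdot 2^{-k} + (17/8)\cdot 2^{-k} < (23/4)\cdot 2^{-k}$.

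Granted the claim, fix any finite tree $T$ rooted at $(k, I)$ and enumerate $\partial T$ into non-edge intervals and edge intervals $\{(k_i, I_i)\}_i$, as in the proof of Lemma \ref{lem:flat3}. For each $i$, let $x_i$ denote the midpoint of $f_{k_i}(I_i)$ and use Lemma \ref{lem:flatcont} to obtain $z_i\in E$ with $|z_i - x_i|\leq (3/16)\cdot 2^{-k_i}$. The disjointness argument of Claim \ref{lem:edgesfar} only compares pairs of edge intervals and therefore transfers immediately to our localized setting, yielding pairwise disjoint balls $B(z_i, (1/16)\cdot 2^{-k_i})$. The geometric claim gives $|x_i - f_k(a)| \leq (23/4)\cdot 2^{-k}$, and combined with $|z_i - x_i| \leq (3/16)\cdot 2^{-k_i}$ and $k_i\geq k$, we obtain $B(z_i, (1/16)\cdot 2^{-k_i}) \subset \overline{B}(f_k(a), 6\cdot 2^{-k})$. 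Lower Ahlfors $s$-regularity of $E$ implies $c[(1/16)\cdot 2^{-k_i}]^s \leq \mathcal{H}^s(E\cap B(z_i, (1/16)\cdot 2^{-k_i}))$, so summing with disjointness and invoking Lemma \ref{lem:TSPflat} gives
\[ \sum_{(k',I')\in\partial T} (\diam f_{k'}(I'))^s \leq 48^s c^{-1}\mathcal{H}^s\left(E\cap \overline{B}(f_k(a), 6\cdot 2^{-k})\right). \]
Taking the supremum over all finite trees $T$ rooted at $(k, I)$ yields the result.

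The main obstacle will be the geometric claim, specifically the bookkeeping needed to identify which sub-intervals contribute expansion at each level and to carefully sum the two distinct sources of expansion (transverse $\alpha$-deviations from each edge sub-interval, and lateral drift of terminal vertices when $I$ contains $0$ or $1$), so that the resulting geometric series fits comfortably below $6\cdot 2^{-k}$.
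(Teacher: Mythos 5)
Your argument is correct and the overall structure matches the paper: fix a finite tree over $(k,I)$, show that the witness balls produced by Claim \ref{lem:edgesfar} all land inside $\overline{B}(f_k(a),6\cdot 2^{-k})$, invoke lower $s$-regularity and disjointness, and take the supremum over trees. The one place where you diverge from the paper is the derivation of the localization bound, i.e., the claim that images at all levels stay within $O(2^{-k})$ of $f_k(a)$. You prove $f_m(I)\subset \overline{B}(f_k(a),(23/4)\cdot 2^{-k})$ by tracking two separate sources of per-step drift — a transverse $2\alpha_0\rho_{m+1}$-tube deviation along edge sub-intervals and a lateral $C^*\rho_{m+1}$ drift at the terminal vertices that occur only when $I$ contains $0$ or $1$ — and summing both geometric series. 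The paper reaches an equivalent bound much more directly: since $a$ is frozen at all subsequent levels ($f_m(a)=f_k(a)$ for all $m\ge k$), and any chain $(k,I)\supset(k+1,J_{k+1})\supset\cdots\supset(k',J_{k'})$ in the tree has $\diam f_j(J_j)<3\cdot 2^{-j}$ by Lemma \ref{lem:TSPflat} (with each new endpoint staying within $\diam f_j(J_j)$ of the old by Lemma \ref{lem:flatdiam}), the image of an edge interval in $\partial T$ lies in $\overline{B}(f_k(a),\sum_{j\ge k}3\cdot 2^{-j})=\overline{B}(f_k(a),6\cdot 2^{-k})$. Your version works — the slack you leave ($41/8$ vs.\ $23/4$, and then $23/4+3/16+1/16=6$) is real — but it trades the paper's single-lemma diameter sum for a finer case analysis that tracks $\alpha_0$ and the construction in \S\ref{sec:flatpoints}/\S\ref{sec:noflatpoints}, and so is more fragile: for instance, the $2\alpha_0\rho_{m+1}$-tube bound requires checking that deviations measured from $\ell_{m,v}$ transfer to the chord $[u,u']$ (a short convexity argument you implicitly rely on). Both approaches buy the same thing; the paper's is cleaner because it only needs Lemma \ref{lem:TSPflat} and Lemma \ref{lem:flatdiam}, which are already in hand.
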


\begin{proof} Let $T$ be a finite tree over $(k,I)$. By Lemma \ref{lem:TSPflat}, the image of any edge interval in $\partial T$ is contained in a ball centered at $f_k(a)$ of radius at most $$3\cdot 2^{-k}+3\cdot 2^{-(k+1)}+\dots=6\cdot 2^{-k}.$$ The conclusion follows by repeating the proof of Lemma \ref{lem:flat3}. \end{proof}

\subsection{Proof of Proposition {\ref{thm:flat-statedagain}}}

With Lemma \ref{lem:flat3} in hand, follow the proof of Theorem \ref{thm:main} in \S\ref{sec:proof1}, \emph{mutatis mutandis}. Construct families of intervals $\mathscr{E}_k'$, $\mathscr{N}_k'$, and $\mathscr{F}_k'$ as in \S\ref{sec:proof1}. We are free to specify the following additional constraints.
\begin{itemize}
\item If $I \in \mathscr{E}_k$, say $I \in \mathscr{E}_k(I_0)$ for some $I_0 \in \mathscr{E}_{k-1}\cup\mathscr{N}_{k-1}$, then the corresponding interval $I' \in \mathscr{E}_k'$ satisfies
\begin{equation}\label{eq:bound}
\diam{I'} = \frac{\mathcal{M}_s(k,I)}{\mathcal{M}_s([0,1])} + \frac{1}{\card(\mathscr{E}_{k}(I_0))}\left ( \diam{I_0} - \sum_{J \in \mathscr{I}_{k}(I_0)}\frac{\mathcal{M}_s(k,J)}{\mathcal{M}_s([0,1])} \right ).
\end{equation}
\item If $I \in \mathscr{F}_k$, then the corresponding interval $I' \in \mathscr{F}_k'$ satisfies $\diam{I'} = 0$. That is, $I'$ is a singleton.
\item If $I \in \mathscr{N}_k$, then the corresponding interval $I' \in \mathscr{N}_k'$ satisfies $\diam{I'} = \dfrac{\mathcal{M}_s(k,I)}{\mathcal{M}_s([0,1])}$.
\end{itemize}

\begin{lem}\label{lem:diamtozero}
For any $\e>0$, there exists $k_0\geq 0$ such that $\diam{I} \leq \e$ for all $k\geq k_0$ and $I\in \mathscr{I}_k'$.
\end{lem}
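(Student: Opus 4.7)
The strategy is to handle intervals in $\mathscr{F}_k$, $\mathscr{N}_k$, and $\mathscr{E}_k$ separately. As a preliminary, I would establish the uniform decay $M(r) := \sup_{x \in E} \mathcal{H}^s(E \cap \overline{B}(x, r)) \to 0$ as $r \to 0$. Because $\mathcal{H}^s \res E$ is a finite Borel measure on the compact set $E$ with no atoms (since $s > 0$), this follows from a compactness argument: if it failed, one could extract $x_n \to x^* \in E$ with $\mathcal{H}^s(E \cap \overline{B}(x_n, r_n)) \geq \delta > 0$ for some $r_n \downarrow 0$, forcing $\mathcal{H}^s(\{x^*\}) \geq \delta$, a contradiction. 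Combined with Corollary \ref{cor:massest}, this yields $\max_{I \in \mathscr{I}_k} \mathcal{M}_s(k, I) \leq 48^s c^{-1} M(6 \cdot 2^{-k}) \to 0$, which immediately disposes of intervals in $\mathscr{F}_k$ (where $\diam I' = 0$) and in $\mathscr{N}_k$ (where $\diam I' = \mathcal{M}_s(k, I)/\mathcal{M}_s([0,1])$).

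For $I = I_k \in \mathscr{E}_k$, I would consider the maximal chain of edge-interval ancestors $I_k \subset I_{k-1} \subset \cdots \subset I_{j_0}$, where either $I_{j_0} \in \mathscr{N}_{j_0}$ or $j_0 = 0$. In both cases the residual $R_{j_0}(I_{j_0}) := \diam I'_{j_0} - \mathcal{M}_s(j_0, I_{j_0})/\mathcal{M}_s([0,1])$ vanishes by the level-$0$ formula and the $\mathscr{N}$-interval specification. Iterating \eqref{eq:bound} along the chain and setting
\[
\Delta_l := \frac{\mathcal{M}_s(l-1, I_{l-1}) - \sum_{J \in \mathscr{I}_l(I_{l-1})} \mathcal{M}_s(l, J)}{\mathcal{M}_s([0,1])} \geq 0,
\]
a short induction yields
\[
R_k(I_k) = \sum_{l = j_0+1}^{k} \frac{\Delta_l}{\prod_{m=l}^{k} \card(\mathscr{E}_m(I_{m-1}))},
\]
and since $\mathcal{M}_s(l, I_l) \leq \sum_J \mathcal{M}_s(l, J)$, the $\Delta_l$'s telescope to $\sum_l \Delta_l \leq \mathcal{M}_s(j_0, I_{j_0})/\mathcal{M}_s([0,1])$.

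Given $\epsilon > 0$, pick $L$ so that $48^s c^{-1} M(6 \cdot 2^{-L})/\mathcal{M}_s([0,1]) < \epsilon/3$. When $j_0 \geq L$ the telescoped bound immediately gives $R_k(I_k) < \epsilon/3$. When $j_0 < L$, split $R_k(I_k) = \sum_{l \leq L} + \sum_{l > L}$: the tail is $\leq \mathcal{M}_s(L, I_L)/\mathcal{M}_s([0,1]) < \epsilon/3$, while the head is at most $(\sum_{l \leq L} \Delta_l)/\prod_{m=L+1}^k \card(\mathscr{E}_m(I_{m-1})) \leq 1/\prod_{m=L+1}^k \card(\mathscr{E}_m(I_{m-1}))$.

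The main obstacle is to show that this product tends to infinity uniformly in the choice of chain. The plan is to combine Lemma \ref{lem:flatcont} with the maximality of $V_{l+1}$: placing a point of $E$ within $|v-v'|/16$ of the midpoint of $f_l(I_l) = [v,v']$ and approximating it by a $V_{l+1}$-point, a short computation shows that $\diam f_l(I_l) > (8/7) \cdot 2^{-l}$ forces $\card(\mathscr{E}_{l+1}(I_l)) \geq 2$. In a no-subdivision step $l-1 \to l$ one has $\diam f_l(I_l) = \diam f_{l-1}(I_{l-1}) \geq 2^{-(l-1)} = 2 \cdot 2^{-l}$ because the endpoints lie in the $2^{-(l-1)}$-separated set $V_{l-1}$, so subdivision is necessarily forced at the next step $l \to l+1$. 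Hence no-subdivision steps are isolated, which gives $\prod_{m=L+1}^k \card(\mathscr{E}_m(I_{m-1})) \geq 2^{\lfloor (k-L-1)/2 \rfloor}$ uniformly over chains; taking $k - L$ sufficiently large makes this exceed $3/\epsilon$. Combining the three contributions, $\diam I' < \epsilon$ for every $I \in \mathscr{I}_k'$ once $k$ is large enough.
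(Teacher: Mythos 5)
Your proof is correct, and it reaches the same conclusion via a chain-of-ancestors argument using Corollary \ref{cor:massest} and the atom-free/vanishing-at-small-scales property of $\Haus^s\res E$, as the paper does. The main place your argument diverges from the paper's is in establishing the multiplicative decay along the chain, and here yours is more complicated than necessary. The paper observes (at the start of its proof of this lemma) that \emph{every} edge interval $J\in\mathscr{E}_l$ has $\card(\mathscr{E}_{l+1}(J))\geq 2$, because $\diam f_l(J)\geq 2^{-l}$; the argument is that by Lemma \ref{lem:flatcont} there is a point $z'\in E$ projecting exactly onto the midpoint of $f_l(J)$, so that $|z'-v|\geq|\pi(z')-\pi(v)|=|v-v'|/2\geq 2^{-l}$ for each endpoint $v$, whence the nearest $V_{l+1}$-point to $z'$ (which is within distance strictly less than $2^{-l}$ by maximality) cannot be $v$ or $v'$, and using separation of projections from Lemma \ref{lem:approx} one checks it is strictly between. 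This gives the clean inductive bound $\diam J_i'\leq\sum_{l=0}^i 2^{l-i}\mathcal{M}_s(l,J_l)/\mathcal{M}_s([0,1])$ with a guaranteed factor of $2$ at every step. You instead work with a naive triangle inequality (not exploiting that $z'$ projects to the midpoint), which only forces subdivision when $\diam f_l(I_l)>(8/7)\cdot 2^{-l}$; you then compensate by arguing that ``no-subdivision steps are isolated,'' getting $2^{\lfloor(k-L-1)/2\rfloor}$. This works, and the telescoping bound $\sum_l\Delta_l\leq\mathcal{M}_s(j_0,I_{j_0})/\mathcal{M}_s([0,1])$ and the three-way decomposition into $\mathcal{M}_s(k,I_k)/\mathcal{M}_s([0,1])$, head, and tail are all fine. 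But your residual-tracking machinery and the isolated-steps argument are entirely dispensable: the sharper use of Lemma \ref{lem:flatcont} shows no-subdivision steps do not occur at all, which is what the paper does, and gives the cleaner bound \eqref{eq:bound2} in one induction.
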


\begin{proof}
Fix $\e>0$. Note that if $J \in \mathscr{E}_l$, then $\card(\mathscr{E}_{l+1}(I)) \geq 2$ by Lemma \ref{lem:flatcont}, because $\diam f_l(J) \geq 2^{-l}$ and $V_{l+1}$ is a maximal $2^{-(l+1)}$-separated set in $E$.

Suppose that $I \in \mathscr{I}_k$. Set $J_k = I$. Inductively, given $J_l \in \mathscr{I}_l$, let $J_{l-1}$ denote the unique interval in $\mathscr{I}_{l-1}$ with $J_l \subset J_{l-1}$. That is,
\[ I = J_k \subset J_{k-1} \subset \cdots \subset J_1 \subset J_0 = [0,1].\]
For each $i \in \{0,\dots,k\}$, let $J_i' \in \mathscr{I}_i'$ be the interval associated to $J_i$. We claim that for each $i \in \{0,\dots,k\}$
\begin{equation}\label{eq:bound2}
\diam{J_i'} \leq \sum_{l=0}^{i}  2^{l-i}\frac{\mathcal{M}_s(l,J_l)}{\mathcal{M}_s([0,1])}
.\end{equation}
We prove (\ref{eq:bound2}) by induction. For $i=0$ the claim is clear. Assume that (\ref{eq:bound2}) is true for $0\leq i<k$. If $J_{i+i} \in \mathscr{F}_{i+1}(J_i)$, then $\diam{J_{i+1}'}=0$ and (\ref{eq:bound2}) is clear. If $J_{i+i} \in \mathscr{F}_{i+1}(J_i)$, then $\diam{J_{i+1}'} = \mathcal{M}_s(i+1,J_{i+1})/\mathcal{M}_s([0,1])$ and (\ref{eq:bound2}) is again clear. If $J_{i+i} \in \mathscr{E}_{i+1}(J_i)$, then by (\ref{eq:bound}) and the induction hypothesis,
\[ \diam{J_{i+1}'} \leq \frac{\mathcal{M}_s(i+1,J_{i+1})}{\mathcal{M}_s([0,1])} + \frac{1}{2}\diam{J_i'} \leq  \sum_{l=0}^{i+1}  2^{l-(i+1)}\frac{\mathcal{M}_s(l,J_l)}{\mathcal{M}_s([0,1])}.\] This establishes \eqref{eq:bound2}.

Choose an integer $l_0$ sufficiently large so that $2^{-l_0}l_0 \leq \e/2$. If $k\geq l_0$, then by (\ref{eq:bound2}) and the fact that $\mathcal{M}_s([0,1])\geq (\diam E)^s=1$,
\[ \diam{I'} \leq \sum_{l=0}^{l_0-1}  2^{l-k}\frac{\mathcal{M}_s(l,J_l)}{\mathcal{M}_s([0,1])} +\sum_{l=l_0}^k  2^{l-k}\frac{\mathcal{M}_s(l,J_l)}{\mathcal{M}_s([0,1])}
\leq \frac{\e}{2}+ 2\,\mathcal{M}_s(l_0,J_{l_0}).\] Thus, by Corollary \ref{cor:massest}, $$\diam {I'} \leq \frac{\epsilon}{2} + 2\,\frac{48^s}{c}\,\sup_{x\in E} \Haus^s(E\cap B(x, 6\cdot 2^{-l_0}))$$ whenever $k\geq l_0$.
Now, because $E$ is compact, $\mathcal{H}^s(E)<\infty$, and $\mathcal{H}^s$ has no atoms, $$\lim_{n\rightarrow\infty}\sup_{x\in E} \mathcal{H}^s(E \cap B(x, 6\cdot 2^{-n}))=0.$$ Hence, by choosing $l_0$ even larger if necessary, we can ensure that $$\sup_{x\in E}  2\,\frac{48^s}{c} \mathcal{H}^s(E\cap B(x, 6\cdot 2^{-l_0})) < \frac{\e}{2}.$$ Therefore, $\diam {I'} < \epsilon$ for all $k\geq l_0$ provided that $l_0$ is sufficiently large depending on $\epsilon$ and $E$.
\end{proof}

Following the proof of Theorem \ref{thm:main} in \S\ref{sec:proof1}, we obtain a sequence of maps $F_k:[0,1]\rightarrow \R^N$ and a $(1/s)$-H\"older continuous map $F:[0,1]\rightarrow\R^N$ satisfying the following properties.
\begin{enumerate}
\item For each $k\geq 0$, there exist (possibly degenerate) closed intervals $[0,a_k]$ and $[b_k,1]$ such that $F_k|[0,a_k]$ and $F|[0,b_k]$ are constant maps and $F(0),F(1)\in V_k$.
\item For each $k\geq 0$, the map $F_k|(a_k,b_k)$ is an injective piecewise linear map connecting $F_k(0)$ to $F_k(1)$ along line segments between points in $V_k$ of length at most $3\cdot 2^{-k}$.
\item If $x\in V_k\setminus\{F_k(0),F_k(1)\}$ for some $k\geq 0$, then $F_k^{-1}(x)=F_j^{-1}(x)$ for all $k\geq j$ (because intervals in $\mathscr{F}_j$ are frozen).
\item The maps $F_k$ converge uniformly to $F$ and $F([0,1])$ contains $\bigcup_{k=0}^\infty V_k$.
\item The H\"older constant of $F$ satisfies $$H \leq \frac{1}{\xi_1}\left(\mathcal{M}_s([0,1])r_0^{1-s} + 60A^* r_0 \frac{\xi_2}{1-\xi_2}\right)
\lesssim_s c^{-1}\mathcal{H}^s(E)(\diam E)^{1-s}.$$\end{enumerate}
It remains to show that $F([0,1])=E$ and $F$ is injective.

On one hand, since each $V_k$ is a maximal $2^{-k}$-separated set in $E$,
$$F([0,1]) \supset \overline{\textstyle\bigcup_{k=0}^\infty V_k} = E$$
by (3). On the other hand, if $x\in F([0,1])$, say $x=F(t)$, then
$$\dist(x,E) \leq \liminf_{k\rightarrow\infty} \dist(F_k(t),E) =0$$
by (2). Thus, $F([0,1])=E$.

To check injectivity, we first establish a lemma. We say that an interval $I$ \emph{separates} two numbers $x < y$ if $x<z<y$ for all $z\in I$.

\begin{lem}\label{lem:inj}
Let $0 \leq x<y\leq 1$. If $k_0$ is the least integer $k\geq 0$ such that there exists an interval in $\mathscr{E}_k'$ separating $x$ and $y$, then $|F_k(x)-F_k(y)| \gtrsim 2^{-k_0}$ for all $k\geq k_0$.
\end{lem}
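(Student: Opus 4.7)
Let $I_0' = (a', b') \in \mathscr{E}_{k_0}'$ be the separating edge interval, and let $v, v' \in V_{k_0}$ be the endpoints of $f_{k_0}(I_0)$, where $I_0 \in \mathscr{E}_{k_0}$ corresponds to $I_0'$; then $|v-v'| \geq 2^{-k_0}$ by (V3). First I would verify that $F_k(a') = v$ and $F_k(b') = v'$ for all $k \geq k_0$: each of $a', b'$ is either the point of an $\mathscr{F}_{k_0}'$ singleton adjacent to $I_0'$ (frozen forever at $v$ or $v'$) or the endpoint of the $\mathscr{N}_{k_0}'$ interval containing $0$ or $1$, whose side adjacent to $I_0'$ continues to map to $v$ or $v'$ at every later step by cases 1a/1b/1c of \S\ref{sec:flatcontinua_algo}. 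This already handles the special case $x = a'$, $y = b'$.

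In general, the plan is to project onto the line $\ell$ through $v$ and $v'$. Let $\pi\colon \R^N \to \ell$ be orthogonal projection, oriented so $\pi(v) < \pi(v')$. The goal is to show that, for every $k \geq k_0$,
\[
\pi(F_k(x)) \leq \pi(v) + c\cdot 2^{-k_0} \quad\text{and}\quad \pi(F_k(y)) \geq \pi(v') - c\cdot 2^{-k_0},
\]
where $c = c(\beta_1) < 1/4$ for $\beta_1$ sufficiently small. Since $\pi$ is $1$-Lipschitz, this yields $|F_k(x) - F_k(y)| \geq |v-v'| - 2c\cdot 2^{-k_0} \geq \tfrac{1}{2}\cdot 2^{-k_0}$. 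Two structural inputs drive the projection bound: (i) by (P4'), (P6'), (P7') and the nesting $V_k \supset V_{k_0}$, the map $f_k$ is a simple path visiting $V_{k_0}$ in a preserved order with $v$ immediately before $v'$, so $F_k(x)$ lies on the sub-path from $f_k(0)$ to $v$ (which never reaches $v'$); and (ii) by the flatness hypothesis \eqref{e:mv-statedagain} applied at scale $\tfrac{1}{4}\cdot 2^{-k_0}$ around $v$ together with Lemma \ref{lem:approx}, the vertices in $V_k \cap B(v, \tfrac{1}{4}\cdot 2^{-k_0})$ are linearly ordered along a line close to $\ell$, with $v'$ on one side of $v$ and the preceding sub-path vertices on the other; hence the final edge of the sub-path before $v$ lies in the half-space $\{z : \pi(z) \leq \pi(v) + O(\beta_1)\cdot 2^{-k_0}\}$. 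A symmetric statement holds near $v'$ for $F_k(y)$.

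The main obstacle will be the case when $F_k(x)$ lies outside $B(v, \tfrac{1}{4}\cdot 2^{-k_0})$, where the local flatness at $v$ does not directly constrain $\pi(F_k(x))$. To address it, I would invoke flatness at $v'$: if $\pi(F_k(x))$ were close to or beyond $\pi(v')$, the sub-path from $f_k(0)$ to $v$ would have to enter the flatness tube around $\ell_{k_0,v'}$ on the $v'$-side of $v$, inserting a vertex of the sub-path strictly between $v$ and $v'$ in projection onto $\ell_{k_0,v'}$, contradicting the fact that these are consecutive vertices in the path at step $k_0$. Combining this dichotomy with the edge-length bound $|v-v'| \leq 3\cdot 2^{-k_0}$ from Lemma \ref{lem:TSPflat} and the triangle inequality then completes the argument.
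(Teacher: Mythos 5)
Your opening observation, that $F_k(a') = v$ and $F_k(b') = v'$ for all $k \geq k_0$, is correct and is consistent with the construction in \S\ref{sec:flatcontinua_algo}. However, the route you take from there diverges sharply from the paper's, and the divergence is where the proof breaks down. The paper's proof splits into two cases. For $k \leq k_0+3$ it uses the \emph{minimality of $k_0$} together with Remark~\ref{rem:lleq5} to bound the number of intervals of $\mathscr{E}_{k_0}'$ separating $x$ from $y$, and hence to place both $F_k(x)$ and $F_k(y)$ inside a ball $B(F_k(a), 45\cdot 2^{-k_0})$; it then projects onto a best-fitting line \emph{for that ball} and uses flatness to compare distances. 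For $k\geq k_0+4$ it runs a telescoping argument along the nested endpoint sequences $(x_i)$ and $(y_i)$. Your proposal never invokes minimality of $k_0$, which should already be a warning sign: that hypothesis is doing real work.

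The concrete gap is in your treatment of the ``main obstacle.'' First, a scale mismatch: since $|v-v'|\geq 2^{-k_0}$ by (V3), the point $v'$ lies outside $B(v,\tfrac14 2^{-k_0})$, so the ordering statement in your item (ii) does not even place $v'$ in the ball you analyze; you would need a radius of at least a few times $2^{-k_0}$. More seriously, once you allow $F_k(x)$ to lie outside a ball of radius $O(2^{-k_0})$ around $v$, the line $\ell=\ell_{v,v'}$ stops being a good proxy for $E$: by Lemma~\ref{lem:flat-twolines}, $\ell$ tracks $E$ only on scales $r$ with $|v-v'|\gtrsim \beta_1 r$, i.e.\ $r \lesssim 2^{-k_0}/\beta_1$, and the angle between $\ell$ and the best-fitting line for $E$ on a much larger ball can be of order $\beta_1 \cdot 2^{k_0}$, which need not be small. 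Your dichotomy asserts that if $\pi(F_k(x))$ were near or beyond $\pi(v')$, the sub-path from $F_k(0)$ to $v$ would have to ``enter the flatness tube around $\ell_{k_0,v'}$.'' But the tube controlled by (V5) is a neighborhood of $\ell_{k_0,v'}$ only inside $B(v', 30A^*\rho_{k_0}r_0)$; a continuous path can cross the affine hyperplane $\{z:\pi(z)=\pi(v')\}$ at a point far from $v'$ and far from that tube, and you have no control there. What your (i) and (ii) actually establish is a half-space constraint for the \emph{final edge} of the sub-path near $v$, not for the whole sub-path, and passing from the former to the latter is precisely the global-monotonicity-of-projection claim you would need to prove separately and do not. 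The paper sidesteps all of this by localizing $F_k(x)$ and $F_k(y)$ into a single ball of size $O(2^{-k_0})$ \emph{before} projecting, which is exactly what minimality of $k_0$ buys, and by never needing a projection estimate at all for $k\geq k_0+4$.
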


\begin{proof}Fix $0 \leq x<y\leq 1$, let $k_0$ be as in the statement of the lemma and let $k\geq k_0$. Let $I_0 \in \mathscr{E}_{k_0}$ be such that $I_0$ separates $x$ and $y$. The proof is divided into two cases.

\emph{Case 1.} Assume that $k\leq k_0 + 3$. By Remark \ref{rem:lleq5} and minimality of $k_0$, there exist at most $13$ intervals $I\in \mathscr{E}_{k_0}'$ separating $x$ from $y$. Therefore, there exist consecutive intervals $J_1,\dots,J_l \in \mathscr{I}_k$ such that $x, y \in \bigcup_{i=1}^l J_i$, $I_0 \subset \bigcup_{i=1}^l J_i$ and $l\leq 15$. Let $a$ be an endpoint of $I_0$. Since $\diam{F_k(J_i)} \leq 3\cdot 2^{-k_0}$ for all $i\in\{1,\dots,l\}$, the points $F_k(x)$ and $F_k(y)$ are in $B := B(F_k(a), 45\cdot 2^{-k_0})$. Let $\ell$ be a best fitting line for $B$ and let $\pi$ be the orthogonal projection on $\ell$.  Since $\beta_E(B) \leq 2^{-11}$, the points of $F_k(\bigcup_{i=1}^l J_i)\cap B$ are linearly ordered according to their projection on $\ell$. In particular, $|z-w| \leq 2|\pi(z)-\pi(w)|$ for all $z,w \in F_k(\bigcup_{i=1}^l J_i)\cap B$. Thus,
\[ |F_k(x) - F_k(y)| \geq |\pi_{\ell}(F_k(x)) - \pi_{\ell}(F_k(y))| \geq \diam{\pi_{\ell}(F_k(I_0))} \geq \frac12\diam{F_k(I_0)} \geq \frac12 2^{-k_0}.\]

\emph{Case 2.} Assume that $k\geq k_0+4$. For each integer $i\geq 0$, let $P_i(x,y)$ be the endpoints of intervals in $\mathscr{E}_{k_0+4+i}' $ lying between $x$ and $y$. Let $x_i$ (resp.~ $y_i$) be the leftmost (resp.~ rightmost) element of $P_i(x,y)$. For all $i\geq 0$,
\[ x \leq x_{i+1} \leq x_i < y_i \leq y_{i+1} \leq y, \]
and $I_0$ separates $x_0$ from $y_0$. As in Case 1, if $\ell$ is a best fitting line for $F_{k_0}(a)$, then
\[ |F_{k_0+3}(x_0) - F_{k_0+3}(y_0)| \geq |\pi_{\ell}(F_{k_0+3}(x_0)) - \pi_{\ell}(F_{k_0+3}(y_0))| \geq \diam{\pi_{\ell}(F_{k_0}(I_0))} \geq \frac9{10}2^{-k_0}.\]
Now, each $x_{i+1}$ (resp.~ $y_{i+1}$) is contained in the closure of an interval in $\mathscr{E}_{k+4+i}'$ which has $x_i$ (resp.~ $y_i$) as an endpoint. This fact along with Lemma \ref{lem:TSPflat} yields
\begin{align*}
|F_{k_0+4+i}(x_i)-F_{k_0+5+i}(x_{i+1})| &\leq 3 \cdot 2^{-k_0-4-i},\\ |F_{k_0+4+i}(y_i)-F_{k_0+5+i}(y_{i+1})| &\leq 3 \cdot 2^{-k_0-4-i}.
\end{align*}
Therefore, by the triangle inequality,
\begin{align*}
|F_k(x) - F_k(y)| &\geq |F_{k_0+4}(x_0)-F_{k_0+4}(y_0)| - \sum_{i=0}^{\infty}|F_{k_0+4+i}(x_i)-F_{k_0+5+i}(x_{i+1})|\\
&\hspace{1in}- \sum_{i=0}^{\infty}|F_{k_0+4+i}(y_i)-F_{k_0+5+i}(y_{i+1})|\\
&\geq \frac9{10}2^{-k_0} - 6\cdot 2^{-k_0-4} - 6\cdot 2^{-k_0-4}.
\end{align*}
Hence $|F_k(x) - F_k(y)| \geq (3/20)2^{-k_0}$ and the proof is complete.
\end{proof}

Suppose that $x,y \in [0,1]$ with $x<y$. By Lemma \ref{lem:diamtozero}, there exist intervals $I\in \mathscr{E}'_k$ that separate $x$ and $y$ provided that $k$ is sufficiently large. If $k_0$ is the least such integer, then $|F(x) - F(y)| \gtrsim 2^{-k_0}>0$ by Lemma \ref{lem:inj}. This shows that $F$ is injective and completes the proof of Proposition \ref{thm:flat-statedagain}.

\subsection{Proof of Lemma {\ref{lem:flatcont}}} \label{sec:prop-continua}

We first give an auxiliary estimate.

\begin{lem}\label{lem:flat-twolines}
Let $E\subset\R^N$, $x\in E$, and $r>0$. If $y\in E\cap \overline{B}(x,r)$, $|y-x|\geq 64\beta_E(\overline{B}(x,r))r$, and $\ell_{x,y}$ is the line passing through $x$ and $y$, then $$\dist(z,\ell_{x,y}) \leq 4\beta_E(\overline{B}(x,r))\left(1+\frac{1.1r}{|y-x|}\right)r\quad\text{for all }z\in E\cap B(x,r).$$
\end{lem}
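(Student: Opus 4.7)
The plan is an elementary coordinate computation. First, set $\beta := \beta_E(\overline{B}(x,r))$, fix $\epsilon > 0$, and choose a line $\ell \subset \R^N$ nearly achieving the infimum defining $\beta_E$, so that $\dist(v,\ell) \leq 2(\beta+\epsilon)r$ for every $v \in E \cap \overline{B}(x,r)$ (the factor $2$ comes from $\diam \overline{B}(x,r) = 2r$). Introduce orthogonal coordinates on $\R^N$ with $\ell$ as the first axis, and write each point $p \in \R^N$ as $p = (p_1, p_\perp)$ with $p_1 \in \R$ and $p_\perp \in \R^{N-1}$. Then $|x_\perp|, |y_\perp|, |z_\perp| \leq 2(\beta+\epsilon)r$.

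The main step is to expand $\dist(z, \ell_{x,y})^2$ using the standard formula
\begin{equation*}
\dist(z, \ell_{x,y})^2 = |z-x|^2 - \frac{\langle z-x, v\rangle^2}{|v|^2}, \qquad v := y-x.
\end{equation*}
Introducing $A := z_1 - x_1$, $B := y_1 - x_1$, $C := |z_\perp - x_\perp|$, $D := |y_\perp - x_\perp|$, and $\eta := \langle z_\perp - x_\perp,\, y_\perp - x_\perp \rangle$ (so $|\eta|\leq CD$), one has $|v|^2 = B^2 + D^2 = |y-x|^2$, $|z-x|^2 = A^2 + C^2$, and $\langle z-x,v\rangle = AB + \eta$. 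A short manipulation yields
\begin{equation*}
\dist(z, \ell_{x,y})^2 = \frac{A^2 D^2}{|y-x|^2} + C^2 - \frac{2AB\eta + \eta^2}{|y-x|^2} \leq \frac{A^2 D^2}{|y-x|^2} + C^2 + \frac{2|A||B|CD}{|y-x|^2}.
\end{equation*}

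Setting $M := 4(\beta+\epsilon)r$ and using $|A|\leq r$, $|B|\leq |y-x|$, $C,D \leq M$, the three terms are bounded by $r^2 M^2/|y-x|^2$, $M^2$, and $2rM^2/|y-x|$, summing to
\begin{equation*}
M^2\left(1 + \frac{r}{|y-x|}\right)^2.
\end{equation*}
Taking the square root and sending $\epsilon \downarrow 0$ gives $\dist(z,\ell_{x,y}) \leq 4\beta r(1 + r/|y-x|)$, which is stronger than the claim. The hypothesis $|y-x| \geq 64\beta r$ is not actually invoked in the estimate itself; it only guarantees that $\ell_{x,y}$ is well-defined (i.e., $y\neq x$). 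No real obstacle is anticipated; the whole point is that the bound factors as a perfect square directly out of Pythagoras.
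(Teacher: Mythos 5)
Your proof is correct, and it takes a genuinely different route from the paper's. The paper projects $y$ and $z$ onto the translate $\ell_x$ of the best-fitting line through $x$, compares $z' = \pi_{\ell_x}(z)$ to a scaled copy $w$ of $y-x$ on $\ell_{x,y}$, and uses the similar-triangles identity $|z'-w| = |y-y'|\,|z'-x|/|y'-x|$; this produces a bound involving $|y'-x|$, which is then converted back to $|y-x|$ via Lemma \ref{lem:approx}. It is precisely that last conversion that consumes the hypothesis $|y-x|\geq 64\beta_E(\overline{B}(x,r))r$ and accounts for the factor $1.1$ in the statement. Your computation instead expands $\dist(z,\ell_{x,y})^2$ directly in a frame aligned with a near-optimal $\ell$, separates the in-line and transverse contributions, and observes that after Cauchy--Schwarz the three bounding terms factor as $M^2(1+r/|y-x|)^2$. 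This bypasses Lemma \ref{lem:approx} altogether, removes the $1.1$, and gives the cleaner bound $4\beta_E(\overline{B}(x,r))r(1+r/|y-x|)$ under no quantitative separation assumption. The algebra checks out: with $A,B,C,D,\eta$ as you define them, one indeed has $\dist(z,\ell_{x,y})^2 = A^2D^2/|v|^2 + C^2 - (2AB\eta+\eta^2)/|v|^2$, and dropping $-\eta^2\leq 0$ and using $|\eta|\leq CD$, $|A|\leq r$, $|B|\leq|v|$, $C,D\leq M$ yields the square. One minor quibble: your remark that the hypothesis ``only guarantees that $\ell_{x,y}$ is well-defined'' is not quite right, since when $\beta_E(\overline{B}(x,r))=0$ the hypothesis reads $|y-x|\geq 0$ and does not preclude $y=x$; the assumption $y\neq x$ is implicit in the phrase ``the line passing through $x$ and $y$'' rather than a consequence of the displayed inequality.
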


\begin{proof} Let $z\in E\cap \overline{B}(x,r)$, $z\neq x$. Let $\ell$ be a best fitting line for $E$ in $\overline{B}(x,r)$. Then $\dist(x,\ell)$, $\dist(y,\ell)$, and $\dist(z,\ell)$ are bounded above by $\beta_E(\overline{B}(x,r)) 2r$. Let $\ell_x=\ell-x$. Then $x\in\ell_x$ and $\dist(y,\ell_x)$ and $\dist(z,\ell_x)$ are bounded above by $2\beta_E(\overline{B}(x,r))2r$. If $y\in\ell_{x}$, then we have $\dist(z,\ell_{x,y}) = \dist(z,\ell_{x}) \leq 2\beta_E(\overline{B}(x,r))2r$ and we are done.

To continue, suppose that $y\not\in\ell_x$ and let $y'=\pi_{\ell_x}(y)$ and let $z'=\pi_{\ell_x}(z)$. Define $$w=x+\frac{|z'-x|}{|y'-x|}(y-x)\in\ell_{x,y}.$$
Since $z' \in \ell_x$ between $x$ and $y'$, we have that $z' = x+|z'-x||y'-x|^{-1}(y'-x)$. Therefore, $$\dist(z',\ell_{x,y}) \leq |z'-w| = |y'-y|\frac{|z'-x|}{|y'-x|} \leq |y'-y|\frac{r}{|y'-x|}.$$ Thus, by the triangle inequality, $$\dist(z,\ell_{x,y}) \leq |z'-z| + |y'-y|\frac{r}{|y'-x|} \leq 2\beta_E(\overline{B}(x,r))\left(1+ \frac{r}{|y'-x|}\right)2r.$$ Since $\dist(y,\ell_x) \leq 2\beta_E(\overline{B}(x,r))2r \leq (1/16)|x-y|,$ we have $$1.1|y'-x| \geq (1+3(1/16)^2)|y'-x|\geq |y-x|$$ by Lemma \ref{lem:approx}, applied with $V=\{x,y\}$. Therefore, \begin{equation*}\dist(z,\ell_{x,y}) \leq 2\beta_E(\overline{B}(x,r))\left(1+\frac{1.1r}{|y-x|}\right)2r. \qedhere\end{equation*}\end{proof}

We now give a proof of the key lemma.

\begin{proof}[Proof of Lemma {\ref{lem:flatcont}}] Without loss of generality, we may assume that $\diam{E}=1$. Fix $x,y\in E$ and let $n\geq 0$ be the unique integer such that
\[ 2^{-(n+1)} < |x-y| \leq 2^{-n}.\]

\emph{Case 1.} Suppose that $n\in\{0,1\}$. Let $\ell$ be the line containing $x$ and $y$. Since $\beta_E(\overline{B}(x,1)) \leq 2^{-11}$ and since $|x-y|>2^{-2}$, by Lemma \ref{lem:flat-twolines} we have that
\begin{equation*}\begin{split} \sup_{w\in E}\dist(w,\ell) = \sup_{w\in E\cap \overline{B}(x,1)}\dist(w,\ell)  &\leq \left( 1+ \frac{1.1}{|x-y|}\right ) 4\beta_E(\overline{B}(x,1)) \\ &\leq \frac{21.6}{2^{11}} \leq \frac{1}{2^{6}} < \frac{|x-y|}{2^4}.\end{split}\end{equation*}
Therefore, $E$ is contained inside the tube $T := \overline{B}(x,1) \cap B(\ell,2^{-4}|x-y|)$. Let $D$ be a closed $(N-1)$-ball centered at $z$, perpendicular to $\ell$ and of radius $2^{-4}|x-y|$. In other words, $D$ is the set of all points in $\overline{B}(z,2^{-6})$ whose projection on $\ell$ is $z$. Then $D$ cuts $ T$ into two pieces, one containing $x$ and another containing $y$. By connectedness of $E$, we must have $D\cap E \neq \emptyset$.

\emph{Case 2.} Suppose that $n\geq 2$. The procedure here is roughly the same as that in Case 1, with the difference that the tube $T$ is replaced by a more complicated set. By connectedness of $E$, for each $k\in \{1,\dots, n-1\}$, there exists a point $y_k \in B(x,2^{-k})\cap E$. For each $k\in \{1,\dots,n-1\}$ let $\ell_k$ be the line containing $x$ and $y_k$. Let also $\ell_n$ be the line containing $x$ and $y$.

Working as in Case 1, we can show that for each $k\in\{1,\dots,n\}$,
\[ E\cap \overline{B}(x,2^{-(k-1)}) \subset T_k := \overline{B}(x,2^{-(k-1)})\cap B(\ell_k,2^{-5} 2^{-(k-1)}).\]
Since $\diam{E} =1$, we also have $E \subset T_1$. For each $k\in \{1,\dots,n-1\}$, let $T_{k,1}$, $T_{k,2}$ be the two components of $T_k \setminus \overline{B}(x, 2^{-k})$. Set
\[ T = T_{1,1}\cup\cdots\cup T_{n-1,1} \cup T_n \cup T_{n-1,2}\cup\cdots\cup T_{1,2}.\]
The sets $T_{1,1}, \dots,  T_{n-1,1},  T_n ,  T_{n-1,2},\dots, T_{1,2}$ intersect at most in pairs. In particular,
\begin{enumerate}
\item if $i\in\{1,2\}$, then $T_{1,i}\cap T_{m,j} = \emptyset$ unless $m\in\{1,2\}$ and $j=i$;
\item if $k\in\{2,\dots,n-2\}$ (if any) and $i\in\{1,2\}$, then $T_{k,i} \cap T_{m,j} = \emptyset$ unless $m\in\{k-1,k,k+1\}$ and $j=i$;
\item if $i\in\{1,2\}$, then $T_{n-1,i}\cap T_{m,j} = \emptyset$ unless $m\in\{n-2,n-1\}$ and $j=i$;
\item $T_n \cap T_{m,j} = \emptyset$ unless $m=n-1$.
\end{enumerate}
As with Case 1, if $D$ is an $(N-1)$-ball centered at $z$, perpendicular to $\ell_n$ and of radius $2^{-4}2^{-n}$, then  $D$ cuts $T_{n}$ into two pieces, one containing $x$ and another containing $y$. Consequently, $D$ cuts $T$ into two pieces, one containing $x$ and another containing $y$. By connectedness of $E$ and the fact that $E\subset T$, we must have $D\cap E \neq \emptyset$. \end{proof}

\section{Examples}\label{sec:examples}

In this section, we give examples of H\"older curves and of sets that are not contained in H\"older curves to illuminate Theorem \ref{thm:main2}, Proposition \ref{thm:flat}, and Theorem \ref{thm:main}.

\subsection{H\"older curves that are non-flat in all scales}\label{sec:notnec}  First up, we show that condition (\ref{eq:thm2}) in Theorem \ref{thm:main2} is not necessary for a bounded set to be contained in a $(1/s)$-H\"older curve when $s>1$. In contrast, when $s=1$, condition \eqref{eq:betasum} in the Analyst's Traveling Salesman theorem is necessary and sufficient for a bounded set to be contained in a rectifiable curve.

Let $N\geq 2$ and $1\leq m \leq N-1$ be integers. Given a nonempty set $E \subset \R^N$ and an $N$-cube $Q\subset\R^N$ with $E\cap Q \neq \emptyset$, define the $m$-dimensional beta number
\[ \beta_{E}^{(m)}(Q) := \inf_{P} \sup_{x\in E\cap Q}\frac{\dist(x,P)}{\diam{Q}}\]
where the infimum is taken over all $m$-planes $P$ in $\R^N$. If $E\cap Q = \emptyset$, set $\beta_{E}^{(m)}(Q) = 0$. Note that $\beta_{E}^{(1)}(Q) =\beta_E(Q)$ as defined in \textsection\ref{sec:intro} and that $\beta_{E}^{(m)}(Q) \leq \beta_{E}^{(n)}(Q)$ whenever $m\geq n$.

\begin{prop}\label{ex:notnec}
For any $N\geq 2$ and any $s\in (1,N]$, there exists a $(1/s)$-H\"older curve $E\subset \R^N$ such that
\begin{equation}\label{eq:notnec}
\sum_{\substack{Q\in\D(\R^N)\\ \beta_{E}^{(N-1)}(3Q)\geq (6\sqrt{N})^{-1}}} (\diam{Q})^s = \infty.
\end{equation}
\end{prop}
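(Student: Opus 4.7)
The goal is to exhibit, for each $s\in(1,N]$, a self-similar fractal $E\subset\R^N$ of Hausdorff dimension exactly $s$ that (a) is a $(1/s)$-H\"older curve, (b) contains the $2^N$ vertices of $[0,1]^N$, and (c) satisfies the open set condition with an iterated function system $\{\phi_i\}_{i=1}^M$ of similarities with (possibly unequal) contraction ratios $r_1,\dots,r_M$ normalized by $\sum_{i=1}^M r_i^s=1$. Self-similarity and Ahlfors $s$-regularity will then produce, at every scale, many dyadic cubes on which $\beta_E^{(N-1)}$ is bounded below by $(6\sqrt N)^{-1}$, and the claimed divergence will follow.

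For the endpoint $s=N$, take $E=[0,1]^N$, parameterized as the image of the classical Peano space-filling curve. For $s\in(1,N)$, one constructs $E$ with the prescribed properties using a ``fattened'' sponge-type construction with unequal contraction ratios chosen to pin the Moran dimension to the target value $s$; the vertex condition can be arranged by including similarities whose fixed points are the vertices of $[0,1]^N$, and connectivity by adding bridging similarities as needed. By Remes's theorem cited in \S\ref{sec:intro}, such $E$ is a $(1/s)$-H\"older curve.

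The key uniform beta lower bound rests on the observation that for every $(N-1)$-affine plane $P\subset\R^N$, some vertex of $[0,1]^N$ lies at distance at least $\tfrac12$ from $P$. Indeed, writing $P=\{x:\langle\nu,x\rangle=a\}$ with $|\nu|=1$ and optimizing over $a$, one obtains $\max_{v\in\{0,1\}^N}\dist(v,P)\geq\tfrac12\sum_{i=1}^N|\nu_i|\geq\tfrac12\|\nu\|_2=\tfrac12$. By similarity, each sub-copy $\phi_\omega(E)$ sits inside its bounding cube $\phi_\omega([0,1]^N)$ of side $r_\omega$, contains the corresponding scaled vertices, and therefore satisfies $\sup_{x\in\phi_\omega(E)}\dist(x,P)\geq r_\omega/2$ for every $(N-1)$-plane $P$. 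Choose a dyadic cube $Q_\omega$ of minimal side length $\ell_\omega$ with $\phi_\omega([0,1]^N)\subset 3Q_\omega$; a direct translation-of-dyadic-cubes argument shows such $Q_\omega$ exists with $\ell_\omega\in[r_\omega/2,r_\omega)$, so $\diam 3Q_\omega=3\ell_\omega\sqrt N<3r_\omega\sqrt N$, and hence
\[
\beta_E^{(N-1)}(3Q_\omega)\;\geq\;\frac{r_\omega/2}{3\ell_\omega\sqrt N}\;>\;\frac{1}{6\sqrt N}.
\]

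To conclude, Ahlfors $s$-regularity of $E$ (a standard consequence of the OSC together with $\sum_i r_i^s=1$) implies that at each integer scale $k\geq 0$ there are $\gtrsim 2^{ks}$ distinct words $\omega$ with $r_\omega\simeq 2^{-k}$, and the associated dyadic cubes $Q_\omega$ have bounded overlap. Each such cube contributes $(\diam Q_\omega)^s\simeq 2^{-ks}$ to the left-hand side of \eqref{eq:notnec}, making the scale-$k$ contribution bounded below by a positive constant and the full sum divergent. The main obstacle is the construction in the second paragraph: producing, for each real $s\in(1,N)$, a compact connected self-similar set of exact dimension $s$ with OSC and the prescribed vertex condition. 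Once such an $E$ is in hand, the beta bound and divergence are essentially immediate from self-similarity and Ahlfors regularity.
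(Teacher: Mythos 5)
Your plan coincides with the paper's: for $s=N$ take $[0,1]^N$, and for $s\in(1,N)$ build a compact, connected, self-similar set $E$ of dimension $s$ with the open set condition whose every sub-copy $\phi_\omega([0,1]^N)$ contributes all $2^N$ cube vertices to $E$; then exploit that no $(N-1)$-plane can lie within $(2\sqrt N)^{-1}\diam K$ of every vertex of a cube $K$ (your $\|\nu\|_1\geq\|\nu\|_2$ computation gives the same bound as the paper's \eqref{eq:sierp4}), combine with Ahlfors $s$-regularity to produce $\gtrsim 2^{ks}$ dyadic cubes of side $\simeq 2^{-k}$ with $\beta_E^{(N-1)}(3Q)\geq(6\sqrt N)^{-1}$ at each scale, and sum. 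Your dyadic cube selection ($\ell_\omega\in[r_\omega/2,r_\omega)$ always achievable) and the resulting beta lower bound are both correct, and Remes's theorem is the right tool for the H\"older parameterization.

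The genuine gap is exactly the step you flag as ``the main obstacle'': you never actually construct $E$ for $s\in(1,N)$, and this is where the real content lies. Your sketch — similarities fixed at the $2^N$ vertices plus ``bridging similarities as needed'' — is not a construction: placing $2^N$ maps with ratio $r$ tuned by $2^N r^s=1$ forces $r<1/2$ when $s<N$, yielding a disconnected Cantor set, and once bridges are added the dimension constraint $\sum r_i^s=1$ must be re-solved, which is not automatic for an arbitrary real $s$. The paper handles this cleanly: take the $l=P_{N,\lfloor s\rfloor}(n)$ sub-cubes of side $1/n$ meeting the $\lfloor s\rfloor$-skeleton of $\partial[0,1]^N$ (already connected and containing the $1$-skeleton, hence all cube vertices) plus one interior filler cube of tunable side $\lambda$. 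Because $\deg P_{N,\lfloor s\rfloor}=\lfloor s\rfloor$ lies strictly between $s-1$ and $s$ when $s\notin\N$, one can choose $n$ so that $n^s-(n-2)^s<P_{N,\lfloor s\rfloor}(n)<n^s-1$, and then the intermediate value theorem yields $\lambda\in(1/n,1-2/n)$ with $P_{N,\lfloor s\rfloor}(n)n^{-s}+\lambda^s=1$; the case $s\in\{2,\dots,N-1\}$ is treated separately with equal ratios. Without this (or an equivalent) construction, you have the right plan but not a proof.
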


The construction splits into three cases. Before proceeding, we introduce some notation. Given a cube $Q\subset \R^N$, denote by $\D(Q)$ the set of dyadic cubes in $\D(\R^N)$ that are contained in $Q$. Moreover, given positive integers $m \leq N$, there exists a polynomial $P_{N,m}$ of degree $m$ with the following property: If $n\in\N$ and $\{Q_1,\dots,Q_{N^n}\}$ is a partition of $[0,1]^N$ into $N$-cubes of side-length $1/n$, then
\[ \card\{Q_i : \text{$Q_i$ intersects the $m$-skeleton of $\partial [0,1]^N$}\} = P_{N,m}(n).\]
Recall that if $I_1,\dots,I_N$ are nondegenerate compact intervals, and $Q=I_1\times\cdots\times I_N$ is an $N$-cube, then the $m$-skeleton of $Q$ is the union of sets $I_1'\times\cdots\times I_N'$ where $I_j' = I_j$ for $m$ indices $j$ and $I_j' = \partial I_j$ for the remaining $N-m$ indices $j$. Finally, we note that if $K$ is the set of vertices of a cube $Q$ in $\R^N$ and $P$ is an $(N-1)$-plane, then
\begin{equation}\label{eq:sierp4}
\dist(x,P) \geq (2\sqrt{N})^{-1}\diam{K} \qquad\text{for all $x\in K$}.
\end{equation}

\emph{Case 1: $s=N$.} We simply take $E=[0,1]^N$. It is well known that there exists a $(1/N)$-H\"older parametrization $f:[0,1]\to E$. On the other hand, by (\ref{eq:sierp4}),
\[\sum_{\substack{Q\in\D(\R^N)\\ \beta_{E}^{(N-1)}(3Q)\geq (2\sqrt{N})^{-1}}} (\diam{Q})^N \geq \sum_{k=0}^{\infty}\sum_{\substack{Q\in\D([0,1]^N)\\ \diam{Q} = \sqrt{N}2^{-k}}}(\diam{Q})^N = \sum_{k=0}^{\infty} 2^{Nk}(\sqrt{N}2^{-k})^N = \infty.\]

\emph{Case 2: $s\in(1,N)\setminus \N$.}  Let $m$ be the integer part of $s$. Since the degree of $P_{N,m}$ is strictly less than $s$ and strictly lager than $s-1$, we can fix $n\in\N$ such that
\begin{equation}\label{eq:sierp1}
n^s - (n-2)^s < P_{N,m}(n) < n^s - 1.
\end{equation}
By (\ref{eq:sierp1}) and the Intermediate Value Theorem, there exists $\lambda \in (\frac1{n},1-\frac2{n})$ such that
\begin{equation}\label{eq:sierp2}
P_{N,m}(n) n^{-s} + \lambda^s = 1.
\end{equation}
Partition $[0,1]^N$ into $N$-cubes of side-lengths $1/n$ and let $\{Q_i\}_{i=1}^{l}$ ($l=P_{N,m}(n)$) be those cubes that intersect the $m$-skeleton of $[0,1]^N$. Let also $Q_0 = [1/n, 1/n+\lambda]^N$. For each $i=0,\dots,l$, let $\phi_i$ be a similarity of $\R^N$ such that $\phi_i([0,1]^N) = Q_i$. Finally, define $E\subset \R^N$,
\[ E := \bigcap_{k=1}^{\infty} \bigcup_{i_1\cdots i_k \in \{0,\dots,l\}^k}\phi_{i_1}\circ\cdots\circ\phi_{i_k}([0,1]^N). \]
Since the maps $\{\phi_0,\dots,\phi_l\}$ satisfy the open set condition, $E$ is Ahlfors regular \cite{hutchinson}. By (\ref{eq:sierp2}) the Hausdorff dimension of $E$ is equal to $s$, so $E$ is $s$-regular.

\begin{lem}\label{lem:Eisconn}
The set $E$ is connected.
\end{lem}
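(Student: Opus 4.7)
The plan is to exhibit $E$ as the intersection of a nested decreasing sequence of compact connected sets and then invoke the elementary fact that such an intersection is connected. Concretely, I set $K_0 := [0,1]^N$ and recursively $K_{k+1} := \bigcup_{i=0}^{l} \phi_i(K_k)$, so that each $K_k$ is compact, $K_0 \supset K_1 \supset \cdots$, and $E = \bigcap_{k\geq 0} K_k$. It therefore suffices to show by induction on $k\geq 0$ that each $K_k$ is connected.

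The base case is the core geometric input: I must show $K_1 = Q_0 \cup \bigcup_{i=1}^l Q_i$ is connected. Since we are in Case 2, we have $m = \lfloor s \rfloor \in \{1,\dots,N-1\}$, so the $m$-skeleton of $[0,1]^N$ is connected. The shell $\bigcup_{i=1}^{l} Q_i$ consists of the closed $1/n$-grid cubes meeting this skeleton; writing each grid cube by its multi-index $(a_1,\dots,a_N) \in \{0,1,\dots,n-1\}^N$, the shell cubes are precisely those with at least $N-m$ coordinates in $\{0,n-1\}$, and a direct combinatorial argument (moving one coordinate at a time while keeping enough coordinates pinned at $0$ or $n-1$) shows the shell is connected as a subset of $\R^N$. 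The central cube $Q_0 = [1/n, 1/n+\lambda]^N$ meets the shell cube $Q_{i_0} := [0, 1/n]^N$ at the single point $(1/n,\dots,1/n)$, attaching $Q_0$ to the shell and establishing connectedness of $K_1$.

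For the inductive step, assume $K_k$ is connected, so each $\phi_i(K_k) \subset Q_i$ is connected. A parallel induction shows $\{0,1\}^N \subset K_k$ for every $k$: each vertex $v \in \{0,1\}^N$ is a vertex of the unique grid cube $Q_i$ with multi-index in $\{0,n-1\}^N$ containing it (which is a shell cube, so $i \geq 1$), and $\phi_i^{-1}(v) \in \{0,1\}^N \subset K_k$ by hypothesis, giving $v \in \phi_i(K_k) \subset K_{k+1}$. Given this, for any pair $i \neq j$ with $Q_i \cap Q_j \neq \emptyset$ I pick a common vertex $v$ of $Q_i$ and $Q_j$ (a shared grid vertex when $i,j \geq 1$, or the point $(1/n,\dots,1/n)$ when $\{i,j\} = \{0,i_0\}$). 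Since $\phi_i$ is a similarity mapping $[0,1]^N$ onto $Q_i$, the preimage $\phi_i^{-1}(v)$ is a vertex of $[0,1]^N$ and hence lies in $\{0,1\}^N \subset K_k$; the same holds for $j$. Therefore $v \in \phi_i(K_k) \cap \phi_j(K_k)$, and combined with the connectedness of the adjacency graph on $\{Q_0,Q_1,\dots,Q_l\}$ established in the base case, $K_{k+1}$ is connected.

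The main obstacle will be the geometric base case, specifically verifying connectedness of the shell $\bigcup_{i=1}^l Q_i$. This depends crucially on the hypothesis $m \geq 1$ (equivalently $s>1$), because for $m=0$ the skeleton would consist of finitely many isolated vertices and the shell would decompose into $2^N$ disjoint corner cubes. Once the base case is secured, the inductive step is a routine reduction using the shared-vertex mechanism.
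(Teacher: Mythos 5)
Your proof is correct and genuinely different from the paper's. The paper takes a path-construction approach: it introduces the $1$-skeleton $\mathcal{K}_w$ of each cube $\phi_w([0,1]^N)$, observes that (i) $\mathcal{K}_w\subset E$ and (ii) $\mathcal{K}_w\cap\mathcal{K}_{wi}\neq\emptyset$ for every letter $i$, and joins a given $x\in E$ to the origin via $\bigcup_n\mathcal{K}_{w_n}$ along the address sequence of $x$. You instead represent $E$ as a decreasing intersection of compact connected sets $K_k = \bigcup_{|w|=k}\phi_w([0,1]^N)$ and invoke the standard fact that such an intersection is connected, driving the induction with the invariant $\{0,1\}^N\subset K_k$ and the shared-vertex mechanism. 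Your nested-intersection approach is the more standard technique for IFS attractors, and it is in fact more robust here: the paper's observation (ii) as stated can fail when $m\geq 2$ --- a shell cube can touch the $m$-skeleton of $[0,1]^N$ while missing its $1$-skeleton entirely (e.g.\ for $N=3$, $m=2$, the cube with multi-index $(0,1,1)$ meets a $2$-face but no edge of $[0,1]^3$), so $\text{$1$-skel}([0,1]^N)\cap\text{$1$-skel}(Q_i)$ can be empty --- whereas your argument never requires the $1$-skeletons of consecutive generations to chain up. One small imprecision to flag in your write-up: it is not true that every pair $i\neq j$ with $Q_i\cap Q_j\neq\emptyset$ shares a vertex, because $Q_0$ has a different side length and can overlap a shell cube in a positive-dimensional face without sharing a corner; but your proof only uses the shared-vertex property for shell--shell pairs (where grid cubes of equal size that intersect always share a corner) and for the pair $\{0,i_0\}$, and these edges already span a connected adjacency graph, so the argument goes through.
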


\begin{proof}
Set $\mathcal{W} = \bigcup_{k\geq 0}\{0,\dots,l\}^k$ with the convention that $\{0,\dots,l\}^0$ is the empty word $\emptyset$ and $\phi_{\emptyset}$ is the identity map of $\R^N$ For each $w\in\W$, let $\mathcal{K}_w$ denote the $1$-skeleton of $\phi_w([0,1]^N)$. The proof is based now on two observations. First, by the choice of cubes $Q_1,\dots,Q_l$, it follows that $\mathcal{K}_w \subset E$ for all $w\in\W$. Second, $\mathcal{K}_w \cap \mathcal{K}_{wi} \neq \emptyset$ for all $w\in\W$ and $i\in\{0,\dots,l\}$.

Now fix $x\in E$. There exists a sequence of words $(w_n)_{n\geq 0}$ in $\W$ such that $w_0$ is the empty word, $w_{n+1} = w_ni_n$ with $i_n \in \{0,\dots,l\}$, and $x \in \bigcap_{n\geq 0}\phi_{w_n}(x)$. The set $\bigcup_{n\geq 0}\mathcal{K}_{w_n}$ is a path that joins $x$ with the origin. Hence $E$ is connected.
\end{proof}

By Lemma \ref{lem:Eisconn}, the fact that the Hausdorff dimension of $E$ is $s$, and Theorem 4.12 in \cite{Remes}, there exists a $(1/s)$-H\"older map $f:[0,1] \to \R^N$ such that $f([0,1]) = E$. It remains to show (\ref{eq:notnec}). We first prove a lemma.

\begin{lem}\label{lem:notnec}
If $Q \in \D([0,1]^N)$ is a dyadic cube that intersects $E$, then there exists a dyadic cube $Q'\subset 3Q$ such that $\diam{Q'}\geq (3n)^{-1}\diam{Q}$ and $\beta_{E}^{(N-1)}(3Q') \geq (6\sqrt{N})^{-1}$.
\end{lem}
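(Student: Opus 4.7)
My approach would be to exploit the self-similarity of $E$ to produce inside the dyadic cube $Q$ a small self-similar copy of $[0,1]^N$ whose vertices all lie in $E$; since the vertices of any $N$-cube cannot all be close to a single $(N-1)$-plane by \eqref{eq:sierp4}, this will force $\beta^{(N-1)}_E$ to be large at a suitable dyadic scale.

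Concretely, fix any $x\in Q\cap E$, set $s=\side(Q)$, and choose an address $(i_j)_{j\ge 1}\subset\{0,\dots,l\}$ of $x$, so that $x\in R_j := \phi_{i_1}\circ\cdots\circ\phi_{i_j}([0,1]^N)$ for every $j\ge 0$ (with $R_0=[0,1]^N$). Each $R_j$ is an axis-parallel cube of side $\sigma_j=\prod_{m=1}^j c_m$ with $c_m\in\{\lambda,1/n\}$, and since $\lambda>1/n$ one has $\sigma_j/\sigma_{j-1}\ge 1/n$. Crucially, by the argument in the proof of Lemma~\ref{lem:Eisconn}, the entire $1$-skeleton of each $R_j$---and in particular each of its $2^N$ vertices---lies in $E$. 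I would then place $R_j$ at a scale slightly smaller than $\side(Q)$ and take $Q'$ to be a dyadic cube of comparable side surrounding $R_j$.

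For the scale selection, let $j\ge 1$ be the smallest index with $\sigma_j\le 2s/3$; such a $j$ exists because $s\le 1$ and $\sigma_j\downarrow 0$, and the threshold $2s/3$ will turn out to be exactly the right one. Minimality of $j$ gives $\sigma_{j-1}>2s/3$, whence $\sigma_j\ge \sigma_{j-1}/n>2s/(3n)$. Next, pick the integer $k'\ge 0$ with $\sigma_j/2\le 2^{-k'}\le \sigma_j$, and let $Q'$ be a dyadic cube of side $2^{-k'}$ containing the center $c$ of $R_j$. The conclusion then reduces to three short $\ell^\infty$ triangle-inequality checks: (a) $\diam Q'=\sqrt{N}\cdot 2^{-k'}\ge \sqrt{N}\sigma_j/2\ge (3n)^{-1}\diam Q$; (b) $R_j\subseteq 3Q'$, since $c\in Q'$ forces every $y\in R_j$ to satisfy $\|y-\mathrm{center}(Q')\|_\infty\le \sigma_j/2+2^{-k'}/2\le 3\cdot 2^{-k'}/2$; and (c) $Q'\subseteq 3Q$, since every $w\in Q'$ satisfies $\|w-\mathrm{center}(Q)\|_\infty\le 2^{-k'}+\sigma_j/2+s/2\le \sigma_j+\sigma_j/2+s/2\le 3s/2$. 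Once $R_j\subseteq 3Q'$ is in hand, for any $(N-1)$-plane $P$ the bound \eqref{eq:sierp4}, applied to the $2^N$ vertices of $R_j$ (all of which lie in $E\cap 3Q'$), yields $\sup_{y\in E\cap 3Q'}\dist(y,P)\ge (2\sqrt{N})^{-1}\diam R_j=\sigma_j/2$, and therefore
\[
\beta^{(N-1)}_E(3Q')\ \ge\ \frac{\sigma_j/2}{3\sqrt{N}\cdot 2^{-k'}}\ \ge\ \frac{1}{6\sqrt{N}},
\]
where the final inequality uses $2^{-k'}\le \sigma_j$.

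The main (mild) obstacle is the simultaneous scale juggling: $\sigma_j$ must be small enough that $R_j$, together with a surrounding dyadic cube $Q'$, fits inside $3Q$, yet large enough that $\sigma_j\gtrsim s/n$ so that $\diam Q'$ meets the required lower bound; the threshold $\sigma_j\le 2s/3$ threads this needle exactly, and the companion pair of inequalities $\sigma_j/2\le 2^{-k'}\le \sigma_j$ ensures that $Q'$ is simultaneously big enough (relative to $\sigma_j$) to guarantee $\beta^{(N-1)}_E(3Q')\ge 1/(6\sqrt{N})$ and small enough (relative to $s$) to sit inside $3Q$.
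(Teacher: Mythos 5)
Your proof is correct and follows essentially the same approach as the paper: trap a self-similar copy $R_j$ of $[0,1]^N$ with its $2^N$ vertices in $E$ inside $3Q'$ for a dyadic $Q'\subset 3Q$ of comparable side, then apply \eqref{eq:sierp4} to that vertex set. The only difference is bookkeeping --- you stop at the first $j$ with $\sigma_j\le 2s/3$ and pick $Q'$ to be a dyadic cube of a chosen dyadic scale near the center of $R_j$, whereas the paper stops at the first $k_0$ with $R_{k_0}\subset 3Q$ and takes $Q'$ to be the smallest dyadic cube in $\D(3Q)$ with the vertex set $K\subset 3Q'$; both routes yield the same bounds.
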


\begin{proof}
Fix $x\in Q\cap E$ and let $i_1,i_2,\dots$ be a sequence of numbers in $\{0,\dots,l\}$ such that
\[ x\in \bigcap_{k=1}^{\infty}\phi_{i_1}\circ\cdots\circ\phi_{i_k}([0,1]^N).\]
Let $k_0$ be the smallest positive integer such that $\phi_{i_1}\circ\cdots\circ\phi_{i_{k_0}}([0,1]^N) \subset 3Q$ and define $K$ to be the set of vertices of $\phi_{i_1}\circ\cdots\circ\phi_{i_{k_0}}([0,1]^N)$. Since each $\phi_i$ has a scaling factor at least $1/n$, by minimality of $k_0$ we have that $\diam{K} \geq (1/n)\diam{Q}$.

Let $Q'$ be a dyadic cube in $\D(3Q)$ (possibly $Q'=Q$) of minimal diameter such that $K\subset 3Q'$. We claim that
\begin{equation}\label{eq:sierp3}
\frac{1}3\diam{K} \leq \diam{Q'} \leq \diam{K}.
\end{equation}
The lower inequality is clear. If $\diam{K} < \diam{Q}$, then, since $K$ has edges parallel to the axes, $K$ is contained in $3Q_0$ for some dyadic cube $Q_0\subset 3Q$ with $\diam{Q_0} = \frac12 \diam{Q'}$, which is a contradiction. That establishes the upper inequality of (\ref{eq:sierp3}).

By (\ref{eq:sierp4}), (\ref{eq:sierp3}), and the fact that $K\subset E$,
\[ \beta_{E}^{(N-1)}(3Q') \geq \beta_{K}^{(N-1)}(3Q') \geq \frac{(2\sqrt{N})^{-1}\diam{K}}{\diam{3Q}} = \frac{(6\sqrt{N})^{-1}\diam{K}}{\diam{Q}} \geq (6\sqrt{N})^{-1}. \] This proves the lemma.
\end{proof}

By Ahlfors $s$-regularity of $E$, there exists a constant $C>1$ such that
\[ \card\{Q\in \D([0,1]^N) : \diam{Q} = \sqrt{N}2^{-k} \text{ and } Q \cap E \neq \emptyset\} \geq C^{-1} 2^{sk}.\]
Fix a positive integer $k_0$ such that $2^{k_0} >3n$. For $k\in\N$, set
\[ \mathcal{Q}_k = \{Q\in \D([0,1]^N) : \diam{Q} \in [\sqrt{N}2^{-k}, \sqrt{N}2^{-k-k_0}] \text{ and } \beta_{E}^{(N-1)}(3Q) \geq (6\sqrt{N})^{-1}\} .\]
By Lemma \ref{lem:notnec},
\begin{align*}
\card{\mathcal{Q}_k} \geq 3^{-N}\card\{Q\in \D([0,1]^N) : \diam{Q} = \sqrt{N}2^{-k} \text{ and } Q \cap E \neq \emptyset\} \geq C^{-1}3^{-N} 2^{sk}.
\end{align*}
Therefore,
\begin{align*}
\sum_{\substack{Q\in\D(\R^N)\\ \beta_{E}^{(N-1)}(3Q) \geq (6\sqrt{N})^{-1}}} (\diam{Q})^N &\geq \sum_{k=0}^{\infty}\sum_{Q \in \mathcal{Q}_{kk_0}}(\diam{Q})^N \\
 &\geq \sum_{k=0}^{\infty} C^{-1}3^{-N} 2^{skk_0} (\sqrt{N})^{s}2^{-s(k+1)k_0} = \infty.
 \end{align*}

\emph{Case 3: $s\in\{2,\dots,N-1\}$.} Fix $n\in\N$ large enough so that $P_{N,s-1}(n) < n^s$. Partition $[0,1]^N$ into $N$-cubes with disjoint interiors and side-lengths $1/n$ and let $\{Q_1,\dots,Q_{l}\}$ ($l=n^s$) be a collection of such cubes so that the set $\bigcup_{k=1}^l Q_i$ is connected and contains the $(s-1)$-skeleton of $[0,1]^N$. The rest of the construction is similar to Case 2 and is left to the reader.

\subsection{Ahlfors regular curves without H\"older parametrizations}\label{sec:regularcurve} Next, for all $s>1$, we construct Ahlfors $s$-regular curves that are not contained in any $(1/s)$-H\"older curve. The basic strategy is take a disconnected set, which is not contained in a H\"older curve, and then extend the set to transform it into an $s$-regular curve. We call the curves that we construct ``Cantor ladders".

\begin{prop}
Let $N\in\N$ with $N\geq 2$, let $s\in (1,N)$, and let $m\in\N$ with $m\leq s$. There exists an Ahlfors $s$-regular curve $E\subset \R^N$, which is not contained in a $(m/s)$-H\"older image of $[0,1]^m$.
\end{prop}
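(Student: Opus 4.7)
The plan is to construct a \emph{Cantor ladder} $E\subset\R^N$: take a totally disconnected product set of the correct dimension and density structure, then glue it together with hierarchically arranged lower-dimensional \emph{rungs} that preserve both Ahlfors regularity and the obstruction to H\"older parametrization. First I would fix a self-similar Cantor set $C\subset [0,1]^{N-m}$ of Hausdorff dimension $s-m$ satisfying the open set condition, and define $E_0:=C\times[0,1]^m$. A standard product argument (using that $C$ is Ahlfors $(s-m)$-regular and $[0,1]^m$ is Ahlfors $m$-regular) shows that $E_0$ is compact and Ahlfors $s$-regular, but of course disconnected in the first $N-m$ coordinates. (The boundary case $s=m$ is handled by taking $C$ to be a large finite set of points and adjusting the ladder construction accordingly.)

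Next I would add the rungs. Let $\lambda\in(0,1)$ be the contraction ratio, so that at level $k$ the set $C$ is a disjoint union of $\simeq \lambda^{-(s-m)k}$ pieces of diameter $\simeq \lambda^k$. For each pair of adjacent level-$k$ pieces of $C$, attach a single $m$-dimensional rung (an isometric copy of a rescaled $[0,1]^m$) of diameter $\simeq \lambda^k$ that bridges the corresponding level-$k$ slabs of $E_0$. Let $E$ be the union of $E_0$ with all the rungs across all levels. Because $m<s$, each rung is $\mathcal{H}^s$-null, so $\mathcal{H}^s(E)=\mathcal{H}^s(E_0)<\infty$. Connectedness follows from a standard hierarchical argument: at every level the rungs provide a spanning tree among the level-$k$ pieces, and these trees are nested. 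The lower Ahlfors bound is inherited from $E_0$. The upper bound requires a short count: in any ball $B(x,r)$ only $O(1)$ rungs of diameter $\gtrsim r$ can intersect $B(x,r)$, while the rungs of diameter $\lesssim r$ are clustered near $E_0\cap B(x,r)$ and their $\mathcal{H}^s$-contribution sums geometrically to $\lesssim r^s$.

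Finally I would invoke the theorem of Mart\'in and Mattila \cite{MM2000} to rule out H\"older parametrization. That theorem provides a necessary density/conical condition at $\mathcal{H}^s$-almost every point of an $(m/s)$-H\"older image of $[0,1]^m$: roughly, the set must fill cones in every direction at every small scale with mass comparable to $r^s$. At $\mathcal{H}^s$-almost every point $x=(c,y)\in E_0\subset E$, and in every cone around a fixed direction in the Cantor factor $\R^{N-m}$, there are arbitrarily small scales at which $C$ (and hence $E_0$) entirely misses that cone; the rungs, being $\mathcal{H}^s$-negligible, cannot compensate. This contradicts the Mart\'in--Mattila condition, so $E$ is not contained in any $(m/s)$-H\"older image of $[0,1]^m$.

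The main obstacle is the third step: pinning down the precise density/conical-density statement of \cite{MM2000} and checking that the rungs, although essential for connectivity, are truly invisible to it at $\mathcal{H}^s$-typical points of $E$. The accounting is delicate because one simultaneously needs the rungs plentiful enough to force connectedness, small enough to preserve upper Ahlfors regularity, and $\mathcal{H}^s$-negligible in a density-theoretic (not merely measure-theoretic) sense so as not to destroy the obstruction. Once the product geometry of $C\times[0,1]^m$ is shown to survive the rung perturbation at a.e.\ point, the rest of the argument is bookkeeping.
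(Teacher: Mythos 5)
Your Cantor-ladder blueprint (obstruction set $=$ Cantor $\times$ cube, hierarchically placed lower-dimensional rungs for connectedness, Mart\'in--Mattila to block the parametrization) is exactly the construction in \S\ref{sec:regularcurve}, and your regularity accounting is on the right track. But your deployment of Mart\'in--Mattila is needlessly complicated and, as sketched, incomplete. You do not need a pointwise density or conical-density criterion applied to $E$, nor any argument that the rungs are ``invisible'' at $\Haus^s$-typical points. Being \emph{contained in} an $(m/s)$-H\"older image of $[0,1]^m$ is inherited by subsets: if $E_0\subset E\subset f([0,1]^m)$ then $E_0\subset f([0,1]^m)$. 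So it suffices to apply \cite[Theorem 2.1]{MM2000} directly to the Cantor-cube product $E_0\subset E$ to conclude $E_0$ lies in no such image, and then $E\supset E_0$ a fortiori lies in no such image. The rungs are irrelevant to this step, and the ``main obstacle'' you flag is not an obstacle at all.

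The genuine gap is the choice $\dim C = s-m$, which breaks down on the boundary $m=s$ (allowed whenever $s$ is an integer in $\{2,\dots,N-1\}$). Your fallback --- take $C$ finite --- produces a finite, connected union of $m$-cubes (the slabs together with the $m$-dimensional rungs), and such a set \emph{is} a Lipschitz image of $[0,1]^m$; the obstruction evaporates. The paper sidesteps this by fixing the Cantor factor's dimension independently of $m$: a dimension-one four-corner set $\mathcal{C}_1\subset\R^2$ when $s$ is an integer (Case 1), or a fractional-dimensional $\mathcal{C}_p\subset\R$ with $p=s-\lfloor s\rfloor$ when $s$ is not (Case 2, where the rungs become rescaled snowflake arcs so that their dimension matches the gap exponent), and then taking a product with the appropriate cube $[0,1]^{\lfloor s\rfloor-1}$. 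Keeping the Cantor factor of fixed positive dimension also has the advantage that a \emph{single} curve $E$ works simultaneously for every $m\leq s$, since the cited Mart\'in--Mattila theorem excludes containment of that one Cantor-cube product in an $(m/s)$-H\"older image of $[0,1]^m$ for all valid $m$ at once.
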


We treat the cases $s\in \N$ and $s\not\in\N$ separately. Given $m\in\N$, let $\mathcal{W}_m$ be the set of finite words formed by the letters $\{1,\dots,m\}$ including the empty word $\emptyset$. We denote by $|w|$ the number of letters a word has with the convention $|\emptyset| = 0$.

\emph{Case 1.} Suppose that $s\in\{2,3,\dots,N-1\}$. Let $D_{\emptyset} = [0,1]^2$. Given a square $D_w \subset \R^2$ for some $w\in\mathcal{W}_4$, let $D_{w1}$, $D_{w2}$, $D_{w3}$, $D_{w4}$ be the four corner squares in $D_w$ with  $\diam{D_{wi}} = (1/4)\diam{D_w}$. Let $\mathcal{C}_1$ be the Cantor set in $\R^2$ defined by
\[ \mathcal{C}_1 = \bigcap_{k=0}^{\infty}\bigcup_{\substack{w\in\mathcal{W}_4 \\ |w|=k}} D_w.\]
For each $i = 1,\dots, 2^{|w|}$, define $D_{w,i} = D_w\times\{(2i-1)2^{-|w|-1}\}$,
\[ K_2 =   (\mathcal{C}_1\times[0,1]) \cup \bigcup_{w\in\mathcal{W}}\bigcup_{i=0}^{2^{|w|}-1} D_{w,i}\qquad\text{and}\qquad E =  K_2\times [0,1]^{s-2} \times \{0\}^{N-s-1}.\]
Here and for the rest of \textsection\ref{sec:regularcurve}, we use the convention $A\times\{0\}^0 = A$.

\emph{Case 2.} Suppose that $s\in(1,N)\setminus \N$. Let $p = s-\lfloor s \rfloor$ be the fractional part of $s$. Let $I_{\emptyset} = [0,1]$. Given an interval $I_w = [a_w,b_w]$ for some $w\in\mathcal{W}_2$, let
\[ I_{w1} = [a_w, a_w+2^{-p}(b_w-a_w)] \qquad\text{and}\qquad I_{w2} = [b_w-2^{-p}(b_w-a_w),b_w].\]
Let $\mathcal{C}_p$ denote the Cantor set in $\R$ defined by
\[ \mathcal{C}_p = \bigcap_{k=0}^{\infty}\bigcup_{\substack{w\in\mathcal{W}_2 \\ |w|=k}} I_w.\]
Let $S$ be the bi-Lipschitz embedded image of $([0,1],|\cdot|^{\frac1{p+1}})$ into $\R^2$. For each $w\in\mathcal{W}_2$, let $S_w$ be a rescaled copy of $S$ whose endpoints are the right endpoint of $I_{w1}$ and the left endpoint of $I_{w2}$. For each $w\in\W_2$ and $i = 1,\dots, 2^{|w|}-1$, define
\[ S_{w,i} = S_w + (0,(2i-1)2^{-|w|-1})\]
and define
\[ K_{p+1} = (\mathcal{C}_{p}\times [0,1])\cup\bigcup_{w\in\mathcal{W}}\bigcup_{i=0}^{2^{|w|}} S_{w,i}\qquad\text{and}\qquad E =   K_{p+1}\times [0,1]^{s-p-1} \times \{0\}^{N+p-s-1}.\]

Verification of the desired properties of $E$ is the same for the two cases, so we only treat Case 1. By Theorem 2.1 in \cite{MM2000}, there exists no $(m/s)$-H\"older map $f:[0,1]^m \to \R^N$ whose image contains $\mathcal{C}_1 \times [0,1]^{s-1}\times\{0\}^{N-s-1}$. We show that $E$ is a curve in \textsection\ref{sec:Eisacurve} and we prove $s$-regularity of $E$ in \textsection\ref{sec:Eisregular}.

\subsubsection{$E$ is a curve}\label{sec:Eisacurve}
By the Hahn-Mazurkiewicz theorem \cite[Theorem 3.30]{hocking}, to show that $E$ is a curve it is enough to show that $E$ is compact, connected, and locally connected.

For compactness, it is easy to see that $K_2 \subset [0,1]^3$, hence $E\subset [0,1]^N$. Moreover, as $|w|\to\infty$, the squares $D_{w,i}$ accumulate on $\mathcal{C}\times [0,1]$. Therefore, $K_2$ is closed. Consequently, $E$ is compact.

To settle both connectedness and local connectedness, we prove that there exists $C>1$ such that for all pairs of points $x,y \in E$ there exists a path joining $x$ with $y$ of diameter at most $C|x-y|^{1/2}$. Clearly, it suffices to show the claim for $K_2$ instead of $E$. Fix $x,y\in K_2$ and let $w_0$ be the word in $\W_4$ of maximum word-length such that the projections of $x$ and $y$ on $\R^2\times\{0\}$ are contained in $D_{w_0}$. This means that $|x-y| \geq \frac12 4^{-|w_0|}$. Choose $i_0 \in \N$ such that $\dist(x,D_{w_0,i_0})  \leq 2\cdot 2^{-|w_0|}.$
If $x_0$ and $y_0$ are the projections of $x$ and $y$ onto $D_{w_0,i_0}$, respectively, then
\[ \max\{|x_0-x|, |y_0-y|\} \lesssim 2^{-|w_0|} + 4^{-|w_0|} + |x-y| \simeq 2^{-|w_0|}.\]
There exist sequences $(w_n)_{n\in\N}, (u_n)_{n\in\N}$ of words in $\mathcal{W}_4$ and sequences $(i_n)_{n\in\N}, (j_n)_{n\in\N}$ of positive integers such that
\begin{enumerate}
\item $|w_n| = |u_n| = |w_0|+n$;
\item the orthogonal projection of $x$ (resp.~ $y$) on $\R^2$ is contained in $D_{w_n}$ (resp.~ $D_{u_n}$);
\item there exists $x_n \in D_{w_n,i_n}$ such that
\[ \max\{ |x-x_n|, |y-y_n|\} \leq 4^{-|w_0|-n}\sqrt{2} + 2^{-|w_0|-n}.\]
\end{enumerate}
Properties (1) and (2) imply that $D_{w_0} \supsetneq D_{w_1} \supsetneq D_{w_2} \supsetneq \cdots$ and $D_{w_0} \supsetneq D_{u_1} \supsetneq D_{u_2} \supsetneq \cdots$, while property (3) implies that the Hausdorff distances
\[ \dist_H(D_{w_n,i_n}, D_{w_{n+1},i_{n+1}})\lesssim 2^{-|w_0|-n} \quad\text{and}\quad  \dist_H(D_{u_n,j_n}, D_{u_{n+1},j_{n+1}}) \lesssim 2^{-|w_0|-n}.\]
Let $\g_0\subset K_2$ be the line segment joining $x_0$ with $y_0$. For each $n\geq 0$, let $z_n \in D_{w_n,i_n}$ be a corner point and let $z_n'$ be its projection on $D_{w_{n+1},i_{n+1}}$. Also, let  $p_n \in D_{u_n,j_n}$ be a corner point and let $p_n'$ be its projection on $D_{u_{n+1},j_{n+1}}$. Consider the curve
\[ \g = \g_0 \cup \bigcup_{n\in\N}([x_n,z_n] \cup [z_n,z_n'] \cup [z_n',x_{n+1}])\cup \bigcup_{n\in\N}([y_n,p_n] \cup [p_n,p_n'] \cup [p_n',y_{n+1}]),\]
which is a subset of $K_2$ and joins $x$ with $y$. Then
\begin{align*}
\diam{\g} &\lesssim \diam{\g_0} + \sum_{n\geq 0}\diam{\g_n} + \sum_{n\geq 0}\diam{\sigma_n}\\
&\leq |x_0-y_0| + \sum_{n\geq 0}(|x_n-z_n| + |z_n-z_n'| + |z_n'-x_{n+1}|) \\
&\hspace{.2in}+ \sum_{n\geq 0}(|y_n-p_n| + |p_n-p_n'| + |p_n'-y_{n+1}|)\\
&\lesssim 4^{-|w_0|} + \sum_{n\geq 0}(2^{-|w_0|-n}+4^{-|w_0|-n} + 2^{-|w_0|-n}) \lesssim 2^{-|w_0|} \simeq |x-y|^{1/2}.
\end{align*}

\subsubsection{$E$ is $s$-regular.}\label{sec:Eisregular} We show $s$-regularity for $E$. Because the product of regular compact spaces of dimension $s_1$ and $s_2$ is $(s_1+s_2)$-regular, to show that $E$ is $s$-regular,
it suffices to show that $K_{2}$ is $2$-regular.  Fix $x\in K_2$ and $r\in (0,\diam{K_2})$.

We first show that
\begin{equation}\label{eq:lowreg}
\mathcal{H}^2(B(x,r)\cap K_2) \gtrsim r^2.
\end{equation}
If $x\in \mathcal{C}_1\times[0,1]$, then (\ref{eq:lowreg}) follows from the $2$-regularity of $\mathcal{C}_1\times[-1,1]$. If $x \in D_{w,i}$ and $r\leq 10 \diam{D_w}$, then (\ref{eq:lowreg}) follows from the $2$-regularity of $D_{w,i}$. If $x \in D_{w,i}$ and $r\geq 10 \diam{D_w}$, then there exists $z\in (\mathcal{C}_1\times[0,1])\cap B(x,r)$ such that $B(z,r/2) \subset B(x,r)$ and (\ref{eq:lowreg}) follows from the $2$-regularity of $B(z,r/2)\cap K_2$.

For the upper regularity of $K_2$, instead of working with balls $B(x,r)$, it is more convenient to use cubes
\[ Q(x,r) = x + [-r/2,r/2]^3 \qquad x\in K_2, \quad r>0.\]
Without loss of generality, we may assume that $r=4^{-k_0}$ for some $k_0\in\N$. For each $k\geq 0$, let
\[ \mathcal{D}_k(x,r)= \{D_{w,i} : Q(x,r)\cap D_{w,i} \neq \emptyset \text{ and }|w|=k\}.\]
Then by the $2$-regularity of $\mathcal{C}_1\times[-1,1]$, it suffices to show that
\begin{align*}
 \sum_{k\geq 0}\sum_{D_{w,i}\in\mathcal{D}_k(x,r)} \mathcal{H}^2(Q(x,r)\cap D_{w,i}) \lesssim r^2.
\end{align*}
The following lemma will let us estimate the above sum. In the sequel, we denote by $m_0\geq 0$ the smallest integer for which there exists $D_{w,i}\in\mathcal{D}(x,r)$ with $|w|=m_0$.

\begin{lem}\label{lem:squareint}
Let $m_0\geq 0$ be the smallest integer for which $\mathcal{D}_{m_0}(x,r) \neq \emptyset$.
\begin{enumerate}
\item If $k>m_0$ and $\mathcal{D}_{k}(x,r) \neq \emptyset$, then $k\geq 2k_0$.
\item If $Q'$ is the projection of $Q(x,r)$ on $\R^2\times\{0\}$, then for all $k\geq 0$,
\[ \card{\{D_w : D_w\cap Q'\neq \emptyset \text{ and }|w|=k\}} \leq 1+4^{k}4^{-k_0}.\]
\item For each $w\in \W_4$,
\[ \card{\{ i : D_{w,i}\cap Q(x,r) \neq \emptyset\}} \leq 1+2^{|w|+1}4^{-k_0}.\]
\item For each $k\geq 0$, $\card{\mathcal{D}_k(x,r)} \leq (1+4^{k}4^{-k_0})(1+2^{k+1}4^{-k_0})$.
\item We have
\[ \sum_{D_{w,i}\in\mathcal{D}_{m_0}(x,r)}\mathcal{H}^2(D_{w,i}\cap Q(x,r)) \lesssim r^2.\]
\end{enumerate}
\end{lem}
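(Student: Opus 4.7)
The plan is to verify the five parts of Lemma~\ref{lem:squareint} in order, with each resting on a short geometric or arithmetic observation about the dyadic hierarchy of the $D_{w,i}$'s.

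The key to (1) is a parity argument. Writing the level-$m_0$ height $h_0 = (2i_0-1)\cdot 2^{-m_0-1}$ with denominator $2^{-k-1}$ gives $h_0=(2i_0-1)\cdot 2^{k-m_0}\cdot 2^{-k-1}$, whose numerator is even (since $k-m_0\geq 1$), while any level-$k$ height $h=(2i-1)\cdot 2^{-k-1}$ has odd numerator. Their difference is therefore a nonzero odd multiple of $2^{-k-1}$, so $|h-h_0|\geq 2^{-k-1}$. Since both heights lie in $[x_3-r/2,x_3+r/2]$ of length $r=2^{-2k_0}$, the triangle inequality forces $2^{-k-1}\leq r$, i.e., $k\geq 2k_0$.

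For (2), project onto the coordinate axes. At level $k$ the $x$-projections of the $D_w$'s form $2^k$ intervals of length $4^{-k}$, with successive intervals inside a common level-$(k-1)$ ancestor separated by a gap of length $2\cdot 4^{-k}$. For $k\leq k_0$ this gap exceeds $r$, so the $x$-projection of $Q'$ meets at most one level-$k$ interval; for $k>k_0$, applying the same gap argument at level $k_0$ confines $Q'_x$ to a single level-$k_0$ ancestor, which by self-similarity contains exactly $2^{k-k_0}$ level-$k$ descendants. The $x$-count is thus $\leq 2^{\max(0,k-k_0)}$, and the $y$-count is bounded symmetrically, so multiplying (and noting that each $(x\text{-projection},y\text{-projection})$ pair corresponds to at most one $D_w$) gives $\leq 1+4^{k-k_0}$. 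Claim (3) is the routine count of the $2^{-|w|}$-spaced arithmetic progression of heights inside an interval of length $r$, and (4) follows at once from (2) and (3).

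For (5), split by the regime of $m_0$. When $m_0\leq 2k_0$: for each $w$ at level $m_0$ with $D_w\cap Q'\neq\emptyset$ and each valid height $h_i$, the slice $D_{w,i}\cap Q(x,r)=(D_w\cap Q')\times\{h_i\}$ has $\mathcal{H}^2$ measure depending only on $w$, while the number of valid $i$'s depends only on $m_0$ and is $\leq 1+2^{m_0+1-2k_0}\leq 3$. The sum therefore factors as (valid $i$-count)$\cdot\sum_w\mathcal{H}^2(D_w\cap Q')$, and the inner sum is bounded by $\mathcal{H}^2(Q')=r^2$ using disjointness of distinct $D_w$'s at a common level. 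When $m_0>2k_0$, the $i$-count is no longer bounded by a constant, so one instead uses the trivial area bound $\mathcal{H}^2(D_{w,i}\cap Q)\leq 4^{-2m_0}$ together with the total pair-count from (4); a short arithmetic check produces a bound of order $2^{-m_0-4k_0}\leq r^2$ in that regime. The main delicate point is the two-regime case split in (5), where choosing the right bound on $\mathcal{H}^2(D_{w,i}\cap Q)$ relative to $m_0$ is necessary; all other parts reduce to elementary counting inside the Cantor hierarchy.
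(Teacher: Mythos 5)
Your proposal is correct and proves the lemma, but parts (2) and (5) take a genuinely different route from the paper that is worth comparing. For (2), you project $Q'$ onto the coordinate axes and count how many intervals of the one-dimensional middle-halves Cantor set at level $k$ can be hit, using the $2\cdot 4^{-k}$ gap between adjacent level-$k$ intervals to confine the $x$-projection of $Q'$ to a single level-$k_0$ ancestor and then self-similarity for the count $2^{k-k_0}$; the paper instead runs a simple induction on $k$ (each level-$k$ square meeting $Q'$ has at most four level-$(k+1)$ children meeting $Q'$), which is shorter but leaves the geometry implicit. For (5), your two-regime split at $m_0\leq 2k_0$ versus $m_0>2k_0$ is the decisive point and it is actually cleaner than what the paper wrote: in the small-$m_0$ regime you use the slice identity $D_{w,i}\cap Q(x,r)=(D_w\cap Q')\times\{h_i\}$, bound the $i$-count by $O(1)$ via (3), and sum the essentially disjoint sets $D_w\cap Q'$ inside $Q'$ to get $\lesssim r^2$; in the large-$m_0$ regime you switch to the trivial area bound $16^{-m_0}$ together with the pair count from (4). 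The paper's own split at $m_0\gtrless k_0$ is not quite coherent --- it asserts $\card(\mathcal{D}_{m_0}(x,r))=1$ when $m_0>k_0$, which does not follow from (4), and in the complementary case uses $(4^{-m_0})^2$ where the intersection bound $r^2$ coming from $Q'$ is what actually makes the estimate close --- so your version fills a real gap. One small slip, shared in spirit with the paper: in (1) your parity argument correctly yields $|h-h_0|\geq 2^{-k-1}$, but $2^{-k-1}\leq r=2^{-2k_0}$ gives $k\geq 2k_0-1$, not $k\geq 2k_0$, and the boundary case $k=2k_0-1$ is attainable (the paper reaches $2k_0$ only by overstating the vertical separation between levels as $\geq 2^{-k}$ rather than $\geq 2^{-k-1}$). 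This off-by-one is harmless downstream, since starting the tail sum at $k=2k_0-1$ instead of $k=2k_0$ only changes the implicit constant in the $2$-regularity estimate.
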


\begin{proof}
For (1), recall that if $|w|>m_0$, then the vertical distance between $D_{w,i}$ and $D_{w_0,i_0}$ is at least $2^{-|w|}$. Since $r=4^{-k_0}$, the cube $Q(x,r)$ can not intersect any $D_{w,i}$, unless $4^{-k_0} \geq 2^{-|w|}$. Thus, $|w|\geq 2k_0$.

For (2), we first note that if $k\leq k_0$, then $Q'$ can intersect at most one square $D_w$ with $|w|=k$. We now use induction to show that for all $k\geq k_0$,
\[ \card{\{D_w : D_w\cap Q'\neq \emptyset \text{ and }|w|=k\}} \leq 4^{k}4^{-k_0}.\]
For $k=k_0$, it is true. Suppose that the claim is also true for some $k\geq k_0$. Then $Q'$ intersects $D_{w}$ with $|w|=k+1$ if and only if there exists $w'$ with $|w'|=k$ such that $Q\cap D_{w'} \neq \emptyset$ and $D_{w}\subset D_{w'}$. Since each square of generation $k$ contains $4$ squares of generation $k+1$,
\begin{align*}
\card{\{D_w : D_w\cap Q'\neq \emptyset \text{ and }|w|=k+1\}} &\leq 4\card{\{D_w : D_w\cap Q'\neq \emptyset \text{ and }|w|=k\}}\\
&\leq 4^{k+1}4^{-k_0}.
\end{align*}

For (3), fix $w\in\W_4$. Recall that the vertical height of $Q(x,r)$ is $2r=2\cdot4^{-k_0}$ and that the vertical distance between $D_{w,i}$ and $D_{w,j}$ with $i\neq j$ is at least $2^{-|w|}$. Therefore,
\begin{align*}
\card{\{ i : D_{w,i}\cap Q(x,r) \neq \emptyset\}} \geq 1 + (2r)/2^{-|w|} = 1+2^{|w|+1}4^{-k_0}.
\end{align*}

Claim (4) is immediate from (2) and (3).

It remains to show (5). On one hand, if $m_0> k_0$, then by (4), $\card(\mathcal{D}_{m_0}(x,r))=1$. Hence  (5) follows from the $2$-regularity of squares $D_{w,i}$. On the other hand, if $m_0\leq k_0$, then by (4),
\[ \sum_{D_{w,i}\in\mathcal{D}_{m_0}(x,r)}\mathcal{H}^2(D_{w,i}\cap Q(x,r)) \leq \card(\mathcal{D}_{m_0}(x,r)) (4^{-m_0})^2 \lesssim 2^{-m_0} 4^{-2k_0} \leq r^2. \qedhere\]
\end{proof}

By Lemma \ref{lem:squareint}, we have
\begin{align*}
\sum_{D_{w,i}\in\mathcal{D}(x,r)} \mathcal{H}^2(B(x,r)\cap D_{w,i}) &\leq \sum_{D_{w,i}\in\mathcal{D}_{m_0}(x,r)} \mathcal{H}^2(D_{w,i}) + \sum_{k=2k_0}^{\infty}\sum_{D_{w,i}\in\mathcal{D}_k(x,r)} \mathcal{H}^2(D_{w,i})\\
&\lesssim r^2 + \sum_{k=2k_0}^{\infty} 2^{k}4^{-k_0}4^k4^{-k_0} 4^{-2k}. 
\end{align*}
Finally,
\[ \sum_{k\geq 2k_0}^{\infty} 2^{k}4^{-k_0}4^k4^{-k_0} 4^{-2k} = 4^{-2k_0}\sum_{k\geq k_0} 2^{-k} \lesssim 4^{-3k_0} \lesssim r^2.\] Therefore, $K_2$ is $2$-regular.

\subsection{A compact countable set that is not contained in any H\"older cube}\label{sec:null}

\begin{prop}
For each $N\in\N$, $N\geq 2$, there exists a compact and countable set $E\subset\R^N$ with one accumulation point such that for any $m\in\{1,\dots,N-1\}$ and any $s\in[1,N/m)$, the set $E$ is not contained in a $(1/s)$-H\"older image of $[0,1]^m$.
\end{prop}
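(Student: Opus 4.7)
The plan is to construct $E$ as a countable union of finite, well-separated ``clusters'' that shrink to a single accumulation point, and to block every $(1/s)$-H\"older parametrization via a pigeonhole on preimages. The key elementary observation is the following: if $f:[0,1]^m \to \R^N$ is $(1/s)$-H\"older with constant $H$ and $S \subset f([0,1]^m)$ is finite and $\delta$-separated, then choosing a preimage for each point of $S$ yields a $(\delta/H)^s$-separated subset of $[0,1]^m$, which by a standard volume/packing estimate contains at most $C_m (H/\delta)^{sm}$ points. Equivalently,
\[
H \,\geq\, c_m\, (\card S)^{1/(sm)}\, \delta.
\]
So, to defeat every $(1/s)$-H\"older image of $[0,1]^m$ simultaneously for all $m \in \{1,\dots,N-1\}$ and all $s \in [1,N/m)$, it suffices to produce inside $E$ finite $\delta_k$-separated sets $E_k$ with $(\card E_k)\, \delta_k^{t} \to \infty$ for every $t < N$.

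For the construction, I would take scales $r_k := 2^{-k}$ and $\delta_k := 2^{-k^2}$, and anchor points $p_k := r_k\, e_1 \in \R^N$. Inside each ball $B(p_k, r_k/4)$, let $E_k$ be a maximal $\delta_k$-separated set; a covering argument then gives $\card E_k \geq c_N (r_k/\delta_k)^N = c_N\, 2^{N(k^2 - k)}$ for all sufficiently large $k$. Define
\[
E \,:=\, \{0\} \,\cup\, \bigcup_{k \geq 1} E_k.
\]
The balls $B(p_k, r_k/4)$ are pairwise disjoint and converge to $0$; hence $E$ is bounded, closed, countable, and has $0$ as its unique accumulation point.

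To verify the H\"older obstruction, fix $m \in \{1,\dots,N-1\}$ and $s \in [1,N/m)$, and suppose for contradiction that $E \subset f([0,1]^m)$ for some $(1/s)$-H\"older map $f$ with finite H\"older constant $H$. Applying the preimage-packing inequality to $S = E_k$ yields
\[
H^{sm} \,\gtrsim_{m,N}\, (\card E_k)\, \delta_k^{sm} \,\geq\, c_N\, 2^{N(k^2-k)} \cdot 2^{-sm\, k^2} \,=\, c_N\, 2^{\,k^2(N-sm)\,-\,kN}.
\]
Since $sm < N$, the right-hand side blows up as $k \to \infty$, a contradiction.

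The main (and essentially only) obstacle is handling the quantifiers simultaneously: a single countable $E$ must work against all admissible pairs $(m,s)$ at once. This is resolved by the diagonal choice $r_k = 2^{-k}$, $\delta_k = 2^{-k^2}$, so that $\log \card E_k \sim N k^2$ grows quadratically in $k$ with leading coefficient $N$, while $sm\,\log(1/\delta_k) = sm\, k^2$ grows quadratically with leading coefficient $sm < N$. This guarantees a uniform quadratic margin in the exponent, regardless of which admissible pair $(m,s)$ is tested. Everything else is bookkeeping: elementary packing and covering estimates, plus checking that $E$ is compact, countable, and has a single accumulation point, which is immediate from the geometry of the disjoint shrinking clusters.
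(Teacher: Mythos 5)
Your proof is correct and follows essentially the same approach as the paper: construct shrinking clusters accumulating at a single point, then obstruct any $(1/s)$-H\"older parametrization by a volume/packing bound on preimages of well-separated points in $[0,1]^m$. You choose a more aggressive separation scale $\delta_k=2^{-k^2}$ so that a single cluster already yields the contradiction, whereas the paper uses dyadic grids scaled by $(k+1)^{-2}$ (so $\delta_k\sim 2^{-k}(k+1)^{-2}$) and instead sums the packing contributions over all $k$ to obtain a divergent series --- a cosmetic difference in bookkeeping, not a different argument.
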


\begin{cor}
For each $N\in\N$, $N\geq 2$, there exists a compact and countable set $E\subset\R^N$ with one accumulation point such that $E$ is not contained in a rectifiable curve.
\end{cor}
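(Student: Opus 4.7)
The corollary is an immediate specialization of the preceding proposition. The plan is to apply the proposition with $m=1$ and $s=1$, which is a legal choice since $s=1\in[1,N/1)=[1,N)$ whenever $N\geq 2$. With these parameters, a $(1/s)$-H\"older image of $[0,1]^m$ is exactly a Lipschitz (equivalently, $1$-H\"older) image of $[0,1]$, i.e.\ a rectifiable curve. Thus, the compact countable set $E\subset\R^N$ with one accumulation point produced by the proposition automatically fails to be contained in any rectifiable curve, and the corollary follows.

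The only thing to flag is that the definition of ``rectifiable curve'' used in this paper (see \S\ref{sec:statements}) coincides with ``Lipschitz image of $[0,1]$,'' so the reduction is clean and requires no further work. There is no real obstacle here; the entire content is carried by the proposition, whose proof is the substantive task. The corollary itself can be disposed of in a single sentence.
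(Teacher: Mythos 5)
Your proof is correct and matches the paper's intent exactly: the corollary is stated immediately after the proposition precisely because it is the $m=1$, $s=1$ specialization, and your observation that a $1$-H\"older image of $[0,1]$ is a rectifiable curve in the sense of \S\ref{sec:statements} closes the argument. Nothing more is needed.
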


For each integer $k\geq 0$, define $\mathcal{G}_{k}^0$ to be the union of all vertices of all dyadic cubes in $\R^N$ that are contained in $[0,1]^N$ and have side length $2^{-k}$. By a simple combinatorial argument, $\card(\mathcal{G}_{0}^k) = (2^k+1)^N$ for all $k\geq 0$.

Let $\phi_0$ be the identity map, and for each $k\geq 1$, define a map $\phi_k : \R^N \to \R^N$ by
\[ \phi_k(x) =(k+1)^{-2}x + \left (0,\dots,0,2\sum_{i=1}^{k} i^{-2} \right ).\]
Set $A := \sum_{i=1}^{\infty} i^{-2} =\pi^2/6$, and define the set
\[ E := \{(0,\dots,0,2A)\} \cup \bigcup_{k=0}^{\infty} \phi_k(\mathcal{G}_{k}^0).\]

The set $E$ is clearly countable. If $(x_1,\dots,x_N) \in E$, then $|x_i| \leq 1$ for all $i=1,\dots,N-1$ while $|x_N| \leq 2A$. Therefore, $E$ is bounded. Moreover, the only accumulation point of $E$ is the point $(0,\dots,0,2A)$ which is contained in $E$. Thus, $E$ is closed.

Next, we claim that
\begin{equation}\label{eq:dyadicnets}
|x-y| \geq 2^{-k}(k+1)^{-2} \qquad\text{for all $x \in \phi_k(\mathcal{G}_{k}^0)$ and all }y \in E\setminus\{x\}.
\end{equation}
Indeed, if $x,y\in\mathcal{G}_k^0$, then inequality (\ref{eq:dyadicnets}) is clear. Otherwise, $\dist(\mathcal{G}_{k}^0, E\setminus \mathcal{G}_{k}^0) \geq (k+1)^{-2}$, and thus, (\ref{eq:dyadicnets}) holds again.

Suppose in order to get a contradiction that there exists a $(1/s)$-H\"older continuous map $f:[0,1]^m\rightarrow\R^N$ such that $E\subset f([0,1])$. Let $H$ be the H\"older constant of $f$.  For each $k\geq 0$ and $x\in \mathcal{G}_{k}^0$, fix a point $w_{k,x}$ such that $f(w_{k,x}) = x$ and set
\[ B_{k,x} = B(w_{k,x}, \tfrac12 H^{-s}2^{-ks}(k+1)^{-2s}).\]
Inequality (\ref{eq:dyadicnets}) implies that the balls $B_{k,x}$ are mutually disjoint. Moreover, it is easy to see that each $B_{k,x}$ is contained in $[-1,2]^m$. Therefore,
\begin{align*}
1 \gtrsim_m\mathcal{H}^m([-1,2]^m) \geq \sum_{k=0}^{\infty}\sum_{x \in \mathcal{G}_{k}^0}\mathcal{H}^m(B_{k,x}) &\gtrsim_{H,s} \sum_{k=0}^{\infty}(2^k+1)^N\frac{2^{-skm}}{(k+1)^{2sm}} \simeq_{N} \sum_{k=0}^{\infty}\frac{2^{k(N-ms)}}{(k+1)^{2s}}.
\end{align*}
Since $N>ms$, the sum on the right hand side diverges and we reach a contradiction.

\subsection{Flat curves with finite $\mathcal{H}^s$ measure and no $(1/s)$-H\"older parametrizations}\label{sec:flatcurves-counterexample}

The following example shows that the assumption of lower $s$-regularity can not be dropped from Proposition \ref{thm:flat}.

\begin{prop}
For any $\beta_0 \in (0,1)$, there exists $s_0 \in (1,2)$ with the following property. For any $s\in (1,s_0)$ there exists a curve $E \subset \R^2$ such that
\begin{enumerate}
\item $\mathcal{H}^s(E) < \infty$ and
\item $\beta_{E}(Q) < \beta_0$ for all $Q\in\D(\R^N)$,
\end{enumerate}
but $E$ is not contained in any $(1/s)$-H\"older image of $[0,1]$.
\end{prop}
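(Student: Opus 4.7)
The strategy is to construct, for each $\beta_0\in(0,1)$ and each $s>1$ sufficiently close to $1$, a connected compact set $E\subset\R^2$ that is flat at every location and scale but carries a nested ``dead-end'' structure obstructing H\"older parameterization. The threshold $s_0=1+c\beta_0^2$ is motivated by the Mattila--Vuorinen estimates \cite{MV}: sets satisfying \eqref{e:mv} have Minkowski dimension at most $1+O(\beta_0^2)$, so the permitted values of $s$ must lie just above $1$.

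The construction is an iterated broken-line procedure. Start with a horizontal unit segment. At each level $k$, replace each segment of length $\rho^{k-1}$ from the previous level by $N$ shorter segments of length $\rho^k$ arranged as a zigzag inside a $\beta_0\rho^{k-1}$-tube around the parent segment, where $\rho<1$ and the integer $N$ are chosen so that $N\rho^s=1$ (forcing Hausdorff dimension $s$) and so that each bend occurs at an angle at most $\beta_0$ with the parent direction. Crucially, the zigzag is designed to include at each level two ``turnaround points'' $a_{k,i},b_{k,i}\in E$ with an intermediate ``tip'' $c_{k,i}$ reached by a dead-end excursion of length $\simeq\rho^k$; the placements are offset from level to level so that the resulting iterated system deliberately fails the open set condition. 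Taking the closure yields a compact connected set $E$ with $\Haus^s(E)<\infty$ (by the geometric scaling $N\cdot(\rho^k)^s=\rho^{k(s-1)}$ bounded uniformly) and $\beta_E(\overline B(x,r))\leq\beta_0$ for all $x\in E$ and all $r>0$ (from the angle bound at every scale).

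To show $E$ is not contained in any $(1/s)$-H\"older image of $[0,1]$, suppose for contradiction that $E\subset f([0,1])$ for a $(1/s)$-H\"older map $f$ with constant $H$. At each level $k$ and each $i$, any time $t_{k,i}\in f^{-1}(c_{k,i})$ is flanked by nearby times at which $f$ equals $a_{k,i}$ and $b_{k,i}$ (since the tip $c_{k,i}$ sits at the end of a dead-end excursion); the H\"older condition gives a time interval $J_{k,i}\ni t_{k,i}$ of length at least $2(|c_{k,i}-a_{k,i}|/H)^s\simeq 2\rho^{ks}/H^s$ on which $f$ lies in the excursion. Because the excursions at different levels/indices correspond to spatially disjoint portions of $E$ (by the dead-end design), the intervals $\{J_{k,i}\}$ are pairwise disjoint, so
\[
1=\Haus^1([0,1])\geq \sum_{k,i}|J_{k,i}|\gtrsim H^{-s}\sum_k N^k\rho^{ks}=H^{-s}\sum_k 1=\infty,
\]
which is the desired contradiction.

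The main obstacle is reconciling flatness with non-H\"older-coverability: clean self-similar flat constructions satisfying the open set condition are automatically $(1/s)$-H\"older curves by Remes's theorem \cite{Remes}. The proof must therefore carefully engineer overlaps between the dead-end excursions at different levels (so the open set condition fails) while still keeping every local picture inside a $\beta_0$-tube; this delicate bookkeeping is the technical crux.
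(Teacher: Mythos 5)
Your approach diverges substantially from the paper's. The paper builds a flat von Koch snowflake $\mathcal{S}_{\mathbf p}$ of dimension $s_0$ (choosing the snowflake parameter $p^*$ close to $1/4$ so that all $\beta$-numbers are below $\beta_0$, which is where the threshold $s_0\to1$ as $\beta_0\to0$ comes from), pushes a Cantor set $E'\subset[0,1]$ of dimension $s/s_0$ through the $(1/s_0)$-bi-H\"older snowflake parameterization to produce a self-similar Cantor set $\Phi(E')\subset\mathcal S_{\mathbf p}$ of dimension $s$, and then fills the gaps with straight chords $\gamma_J$ to produce a simple arc $E$. Flatness is inherited from the snowflake, $\mathcal H^s$-finiteness from regularity of the Cantor set, and the non-parameterizability is outsourced to Mart\'in--Mattila \cite{MM2000}, which is applied directly to the \emph{disconnected} set $\Phi(E')$. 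In particular the paper's $E$ is a simple arc with \emph{no} dead-ends.

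Your packing mechanism is sound as a general principle: if $\{B(x_i,r_i)\}$ are pairwise disjoint balls with $x_i\in E$ and $r_i$ small, then a $(1/s)$-H\"older $f$ covering $E$ gives pairwise disjoint subintervals of $[0,1]$ of lengths $\gtrsim(r_i/H)^s$, forcing $\sum r_i^s\lesssim H^s$. So it suffices to produce a flat curve $E$ with $\mathcal H^s(E)<\infty$ but with $\delta$-restricted packing sums $\sup\sum r_i^s$ unbounded as $\delta\to0$. (Interestingly, the paper's $E$ does satisfy this: packing one ball of radius $\epsilon\,\ell_k$ at the midpoint of each level-$k$ chord, with $\ell_k\simeq2^{-(k-1)/s}$ and $2^{k-1}$ chords per level, gives $\sum_k 2^{k-1}(\epsilon\ell_k)^s\simeq\epsilon^s\sum_k1=\infty$. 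The chords contribute nothing to $\mathcal H^s$ for $s>1$ but everything to the packing premeasure. So your packing argument would also establish the paper's conclusion, once the right $E$ is in hand.)

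The genuine gap is in the construction of $E$, and it is exactly the ``technical crux'' you flagged---but I think it is worse than delicate bookkeeping; it looks like a real obstruction to the construction as sketched. A \emph{dead-end excursion} as you describe it is a place where the curve leaves an arc, reaches a tip $c_{k,i}$ at distance $\simeq\rho^k$, and doubles back. At the scale of the hairpin's width $\epsilon$, the ball $B((\text{turn}),\epsilon)\cap E$ is a ``U''- or ``C''-shape of two nearly parallel strands plus a turn, and one computes $\beta_E(\overline B)\simeq1/4$ to $1/2$ regardless of how thin the hairpin is; so for $\beta_0$ small (which is precisely the regime where the proposition has content, since $s_0\to1$) such excursions cannot exist anywhere in a set satisfying (2). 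You cannot repair this by making the turn very gentle either: a bump whose tip stays within the $\beta_0$-tube of the chord has $\dist(c_{k,i}, E\setminus\text{bump})\lesssim\beta_0\rho^k$ only if the curve continues nearby, in which case the bump is just a peak of the snowflake---and a count of all $N^k$ peaks at level $k$ does \emph{not} give a divergent packing sum, because the balls around level-$k$ peaks contain the level-$(k+1)$ peaks, so the families are nested rather than disjoint; the disjoint selection collapses to $\sum_k(N'\rho^s)^k$ with $N'<N$, which converges. This is consistent with Remes's theorem: the flat snowflake itself \emph{is} a H\"older curve. So the obstruction must live somewhere other than at turnaround points of the curve. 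In the paper's construction the obstruction lives in the disconnectedness of $\Phi(E')$ (equivalently, in the positions of the chords, which are ball centers ``off'' the Cantor set), and a chord across a Cantor gap is flat by nature---it is a straight segment near the snowflake---so there is no conflict with (2). I would suggest replacing the hairpin construction outright with a gap-filling construction of that type; then your packing argument, suitably localized to the chords, should close the proof without invoking Mart\'in--Mattila.
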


Before proceeding, we recall a well-known construction method for snowflakes in $\R^2$. Let $\textbf{p} = (p_0,p_1,\dots)$ be sequence of numbers in $[1/4,1/2)$. Let $\G_0$ be the segment $[0,1]\times\{0\}$, oriented from $(0,0)$ to $(1,0)$. Assume that we have constructed an oriented polygonal arc $\G_k$ with $4^k$ edges. Define $\G_{k+1}$ to be the polygonal arc constructed by replacing each edge $e$ of $\G_{k}$ by a rescaled and rotated copy of the oriented polygonal arc in Figure \ref{fig:figure1} with $p = p_k$, so that the new oriented arc lies to the left of $e$. A snowflake arc $\mathcal{S}_{\textbf{p}}$ is obtained by taking the limit of $\G_{k}$, just as in the construction of the usual von Koch snowflake.

\begin{figure}[ht]
\includegraphics[scale=0.6]{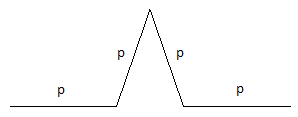}
\caption{}
\label{fig:figure1}
\end{figure}

\begin{rem}\label{rem:flatsnowflakes}
For any $\e>0$, there exists $p^*>1/4$ (small) such that if a snowflake is built with parameters $1/4 \leq p_k \leq p^*$ for all $k\geq 0$, then $\beta_{\G_k}(B(x,r)) \leq \e r$ for all $k\geq 0$, $x\in\G_k$, and $r>0$.
\end{rem}

Fix $\beta_0 \in (0,1)$. By the preceding remark, there exists $p^* \in (1/4,1/2)$ such that $\beta_{\mathcal{S}_{p_0}}(Q) < \beta_0$ for all $Q\in\D(\R^N)$. Set $\textbf{p}=(p^*,p^*,\dots)$, set $s_0 = -\log{4}/\log{p_0}$, and fix $s \in [1,s_0)$. It is well-known that there exists a $(1/s_0)$-bi-H\"older homeomorphism $\Phi : [0,1]\to \mathcal{S}_{\textbf{p}}$; e.g., see \cite{BoHei, RV}.

We now construct a self-similar Cantor set in $[0,1]$ in the following way. Let $I_{\emptyset} = [0,1]$. Assuming we have constructed $I_w = [a_w,b_w]$ for some $w\in\{1,2\}^n$, let
\[I_{w1} = [a_w, a_w + (b_w-a_w)2^{-s_0/s}] \quad\text{and}\quad I_{w2} = [b_w - (b_w-a_w)2^{-s_0/s}, b_w].\]
Define $E' = \bigcap_{n=0}^{\infty}\bigcup_{w\in\{1,2\}^n}I_w$. For each component $J$ of $[0,1]\setminus E'$, let $\g_J$ be the line segment joining the endpoints of $\Phi(J)$. Then define
\[ E = \Phi(E') \cup \bigcup_{J} \g_J,\]
where the union is taken over all components $J$ of $[0,1]\setminus E'$. Since $E'$ is $s/s_0$-regular and $\Phi$ is $(1/s_0)$-bi-H\"older,
\[\mathcal{H}^s(E) = \mathcal{H}^s(\Phi(E')) + \sum_{J}\mathcal{H}^s(\g_J) \leq C\mathcal{H}^{s/s_0}(E') < \infty .\]
Since $\Phi(E')\subset \mathcal{S}_{\textbf{p}}$ and $\gamma_J$ are line segments,
we have $\beta_{E}(Q) < \beta_0$ for all $Q\in\D(\R^N)$. Finally, by Theorem 2.1 in \cite{MM2000}, there does not exist a $(1/s)$-H\"older map $f:[0,1]\to \R^2$ whose image contains $\Phi(E')$ (and consequently $E$).

\subsection{Sharpness of exponent 1 in Theorem \ref{thm:main}}\label{sec:sharpnessof1}
To wrap up, we show that Theorem \ref{thm:main} does not hold if numbers $\tau_{s}(k,v,v')$ are replaced by $\tau_{s}(k,v,v')^p$ with $p>1$. When $s=1$, this follows from the necessary half of the Analyst's Traveling Salesman theorem. Thus, we may focus on the case $s>1$.

\begin{prop}
Let $p>1$, let $s>1$ be sufficiently close to $1$, and let $\alpha_0>0$ be sufficiently close to $0$. There exists a sequence of finite sets $\{(V_k,\rho_k)\}_{\geq 0}$ of numbers and finite sets in $\R^2$ satisfying (V0)--(V5) such that
\begin{equation}\label{eq:sharp}
\sum_{\substack{v\in V_k \\ \a_{k,v}\geq \a_0}}\rho_k^s r_0^s + \sum_{(v,v')\in \Flat(k)}\tau_s(k,v,v')^{p}\rho_k^sr_0^s < \infty
\end{equation}
but there does not exist a $(1/s)$-H\"older map $f:[0,1]\to \R^2$ such that $\bigcup_{k\geq 0}V_k \subset f([0,1])$.
\end{prop}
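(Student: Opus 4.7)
The plan is to build a non-self-similar snowflake $E = \overline{\bigcup_k V_k} \subset \R^2$ whose bump heights are tuned so that the variation excess $\tau_s(k,v,v')$ is uniformly positive but very small, while the resulting limit set has $\dim_H(E) > s$ (so $\Haus^s(E) = \infty$ and $E$ cannot lie in any $(1/s)$-H\"older image of $[0,1]$). Set $\epsilon := s-1$. Starting from $V_0 = \{(0,0),(1,0)\}$ and $\rho_0 = 1$, inductively construct $V_{k+1}$ by replacing each consecutive pair $(v,v')\in V_k$ with a two-segment bump: adjoin the new vertex $w := \tfrac12(v+v') + h_k\mathbf{n}_{v,v'}$, where $\mathbf{n}_{v,v'}$ is a unit normal to $v'-v$ whose side is chosen to alternate as one traverses the polyline. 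All level-$(k+1)$ edges then share the common length $\rho_{k+1} = \sqrt{(\rho_k/2)^2 + h_k^2}$, and the number of flat pairs at level $k$ is $N_k \asymp 2^k$. Parametrize the bump heights through $u_k := (h_k/\rho_k)^2 = u_0(s) + \xi_k$, where $u_0(s)>0$ is the unique positive root of $2(\tfrac14+u)^{s/2} = 1$ (so $\tau_s$ vanishes at $u=u_0$) and $\xi_k := k^{-2}\,2^{-k\epsilon^2/p}$.

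The key quantitative estimates are as follows. A Taylor expansion gives $u_0(s) = \tfrac{\epsilon\log 2}{2} + O(\epsilon^2)$, and then $\tau_s(k,v,v') = 2(\tfrac14 + u_k)^{s/2} - 1 \approx 2\xi_k > 0$ uniformly in $(v,v')$; moreover $\rho_k \asymp 2^{-k(1-\epsilon)}$ since $\sum_j \xi_j < \infty$, so $\rho_k^s \asymp 2^{-k(1-\epsilon^2)}$ and $N_k \rho_k^s \asymp 2^{k\epsilon^2}$. Consequently the $p$-sum contribution at scale $k$ is $\asymp 2^{k\epsilon^2}\xi_k^p = k^{-2p}$, and $\sum_k k^{-2p} < \infty$ because $p > 1$. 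Meanwhile the linear sum behaves like $\sum_k 2^{k\epsilon^2(1-1/p)}/k^2 = \infty$ (the exponent $\epsilon^2(1-1/p)$ is strictly positive), confirming that the hypothesis of Theorem \ref{thm:main} genuinely fails on this example.

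Next I verify the structural hypotheses. Conditions (V0)--(V4) are immediate with $r_0 = 1$, $C^* = 2$, and $\xi_1,\xi_2$ taken near $\tfrac12$ (once $\xi_0$ is sufficiently small). For (V5), take $\ell_{k,v}$ to be the best-fit line to $V_{k+1}\cap B(v, 30A^*\rho_k)$. A sagitta-type estimate shows that inside this window the polyline deviates from $\ell_{k,v}$ by at most $O(\sqrt{\epsilon}\,\rho_k)$: each bump at scale $\rho_j$ turns the polyline through angle $O(\sqrt{u_j}) = O(\sqrt{\epsilon})$, and the alternating-sign convention for $\mathbf{n}_{v,v'}$ prevents cumulative drift, so the deviation is dominated by the largest-scale bump fitting inside the window. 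Hence $\alpha_{k,v} \lesssim \sqrt{\epsilon}$ uniformly, and once $\epsilon$ is chosen small enough (depending on $\alpha_0$) we force $\alpha_{k,v} < \alpha_0$ for every $(k,v)$. Thus every consecutive pair in $V_k$ lies in $\Flat(k)$ and the non-flat vertex sum in \eqref{eq:sharp} is empty.

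To finish, I show $\dim_H(E) > s$ via the mass distribution principle. The normalized counting measures $N_k^{-1}\sum_{v\in V_k}\delta_v$ admit a weak limit $\mu$ on $E$ with $\mu(B(x,\rho_k)) \lesssim N_k^{-1}$. For $r \in [\rho_{k+1},\rho_k]$ and any $s' < 1/(1-\epsilon)$ this yields $\mu(B(x,r))/r^{s'} \lesssim 2^{k(s'(1-\epsilon)-1)} = O(1)$, so the mass distribution principle gives $\dim_H(E) \geq 1/(1-\epsilon) = 1+\epsilon+\epsilon^2+\cdots > 1+\epsilon = s$. Therefore $\Haus^s(E) = \infty$ and $E$ is not contained in any $(1/s)$-H\"older image of $[0,1]$. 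The hardest step is the uniform bound $\alpha_{k,v} \lesssim \sqrt{\epsilon}$: it relies essentially on the alternating-sign convention and requires careful tracking of how bump-induced curvature accumulates and partially cancels across all scales $j \leq k+1$, an estimate in which the smallness of $u_k \asymp \epsilon$ plays a decisive role.
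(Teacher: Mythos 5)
Your overall strategy---a perturbed snowflake with bump heights parameterized by $u_k = u_0(s) + \xi_k$, where $u_0(s)$ is the critical value at which $\tau_s$ vanishes---is the same approach as in the paper (where the perturbation is encoded through $t_k$ and the factor $(1+\tfrac{1}{k+n_0})^{2q}$). However, there is a decisive computational error in the asymptotics of $\rho_k$ that invalidates the argument.

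You claim $\rho_k \asymp 2^{-k(1-\epsilon)}$ and hence $\rho_k^s \asymp 2^{-k(1-\epsilon^2)}$, giving $N_k\rho_k^s \asymp 2^{k\epsilon^2}$. But with your choice $u_k = u_0(s) + \xi_k$ where $\sum_j\xi_j < \infty$, the edge-length ratio is $\rho_{k+1}/\rho_k = (1/4 + u_k)^{1/2} = 2^{-1/s}(1 + O(\xi_k))$, so the product $\prod_{j<k}(1+O(\xi_j))$ converges to a finite positive constant and $\rho_k \asymp 2^{-k/s}$. Since $s = 1+\epsilon$, we have $1/s = 1-\epsilon+\epsilon^2-\cdots \neq 1-\epsilon$; the difference $\epsilon^2/(1+\epsilon)$ is exactly the amount you conflated. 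The correct conclusion is $\rho_k^s \asymp 2^{-k}$, hence $N_k\rho_k^s \asymp 1$, and the phantom factor $2^{k\epsilon^2}$ disappears. With that corrected, both of your scale-$k$ contributions become $\asymp\xi_k$ and $\asymp\xi_k^p$. Because your $\xi_k = k^{-2}2^{-k\epsilon^2/p}$ decays geometrically, the \emph{linear} sum $\sum_k N_k\tau_s(k,\cdot)\rho_k^s \asymp \sum_k\xi_k$ already converges, so your construction in fact satisfies the hypothesis of Theorem~\ref{thm:main} with $p=1$, and the limit set \emph{is} contained in a $(1/s)$-H\"older curve---the opposite of what you need. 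Your dimension argument fails for the same reason: with $\rho_k \asymp 2^{-k/s}$ and $N_k\asymp 2^k$, the mass distribution principle gives $\dim_H E = s$, not $> s$, and there is no obstruction.

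The fix is what the paper does: take $\xi_k$ to decay only polynomially, $\xi_k \asymp 1/k$ (so $\sum\xi_k=\infty$). Then $\prod_{j<k}(1+O(\xi_j))$ grows like a power of $k$, giving $\rho_k \asymp 2^{-k/s}k^{q}$ for some $q>0$, hence $N_k\rho_k^s \asymp k^{sq}\to\infty$. This unbounded polynomial factor is what both kills the linear sum and produces the contradiction against a H\"older parameterization (in the paper: $1 \geq 2^k\min_i|w_{k,i}-w_{k,i+1}| \gtrsim 2^k\rho_k^s\asymp k^{sq}\to\infty$). Meanwhile $\tau_s\asymp\xi_k\asymp 1/k$ still gives $\sum_k k^{sq}\cdot k^{-p} <\infty$ provided $sq < p-1$, which the paper arranges by taking $q < (p-1)/s$. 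You should also note that the paper's non-parametrization argument is a direct pigeonhole on preimages in $[0,1]$ and does not require a dimension/mass-distribution step at all, which saves you the ``$\dim_H > s$'' argument that your erroneous $\rho_k$-estimate was propping up.
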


Let $s>1$ and $n_0 \in \N$ be constants to be specified below. Fix a number
\[ 0 < q < \min\{1/s, (p-1)/s\}. \]
For each $n\in \N$, let
\[ t_k = \sqrt{\frac{1}{4^{1/s}}\left ( 1 + \frac{1}{k+n_0} \right )^{2q} - \frac{1}{4}}.\]

Construct a sequence of polygonal arcs $\G_k$ as in \textsection\ref{sec:flatcurves-counterexample} with parameters
\[ p_k = 1/4 + t_k^2.\]
We may assume that numbers $p_k$ are in $[1/4,1/2)$ by taking $n_0$ to be sufficiently large. For each $k\geq 0$, we define a finite set $V_k \subset \G_k$ as follows. Define $V_0 := \{v_{0,1},v_{0,2}\}$, where $v_{0,1} = (0,0)$ and $v_{0,2} = (1,0)$. Suppose that for some $k\geq 0$ we have defined a set
\[ V_k = \{v_{k,1},\dots,v_{k,N_k}\},\qquad N_k = 2^{k}+1,\]
where points $v_{k,i}$ are enumerated according to the orientation of $\G_k$.
For each $i=1,\dots, 2^k+1$, set $v_{k+1,2i-1} = v_{k,i}$, and assign $v_{k+1,2i}$ to the point of $\G_{k+1}$ that lies between $v_{k+1,2i-1}$ and $v_{k+1,2i+1}$ and is equal distance to $v_{k+1,2i-1}$ and $v_{k+1,2i+1}$ (the peak of the triangle in Figure \ref{fig:figure1}). Define the quantities
\begin{equation}\label{eq:netparameters}
r_0 = 1, \ \  C^*=2, \ \ \xi_1 = 2^{-1/s},\ \ \xi_2 = \frac{1+2^{-1/s}}{2},\ \ \rho_0 = 1,\ \  \rho_k= 2^{-k/s}\frac{(k+1+n_0)^{q}}{(2+n_0)^{q}}.
\end{equation}
For each $k\geq 0$ and $v\in V_k$, define
\[ \a_{k,v} := \inf_{\ell} \sup_{x\in V_{k+1}\cap B(v,30A^*\rho_kr_0)} \frac{\dist(x,\ell)}{\rho_{k+1}r_0},\]
where the infimum is taken over all lines $\ell$ in $\R^2$ and $A^*$ is as in \textsection\ref{sec:flat}. Let $\ell_{k,v}$ be a line $\ell$, which realizes the number $\a_{k,v}$.

\begin{lem}\label{lem:propnets}
There exist choices of $s$ and $n_0$ so that the following properties hold.
\begin{enumerate}
\item For all $k\geq 0$ and $i\in\{1,\dots N_k\}$, we have $|v_{k,i} - v_{k,i+1}| = \rho_k$.
\item The sequence $\{(V_k,\rho_k)\}_{k\geq 0}$ satisfies (V0)--(V5) with the parameters given in (\ref{eq:netparameters}).
\item For all $k\geq 0$ and $v\in V_k$, we have $\a_{k,v} \leq \a_0$, where $\a_0$ is as in Definition 2.4.
\item For all $k\geq 0$ and $i\in\{1,\dots,N_k\}$,
\[ \Flat(k) = \{(v_{k,i},v_{k,i+1}) : i=1,\dots,2^{k}\}\quad\text{and}\quad V_{k+1,i}(v_{k,i},v_{k,i+1}) = \{v_{k,i},v_{k+1,2i},v_{k,i}\}.\]
\end{enumerate}
\end{lem}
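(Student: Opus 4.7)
The plan is to prove claims (1)--(4) in sequence. Claim (1) sets the geometric backbone; claims (2) and (3) follow from the algebraic relations among $s$, $q$, $n_0$ together with the near-flatness of the snowflake; and claim (4) is a consequence of (1) and (3).

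For claim (1), I would proceed by induction on $k$. The base case $k=0$ is trivial since $|v_{0,1}-v_{0,2}|=1=\rho_0$. For the inductive step, assume $|v_{k,i}-v_{k,i+1}|=\rho_k$ for all $i$. The snowflake replacement described in \S\ref{sec:flatcurves-counterexample} and Figure \ref{fig:figure1} places $v_{k+1,2i}$ at the apex of an isosceles triangle with base $[v_{k,i},v_{k,i+1}]$ and slant sides of length $\sqrt{p_k}\,\rho_k$ (computed from $(1/2)^2+t_k^2=p_k$). Hence $|v_{k+1,2i-1}-v_{k+1,2i}|=|v_{k+1,2i}-v_{k+1,2i+1}|=\sqrt{p_k}\,\rho_k$, and it suffices to verify the identity $\rho_{k+1}=\sqrt{p_k}\,\rho_k$. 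From $p_k=4^{-1/s}(1+1/(k+n_0))^{2q}$ one has $\sqrt{p_k}=2^{-1/s}(1+1/(k+n_0))^q$, and this matches the telescoping ratio coming directly from the definition of $\rho_k$.

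The axioms (V0)--(V4) follow quickly from claim (1). The inequality $\rho_{k+1}\geq\xi_1\rho_k$ with $\xi_1=2^{-1/s}$ reduces to $(1+1/(k+n_0))^q\geq 1$, which is immediate. The upper bound $\rho_{k+1}\leq\xi_2\rho_k$ is maximized at $k=0$, and $\rho_{k+1}/\rho_k\to 2^{-1/s}<(1+2^{-1/s})/2=\xi_2$ as $n_0\to\infty$, so choosing $n_0$ sufficiently large makes the bound hold uniformly. Axioms (V1)--(V3) are immediate from the construction and claim (1). For (V4), each new vertex $v_{k+1,2i}\in V_{k+1}\setminus V_k$ lies at distance exactly $\rho_{k+1}<C^*\rho_{k+1}r_0$ from $v_{k,i}\in V_k$. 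To verify (V5), I would take $\ell_{k,v_{k,i}}$ to be the line through $v_{k,i}$ parallel to the chord $[v_{k,i-1},v_{k,i+1}]$ (and $\R\times\{0\}$ at stage $k=0$), and bound the deviation of points in $V_{k+1}\cap B(v_{k,i},30A^*\rho_k r_0)$ by a geometric sum in the peak heights $t_j$, $j\geq k$, scaled by the separations $\rho_j/\rho_{k+1}$; convergence is guaranteed because $s>1$ makes the ratios $\rho_j/\rho_{k+1}$ decay geometrically.

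For claim (3), the preceding bound on $\alpha_{k,v}$ is controlled by a universal multiple of $\sup_{j\geq k}t_j$. Since $t_j^2\leq 4^{-1/s}-1/4+O(1/(j+n_0))$, first fixing $s$ sufficiently close to $1$ makes $4^{-1/s}-1/4$ small, and then choosing $n_0$ large makes the correction term negligible; this yields $\alpha_{k,v}\leq\alpha_0$ uniformly in $k$ and $v\in V_k$. Finally, for claim (4), claim (1) gives $|v_{k,i}-v_{k,i+1}|=\rho_k r_0$, well below $14A^*\rho_k r_0$, while claim (3) combined with Lemma \ref{lem:approx} ensures that the linear ordering of $V_k\cap B(v_{k,i},14A^*\rho_k r_0)$ induced by $\ell_{k,v_{k,i}}$ coincides with the index ordering. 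Thus $v_{k,i+1}$ is the first point of $V_k$ to the right of $v_{k,i}$, giving the stated form of $\Flat(k)$. A parallel argument with $V_{k+1}$ in place of $V_k$ shows that the only vertices of $V_{k+1}$ whose projections onto $\ell_{k,v_{k,i}}$ fall between the projections of $v_{k,i}$ and $v_{k,i+1}$ are the three points $v_{k,i}$, $v_{k+1,2i}$, $v_{k,i+1}$. The main obstacle I anticipate is the quantitative estimate in (V5) underlying claim (3): one must carefully sum peak contributions across all generations and ensure the bound $\alpha_{k,v}\leq\alpha_0$ persists uniformly in $k$, which is precisely what dictates the hypotheses that $s$ be close to $1$ and that $\alpha_0$ not be taken too small.
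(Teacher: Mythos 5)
Your overall strategy mirrors the paper's: induction for claim (1) via the Pythagorean identity $p_k = 1/4 + t_k^2$, verification of (V0) by estimating the ratio $\rho_{k+1}/\rho_k$ with $n_0$ large, invocation of the near-flatness of the snowflake (Remark \ref{rem:flatsnowflakes}) for claim (3), and Lemma \ref{lem:approx} for the ordering statement in claim (4). Your unpacking of Remark \ref{rem:flatsnowflakes}, controlling $\alpha_{k,v}$ via the peak heights $t_j$, is a reasonable substitute for the paper's bare citation.

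However, there is a genuine gap in your treatment of (V3). You assert that ``Axioms (V1)--(V3) are immediate from the construction and claim (1),'' but claim (1) only controls distances between \emph{consecutive} vertices $v_{k,i}$, $v_{k,i+1}$. Axiom (V3) requires the full set $V_k$ to be $\rho_k$-separated, and in particular one must rule out that a vertex $v_{k,i}$ comes within $\rho_k$ of some non-adjacent $v_{k,j}$ --- i.e.~that the polygonal arc never curls back on itself. This is precisely what the flatness assertion (3) guarantees: once $\alpha_{k,v}$ is uniformly small, Lemma \ref{lem:approx} linearly orders the points of $V_{k+1}$ near each $v$, so the nearest neighbors of any vertex are its index-adjacent ones and (V3) follows from (1). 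The paper carries this out by a \emph{joint} induction on (V3), claim (3), and claim (4), precisely because (V3) at level $k+1$ depends on (3) at level $k$. Your write-up proves (3) after having already asserted (V3), so the logical order is broken; you should restructure to either prove (3) first (for all $k$, using Remark \ref{rem:flatsnowflakes} directly, which does not reference $V_k$), or set up the same joint induction as the paper. As a minor note, your computation $\sqrt{p_k} = 2^{-1/s}(1+1/(k+n_0))^q$ does not literally match $\rho_{k+1}/\rho_k = 2^{-1/s}(1+1/(k+1+n_0))^q$ as written in \eqref{eq:netparameters}; this off-by-one is inherited from an indexing inconsistency in the paper itself and is harmless, but you should not claim the telescoping identity holds verbatim without flagging it.
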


\begin{proof}
For (1), we work by induction. The claim is true for $k=0$ by the choice of points $v_{0,1}$ and $v_{0,2}$. Assume the claim is true for some $k\geq 0$. By the Pythagorean theorem,
\[ |v_{k+1,2i-1}-v_{k+1,2i}| = |v_{k,i} - v_{k+1,2i-1}| =(4^{-1}+t_1^2)^{1/2}|v_{k,i}-v_{k,i+1}| = (4^{-1}+t_1^2)^{1/2}\rho_{k} = \rho_{k+1}.\]
In similar fashion, one can compute $|v_{k+1,2i}-v_{k+1,2i+1}|$ and the proof of (1) is complete.

Claim (3) is immediate from Remark \ref{rem:flatsnowflakes} by taking $s$ sufficiently close to $1$ and $n_0$ sufficiently large.

For (V0), we have
\[ \frac{\rho_{k+1}}{\rho_k} = 2^{-1/s}\left ( \frac{k+2+n_0}{k+1+n_0} \right )^q.\]
Clearly, $\rho_{k+1} > \xi_1\rho_{k}$. On the other hand, since $2^{-1/s} < \xi_2 < 1$, if $n_0$ is sufficiently large, then $\rho_{k+1} \leq \xi_2\rho_k$. Properties (V1), (V2), and (V5) are immediately satisfied by our construction. For (V4), fix a point $v_{k+1,2i} \in V_{k+1}\setminus V_k$. By (1), we have $|v_{k+1,2i} - v_{k+1,2i+1}| = \rho_{k+1}$ and (V4) is satisfied.

For (V3), claim (3), and claim (4), we apply induction on $k$. For $k=0$ (V3) is immediate by the choice of parameters. For claim (3), we note that $\a_{0,v}=0$ for all $v\in V_0$, since $V_0$ contains only 2 points. For the same reason, claim (4) is satisfied when $k=0$.

To show (V3), we note by (3) that the closest point of $V_{k+1}$ to $v_{k+1,2i}$ are the points $v_{k,i}$ and $v_{k,i+1}$. Therefore,
\[ \min_{v\in V_{k+1} \setminus \{v_{k+1,2i}\}} |v-v_{k+1,2i}| = |v_{k+1,2i} - v_{k,i}| = \rho_{k+1}.\]
Similarly, by (3), the closest point of $V_{k+1}$ to $v_{k+1,2i+1}=v_{k,i+1}$ are the points $v_{k+1,2i}$ and $v_{k+1, 2(i+1)}$ (or only one point of these two if $i =0$ or $i = 2^{k}\}$) and the above inequality also applies.

Finally, to show (4), we apply (3) and the arguments in the proof of (V3). Namely, if $v_{k,i} \in V_k$ with $k\in\{2,2^k-1\}$, then $\a_{v_{k,i}} < \a_0$ and $v_{k,i}$ lies between points $v_{k,i-1}$ and $v_{k,i+1}$. Therefore,
\[ \Flat(k) = \{(v_{k,i},v_{k,i+1}) : i=1,\dots,2^{k}\}.\]
Furthermore, the only point of $V_{k+1}$ lying between $v_{k,i}$ and $v_{k,i+1}$ is $v_{k+1,2i}$. Thus,
\[ V_{k+1,i}(v_{k,i},v_{k,i+1}) = \{v_{k,i},v_{k+1,2i},v_{k,i}\}. \qedhere\]
\end{proof}

We now show that there does not exist a $(1/s)$-H\"older map $f:[0,1] \to \R^2$ whose image contains $\bigcup_{k\geq 0}V_k$. Contrary to the claim, assume that such a map $f$ exists and let $H$ be its H\"older constant. For each $v_{k,i}\in V_k$, fix $w_{k,i} \in [0,1]$ such that $f(w_{k,i})=v_{k,i}$. Then
\[ |w_{k,i}-w_{k,j}| \gtrsim_{H,s} |v_{k,i}-v_{k,j}|^s \gtrsim_{n_0,q,s} 2^{-k}(k+1)^{sq}.\]
Therefore,
\begin{align*}
1 \geq 2^k \min_{i=1,\dots,2^{k}+1}|w_{k,i}-w_{k,j}| \gtrsim_{H,s,n_0,q} (k+1)^{sq},
\end{align*}
which diverges as $k\to\infty$ and we reach a contradiction.

It remains to check (\ref{eq:sharp}). By Lemma \ref{lem:propnets}, it suffices to show that
\begin{align*}
\sum_{k=0}^{\infty}\sum_{i=1}^{2^{k}} \tau_s(k,v_{k,i},v_{k,i+1})^p |v_{k,i}-v_{k,i+1}|^s < \infty.
\end{align*}
By the Mean Value Theorem,
\begin{align*}
\tau_s(k,v_{k,i},v_{k,i+1}) &= \frac{|v_{k,i}-v_{k+1,2i}|^s + |v_{k+1,2i}-v_{k,i+1}|^s-|v_{k,i}-v_{k,i+1}|^s}{|v_{k,i}-v_{k,i+1}|^s}\\
&=  2\frac{(k+2+n_0)^{sq} - (k+1+n_0)^{sq}}{(k+2+n_0)^{sq}}\\
&\lesssim_{n_0,s,q}\frac1{k+1}.
\end{align*}
Finally, since $sq-p<-1$,
\begin{align*}
\sum_{k=0}^{\infty}\sum_{i=1}^{2^{k}} \tau_s(k,v_{k,i},v_{k,i+1})^p |v_{k,i}-v_{k,i+1}|^s &\lesssim_{n_0,s,q}  \sum_{k=0}^{\infty} \frac{2^k}{ (k+1)^{p}}(2^{-k/s}(k+1)^q)^s\\
&= \sum_{k=0}^{\infty}(k+1)^{sq-p}<\infty.
\end{align*}

\addtocontents{toc}{\protect\vspace{10pt}}

\appendix

\section{Tours on connected, finite simple graphs}\label{sec:appendix-a}
A \emph{finite simple graph} $G = (V,E)$ in a Banach space $X$ is a finite set of points $V\subset X$ (called \emph{vertices} of $G$) along with a set $E\subseteq \{\{v,v'\} : \text{distinct } v,v' \in V\}$ (called \emph{edges} of $G$). We may identify edges $\{v,v'\}$ in the graph with the (unoriented) line segments $[v,v']$ in $X$. A graph is \emph{connected} if every pair of vertices in the graph can be joined by a sequence of edges in the graph.
The \emph{valence} of a vertex $v$ in $G$ is the number of edges in $G$ that contain $v$.

\begin{prop}\label{prop:graph}
Let $G$ be a connected, finite simple graph in $X$. Assume that every vertex in $G$ has valence at most 2. For any vertex $v_0$ in $G$ and any nondegenerate compact interval $\Delta$, there exists a collection $\mathcal{I}$ of open intervals, whose closures are mutually disjoint and contained in the interior of $\D$, and there exists a continuous map $g : \D \to G$ with the following properties.
\begin{enumerate}
\item The endpoints of $\D$ are mapped onto $v_0$.
\item For every vertex $v$ of $G$, there exists at least one component $J$ of $\D\setminus \bigcup\mathcal{I}$ such that $g(J)=v$. Conversely, for every component $J$ of $\D\setminus \bigcup\mathcal{I}$, there exists a vertex $v$ of $G$ such that $g(J)=v$.
\item Each interval in $\mathcal{I}$ is mapped linearly onto some edge $e$ of $G$. Conversely, for each edge $e$ of $G$, there exist exactly two intervals $I \in \mathcal{I}$ such that $g(I) = e$.
\item For any vertex $v$ in $G$, there exists a component $J$ of $g^{-1}(v)\cap\D\setminus \bigcup\mathcal{I}$ such that for any edge $e$ containing $v$ as an endpoint, there exists $I\in\mathcal{I}$ such that $\overline{I}\cap J \neq \emptyset$ and $g(I) = e$. 
\end{enumerate}
\end{prop}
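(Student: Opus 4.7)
The hypotheses that $G$ is connected and every vertex has valence at most $2$ force $G$ to be exactly one of: (a) a single vertex $\{v_0\}$; (b) a simple path $w_0\,\text{--}\,w_1\,\text{--}\,\cdots\,\text{--}\,w_n$ with $n\geq 1$; or (c) a simple cycle $v_0\,\text{--}\,v_1\,\text{--}\,\cdots\,\text{--}\,v_{n-1}\,\text{--}\,v_0$ with $n\geq 3$. My strategy is to first construct a \emph{tour} on $G$, i.e.\ a vertex sequence $u_0,u_1,\dots,u_N$ with $N=2|E|$ such that $u_0=u_N=v_0$, each consecutive pair $\{u_{j-1},u_j\}$ is an edge of $G$, every edge equals $\{u_{j-1},u_j\}$ for exactly two indices $j$, and for every vertex $v$ of $G$ there is some index $j\in\{1,\dots,N-1\}$ with $u_j=v$ at which $\{u_{j-1},u_j\}$ and $\{u_j,u_{j+1}\}$ together exhaust the edges of $G$ incident to $v$. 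The map $g$ and the family $\mathcal{I}$ are then read off from the tour.

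\textbf{Construction of the tour.} Case (a) is trivial ($N=0$; the tour is $(v_0)$). In case (c) I traverse the cycle twice in the \emph{same} direction:
\begin{equation*}
(v_0,v_1,\dots,v_{n-1},v_0,v_1,\dots,v_{n-1},v_0),\qquad N=2n=2|E|.
\end{equation*}
The middle visit to $v_0$ (index $n$) is flanked by the two distinct edges $\{v_{n-1},v_0\}$ and $\{v_0,v_1\}$ incident to $v_0$; the first visit to any other $v_i$ (index $i$) is flanked by its two incident edges $\{v_{i-1},v_i\}$ and $\{v_i,v_{i+1}\}$. In case (b), if $v_0=w_k$ with $0\leq k\leq n$, I form the tour by first going from $v_0$ out to $w_n$ and back, then from $v_0$ out to $w_0$ and back (omitting either half if $k=0$ or $k=n$); this again has length $2n=2|E|$, and the middle visit to $v_0$ (or the single visit if $v_0$ is an endpoint) is flanked by every edge incident to $v_0$, while each $w_i$ with $i\neq k$ is first visited with both its incident edges adjacent in the tour (using that the "turnaround" at a pendant vertex has only one incident edge to match).

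\textbf{From tour to parameterization.} Write $\Delta=[a,b]$ and pick $a=t_0<s_0<t_1<s_1<\cdots<t_N<s_N=b$. Set $J_j:=[t_j,s_j]$ for $0\leq j\leq N$ and $I_j:=(s_{j-1},t_j)$ for $1\leq j\leq N$, and let $\mathcal{I}:=\{I_1,\dots,I_N\}$, so that the closures $\overline{I_j}$ are pairwise disjoint and contained in $\mathrm{int}(\Delta)$. Define $g:\Delta\to X$ by $g\equiv u_j$ on $J_j$ and by the orientation-preserving affine map of $I_j$ onto the segment $[u_{j-1},u_j]\subset G$ on $I_j$. Continuity of $g$ is immediate since $g(s_{j-1})=u_{j-1}$ and $g(t_j)=u_j$ agree with the constant values on the adjacent $J_{j-1}$ and $J_j$. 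Properties (1)--(4) now translate directly from the combinatorial properties of the tour: (1) because $u_0=u_N=v_0$; (2) because every vertex of $G$ is visited; (3) because every edge is visited exactly twice; and (4) because the special index $j$ furnished by the tour makes $J_j$ a component of $g^{-1}(v)\setminus\bigcup\mathcal{I}$ whose closure meets $I_j$ and $I_{j+1}$, and the images of these two intervals realize all edges incident to $v$.

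\textbf{Main obstacle.} The only delicate point is property (4) at the distinguished vertex $v_0$. A naive cycle tour that goes once clockwise and once counterclockwise puts $v_0$ between two copies of the \emph{same} edge at its middle visit, so (4) fails there; going around twice in the same direction fixes this by flanking the middle $v_0$-visit with its two \emph{distinct} incident edges. An analogous observation handles the path case with $v_0$ interior. Once the tour is chosen with care, the analytic construction of $g$ and $\mathcal{I}$ is routine bookkeeping.
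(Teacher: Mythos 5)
Your proposal is correct and takes essentially the same approach as the paper: split $G$ into the cycle case and the path case, construct a closed walk from $v_0$ that traverses each edge exactly twice, arrange it so that near each vertex the two flanking edges exhaust the incident edges, and then read off $\mathcal{I}$ and $g$ by alternating constant blocks $J_j$ and linear blocks $I_j$. The paper's cycle map likewise winds around twice in the same direction, and its path tour $u^l \to u^1 \to u^k \to u^l$ is a variant of your out-and-back-twice tour $w_k\to w_n\to w_k\to w_0\to w_k$; both are valid.

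One small imprecision worth fixing: your abstract tour condition demands an interior index $j\in\{1,\dots,N-1\}$ for \emph{every} vertex, but when $v_0$ is a valence-$1$ endpoint of the path (i.e.\ $k=0$ or $k=n$), the tour visits $v_0$ only at $j=0$ and $j=N$, so no such interior index exists. This does not affect the validity of $g$ or property~(4): a valence-$1$ vertex needs only one flanking interval, and $J_0$ (resp.\ $J_N$) does meet $\overline{I_1}$ (resp.\ $\overline{I_N}$), whose image is the unique incident edge. You should simply weaken the stated requirement for $v=v_0$ to allow $j\in\{0,N\}$ when $v_0$ has valence $1$; your subsequent case-by-case verification already handles this correctly.
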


\begin{proof} If we only desired properties (1)--(3), then we could prove the proposition without any restriction on the valency of the vertices by simple induction on the number of edges. The restriction on the valency of the vertices ensures the graph has one of two simple forms that make it easy to describe maps $g$ satisfying properties (1)--(4). Thus, let $G$ be a connected, finite simple graph in $X$, and assume that every vertex in $G$ has valence at most 2. The conclusion being trivial otherwise, we may assume that $G$ contains at least two vertices. There are two possibilities. In each case, we will construct the family $\mathcal{I}$ and the map $g$, but leave verification of properties (1) through (4) to the reader.

\emph{Case 1: Suppose that every vertex of $G$ has valence 2 (i.e.~ $G$ is a ``cycle").} Then we can find an enumeration $\{u^1,\dots,u^k\}$ of the vertices of $G$ so that the edges of $G$ are precisely $\{[u^i,u^{i+1}]: i=1,\dots,k\}$, where we set $u^{k+1}=u^1$. Without loss of generality, assume that $u^1 = v_0$. Let $\mathcal{I} = \{I_1,\dots, I_{2k}\}$ be open intervals, enumerated according to the orientation of $\D$, whose closures are mutually disjoint and contained in the interior of $\D$. Then there exists a continuous, surjective map $g : \D \to G$ such that
\begin{enumerate}
\item $g$ is linear on each $I_i$ and constant on each component of $\D\setminus \bigcup_{i=1}^{2k}I_i$;
\item for each $i\in \{1,\dots, k\}$, $g$ maps $I_i$ linearly onto $[u^i,u^{i+1}]$ and maps the left endpoint of $I_i$ onto $u^i$;
\item for each $i\in \{k+1,\dots, 2k\}$, $g$ maps $I_i$ linearly onto $[u^{i-k},u^{i-k+1}]$ and maps the left endpoint of $I_i$ onto $u^{i-k}$.
\end{enumerate}
That is, $g$ winds twice around the graph, starting and ending at $v_0=u^1$.

\emph{Case 2: Suppose that least one vertex of $G$ has valence 1 (i.e.~ $G$ is an ``arc")}. Then we can find an enumeration $\{u^1,\dots,u^k\}$ of the vertices of $G$ so that the edges of $G$ are precisely $\{[u^i,u^{i+1}]: i=1,\dots,k-1\}$. In this case, $u^1$ and $u^k$ have valence 1 and all other vertices have valence 2. Assume that $v_0 = u^l$ for some $1\leq l\leq k$. Let $\mathcal{I} = \{I_1,\dots, I_{2(k-1)}\}$ be open intervals, enumerated according to the orientation of $\D$, whose closures are mutually disjoint and contained in the interior of $\D$. Then there exists a continuous, surjective map $g : \D \to G$ such that
\begin{enumerate}
\item $g$ is linear on each $I_i$ and constant on each component of $\D\setminus \bigcup_{i=1}^{2(k-1)}I_i$;
\item for each $1\leq i\leq l-1$ (if any), $g$ maps $I_i$ linearly onto $[u^{l-i},u^{l-i+1}]$ and maps the left endpoint of $I_i$ onto $u^{l-i+1}$;
\item for each $i\in \{l,\dots, l+k-2\}$, $g$ maps $I_i$ linearly onto $[u^{i-l+1},u^{i-l+2}]$ and maps the left endpoint of $I_i$ onto $u^{i-l+1}$;
\item for each $l+k-1\leq i \leq 2(k-1)$ (if any), $g$ maps $I_i$ linearly onto $[u^{2k-2+l-i},u^{2k-1+l-i}]$ and maps the left endpoint of $I_i$ onto $u^{2k-1+l-i}$.
\end{enumerate} That is, $g$ walks along the graph from $v_0=u^l$ towards $u^1$, walks from $u^1$ to $u^k$, and walks from $u^k$ back to $u^l$.
\end{proof}

\section{From Lipschitz to H\"older parameterizations}\label{sec:lip-hold}

The following method of obtaining H\"older parameterizations from Lipschitz ones is well known, see e.g.~ \cite[Lemma VII.2.8]{SS}.  We include Lemma \ref{l:LipHold} and its proof here to have a clear statement about the dependence of the H\"older constant of the map $f$.

\begin{lem}\label{l:LipHold} Let $s> 1$, $M>0$, $0<\xi_1\leq \xi_2<1$, $\alpha>0$, $\beta>0$, and $j_0\in\Z$. Let $(X,|\cdot|)$ be a Banach space. Suppose that $\rho_j$ $(j\geq j_0)$ is a sequence of scales and  $f_j:[0,M]\rightarrow X$ $(j\geq j_0)$ is a sequence of Lipschitz maps satisfying \begin{enumerate}
\item $\rho_{j_0}=1$ and $\xi_1 \rho_j \leq \rho_{j+1}\leq \xi_2 \rho_j$ for all $j\geq j_0$,
\item $|f_j(x)-f_j(y)| \leq A_j|x-y|$ for all $j\geq j_0$, where $A_j \leq \alpha \rho_j^{1-s}$, and
\item $|f_j(x)-f_{j+1}(x)| \leq B_j$ for all $j\geq j_0$, where $B_j \leq \beta \rho_j$.\end{enumerate} Then
$f_j$ converges uniformly to a map $f:[0,M]\rightarrow X$ such that $$|f(x)-f(y)|\leq H |x-y|^{1/s}\quad\text{for all $x,y\in[0,M]$},$$ where $H$ is a finite constant depending only on $\max(M,1/M)$, $\xi_1$, $\xi_2$, $\alpha$, and $\beta$; see \eqref{e:geoH}.  \end{lem}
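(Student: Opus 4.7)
The plan is to first establish uniform convergence via a telescoping argument, then prove $(1/s)$-H\"older continuity of the limit by a dyadic scale-matching trick: given $x,y\in[0,M]$, one picks an index $k=k(|x-y|)$ such that $\rho_k$ is comparable to $|x-y|^{1/s}$, then decomposes $f(x)-f(y)$ into $(f(x)-f_k(x))+(f_k(x)-f_k(y))+(f_k(y)-f(y))$ and bounds the three pieces against $|x-y|^{1/s}$ with the chosen $k$. The H\"older exponent $1/s$ is exactly the one for which the Lipschitz bound on $f_k$ and the telescoping tail bound both contribute $O(|x-y|^{1/s})$ at the optimally chosen scale.

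For uniform convergence, hypothesis (1) gives $\rho_j \leq \xi_2^{j-j_0}$ for $j\geq j_0$, so hypothesis (3) yields $\sum_{j\geq j_0}\|f_{j+1}-f_j\|_\infty \leq \beta/(1-\xi_2)$, whence the partial sums form a uniformly Cauchy sequence in $C([0,M],X)$, which is complete by completeness of $X$. Thus $f_j\to f$ uniformly. For the H\"older estimate, fix $x\neq y$ in $[0,M]$ and consider two cases. If $|x-y|^{1/s}<\rho_{j_0}=1$, let $k\geq j_0$ be the largest integer with $\rho_k>|x-y|^{1/s}$, so that $\rho_{k+1}\leq|x-y|^{1/s}$ and, by (1), $\rho_k\leq \rho_{k+1}/\xi_1\leq\xi_1^{-1}|x-y|^{1/s}$. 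The two outer terms satisfy
\[
|f(z)-f_k(z)|\leq\sum_{j\geq k}B_j\leq \frac{\beta\rho_k}{1-\xi_2}\leq\frac{\beta}{(1-\xi_2)\xi_1}|x-y|^{1/s}
\]
for $z\in\{x,y\}$, and the middle term obeys
\[
|f_k(x)-f_k(y)|\leq A_k|x-y|\leq\alpha\rho_k^{1-s}|x-y|<\alpha|x-y|^{(1-s)/s}|x-y|=\alpha|x-y|^{1/s},
\]
using $1-s<0$ together with $\rho_k>|x-y|^{1/s}$. In the complementary case $|x-y|^{1/s}\geq 1$ (which forces $M\geq 1$ and $|x-y|\geq 1$), estimate directly against $f_{j_0}$: the tail contribution is at most $2\beta/(1-\xi_2)\leq 2\beta(1-\xi_2)^{-1}|x-y|^{1/s}$, while
\[
|f_{j_0}(x)-f_{j_0}(y)|\leq\alpha|x-y|\leq\alpha M^{(s-1)/s}|x-y|^{1/s},
\]
the last inequality from $|x-y|^{(s-1)/s}\leq M^{(s-1)/s}$.

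Combining the two regimes yields the H\"older constant
\[
H=\max\!\left\{\alpha+\frac{2\beta}{(1-\xi_2)\xi_1},\ \alpha\max(M,1)^{(s-1)/s}+\frac{2\beta}{1-\xi_2}\right\},
\]
which depends only on $\alpha$, $\beta$, $\xi_1$, $\xi_2$, and $\max(M,M^{-1})$, as claimed. There is no real obstacle to this argument; the only sensitive point is choosing $k$ as the \emph{largest} index with $\rho_k>|x-y|^{1/s}$ rather than the smallest, which is what produces an upper bound $\rho_k\lesssim|x-y|^{1/s}$ (used for the tail) simultaneously with the lower bound $\rho_k>|x-y|^{1/s}$ (used for the middle term through $\rho_k^{1-s}<|x-y|^{(1-s)/s}$). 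The hypothesis $s>1$ enters solely through this last implication; for $s=1$ one recovers the standard Lipschitz version of the statement with the same proof, since the middle term becomes $A_k|x-y|\lesssim|x-y|$ directly.
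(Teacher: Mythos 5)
Your proof is correct and follows essentially the same strategy as the paper's: telescoping for uniform convergence, then a three-term decomposition $f(x)-f_k(x)$, $f_k(x)-f_k(y)$, $f_k(y)-f(y)$ at a scale $k$ matched to $|x-y|^{1/s}$. The only real structural difference is the normalization of the scale-matching condition. You pick $k$ so that $\rho_{k+1}^s\leq|x-y|<\rho_k^s$, which requires a separate case when $|x-y|\geq 1$ (since no such $k\geq j_0$ exists there); the paper instead picks the unique $j\geq j_0$ with $M\rho_{j+1}^s<|x-y|\leq M\rho_j^s$, and because $|x-y|\leq M=M\rho_{j_0}^s$ this choice is always available, so the case split disappears at the cost of carrying the factor $M^{1/s}$ through the estimate. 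One small technical point: your final $H$ contains $\max(M,1)^{(s-1)/s}$, which literally depends on $s$, contrary to the lemma's claim; since $0<(s-1)/s<1$ and $\max(M,1)\geq 1$, you should enlarge it to $\max(M,1)$ to obtain an $s$-independent constant, as in the paper's expression \eqref{e:geoH}.
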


\begin{proof} Define $f:[0,M]\rightarrow X$ pointwise by $f(x)=f_{j_0}(x)+\sum_{k=j_0}^\infty (f_{k+1}(x)-f_k(x)).$ Then $f$ exists and is the uniform limit of the maps $f_j$ by (3), because $\sum_{k=j_0}^\infty B_k<\infty$. In fact, for all $j\geq j_0$ and $x\in[0,M]$,
\begin{equation}\label{e:geo1} |f(x)-f_j(x)| \leq  \sum_{k=j}^\infty |f_{j+1}(x)-f_j(x)| \leq \sum_{k=j}^\infty \beta \rho_{k} \leq  \frac{\beta}{1-\xi_2} \rho_j.
\end{equation}
Suppose that $x,y\in[0,M]$ with $x\neq y$. Then there is a unique integer $j\geq j_0$ such that
\begin{equation}\label{e:geo2} M\rho_{j+1}^s<|x-y|\leq M\rho_j^s.\end{equation}
By the triangle inequality, (2), and \eqref{e:geo1},
\begin{equation} \begin{split} \label{e:geo3} |f(x)-f(y)| &\leq |f_j(x)-f_j(y)|+|f(x)-f_j(x)|+|f(y)-f_j(y)|\\ &\leq \alpha \rho_j^{1-s}|x-y| + \frac{2\beta}{1-\xi_2}\rho_j.\end{split}\end{equation}
By the first inequality in \eqref{e:geo2} and (1), we have
\begin{equation}\label{e:geo4} \rho^{j} < \frac{1}{M^{1/s}\xi_1}|x-y|^{1/s}.\end{equation} Hence, by the second inequality in \eqref{e:geo2}, we have
\begin{equation} \label{e:geo5} \rho_j^{1-s}|x-y| \leq M\rho_j < \frac{M}{M^{1/s}\xi_1}|x-y|^{1/s}.\end{equation}
Combining \eqref{e:geo3}--\eqref{e:geo5} yields $|f(x)-f(y)|\leq h|x-y|^{1/s}$ for all $x,y\in [0,M]$, where
\begin{equation}\label{e:geo6} h= \frac{1}{M^{1/s}\xi_1}\left(\alpha M+\frac{2\beta}{1-\xi_2}\right).\end{equation}
If $M\geq 1$, then $1/M^{1/s}\leq 1$, while if $M\leq 1$, then $1/M^{1/s}\leq 1/M$. Thus, it follows that $|f(x)-f(y)|\leq H|x-y|^{1/s}$ for all $x,y\in [0,M]$, where
\begin{equation}\label{e:geoH} H= \frac{\max(1,1/M)}{\xi_1}\left(\alpha M+\frac{2\beta}{1-\xi_2}\right)=\frac{\alpha}{\xi_1}\max(1,M)+\frac{2\beta}{\xi_1(1-\xi_2)}\max(1,1/M)\end{equation}
depends only on $\max(M,1/M)$, $\xi_1$, $\xi_2$, $\alpha$, and $\beta$.
\end{proof}

\begin{rem}Lemma \ref{l:LipHold} is often used with geometric scales $\rho_j = \rho^j$ (i.e.~ $\xi_1=\xi_2=\rho$). However, separating the parameters $\xi_1$ and $\xi_2$ provides additional flexibility that can make constructing examples easier; see e.g.~ \S\ref{sec:sharpnessof1}.\end{rem}

\bibliography{tsp-refs}
\bibliographystyle{amsbeta}

\end{document}